\newtheorem{lemma}{Lemma}[section]
\newtheorem{proposition}{Proposition}[section]
\newtheorem{theorem}{Theorem}[section]
\theoremstyle{definition}
\newtheorem{definition}{Definition}[section]
\newtheorem{remark}{Remark}[section]
\newtheorem{corollary}{Corollary} [section]
\def\dsp{\def\baselinestretch{2.0}\large\normalsize}
\begin{document}


\title{Seiberg-Witten Invariants, Alexander Polynomials, and Fibred Classes}
\author{Oliver James Thistlethwaite}
\degreemonth{December}
\degreeyear{2014}
\degree{Doctor of Philosophy}
\chair{Dr. Stefano Vidussi }
\othermembers{Dr. Yatsun Poon\\
Dr. Bun Wong}
\numberofmembers{3}
\field{Mathematics}
\campus{Riverside}

\maketitle
\copyrightpage{}
\approvalpage{}

\degreesemester{Fall}

\begin{frontmatter}

\begin{acknowledgements}
I thank my advisor, Stefano Vidussi, for all his help and his endless patience with me.

I am grateful to my mother, Annette Thistlethwaite, for all her love and support and my father, Morwen Thistlethwaite, for inspiring me to become a mathematician. I also thank my siblings, Abigail Thistlethwaite and William Thistlethwaite for their encouragement.

I was fortunate to have made many great friends at UCR. I am grateful to all the fellow graduate students I have met throughout my years here. In somewhat of a chronological order, I would particularly like to thank the following people. Jorge Perez, for being a great friend and our many conversations at Farmer Boys. I thank Chunghoon Kim for his friendship and all his help. Reid Livingston, for always being willing to grab a pitcher at the local Getaway cafe. Edward Burkard, for his friendship and hospitality in letting me stay at his apartment. Jason Chou, for the countless hours we spent talking outside the math department building. I am grateful to Jason Park for always being willing to listen to my complaints. Mat Lunde, for cooking me delicious food countless times. Jake West, for always providing valuable advice. Soheil Safii, for being a great guy and also the only other topology graduate student for many years. Dominick Scaletta, for showing me it's possible for someone to turn their life around and become an excellent mathematician. Teo Holbert, for motivating me with his enthusiasm for mathematics. I thank Jeff Wand for his cheerfulness. Peri Shereen, for being a great officemate and inspiring me with her hard work and positive attitude. Thomas Schellhous, for organizing many enjoyable events. Jason Erbele, for our many interesting conversations about math. I am grateful to Lisa Schneider for her kindness and the times we spent together. I thank Matt O'dell for always being willing to discuss obscure facts about Lie algebras and Professor Vyjayanthi Chari for making me feel welcome as an honorary member in the UCR Lie algebra group.
\end{acknowledgements}

\begin{dedication}
\null\vfil
{
\begin{center}
To my mother, Annette Thistlethwaite.
\end{center}}
\vfil\null
\end{dedication}

\begin{abstract}

Since their introduction in 1994, the Seiberg-Witten invariants have become one of the main tools used in $4$-manifold theory. In this thesis, we will use these invariants to identify sufficient conditions for a $3$-manifold to fibre over a circle. Additionally, we will construct several examples of genus $1$ and $2$ surface bundles and prove their total spaces are spin $4$-manifolds.
\end{abstract}

\tableofcontents
\end{frontmatter}


 \chapter{Introduction}

The past three decades have experienced a massive growth in four dimensional manifold theory. Very little was known about these manifolds until the early eighties. Smooth $4$-manifolds are very different from their higher and lower dimensional counterparts. As an example, four dimensional Euclidean space $\mathbb R^4$ is the only Euclidean space to admit more than one distinct smooth structure. In fact, it was shown in 1987 by Clifford Taubes that there are an uncountable number of so-called exotic smooth structures.

Relating to this dissertation, the most important development is that of the Seiberg-Witten invariants in 1994 by the physicists Nathan Seiberg and Edward Witten. Given a smooth closed oriented $4$-manifold $X$, these invariants can be used to define a map
$$SW : Spin^c(X) \rightarrow \mathbb Z$$
where $Spin^c(X)$ is the set of equivalence classes of $Spin^c$-structures on $X$.

Many exciting results can be obtained from these invariants. Our main interest will be how the invariants apply to the study of symplectic manifolds. In \cite{taubes}, Taubes showed a symplectic manifold's canonical $Spin^c$-structure is sent to $1$. This has the immediate consequences that if a symplectic $4$-manifold splits as a smooth
connected sum then one of the summands must have a negative definite intersection form and also a symplectic $4$-manifold admitting a metric with positive scalar curvature must have an almost-definite intersection form.

Fibre bundles will play a significant role in this dissertation. One of the purposes of this dissertation is use the relationship between the Seiberg-Witten invariants and $3$-manifold invariants given a $4$-manifold that fibres over a $3$-manifold.

$3$-manifolds have their own Seiberg-Witten invariants as well as twisted Alexander polynomials. Alexander polynomials originated as invariants for knots in $S^3$ and were defined by J. Alexander in 1923. In 1990, X. S. Lin introduced a generalization of these polynomials, called twisted Alexander polynomials. This definition was later generalized to $3$-manifolds. Meng and Taubes \cite{meng} showed these polynomials have a surprising connection to a $3$-manifold's Seiberg-Witten invariants.

In turn, given a principal circle bundle over a $3$-manifold, $X \rightarrow Y$, with nontorsion Euler class and $b_2(Y) = 2$, Baldridge \cite{bald2} found a relationship between the two manifolds' Seiberg-Witten invariants.

This dissertation will be organized as follows. Chapter 2 will be dedicated to providing some of the preliminary results necessary to understand the Seiberg-Witten invariants. These include some basic facts about symplectic structures which are closely related to the invariants. We will also prove some standard results about Clifford algebras as well as define $Spin$ and $Spin^c$ structures.

In Chapter 3, we will construct a detailed explanation of the invariants. We will need two maps, the Dirac operator and the curvature map. Using these maps, we can define a quotient space from which the Seiberg-Witten invariants arise. We will primary be interested in manifolds with $b^+ = 1$, however this slightly complicates the invariants so we dedicate a section to this case.

Chapter 4 will be dedicated to $3$-manifolds. We will define twisted Alexander polynomials and discuss their relationship to $3$-dimensional Seiberg-Witten invariants. Finally we will prove some results involving principal circle bundles.

In chapter 5, we prove a partial converse of Fern\'andez-Gray-Morgan's theorem. Fernandez-Gray-Morgan's theorem gives sufficient conditions for the total space of a principal circle bundle to admit a symplectic structure. These include the base space fibreing over a circle.  Our partial converse will give sufficient conditions for a principal circle bundle whose total space is symplectic to have a base space which fibres over a circle.

Chapter 6 will be dedicated to constructing some examples of surface bundles over tori and then proving their total spaces are in fact spin manifolds. This is to work towards answering a question posed by Ron Stern, ``is there an orientable aspherical surface bundle over the torus that is not spin'' \cite{hillman}.

 \chapter{Background and Preliminaries}

In this chapter we will provide some of the required prerequisites to understand this dissertation. All these results are standard and we will provide references when necessary.
 
\begin{section}{Symplectic manifolds}

In this section, we give an introduction to symplectic manifolds as well as related concepts.

\begin{definition}
An {\it almost-complex structure} on a $2n$-manifold $X$ is a vector bundle map $J \colon TX \rightarrow TX$ satisfying
$J^2 = -id_{TX}$ where $TX$ denotes the tangent bundle of $X$. We call the pair $(X, J)$ an {\it almost-complex manifold}. 
\end{definition}

Note the set of almost-complex structures on $X$ is a subset of $C^\infty(X, TX \otimes T^*X)$ so it inherits a topology from the compact-open topology on $C^\infty(X, TX \otimes T^*X)$.

\begin{lemma} 
If $X$ be a $2n$-manifold, an almost-complex structure $J$ induces a complex vector bundle structure on $TX \xrightarrow[]{\pi} X$.
\end{lemma}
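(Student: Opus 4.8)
The plan is to turn each fibre $T_xX$ into a complex vector space by using $J$ to implement multiplication by $i$, and then to check that these structures fit together into a complex vector bundle. First I would define, for $z = a + bi \in \mathbb{C}$ and $v \in T_xX$, the scalar multiplication $z \cdot v := a v + b\, J_x(v)$, where $J_x = J|_{T_xX}$. Since $J$ is a real vector bundle map, $J_x$ is $\mathbb{R}$-linear, so additivity in $v$ and the distributive laws over addition of scalars and of vectors are immediate. The one axiom that genuinely uses the hypothesis is compatibility of iterated scalar multiplication: for $v \in T_xX$ one has $i \cdot (i \cdot v) = J_x(J_x(v)) = J_x^2(v) = -v = (-1)\cdot v$, and from this together with $\mathbb{R}$-bilinearity the full associativity $z_1 \cdot (z_2 \cdot v) = (z_1 z_2)\cdot v$ follows by expanding $z_1, z_2$ into real and imaginary parts. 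Thus each $T_xX$ becomes a complex vector space, of complex dimension $n$ since its underlying real dimension is $2n$.

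The remaining, and main, point is local triviality as a \emph{complex} bundle: I must produce, around each point, an open set $U$ and sections $s_1,\dots,s_n$ of $TX|_U$ that form a complex basis of $T_xX$ for every $x \in U$ and depend smoothly on $x$. I would do this by induction on $n$. Pick any nonvanishing local section $e_1$ of $TX$ near the given point (shrinking the neighbourhood if necessary; for instance a coordinate vector field works). Then $e_1(x)$ and $J_x(e_1(x))$ are $\mathbb{R}$-linearly independent for each $x$: a relation $\lambda e_1(x) + \mu J_x(e_1(x)) = 0$ with $(\lambda,\mu)\neq(0,0)$ would force $J_x$ to have a real eigenvalue on $\mathrm{span}_{\mathbb{R}}\{e_1(x)\}$, which is impossible since $J_x^2 = -\mathrm{id}$. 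Hence $e_1$ spans a rank-$2$ real subbundle $L_1 \subseteq TX|_U$ which is $J$-invariant, i.e.\ a complex line subbundle. Choosing a Riemannian metric on $X$ and averaging it over $J$ (replacing $g$ by $g(\cdot,\cdot) + g(J\cdot,J\cdot)$) gives a $J$-compatible metric, so the orthogonal complement $L_1^{\perp}$ is again a smooth $J$-invariant subbundle, of complex rank $n-1$; applying the inductive hypothesis to $(L_1^{\perp}, J|_{L_1^{\perp}})$ produces $s_2,\dots,s_n$, and together with $s_1 := e_1$ these give the desired complex frame. The base case $n=1$ is exactly the construction of $e_1$ just described.

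Finally I would record that the transition functions between two such complex frames are, by construction, pointwise $\mathbb{C}$-linear and smooth, hence define a smooth map $U \cap U' \to GL(n,\mathbb{C})$; this exhibits $TX \xrightarrow{\pi} X$ as a rank-$n$ complex vector bundle whose underlying real bundle is the original $TX$ and whose fibrewise multiplication by $i$ is $J$. The only step requiring real care is the inductive construction of the local complex frame — in particular the smooth choice of a $J$-invariant complement, for which the $J$-averaged metric is the clean device; everything else is a direct unravelling of the definition and of the identity $J^2 = -\mathrm{id}$.
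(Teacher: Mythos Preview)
Your proof is correct and follows the same strategy as the paper --- define complex scalar multiplication on each fibre via $(a+bi)\cdot v = av + bJv$ and then establish local triviality --- though you are considerably more explicit. The paper's proof simply asserts that local triviality follows from $J$ being a fibrewise real-linear map, whereas your inductive construction of a local complex frame using a $J$-averaged metric actually carries this step out.
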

\begin{proof}
For each $v \in T_x X$ define $(a+ib)v = av + J(bv)$ for each complex number $a + ib$. So we have made each fibre of $TX$ into a complex vector space isomorphic to $\mathbb C^n$. Since $J$ is a fibrewise real-linear map, it follows each $x \in X$ possesses a neighborhood $U$ whose
inverse image $\pi^{-1} (U)$ is diffeomorphic to $U \times \mathbb C^n$ under a diffeomorphism that is fibrewise complex-linear. 
\end{proof}

So we see it makes sense to discuss Chern classes of almost-complex structures which we will denote $c_i (X, J) \in H^{2i}(X; \mathbb Z)$. Also note $J$ naturally defines an orientation on $X$ as it reduces the structure group of $TX$ to $GL(n,\mathbb C) \subset GL^+(2n)$.

\begin{definition}
The {\it canonical bundle} $K$ of an almost-complex manifold $(X, J)$ is defined to be the complex line bundle corresponding with $-c_1(X, J)$. 
\end{definition}

A two-form $\omega$ on a real $n$-dimensional vector space $V$ is called {\it nondegenerate} if for each nonzero vector $v \in V$ there exists $u \in V$ such that $\omega(v,u) \neq 0$. A $2$-form $\omega$ on $X$ is called {\it nondegenerate} if it is nondegenerate on each tangent space $T_x X$. $\omega$ is called {\it closed} if $d\omega = 0$.

\begin{definition}
A closed, nondegenerate $2$-form $\omega$ on $X$ is called a {\it symplectic form} on $X$. We call the pair $(X, \omega )$ a symplectic manifold.
\end{definition}

A two-form $\omega$ on a $4$-manifold $X$ is called {\it compatible with an almost-complex structure} $J$ if $<v_1, v_2> = \omega(v_1, Jv_2)$ defines a Riemannian metric on $X$. 
Similarly, a Riemannian metric $g$ is called {\it compatible an almost-complex structure} $J$ if $g(v_1, v_2) = g(Jv_1, Jv_2)$ for each $v_i \in \Gamma(TX)$.

In the case where a symplectic form $\omega$ and Riemannian metric $g$ are both compatible with an almost-complex structure $J$, the elements of the triple $(\omega, J, g)$ are called {\it compatible} with each other.

\begin{proposition} \cite{mcduff} 
Each symplectic manifold $(X, \omega)$ admits a compatible almost-complex structure $J$ and a compatible Riemannian metric $g$. The set of compatible almost-complex structures is contractible. \hfill $\Box$
\end{proposition}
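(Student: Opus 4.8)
The plan is to build $J$ pointwise from an auxiliary Riemannian metric by a polar-decomposition argument, to take $g(\cdot,\cdot) := \omega(\cdot, J\,\cdot)$ as the compatible metric, and then to notice that this construction depends continuously on the auxiliary metric; the contractibility statement will then follow by a short retraction argument. Concretely, I would first pick any Riemannian metric $g_0$ on $X$ (partition of unity), and use nondegeneracy of $\omega$ to get the unique bundle automorphism $A$ of $TX$ with $\omega(u,v) = g_0(Au,v)$. Skew-symmetry of $\omega$ forces $A$ to be skew-adjoint for $g_0$, so $A^\ast A = -A^2$ is fibrewise symmetric and positive definite; let $P$ be its fibrewise positive-definite symmetric square root. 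Then $P$ commutes with $A$, depends smoothly on $A$, and setting $J := P^{-1}A$ one computes $J^2 = P^{-2}A^2 = (-A^2)^{-1}A^2 = -\mathrm{id}$, so $J$ is an almost-complex structure. Finally $g(u,v) := \omega(u,Jv)$ is checked to be symmetric, positive definite, and to satisfy $g(Ju,Jv)=g(u,v)$, so that $(\omega,J,g)$ is a compatible triple. These verifications are elementary fibrewise linear algebra and I would not dwell on them.

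For contractibility, write $\mathcal M$ for the space of Riemannian metrics on $X$ and $\mathcal J$ for the space of $\omega$-compatible almost-complex structures, both with the $C^\infty$ topology. The construction above is a continuous map $r\colon\mathcal M\to\mathcal J$, $g_0\mapsto J$, continuity coming from the fact that the positive square root of a positive self-adjoint operator varies continuously with the operator. There is also a continuous map $i\colon\mathcal J\to\mathcal M$ sending $J$ to $g_J$, where $g_J(u,v):=\omega(u,Jv)$. The point is that $r\circ i=\mathrm{id}_{\mathcal J}$: when $g_0=g_J$, the automorphism $A$ satisfies $g_J(Ju,v)=\omega(Ju,Jv)=\omega(u,v)$, so $A=J$ by uniqueness, hence $A^\ast A=-J^2=\mathrm{id}$, $P=\mathrm{id}$, and $r(g_J)=J$. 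Thus $\mathcal J$ is a retract of $\mathcal M$; since $\mathcal M$ is a convex subset of the space of symmetric $2$-tensors it is contractible, and a retract of a contractible space is contractible, so $\mathcal J$ is contractible --- in particular nonempty, which re-proves the existence part.

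The steps that genuinely need care are the pointwise identities ($J^2=-\mathrm{id}$, positivity and $J$-invariance of $g$, and $A=J$ in the computation above) and the claim that $P=\sqrt{A^\ast A}$ can be chosen smoothly, hence the family version continuously --- this last point is the standard functional-calculus (or implicit-function-theorem) fact about square roots of positive operators, which I would cite rather than reprove. I expect the main obstacle to be organizational rather than technical: one wants to set things up so that existence and contractibility both fall out of the single retraction $r$, and once $r\circ i=\mathrm{id}$ is in hand the rest is formal.
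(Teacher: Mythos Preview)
Your argument is correct and is essentially the standard polar-decomposition proof from the cited reference \cite{mcduff}; the paper itself gives no proof and simply quotes the result. In other words, you have supplied exactly the proof the paper defers to McDuff--Salamon for, so there is nothing to compare.
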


Now suppose $(X, \omega )$ is a symplectic manifold. Let $J$ be a compatible almost-complex structure. Note we may define
the Chern classes $c_i (X, \omega, J) \in H^{2i}(X; \mathbb Z)$. As a result of the space of compatible almost-structures being contractible (hence connected), the $c_i (X, \omega, J)$ are independent of the choice of compatible almost-complex structure $J$ so we may define $c_i (X, \omega)$. 

\begin{definition}
The canonical class $K$ of $(X, \omega)$ is defined to be the cohomology class $-c_1 (X, \omega) \in H^2(X)$. 
\end{definition}

\begin{remark} We will denote the inverse of the canonical form $K$ as $-K$ and $K^{-1}$ interchangeably.
\end{remark}

\end{section}

\begin{section}{Clifford algebras}

In order to define Seiberg-Witten invariants, we will dedicate a section to Clifford algebras. This section will be entirely algebraic. All results that are not proven here may be found in \cite{SpinGeo}. 
Throughout this section, let $K$ denote the field $\mathbb R$ or $\mathbb C$.

\begin{definition} The {\it tensor algebra on $K^n$} is defined to be the algebra
$$T(K^n) =  \oplus_{r=0}^{\infty} T^r( K^n ) = K \oplus K^n \oplus K^n \otimes K^n \oplus \cdots$$
whose multiplication is given by linearly extending the canonical map
$$T^\ell (K^n) \times T^m (K^n) \rightarrow T^{\ell + m}(K^n).$$
\end{definition}

Note $T(K^n)$ is associative with multiplicative identity $1 \in K = T^0(K^n) \subset T(K^n)$. 

\begin{definition}
The {\it Clifford Algebra on $K^n$} is defined to be the quotient of algebras 
$$\mathcal C l(K^n) = T(K^n ) / I (K^n)$$
where $I(K^n )$ is the two-sided ideal generated by elements of form 
$$v \otimes v \; + < v, v >1 \in T(K^n) \,\,\, \text{ with } \,\,\,  v \in K^n.$$
Here $<\, ,>$ denotes the usual inner product on $K^n$, also known as the dot product.
\end{definition}

Note $\mathcal C l(K^n)$ has the multiplicative identity, $[1] \in [K] \subset \mathcal C l(K^n)$. We will denote $[1]$ with $1$.

\begin{lemma} \label{cliffembed} There is a natural embedding of $K^n = T^1(K^n)$ into $\mathcal C l(K^n)$.
\end{lemma}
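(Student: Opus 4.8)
The plan is to show that the composite map $\iota\colon K^n = T^1(K^n) \hookrightarrow T(K^n) \twoheadrightarrow \mathcal C l(K^n)$, which sends $v$ to its class $[v]$, is injective. Since $\iota$ is already linear by construction, the whole content is injectivity, equivalently that the ideal $I(K^n)$ meets $T^1(K^n)$ only in $0$. First I would observe that $I(K^n)$ is spanned by elements of the form $a \otimes (v\otimes v + \langle v,v\rangle 1) \otimes b$ with $a,b \in T(K^n)$ and $v \in K^n$; the issue is that the generator $v\otimes v + \langle v,v\rangle 1$ is not homogeneous — it has a degree-$2$ piece and a degree-$0$ piece — so $I(K^n)$ is not a graded ideal and one cannot conclude injectivity merely by counting tensor degrees. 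This is the main obstacle, and the standard way around it is to produce an explicit linear map out of $T(K^n)$ that kills $I(K^n)$ but is injective on $T^1(K^n)$.

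The cleanest route is via an exterior-algebra (symbol) map. I would define a $K$-linear map $\Phi\colon T(K^n) \to \Lambda^* K^n$ on generators by sending $v_1 \otimes \cdots \otimes v_r$ to the image of $v_1 \wedge \cdots \wedge v_r$ under left Clifford-type contraction — concretely, one can instead use the classical fact (see \cite{SpinGeo}) that $\Lambda^* K^n$ is a $\mathcal C l(K^n)$-module with $v$ acting as $e_v - \iota_v$ (exterior multiplication by $v$ minus interior multiplication/contraction by $v$), and let $\Phi(x) = x \cdot 1$ where $1 \in \Lambda^0 K^n$. One checks directly that for the module action $(v\cdot(v\cdot \xi)) = -\langle v,v\rangle \xi$, so $\Phi$ annihilates $I(K^n)$ and hence descends to a map $\bar\Phi\colon \mathcal C l(K^n) \to \Lambda^* K^n$. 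Then $\bar\Phi\circ \iota$ sends $v \mapsto v\cdot 1 = e_v(1) - \iota_v(1) = v \in \Lambda^1 K^n$, which is the identity on $K^n = \Lambda^1 K^n$; therefore $\iota$ is injective.

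The key steps, in order: (1) recall/verify that the formula $c(v)\xi = e_v(\xi) - \iota_v(\xi)$ defines an algebra action of $T(K^n)$ on $\Lambda^* K^n$ by checking $c(v)^2 = -\langle v,v\rangle\,\mathrm{id}$ — this is the one genuine computation, and it follows from $e_v^2 = 0$, $\iota_v^2 = 0$, and $e_v\iota_v + \iota_v e_v = \langle v,v\rangle\,\mathrm{id}$; (2) conclude the action factors through $\mathcal C l(K^n)$, giving $\bar\Phi$; (3) evaluate $\bar\Phi\circ\iota$ on $K^n$ and note it is the canonical inclusion $\Lambda^1 K^n \hookrightarrow \Lambda^* K^n$, hence injective, so $\iota$ is injective; (4) since $\iota$ is also linear, it is the desired natural embedding. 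The naturality claim is immediate: a linear isometry $K^n \to K^m$ induces compatible maps on tensor algebras, Clifford algebras, and exterior algebras, and all the maps above are functorial in $K^n$.
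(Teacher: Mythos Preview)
Your argument is correct, but it is genuinely different from the paper's. The paper proves $K^n \cap I(K^n) = \{0\}$ by a direct degree argument inside $T(K^n)$: writing an element of the intersection as a finite sum $\sum_i a_i \otimes (v_i\otimes v_i + \langle v_i,v_i\rangle)\otimes b_i$ with $a_i,b_i$ of pure degree, it observes that the top-degree piece $\sum_{i'} a_{i'}\otimes v_{i'}\otimes v_{i'}\otimes b_{i'}$ (over indices with maximal $\deg a_{i'}+\deg b_{i'}$) must vanish, then contracts to kill the matching degree-$0$ pieces $\sum_{i'} a_{i'}\langle v_{i'},v_{i'}\rangle b_{i'}$, and iterates downward. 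Your route instead builds a one-sided inverse: the Clifford action $c(v)=e_v-\iota_v$ on $\Lambda^* K^n$ gives a symbol map $\bar\Phi:\mathcal C l(K^n)\to \Lambda^* K^n$, and $\bar\Phi\circ\iota=\mathrm{id}_{K^n}$ forces injectivity. The paper's argument is more elementary and entirely self-contained in the tensor algebra; your argument imports a small amount of machinery but in return yields strictly more --- the same map $\bar\Phi$ is exactly what establishes the vector-space isomorphism $\mathcal C l(K^n)\cong \Lambda^*(K^n)$ stated later as Corollary~\ref{extcliffiso}, so your proof of Lemma~\ref{cliffembed} would subsume that corollary rather than leaving it for a separate basis-counting step.
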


\begin{proof}

It is sufficient to show $K^n \cap I(K^n ) = \{ 0 \}$. First an element in $T(K^n)$ is said to be of {\it pure degree} $s$ if it is contained in $T^s(K^n) \subset T(K^n)$. Let $\varphi \in K^n \cap I(K^n)$. Since $\varphi \in I(K^n)$ we may write it as a finite sum
$$ \varphi = \sum_i a_i \otimes (v_i \otimes v_i + <v_i, v_i> ) \otimes b_i$$
where the $a_i$ and $b_i$ are each of pure degree.
Since $\varphi \in K^n = T^1(K^n)$, we have 
$$ \sum_{i^\prime} a_{i^\prime} \otimes (v_{i^\prime} \otimes v_{i^\prime}) \otimes b_{i^\prime} = 0$$
where the above sum is taken over indices where each $\deg a_{i^\prime} + \deg b_{i^\prime}$ is maximal. 
By contraction with $<\, , >$, we also have
$$ \sum_{i^\prime} a_{i^\prime} <v_{i^\prime}, v_{i^\prime} > b_{i^\prime} = 0.$$
Hence
$$ \sum_{i^\prime} a_{i^\prime} \otimes (v_{i^\prime} \otimes v_{i^\prime} + <v_{i^\prime}, v_{i^\prime}> ) \otimes b_{i^\prime} = 0.$$
and proceeding inductively, we conclude $\varphi = 0$.
\end{proof}

\begin{lemma} \label{cliffbasis} Let $\{e_1, \cdots , e_n \}$ denote an orthonormal basis for $K^n$. Then $\mathcal C l(K^n)$ is generated as an algebra by the $e_i$ subject to the relations:
$$ e_i^2 = -1 \, \, \, \, \text{  and  } \, \, \, \, e_i e_j = -e_j e_i$$
for each $i$ and $j \neq i$.

It follows $\mathcal C l(K^n)$ is a $2^n$-dimensional vector space with basis
$$\{1 \} \cup \{e_{i_1} e_{i_2} \cdots e_{i_r} \mid 1 \leq i_1 < i_2 < \cdots i_r \leq n\}.$$
 \hfill $\square$
\end{lemma}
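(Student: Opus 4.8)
The plan is to establish the two relations from the defining ideal, then show that products of the basis vectors $e_i$ span $\mathcal C l(K^n)$, and finally that the proposed $2^n$ elements are linearly independent. First I would extract the relations: since $e_i \otimes e_i + \langle e_i, e_i\rangle 1 = e_i \otimes e_i + 1$ lies in $I(K^n)$, passing to the quotient gives $e_i^2 = -1$. For the anticommutation relation, I would apply the same construction to the vector $e_i + e_j$, using that $\langle e_i+e_j, e_i+e_j\rangle = 2$ for $i \neq j$ by orthonormality; expanding $(e_i+e_j)^2 = -2$ in $\mathcal C l(K^n)$ and substituting $e_i^2 = e_j^2 = -1$ yields $e_ie_j + e_je_i = 0$. (More generally the polarization identity gives $e_ie_j + e_je_i = -2\langle e_i,e_j\rangle$.)

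Next I would show the $e_i$ generate and that the listed monomials span. Since $K^n = T^1(K^n)$ generates $T(K^n)$ as an algebra, its image generates $\mathcal C l(K^n)$; and $\{e_1,\dots,e_n\}$ is a basis of $K^n$, so the $e_i$ generate. Any element of $\mathcal C l(K^n)$ is therefore a $K$-linear combination of words $e_{i_1}e_{i_2}\cdots e_{i_r}$. Using $e_ie_j = -e_je_i$ I can sort any such word into nondecreasing index order at the cost of a sign, and using $e_i^2 = -1$ I can eliminate repeated indices, reducing to a word with strictly increasing indices (or to $\pm 1$). Hence $\{1\} \cup \{e_{i_1}\cdots e_{i_r} \mid i_1 < \cdots < i_r\}$ spans, and this set has $\sum_{r=0}^n \binom{n}{r} = 2^n$ elements, so $\dim_K \mathcal C l(K^n) \le 2^n$.

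The main obstacle is the reverse inequality: showing these $2^n$ monomials are linearly \emph{independent}, i.e.\ that $\dim_K \mathcal C l(K^n) \ge 2^n$, so that no further collapsing occurs. The standard way I would handle this is to construct an explicit representation of $\mathcal C l(K^n)$ on a $2^n$-dimensional space realizing all the monomials as linearly independent operators — concretely, let $\mathcal C l(K^n)$ act on $\Lambda^* K^n$ by $e_i \mapsto (e_i \wedge \cdot) - (\iota_{e_i} \cdot)$ (exterior multiplication minus interior multiplication/contraction), check that this map satisfies $e_i^2 = -1$ and the anticommutation relations (so by the universal property it extends to an algebra map $\mathcal C l(K^n) \to \operatorname{End}(\Lambda^* K^n)$), and then observe that the image of the monomial $e_{i_1}\cdots e_{i_r}$ applied to $1 \in \Lambda^0 K^n$ is $\pm\, e_{i_1}\wedge\cdots\wedge e_{i_r}$ plus lower-order terms; since the $e_{i_1}\wedge\cdots\wedge e_{i_r}$ form a basis of $\Lambda^* K^n$, the monomials are linearly independent in $\mathcal C l(K^n)$. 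Combining both inequalities gives $\dim_K \mathcal C l(K^n) = 2^n$ with the stated basis. Since the lemma cites \cite{SpinGeo}, I would at this point simply invoke that reference for the independence argument rather than spell out the representation in full detail.
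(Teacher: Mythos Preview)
Your argument is correct and follows the standard line of reasoning. Note, however, that the paper does not actually prove this lemma: the $\square$ appears immediately after the statement with no proof environment, and the section preamble says ``All results that are not proven here may be found in \cite{SpinGeo}.'' So there is no proof in the paper to compare against---the author simply defers to Lawson--Michelsohn. Your sketch (deriving the relations by polarization, reducing words to ordered monomials for the spanning direction, and invoking the exterior-algebra representation $e_i \mapsto e_i \wedge {} - \iota_{e_i}$ for independence) is precisely the argument one finds there, so you have effectively reconstructed the cited proof rather than diverged from it. Your closing remark that you would invoke \cite{SpinGeo} for the independence step matches exactly what the paper does for the entire lemma.
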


For the reader's convenience, we recall the definition of the complexification of a real algebra.

\begin{definition} If $A$ is an algebra over $\mathbb R$ then its complexification is the algebra $A \otimes_\mathbb R \mathbb C$ endowed with the following complex scalar multiplication map. For each $v \otimes z \in A \otimes_\mathbb R \mathbb C$ and $\lambda \in \mathbb C$, we define $\lambda (v \otimes z) =  v \otimes (\lambda z)$. 
\end{definition}

\begin{lemma} \label{complexcliff}
The complexification of $\mathcal C l (\mathbb R^n)$, $\mathcal C l (\mathbb R^n) \otimes \mathbb C$, is isomorphic as an algebra to $\mathcal C l (\mathbb C^n)$.
\end{lemma}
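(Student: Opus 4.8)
The plan is to exhibit an explicit algebra isomorphism between $\mathcal{C}l(\mathbb{R}^n) \otimes_\mathbb{R} \mathbb{C}$ and $\mathcal{C}l(\mathbb{C}^n)$ by tracking what each side does to the generators. By Lemma \ref{cliffbasis}, $\mathcal{C}l(\mathbb{R}^n)$ is generated as a real algebra by an orthonormal basis $\{e_1, \dots, e_n\}$ of $\mathbb{R}^n$ subject only to $e_i^2 = -1$ and $e_i e_j = -e_j e_i$ for $i \neq j$; similarly $\mathcal{C}l(\mathbb{C}^n)$ is generated as a complex algebra by an orthonormal basis $\{f_1, \dots, f_n\}$ of $\mathbb{C}^n$ subject to the same relations (with respect to the bilinear dot product on $\mathbb{C}^n$). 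First I would define a map on generators by sending $e_i \otimes 1 \mapsto f_i$ and extending $\mathbb{C}$-linearly and multiplicatively, i.e.\ $(\sum_\alpha v_\alpha \otimes z_\alpha) \mapsto \sum_\alpha z_\alpha \, \iota(v_\alpha)$ where $\iota$ denotes the obvious map on monomials $e_{i_1}\cdots e_{i_r} \mapsto f_{i_1}\cdots f_{i_r}$.

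The cleanest way to see this is well-defined is via a universal property: the map $\mathbb{R}^n \to \mathcal{C}l(\mathbb{C}^n)$, $v \mapsto \iota(v)$, satisfies $\iota(v)^2 = -\langle v,v\rangle \cdot 1$, so by the universal property of the Clifford algebra it extends to a unique real algebra homomorphism $\mathcal{C}l(\mathbb{R}^n) \to \mathcal{C}l(\mathbb{C}^n)$. Since $\mathcal{C}l(\mathbb{C}^n)$ is a complex algebra, this $\mathbb{R}$-homomorphism factors uniquely through the complexification, giving a $\mathbb{C}$-algebra homomorphism $\Phi \colon \mathcal{C}l(\mathbb{R}^n) \otimes_\mathbb{R} \mathbb{C} \to \mathcal{C}l(\mathbb{C}^n)$. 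Alternatively, if the excerpt's development of Clifford algebras does not make the universal property available, I would verify directly that the defining relations are preserved: the generators $e_i \otimes 1$ of the complexification satisfy $(e_i \otimes 1)^2 = -1$ and $(e_i \otimes 1)(e_j \otimes 1) = -(e_j \otimes 1)(e_i \otimes 1)$, which maps exactly onto the relations among the $f_i$, so the assignment descends to a homomorphism of the quotient.

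Next I would check $\Phi$ is bijective by a dimension count. By Lemma \ref{cliffbasis}, $\mathcal{C}l(\mathbb{C}^n)$ has complex dimension $2^n$ with basis the ordered monomials $f_{i_1}\cdots f_{i_r}$ for $i_1 < \cdots < i_r$, while $\mathcal{C}l(\mathbb{R}^n)$ has real dimension $2^n$ with the analogous basis in the $e_i$, so $\mathcal{C}l(\mathbb{R}^n)\otimes_\mathbb{R}\mathbb{C}$ has complex dimension $2^n$ as well. Since $\Phi$ sends the basis monomials $e_{i_1}\cdots e_{i_r} \otimes 1$ to the corresponding basis monomials $f_{i_1}\cdots f_{i_r}$, it maps a $\mathbb{C}$-basis onto a $\mathbb{C}$-basis, hence is an isomorphism of complex algebras. (One could equivalently note $\Phi$ is surjective because its image contains all the generators $f_i$, and then conclude injectivity from equality of finite dimensions.)

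I do not expect a serious obstacle here; the only point requiring a little care is making sure the bilinear form used to define $\mathcal{C}l(\mathbb{C}^n)$ is the symmetric $\mathbb{C}$-bilinear extension of the dot product (as in the paper's definition) rather than a Hermitian form — with the bilinear convention the relations match verbatim and the argument goes through cleanly. A secondary bookkeeping point is being explicit that a real algebra homomorphism out of a complexified algebra is the same data as its restriction to the real part, so that the $\mathbb{C}$-linearity of $\Phi$ is automatic once it is defined on $e_i \otimes 1$; this is the step I would write out carefully rather than assert.
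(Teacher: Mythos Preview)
Your proposal is correct and follows essentially the same approach as the paper: both define the isomorphism on generators by $v \otimes \lambda \mapsto \lambda v$ for $v \in \mathbb{R}^n$, $\lambda \in \mathbb{C}$. The paper's proof is far more terse---it simply writes down this map and asserts it is an algebra isomorphism---whereas you supply the justifications (well-definedness via relations or the universal property, and bijectivity via the dimension count from Lemma~\ref{cliffbasis}) that the paper leaves implicit.
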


\begin{proof}

Define an algebra isomorphism $\mathcal C l (\mathbb R^n) \otimes \mathbb C \rightarrow \mathcal C l (\mathbb C^n)$ by 
$$v \otimes \lambda \mapsto \lambda v \in \mathbb C^n \subset C l (\mathbb C^n)$$
for each $v \in \mathbb R^n \subset \mathcal C l(\mathbb R^n)$ and $\lambda \in \mathbb C$.

\end{proof}

We will be interested in some low dimensional cases so we will specialize to those cases whenever useful.

\begin{lemma} \label{cliffmult}
We can construct an algebra isomorphism:
$$\mu : \mathcal Cl(\mathbb C^4) \rightarrow Mat(\mathbb C, 4).$$
\end{lemma}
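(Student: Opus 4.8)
The plan is to build the isomorphism $\mu$ explicitly by exhibiting four concrete $4\times 4$ complex matrices that satisfy the Clifford relations of Lemma \ref{cliffbasis}, namely $\mu(e_i)^2 = -I$ and $\mu(e_i)\mu(e_j) = -\mu(e_j)\mu(e_i)$ for $i\neq j$, and then upgrading this to an algebra isomorphism by a dimension count. By the universal property of the Clifford algebra (which follows from Lemmas \ref{cliffembed} and \ref{cliffbasis}), any assignment of the generators $e_1,\dots,e_4$ to elements of $Mat(\mathbb C,4)$ whose squares are $-I$ and which pairwise anticommute extends uniquely to an algebra homomorphism $\mu\colon \mathcal Cl(\mathbb C^4)\to Mat(\mathbb C,4)$. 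Since $\dim_{\mathbb C}\mathcal Cl(\mathbb C^4) = 2^4 = 16 = \dim_{\mathbb C}Mat(\mathbb C,4)$ by Lemma \ref{cliffbasis}, it then suffices to prove $\mu$ is either injective or surjective; surjectivity is the convenient one to check, as one shows the image contains a spanning set (e.g.\ all matrix units, or enough products of the $\mu(e_i)$).

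First I would fix the $2\times 2$ Pauli-type matrices
\[
\sigma_1 = \begin{pmatrix} 0 & 1 \\ 1 & 0 \end{pmatrix},\quad
\sigma_2 = \begin{pmatrix} 0 & -i \\ i & 0 \end{pmatrix},\quad
\sigma_3 = \begin{pmatrix} 1 & 0 \\ 0 & -1 \end{pmatrix},
\]
which satisfy $\sigma_j^2 = I$, $\sigma_j\sigma_k = -\sigma_k\sigma_j$ for $j\neq k$, and $\sigma_1\sigma_2 = i\sigma_3$. Then I would define, using $2\times 2$ block (Kronecker) form,
\[
\mu(e_1) = \begin{pmatrix} 0 & -I_2 \\ I_2 & 0 \end{pmatrix},\quad
\mu(e_2) = \begin{pmatrix} 0 & i\sigma_1 \\ i\sigma_1 & 0 \end{pmatrix},\quad
\mu(e_3) = \begin{pmatrix} 0 & i\sigma_2 \\ i\sigma_2 & 0 \end{pmatrix},\quad
\mu(e_4) = \begin{pmatrix} 0 & i\sigma_3 \\ i\sigma_3 & 0 \end{pmatrix},
\]
and then carry out the (routine) verification that each squares to $-I_4$ and that any two distinct ones anticommute; the anticommutation among $\mu(e_2),\mu(e_3),\mu(e_4)$ reduces to that of the $\sigma_j$, while anticommutation of $\mu(e_1)$ with the others reduces to the off-diagonal block structure. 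This produces the homomorphism $\mu$.

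For surjectivity I would compute a few products: $\mu(e_1)\mu(e_2)$, $\mu(e_1)\mu(e_3)$, $\mu(e_1)\mu(e_4)$ are block-diagonal, and together with $I_4$ and $\mu(e_2)\mu(e_3)$, $\mu(e_2)\mu(e_4)$, $\mu(e_3)\mu(e_4)$ one checks the $16$ basis images $\mu(e_{i_1}\cdots e_{i_r})$ are linearly independent in $Mat(\mathbb C,4)$ — equivalently, since this is a $16$-dimensional space, that they span it. Linear independence of these matrices, rather than the algebra structure itself, is the one genuine computation; everything else is formal. I expect the main (minor) obstacle to be bookkeeping: choosing the generator matrices so that the linear-independence check is transparent. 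An alternative that sidesteps even this is to invoke the classical structure theory — $\mathcal Cl(\mathbb C^{2k})\cong Mat(\mathbb C,2^k)$ from \cite{SpinGeo} with $k=2$ — but since the paper has set up Clifford algebras from scratch, I would present the explicit construction above and close with the dimension argument.
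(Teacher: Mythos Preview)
Your approach is correct and essentially the same as the paper's: both specify explicit gamma matrices for the generators $e_1,\dots,e_4$ satisfying the Clifford relations and extend to an algebra map. The paper routes through $Mat(2,\mathbb C)\otimes Mat(2,\mathbb C)$ before applying the Kronecker map and simply asserts the result is an isomorphism, whereas you work directly in $4\times4$ block form and supply the universal-property/dimension-count justification the paper omits; the particular choice of generator matrices differs, but this is immaterial.
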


\begin{proof}
Let $\{e_1, \cdots, e_4 \}$ denote the usual orthonormal basis for $\mathbb C^4 \subset \mathcal Cl(\mathbb C^4)$. We can define an algebra isomorphism $\mathcal Cl(\mathbb C^4) \rightarrow Mat(2, \mathbb C) \otimes Mat(2, \mathbb C)$ as follows.
$$ \begin{array}{l l}
      e_1 \mapsto   \left[ \begin{array}{cc} i & 0 \\ 0 & -i \end{array} \right] \otimes \left[ \begin{array}{cc} 0 & i \\ -i & 0 \end{array} \right] &
      e_2 \mapsto   \left[ \begin{array}{cc} 0 & 1 \\ -1 & 0 \end{array} \right] \otimes \left[ \begin{array}{cc} 0 & i \\ -i & 0 \end{array} \right] \\ & \\
      e_3 \mapsto   \left[ \begin{array}{cc} 1 & 0 \\ 0 & 1 \end{array} \right] \otimes \left[ \begin{array}{cc} 0 & i \\ i & 0 \end{array} \right] &
      e_4 \mapsto   \left[ \begin{array}{cc} 1 & 0 \\ 0 & 1 \end{array} \right] \otimes \left[ \begin{array}{cc} i & 0 \\ 0 & -i \end{array} \right]
   \end{array} 
$$
By composing this map with the Kronecker map, we obtain the desired algebra isomorphism, $\mu$.
\end{proof}

Using the map $\mu$ from the previous Lemma, we have an action of $\mathcal Cl(\mathbb C^4)$ on $\mathbb C^4$ which we will refer to as {\it Clifford multiplication }. We will denote this action by ``$\cdot$".

Now we will discuss the splittings of Clifford algebras. First note by Lemma~\ref{cliffbasis}, the map $\alpha : K^n \rightarrow K^n$ given by $\alpha(v) = -v$ extends to an algebra automorphism of $\mathcal C l(K^n)$. $\alpha$ induces the splitting
$$ \mathcal C l(K^n) = \mathcal C l_0 (K^n) \oplus \mathcal C l_1(K^n)$$
where $\mathcal C l_i (K^n) = \{ \varphi \in \mathcal C l(K^n) \mid \alpha(\varphi) = (-1)^i \varphi \}$. To verify this is indeed a splitting note each $C l_i (K^n)$ is a linear subspace, $span \{ C l_0 (K^n), C l_1 (K^n) \} = C l (K^n)$ as each element of the vector space basis $\{1 \} \cup \{e_{i_1} e_{i_2} \cdots e_{i_r} \mid 1 \leq i_1 < i_2 < \cdots i_r \leq n\}$ is contained in one of the $C l_i (K^n)$, and $C l_0 (K^n) \cap C l_1 (K^n) = 0$ since if $\varphi \in C l_0 (K^n) \cap C l_1 (K^n)$ then $\varphi = -\varphi$ implies $\varphi = 0$ as $K$ has no nontrivial torsion.

The {\it volume element} of $C l(\mathbb R^n)$ (oriented by the usual orientation of $\mathbb R^n$) is defined to be $\omega = e_1 \cdots e_n$ where the $e_i$ are an orthonormal basis of $\mathbb R^n$ with the usual orientation. To see this is well-defined, suppose $e_1^\prime, \cdots, e_n^\prime$ is another oriented orthonormal basis. Then each $e_i^\prime = \sum_j g_{ij} e_j$ for some $g = (g_{ij}) \in SO(n)$. Hence from Lemma~\ref{cliffbasis} 
$$e_1^\prime \cdots e_n^\prime = \det (g) e_1 \cdots e_n = e_1 \cdots e_n.$$

In the case $\omega^2 = 1$, we have the splitting
$$ \mathcal C l(\mathbb R^n) = \mathcal C l^+ (\mathbb R^n) \oplus \mathcal C l^-(\mathbb R^n)$$
where $\mathcal C l^\pm (\mathbb R^n) = \pi^\pm \mathcal C l (\mathbb R^n)$ with $\pi^\pm = \frac{1}{2}(1\pm \omega)$ (note we use this notation since left multiplication by $\pi^\pm$ is a projection onto $\mathcal C l^\pm (\mathbb R^n)$)  . To see this is a splitting, observe each $C l^\pm (\mathbb R^n)$ is a linear subspace, $span \{ C l^+ (\mathbb R^n), C l^- (\mathbb R^n) \} = C l (K^n)$ since we can write each $\varphi \in C l (\mathbb R^n)$ as $\varphi = \pi^+\varphi + \pi^-\varphi$, and $C l^+ (\mathbb R^n) \cap C l^- (\mathbb R^n) = 0$ since if $\pi^+ \varphi_1 = \pi^- \varphi_2$, using the facts  
$\pi^\pm \pi^\pm = \pi^\pm \,\,\,\, \text{and} \,\,\,\, \pi^\pm \pi^\mp = 0,$
we obtain $0 = \pi^- \varphi_2$. Note for each $e \in \mathbb R^n$, we have $\pi^\pm e = e \frac{1}{2}(1 \mp \omega)$. 

\begin{lemma} \label{cliff3}
$$\mathcal Cl(\mathbb R^3) \cong \mathbb H \oplus \mathbb H$$
\end{lemma}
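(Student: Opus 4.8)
The plan is to exhibit an explicit algebra isomorphism $\mathcal{C}l(\mathbb{R}^3) \xrightarrow{\sim} \mathbb{H} \oplus \mathbb{H}$ and then verify it is a bijective algebra homomorphism, leveraging the structural results already established. First I would recall from Lemma~\ref{cliffbasis} that $\mathcal{C}l(\mathbb{R}^3)$ is an $8$-dimensional real algebra with basis $\{1, e_1, e_2, e_3, e_1e_2, e_1e_3, e_2e_3, e_1e_2e_3\}$, subject to $e_i^2 = -1$ and $e_ie_j = -e_je_i$ for $i \neq j$. Since $\dim_{\mathbb{R}}(\mathbb{H}\oplus\mathbb{H}) = 8$ as well, it suffices to produce a surjective (equivalently, injective) algebra homomorphism.

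The key observation is that the volume element $\omega = e_1e_2e_3$ satisfies $\omega^2 = e_1e_2e_3e_1e_2e_3$; reordering using the anticommutation relations gives $\omega^2 = +1$, so by the discussion preceding the lemma we get the splitting $\mathcal{C}l(\mathbb{R}^3) = \mathcal{C}l^+(\mathbb{R}^3) \oplus \mathcal{C}l^-(\mathbb{R}^3)$ via the projectors $\pi^\pm = \tfrac{1}{2}(1 \pm \omega)$. Moreover $\omega$ is central in $\mathcal{C}l(\mathbb{R}^3)$: it commutes with each $e_i$ because passing $e_i$ through the two factors of $\{e_1,e_2,e_3\}\setminus\{e_i\}$ produces two sign changes, and it trivially commutes with itself and with $1$; hence $\omega$ is central. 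Therefore each $\mathcal{C}l^\pm(\mathbb{R}^3)$ is a two-sided ideal, and the splitting is a direct sum decomposition as algebras. Next I would identify each summand: $\mathcal{C}l^+(\mathbb{R}^3)$ is spanned by $\pi^+ \cdot \{1, e_1, e_2, e_3\}$, which is $4$-dimensional, and I claim it is isomorphic to $\mathbb{H}$. Setting $i' = \pi^+ e_1$, $j' = \pi^+ e_2$, $k' = \pi^+ e_3$, one checks $i'^2 = \pi^+ e_1^2 = -\pi^+$ (the unit of the summand), and $i'j' = \pi^+ e_1 e_2 = \pi^+ e_1 e_2 \omega \cdot \omega^{-1}$; using $e_1 e_2 \omega = e_1e_2e_1e_2e_3 = e_3$ one gets $i'j' = \pi^+ e_3 = k'$, and similarly the other quaternion relations hold. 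Thus $\mathcal{C}l^+(\mathbb{R}^3) \cong \mathbb{H}$, and the same computation with $\pi^-$ (where $e_1e_2 = -e_3\omega$, so signs flip consistently and one still lands on $\mathbb{H}$, which is isomorphic to its opposite algebra via quaternion conjugation) gives $\mathcal{C}l^-(\mathbb{R}^3) \cong \mathbb{H}$.

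Assembling these, $\mathcal{C}l(\mathbb{R}^3) = \mathcal{C}l^+(\mathbb{R}^3) \oplus \mathcal{C}l^-(\mathbb{R}^3) \cong \mathbb{H} \oplus \mathbb{H}$ as algebras, which is the claim. The main obstacle — really the only place where care is needed — is bookkeeping the signs in the anticommutation relations when computing products like $e_1e_2\omega$ and when verifying $\omega$ is central and $\omega^2 = 1$; an off-by-one-sign error there would wrongly suggest $\omega^2 = -1$ (which would instead give $\mathcal{C}l(\mathbb{R}^3) \cong \mathbb{C} \otimes \mathbb{H} \cong Mat(2,\mathbb{C})$-type behavior). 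An alternative, slicker route that avoids some of this is to define the map directly on generators: send $e_1, e_2, e_3$ to $(\mathbf{i}, \mathbf{i}), (\mathbf{j}, \mathbf{j}), (\mathbf{k}, \mathbf{k}) \in \mathbb{H} \oplus \mathbb{H}$ in one copy and to $(\mathbf{i}, -\mathbf{i})$, etc., or more symmetrically exploit that $\mathbb{H} \oplus \mathbb{H}$ has the right generators-and-relations presentation; then invoke the universal property of the Clifford algebra (the relations $e_i^2 = -1$, $e_ie_j = -e_je_i$ from Lemma~\ref{cliffbasis}) to get a well-defined homomorphism, and check surjectivity by noting the images generate, finishing by dimension count. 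I would likely present the projector-based argument since the machinery $\pi^\pm = \tfrac12(1\pm\omega)$ is already set up in the text.
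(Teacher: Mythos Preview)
Your approach is essentially the paper's: the paper simply writes down an explicit map $\mathbb{H}\oplus\mathbb{H}\to\mathcal{C}l(\mathbb{R}^3)$ on generators (e.g.\ $i\oplus 0\mapsto \tfrac12(e_1e_2-e_3)$, $j\oplus 0\mapsto\tfrac12(e_2e_3-e_1)$), and these formulas are exactly your projector expressions $\pi^+e_1e_2$, $\pi^+e_2e_3$, etc.; the paper then remarks that the two $\mathbb{H}$-summands land in $\mathcal{C}l^\pm(\mathbb{R}^3)$, which is the splitting you built your argument around.

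One computational slip to fix: $e_1e_2\,\omega = e_1e_2e_1e_2e_3 = -e_1^2e_2^2e_3 = -e_3$, not $+e_3$, so with your generators you actually get $i'j'=-k'$ (and cyclically), i.e.\ $\mathbb{H}^{\mathrm{op}}$. As you already observe for the $\pi^-$ piece, $\mathbb{H}^{\mathrm{op}}\cong\mathbb{H}$ via conjugation, so the conclusion is unaffected; just correct the sign or instead take $i'=\pi^+e_2e_3$, $j'=\pi^+e_3e_1$, $k'=\pi^+e_1e_2$ to get the standard relations on the nose.
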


\begin{proof}

Define an algebra isomorphism $\mathbb H \oplus \mathbb H \rightarrow \mathcal Cl(\mathbb R^3)$ by
$$ \begin{array}{l l}
      i \oplus 0 \mapsto \frac{1}{2} (e_1 e_2 - e_3) &
      j \oplus 0 \mapsto \frac{1}{2} (e_2 e_3 - e_1) \\ & \\
      0 \oplus i \mapsto \frac{1}{2} (e_1 e_2 + e_3)  &
      0 \oplus j \mapsto \frac{1}{2} (e_2 e_3 + e_1)  
   \end{array} 
$$
where the $e_i$ are the usual oriented orthonormal basis for $\mathbb R^3 \subset Cl(\mathbb R^3)$.
\end{proof}

Note this map sends $\mathbb H \oplus 0$ to  $Cl^+(\mathbb R^3)$ and $0 \oplus \mathbb H$ to $Cl^-(\mathbb R^3)$.

\begin{lemma} \label{cliffm1}
$$\mathcal C l(K^{n-1}) \cong \mathcal C l_0 (K^n)$$
\end{lemma}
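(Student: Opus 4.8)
The plan is to construct an explicit algebra isomorphism $\mathcal{C}l(K^{n-1}) \to \mathcal{C}l_0(K^n)$ using the standard trick: if $\{e_1, \ldots, e_{n-1}\}$ is an orthonormal basis for $K^{n-1}$ and $\{f_1, \ldots, f_n\}$ is an orthonormal basis for $K^n$, then the map should send $e_i \mapsto f_i f_n$ (or, up to sign conventions, $f_n f_i$). First I would check that the images satisfy the defining relations of $\mathcal{C}l(K^{n-1})$ from Lemma~\ref{cliffbasis}. Using $f_i^2 = -1$ and $f_i f_j = -f_j f_i$, one computes $(f_i f_n)^2 = f_i f_n f_i f_n = -f_i f_i f_n f_n = -(-1)(-1) = -1$, and for $i \neq j$, $(f_i f_n)(f_j f_n) = -f_i f_j f_n f_n = f_i f_j = -f_j f_i = -(f_j f_n)(f_i f_n)$. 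So the relations hold, and by the universal property implicit in Lemma~\ref{cliffbasis}, this extends to a well-defined algebra homomorphism $\Phi: \mathcal{C}l(K^{n-1}) \to \mathcal{C}l(K^n)$.

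Next I would verify that the image of $\Phi$ lands in $\mathcal{C}l_0(K^n)$. Each basis element $e_{i_1} \cdots e_{i_r}$ maps to $(f_{i_1} f_n)(f_{i_2} f_n) \cdots (f_{i_r} f_n)$, which after rearranging (each transposition contributes a sign, and $f_n^2 = -1$) is $\pm f_{i_1} f_{i_2} \cdots f_{i_r} f_n^r$, a product of either $r$ or $r+1$ of the $f$'s depending on the parity of $r$; in either case the total number of generators appearing is even (namely $2r$ before simplification, or after simplification still an even count since we only ever remove pairs $f_n f_n$). Thus $\alpha(\Phi(\varphi)) = \Phi(\varphi)$, so $\Phi$ maps into $\mathcal{C}l_0(K^n)$.

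Finally I would establish that $\Phi$ is a bijection onto $\mathcal{C}l_0(K^n)$. By Lemma~\ref{cliffbasis}, $\mathcal{C}l(K^{n-1})$ has dimension $2^{n-1}$, and $\mathcal{C}l_0(K^n)$ also has dimension $2^{n-1}$ (it is spanned by the basis elements $f_{i_1} \cdots f_{i_r}$ with $r$ even, of which there are $\sum_{r \text{ even}} \binom{n}{r} = 2^{n-1}$). So it suffices to show $\Phi$ is injective, or equivalently surjective onto $\mathcal{C}l_0(K^n)$. For surjectivity one shows each even-degree basis monomial of $\mathcal{C}l(K^n)$ is hit: a product $f_{j_1} \cdots f_{j_{2s}}$ can be written, inserting factors $f_n f_n = -1$, as $\pm (f_{j_1} f_n)(f_n f_{j_2})(f_{j_3} f_n) \cdots$, which is $\pm \Phi$ of a suitable product of the $e_i$'s. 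The main obstacle here is purely bookkeeping: tracking the signs introduced by reordering the $f_n$ factors past the other generators and by the substitutions $f_n^2 = -1$. I would handle this by noting that since the dimensions already match, I only need $\Phi$ to be injective; and injectivity follows because $\Phi$ is a nonzero algebra homomorphism between algebras of the same finite dimension once we know the image is not contained in a proper subalgebra — more cleanly, exhibiting an explicit inverse on generators $f_i f_n \mapsto e_i$ (and more generally $f_i f_j \mapsto -e_i e_j$, $f_i f_n \mapsto e_i$) and checking it is well-defined finishes the argument without delicate sign chases.
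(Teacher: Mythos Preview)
Your approach is correct and is exactly the one in the paper: define the map on generators by $e_i \mapsto e_i e_n$, check the Clifford relations, and use the dimension count $\dim \mathcal{C}l(K^{n-1}) = 2^{n-1} = \dim \mathcal{C}l_0(K^n)$ from Lemma~\ref{cliffbasis}. The paper's proof is in fact just the one-line definition of this map; your verification of the relations and the dimension argument simply fills in details the paper leaves implicit. One small caution: the remark that ``injectivity follows because $\Phi$ is a nonzero algebra homomorphism between algebras of the same finite dimension'' is not valid on its own, but your surjectivity sketch (or the observation that distinct basis monomials map to $\pm$ distinct even basis monomials) already suffices.
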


\begin{proof}
We can define an algebra isomorphism $\mathcal C l( K^{n-1}) \rightarrow \mathcal C l_0 ( K^n)$ by
$$ e_i \mapsto e_i e_n$$
where the $e_i$ on the left are the usual orthonormal basis for $ K^{n-1} \subset C l( K^{n-1})$ and the $e_i$ on the right are the usual orthonormal basis for $K^n \subset C l(K^n)$.

\end{proof}

Note the isomorphism from the previous Lemma sends the $\omega$ of $\mathcal C l(\mathbb R^3)$ to the $\omega$ of $\mathcal C l (\mathbb R^4)$ so it preserves the corresponding splittings, i.e. $\mathcal C l^\pm (\mathbb R^3) \mapsto \mathcal C l^\pm_0 (\mathbb R^4)$.

Similarly $C l(\mathbb C^n)$ has a {\it complex volume element} (oriented by the usual orientation of $\mathbb R^n$), $\omega_\mathbb C = i^{\left [ \frac{n(n-1)}{2} \right ]} e_1 \cdots e_n$ where the $e_i$ are an oriented orthonormal basis for $\mathbb R^n = (\mathbb R \oplus 0) \oplus \cdots \oplus (\mathbb R \oplus 0)  \subset \mathbb C^n \subset C l(\mathbb C^n)$. Note $\omega_\mathbb C^2 = 1$ so it induces a splitting
$$ \mathcal C l(\mathbb C^n) = \mathcal C l^+ (\mathbb C^n) \oplus \mathcal C l^-(\mathbb C^n)$$
where $\mathcal C l^\pm (\mathbb C^n) = \pi^\pm_\mathbb C \mathcal C l (\mathbb C^n)$ with $\pi^\pm_\mathbb C = \frac{1}{2}(1 \pm \omega_\mathbb C)$. 

Finally note Clifford multiplication induces a splitting 
$$\mathbb C^4 = (\mathbb C^4)^+ \oplus (\mathbb C^4)^-$$
where $(\mathbb C^4)^\pm = \pi^\pm_\mathbb C \cdot \mathbb C^4$.  

\begin{lemma}\label{cliffiso} $\dim (\mathbb C^4)^\pm = 2$ and Clifford multiplication induces vector space isomorphisms:
$$ \begin{array}{r l}
      \mathbb C^4 \rightarrow  & Hom_\mathbb C ((\mathbb C^4)^+, (\mathbb C^4)^-) \\
      \mathbb C^4 \rightarrow  & Hom_\mathbb C ((\mathbb C^4)^-, (\mathbb C^4)^+).
   \end{array} 
$$
\end{lemma}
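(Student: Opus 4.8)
The plan is to push everything through the explicit algebra isomorphism $\mu\colon\mathcal{Cl}(\mathbb C^4)\xrightarrow{\sim}\mathrm{Mat}(\mathbb C,4)$ of Lemma~\ref{cliffmult}, using only two properties of the complex volume element: with $n=4$ one has $\omega_{\mathbb C}=-e_1e_2e_3e_4$ and $\omega_{\mathbb C}^2=1$ (already recorded above), and a one-line computation with the relations of Lemma~\ref{cliffbasis} shows that each $e_j$ anticommutes with $e_1e_2e_3e_4$, so $e_j\omega_{\mathbb C}=-\omega_{\mathbb C}e_j$ for every $j$. Since $\mu$ is an algebra homomorphism, $M:=\mu(\omega_{\mathbb C})$ is an involution of $\mathbb C^4$ that anticommutes with each $\mu(e_i)$, and by definition the splitting $\mathbb C^4=(\mathbb C^4)^{+}\oplus(\mathbb C^4)^{-}$ is its $(+1)$- and $(-1)$-eigenspace decomposition.

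First I would settle the dimensions and the well-definedness of the two maps. The anticommutation gives, for any $v\in\mathbb C^4$ and any $w$ with $\omega_{\mathbb C}\cdot w=\pm w$, that $\omega_{\mathbb C}\cdot(v\cdot w)=-v\cdot(\omega_{\mathbb C}\cdot w)=\mp v\cdot w$, so Clifford multiplication by $v$ carries $(\mathbb C^4)^{\pm}$ into $(\mathbb C^4)^{\mp}$; in particular the two displayed maps make sense. Taking $v=e_1$, which is a unit ($e_1\cdot e_1=-1$, so $\mu(e_1)$ is invertible), Clifford multiplication by $e_1$ restricts to mutually inverse (up to sign) isomorphisms $(\mathbb C^4)^{+}\cong(\mathbb C^4)^{-}$; combined with $(\mathbb C^4)^{+}\oplus(\mathbb C^4)^{-}=\mathbb C^4$ this forces $\dim(\mathbb C^4)^{\pm}=2$. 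Then both the source $\mathbb C^4$ and the target $\mathrm{Hom}_{\mathbb C}((\mathbb C^4)^{\pm},(\mathbb C^4)^{\mp})$ of each map have dimension $4$, so it suffices to prove injectivity of each.

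For injectivity I would pick a basis of $\mathbb C^4$ whose first two vectors span $(\mathbb C^4)^{+}$ and last two span $(\mathbb C^4)^{-}$, so that $M=\begin{pmatrix}I_2&0\\0&-I_2\end{pmatrix}$. Anticommutation with $M$ forces $\mu(e_i)=\begin{pmatrix}0&\beta_i\\\gamma_i&0\end{pmatrix}$ with $\beta_i,\gamma_i\in\mathrm{Mat}(\mathbb C,2)$, and $\gamma_i$ is exactly the matrix of $e_i\cdot\colon(\mathbb C^4)^{+}\to(\mathbb C^4)^{-}$ while $\beta_i$ is that of $e_i\cdot\colon(\mathbb C^4)^{-}\to(\mathbb C^4)^{+}$; hence by linearity the two maps of the lemma are $v=\sum c_ie_i\mapsto\sum c_i\gamma_i$ and $v\mapsto\sum c_i\beta_i$. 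From $\mu(\pi^{\pm}_{\mathbb C}e_i)=\frac{1}{2}(I\pm M)\mu(e_i)$ one gets $\mu(\pi^{+}_{\mathbb C}e_i)=\begin{pmatrix}0&\beta_i\\0&0\end{pmatrix}$ and $\mu(\pi^{-}_{\mathbb C}e_i)=\begin{pmatrix}0&0\\\gamma_i&0\end{pmatrix}$. So it is enough to show the eight elements $\pi^{+}_{\mathbb C}e_1,\dots,\pi^{+}_{\mathbb C}e_4,\pi^{-}_{\mathbb C}e_1,\dots,\pi^{-}_{\mathbb C}e_4$ are linearly independent in $\mathcal{Cl}(\mathbb C^4)$: their $\mu$-images then form eight independent matrices lying in the two complementary blocks, which forces $\{\gamma_i\}$ and $\{\beta_i\}$ each to be a basis of the $4$-dimensional $\mathrm{Mat}(\mathbb C,2)$, i.e. both maps to be bijective. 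For the independence, suppose $\sum c_i\pi^{\pm}_{\mathbb C}e_i=0$, i.e. $(1\pm\omega_{\mathbb C})v=0$ with $v=\sum c_ie_i$; then $v=\mp\omega_{\mathbb C}v$, but $v$ lies in the span of the degree-one monomials whereas $\omega_{\mathbb C}v$ is a scalar multiple of $e_1e_2e_3e_4v$, which lies in the span of the degree-three monomials (each $e_1e_2e_3e_4e_i$ collapses to a product of three distinct basis vectors), and by Lemma~\ref{cliffbasis} these spans intersect only in $0$; hence $v=0$ and all $c_i=0$, and since the two families lie in the complementary summands $\mathcal{Cl}^{\pm}(\mathbb C^4)$, all eight elements are independent.

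I expect the injectivity step to be the only genuine obstacle. Over $\mathbb C$ the form $\langle\,,\,\rangle$ is isotropic, so a nonzero $v\in\mathbb C^4$ need not act invertibly on $\mathbb C^4$ and one cannot argue directly that $v\cdot$ is injective on $(\mathbb C^4)^{+}$; the linear-independence argument for the $\pi^{\pm}_{\mathbb C}e_i$ sketched above is precisely what circumvents this, since it uses only the independence of the standard monomial basis from Lemma~\ref{cliffbasis} and the anticommutation $e_j\omega_{\mathbb C}=-\omega_{\mathbb C}e_j$, not any nondegeneracy.
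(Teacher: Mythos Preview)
Your proof is correct and follows essentially the same line as the paper's. Both arguments show well-definedness from $e_j\omega_{\mathbb C}=-\omega_{\mathbb C}e_j$, obtain $\dim(\mathbb C^4)^\pm=2$ by noting that Clifford multiplication by a non-null vector (the paper uses any $e$ with $\langle e,e\rangle\neq0$, you use $e_1$) gives an invertible map swapping the two summands, and then reduce injectivity to the statement that $\pi^\pm_{\mathbb C}e=0$ forces $e=0$ by expanding in the monomial basis of Lemma~\ref{cliffbasis} and observing that the degree-one and degree-three pieces cannot cancel.

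The only cosmetic difference is packaging: the paper passes directly from ``$e\cdot v=0$ for all $v\in(\mathbb C^4)^+$'' to ``$e\pi^+_{\mathbb C}=0$ in $\mathcal{Cl}(\mathbb C^4)$'' by invoking faithfulness of the $\mu$-action, whereas you first diagonalize $\mu(\omega_{\mathbb C})$ to put everything in block-off-diagonal form and then argue the blocks $\{\beta_i\},\{\gamma_i\}$ are bases of $\mathrm{Mat}(\mathbb C,2)$ via the same independence of the $\pi^\pm_{\mathbb C}e_i$. Your route makes the matrix picture more explicit at the cost of an extra change of basis; the paper's is marginally shorter because faithfulness of $\mu$ (an isomorphism) lets it skip that step. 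Either way the decisive computation is identical.
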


\begin{proof}
First define a linear maps $\phi_\pm : \mathbb C^4 \rightarrow Hom_\mathbb C ((\mathbb C^4)^\pm, (\mathbb C^4)^\mp)$ by $e \mapsto (v \mapsto e \cdot v)$ where $e \in \mathbb C^4 \subset \mathcal C l (\mathbb C^4)$ and $v \in (\mathbb C^4)^\pm \subset \mathbb C^4$. 

To see this is well-defined, choose $v \in (\mathbb C^4)^\pm$ and $e \in \mathbb C^4 \subset \mathcal C l (\mathbb C^4)$. Then
$$e \cdot v = e \pi^\pm_\mathbb C \cdot v = \pi^\mp_\mathbb C e \cdot v \in (\mathbb C^4)^\mp.$$

For any $e \in \mathbb C^4 \subset \mathcal C l(\mathbb C^4)$ with $<e,e> \neq 0$ and $v \in (\mathbb C^4)^\pm$, $$\phi_\mp (e) \circ \phi_\pm(e) (v) = -<e,e>v$$ is an automorphism of $(\mathbb C^4)^\pm$ hence $\dim (\mathbb C^4)^+ = \dim (\mathbb C^4)^-$. Then since $\dim (\mathbb C^4)^+ + \dim (\mathbb C^4)^- = 4$, $\dim (\mathbb C^4)^\pm = 2$.

Now we will show $\phi_\pm$ are injective. First suppose $\phi_+(e) = 0$ for some $e \in \mathbb C^4 \subset \mathcal C l(\mathbb C^4)$ then $e \cdot v = 0$ for each $v \in (\mathbb C^4)^+$. It follows $e \pi^+_\mathbb C \cdot v^\prime = 0$ for each $v^\prime \in \mathbb C^4$. Since Clifford multiplication is faithful, we have $e \pi^+_\mathbb C = 0$. Hence $\pi^-_\mathbb C e = 0$. We may write $e = c_1 e_1 + c_2 e_2 + c_3 e_3 + c_4 e_4$ where the $e_i$ are the usual orthonormal basis for $\mathbb C^4$ and the $c_i$ are constants in $\mathbb C$. Then the previous equation may be written
$$\pi^-_\mathbb C (c_1 e_1 + c_2 e_2 + c_3 e_3 + c_4 e_4) = 0.$$
If we distribute, this turns into
$$\frac{1}{2} (c_1 e_1 + c_2 e_2 + c_3 e_3 + c_4 e_4 + c_1 e_2 e_3 e_4 - c_2 e_1 e_3 e_4 + c_3 e_1 e_2 e_4 - c_4 e_1 e_2 e_3 ) = 0.$$
This is a linear combination of distinct basis vectors so each $c_i = 0$ and hence $e = 0$. Proving $\phi_-$ is injective can be done similarly.

Therefore since $\dim Hom_\mathbb C ((\mathbb C^4)^\pm, (\mathbb C^4)^\mp) = 4$, $\phi_\pm$ are isomorphisms. 

\end{proof}

Using this Lemma, we can define maps $C : \mathbb C^4 \otimes (\mathbb C^4)^\pm \rightarrow (\mathbb C^4)^\mp$ which we will also refer to as {\it Clifford multiplication}.

\begin{lemma}\label{endiso} Clifford multiplication induces algebra isomorphisms
$$\mathcal C l^\pm_0(\mathbb C^4) \rightarrow End(\mathbb C^4)^\pm.$$
\end{lemma}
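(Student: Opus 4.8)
The plan is to build the isomorphism $\mathcal{C}l_0^\pm(\mathbb{C}^4)\to \mathrm{End}(\mathbb{C}^4)^\pm$ directly from the two halves of Clifford multiplication already established in Lemma~\ref{cliffiso}. First I would observe that $\mathcal{C}l_0(\mathbb{C}^4)$ is exactly the even part of the algebra, spanned by $1$ and the products $e_ie_j$, and that the global volume element $\omega_\mathbb{C}$ lies in $\mathcal{C}l_0(\mathbb{C}^4)$; hence the projectors $\pi^\pm_\mathbb{C}=\tfrac12(1\pm\omega_\mathbb{C})$ are central idempotents inside $\mathcal{C}l_0(\mathbb{C}^4)$, and they give the algebra splitting $\mathcal{C}l_0(\mathbb{C}^4)=\mathcal{C}l_0^+(\mathbb{C}^4)\oplus\mathcal{C}l_0^-(\mathbb{C}^4)$. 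Since any even element preserves each of $(\mathbb{C}^4)^\pm$ (an even number of Clifford multiplications by vectors flips the $\pm$ grading twice), left Clifford multiplication gives a well-defined algebra homomorphism $\mathcal{C}l_0(\mathbb{C}^4)\to \mathrm{End}((\mathbb{C}^4)^+)\oplus\mathrm{End}((\mathbb{C}^4)^-)$ which visibly carries $\mathcal{C}l_0^+$ into $\mathrm{End}((\mathbb{C}^4)^+)$ and $\mathcal{C}l_0^-$ into $\mathrm{End}((\mathbb{C}^4)^-)$; it remains to check each restriction $\rho_\pm\colon\mathcal{C}l_0^\pm(\mathbb{C}^4)\to\mathrm{End}((\mathbb{C}^4)^\pm)$ is an isomorphism.

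For surjectivity and injectivity I would exploit Lemma~\ref{cliffiso}: composing the two isomorphisms there gives an isomorphism $\mathbb{C}^4\otimes\mathbb{C}^4\to \mathrm{Hom}((\mathbb{C}^4)^+,(\mathbb{C}^4)^-)\otimes\mathrm{Hom}((\mathbb{C}^4)^-,(\mathbb{C}^4)^+)\cong\mathrm{End}((\mathbb{C}^4)^-)\oplus\mathrm{End}((\mathbb{C}^4)^+)$ under which $e\otimes e'\mapsto$ the pair $(\phi_-(e')\phi_+(e'')\ \text{type compositions})$; more simply, for $e,e'\in\mathbb{C}^4$ the even element $e e'$ acts on $(\mathbb{C}^4)^+$ as $\phi_-(e)\circ\phi_+(e')$ and on $(\mathbb{C}^4)^-$ as $\phi_+(e)\circ\phi_-(e')$. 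Because $\phi_+$ and $\phi_-$ are already isomorphisms $\mathbb{C}^4\xrightarrow{\ \sim\ }\mathrm{Hom}((\mathbb{C}^4)^\pm,(\mathbb{C}^4)^\mp)$ and $\mathrm{Hom}((\mathbb{C}^4)^+,(\mathbb{C}^4)^-)$ has dimension $4$, the products $\{\phi_-(e)\circ\phi_+(e')\}$ span all of $\mathrm{End}((\mathbb{C}^4)^+)$ (a rank count: $\mathrm{End}((\mathbb{C}^4)^+)\cong \mathrm{Mat}(2,\mathbb{C})$ is $4$-dimensional and is spanned by such composites, e.g. taking $e,e'$ running over a basis gives $16$ generators spanning the $4$-dimensional target). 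Thus $\rho_+$ is surjective; since $\dim\mathcal{C}l_0^+(\mathbb{C}^4)=\dim\mathcal{C}l_0^-(\mathbb{C}^4)=\tfrac12\dim\mathcal{C}l_0(\mathbb{C}^4)=\tfrac12\cdot 8=4$ by Lemma~\ref{cliffm1} and Lemma~\ref{cliffbasis}, and $\dim\mathrm{End}((\mathbb{C}^4)^\pm)=4$, surjectivity forces bijectivity. The same argument handles $\rho_-$.

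Alternatively, and perhaps more cleanly, I would route through the already-proved structural results: by Lemma~\ref{cliffm1}, $\mathcal{C}l_0(\mathbb{C}^4)\cong\mathcal{C}l(\mathbb{C}^3)\cong\mathcal{C}l(\mathbb{R}^3)\otimes\mathbb{C}\cong(\mathbb{H}\oplus\mathbb{H})\otimes_\mathbb{R}\mathbb{C}\cong\mathrm{Mat}(2,\mathbb{C})\oplus\mathrm{Mat}(2,\mathbb{C})$ by Lemmas~\ref{complexcliff} and~\ref{cliff3}, with the two summands being precisely $\mathcal{C}l_0^\pm(\mathbb{C}^4)$ (the remark after Lemma~\ref{cliffm1} identifies the $\omega$-splitting of $\mathcal{C}l(\mathbb{R}^3)$ with that of $\mathcal{C}l_0(\mathbb{R}^4)$). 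So each $\mathcal{C}l_0^\pm(\mathbb{C}^4)$ is already known to be $\mathrm{Mat}(2,\mathbb{C})\cong\mathrm{End}((\mathbb{C}^4)^\pm)$; the content of the lemma is that the specific map induced by Clifford multiplication realizes this isomorphism, which again reduces to checking the induced map is nonzero — hence injective, since $\mathrm{Mat}(2,\mathbb{C})$ is simple — and then surjective by dimension count. The main obstacle, such as it is, is bookkeeping: verifying that Clifford multiplication actually lands in $\mathrm{End}((\mathbb{C}^4)^\pm)$ with the claimed grading behavior and that the map respects the $\pi^\pm_\mathbb{C}$-decomposition on both sides, i.e. matching up the two volume-element splittings consistently; once that compatibility is pinned down, simplicity plus the dimension count from Lemma~\ref{cliffiso} finishes it with no further computation.
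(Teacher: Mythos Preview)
Your proposal is correct, and both of your routes work, but they differ from the paper's argument in where the work is done. The paper defines $\phi_\pm(\varphi)(v)=\varphi\cdot v$, checks well-definedness via $\varphi=\pi^\pm_\mathbb C\varphi$, and then proves \emph{injectivity} directly: if $\varphi\cdot v=0$ for all $v\in(\mathbb C^4)^\pm$ then, using that $\omega_\mathbb C$ commutes with even elements, $\varphi$ kills all of $\mathbb C^4$, and faithfulness of the representation $\mu$ from Lemma~\ref{cliffmult} forces $\varphi=0$; the dimension count $\dim\mathcal C l_0^\pm(\mathbb C^4)=\dim\mathrm{End}(\mathbb C^4)^\pm=4$ then gives surjectivity. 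By contrast, your first approach establishes \emph{surjectivity} first, by factoring the action of $ee'$ through the isomorphisms $\phi_\mp,\phi_\pm$ of Lemma~\ref{cliffiso} and noting that composites $\mathrm{Hom}((\mathbb C^4)^-,(\mathbb C^4)^+)\circ\mathrm{Hom}((\mathbb C^4)^+,(\mathbb C^4)^-)$ span $\mathrm{End}((\mathbb C^4)^+)$, and then invokes the same dimension count for injectivity; your second approach replaces the injectivity argument by the observation that $\mathcal C l_0^\pm(\mathbb C^4)\cong\mathrm{Mat}(2,\mathbb C)$ is simple, so any nonzero algebra map out of it is injective. The paper's route is the most self-contained (it only needs faithfulness of $\mu$), while your simplicity argument is the slickest but imports the structure theory assembled in Lemmas~\ref{complexcliff}, \ref{cliff3}, and \ref{cliffm1}; your surjectivity-first argument is fine but the spanning claim deserves one more sentence than ``a rank count'' to be airtight.
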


\begin{proof}
Define algebra isomorphisms $\phi_\pm : \mathcal C l^\pm_0(\mathbb C^4) \rightarrow End(\mathbb C^4)^\pm$ by $\varphi \mapsto (v \mapsto \varphi \cdot v)$ where $\varphi \in C l^\pm_0(\mathbb C^4)$ and $v \in (\mathbb C^4)^\pm \subset \mathbb C^4$. Since $\varphi = \pi^\pm_\mathbb C \varphi$ for all $\varphi \in C l^\pm(\mathbb C^4)$, this is well-defined. 

To see $\phi^\pm$ are injective suppose $\varphi \cdot v = 0$ for all $v \in (\mathbb C^4)^\pm$. Then $\varphi \cdot \pi^\pm_\mathbb C \cdot v^\prime = 0$ for each $v^\prime \in \mathbb C^4$. Because $\omega_\mathbb C$ commutes with each element in $\mathcal C l_0(\mathbb C^4)$, we have $\varphi \cdot v^\prime = 0$ and since Clifford multiplication is faithful, $\varphi = 0$. 

Then since $\dim C l^\pm_0(\mathbb C^4) = \dim End(\mathbb C^4)^\pm = 4$, $\phi_\pm$ are vector space isomorphisms. Additionally they are algebra isomorphisms since for each $\varphi \in \mathcal C l^\pm_0(\mathbb C^4)$, 
$$\phi_\pm(\varphi_1\varphi_2)(v) = \varphi_1 \varphi_2 \cdot v = \varphi_1 \cdot (\varphi_2 \cdot v) = \phi_\pm(\varphi_1) \circ \phi_\pm(\varphi_2)(v).$$ 

\end{proof}

\end{section}

\begin{section}{Exterior algebras}

First we define the vector space of {\it alternating tensors} of degree $r$ on $K^n$ to be
$$\begin{array}{r l}
\mathcal \wedge^r(K^n) &= \{ \varphi \in T^r(K^n) \mid \varphi(v_1, \cdots, v_i, \cdots, v_j, \cdots v_r) \\
                       &= -\varphi(v_1, \cdots, v_j, \cdots, v_i, \cdots v_r) \}.
\end{array}$$ 

We have the {\it alternating projection} map $Alt : T^r(K^n) \rightarrow \wedge^r(K^n)$ given by
$$\varphi(v_1, \cdots, v_r) \mapsto \frac{1}{r!} \sum_{\sigma \in S_r} sgn(\sigma) \varphi(v_{\sigma(1)}, \cdots, v_{\sigma(r)} )$$
where $S_r$ denotes the set of permutations of $r$ elements.

\begin{definition}
The {\it exterior algebra} on $K^n$ is defined to be the algebra
$$ \wedge(K^n) = \oplus_{r=0}^{\infty} \wedge^r( K^n ) = K \oplus K^n \oplus \wedge^2 (K^n) \oplus \wedge^3 (K^n) \oplus \cdots$$
whose multiplication $\wedge$ is given by linearly extending the map $\wedge^\ell(K^n) \otimes \wedge^m(K^n) \rightarrow \wedge^{\ell + m}(K^n)$ defined by
$$(\varphi, \phi) \mapsto \frac{(\ell + m)!}{\ell ! m!} Alt(\varphi \otimes \phi).$$
\end{definition}

Note $\wedge(K^n)$ is an associative algebra with multiplicative identity 
$$1 \in K = \wedge^0(K^n) \subset \wedge (K^n).$$

\begin{lemma} \cite{Lee} 
Let $\{e_1, \cdots , e_n \}$ denote a basis for $K^n$. Then $\wedge(K^n)$ is generated as an algebra by the $e_i$ and $1$. It follows $\wedge (K^n)$ is a $2^n$-dimensional vector space with basis
$$\{ 1 \} \cup \{e_{i_1} \wedge e_{i_2} \wedge \cdots \wedge e_{i_r} \mid 1 \leq i_1 < i_2 < \cdots i_r \leq n\}.$$
 \hfill $\square$
\end{lemma}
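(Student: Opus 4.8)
The plan is to mimic, in the exterior-algebra setting, the structural facts already established for Clifford algebras in Lemmas~\ref{cliffembed} and~\ref{cliffbasis}. First I would note that the defining relation of the wedge product, $e_i \wedge e_i = 0$ (which follows immediately from $Alt(e_i \otimes e_i) = 0$) together with the bilinearity of $\wedge$ forces the anticommutation relation $e_i \wedge e_j = -e_j \wedge e_i$: expand $0 = (e_i + e_j)\wedge(e_i + e_j) = e_i\wedge e_i + e_i\wedge e_j + e_j\wedge e_i + e_j\wedge e_j$. Consequently any product of basis vectors $e_{i_1}\wedge\cdots\wedge e_{i_r}$ can, by reordering the factors and tracking signs, be written up to sign as either $0$ (if an index repeats) or $\pm e_{j_1}\wedge\cdots\wedge e_{j_r}$ with strictly increasing indices. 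Since an arbitrary element of $\wedge(K^n)$ is a $K$-linear combination of tensors of the form $v_1\wedge\cdots\wedge v_r$, and each $v_k$ expands in the basis $\{e_i\}$, distributing shows every element is a $K$-linear combination of the claimed monomials $\{1\}\cup\{e_{i_1}\wedge\cdots\wedge e_{i_r} : i_1 < \cdots < i_r\}$, together with $1$. This proves the spanning (generation) half of the statement, and counts the spanning set as having $\sum_{r=0}^{n}\binom{n}{r} = 2^n$ elements.

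Next I would establish linear independence of these $2^n$ monomials, which is the only step requiring genuine content rather than bookkeeping. The cleanest route is to exhibit, for each strictly increasing multi-index $I = (i_1 < \cdots < i_r)$, a multilinear alternating functional that detects $e_I$ and kills all the other basis monomials; concretely one uses the dual basis $\{e^i\}$ of $(K^n)^*$ and the pairing sending $e_{j_1}\wedge\cdots\wedge e_{j_s}$ to $\det\big(e^{i_a}(e_{j_b})\big)$ when $s = r$ and to $0$ when $s \neq r$ (using the $\mathbb Z$-grading of $\wedge(K^n)$ to separate different degrees). This determinant is $1$ for $J = I$ and $0$ otherwise, so a nontrivial linear relation among the monomials would force all coefficients to vanish. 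Combined with the spanning count, this shows $\dim_K \wedge(K^n) = 2^n$ with the stated basis. Alternatively, one can simply cite the construction of $\wedge^r(K^n)$ via the universal property and the standard fact that $\dim \wedge^r(K^n) = \binom{n}{r}$, which is exactly the spirit of the paper's earlier treatment.

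The main obstacle, such as it is, is the linear independence claim: generation and the dimension upper bound are purely formal consequences of the relations, but proving the monomials are not further entangled requires either the determinant/dual-basis pairing above or an appeal to the universal property of the exterior power. Everything else — the anticommutativity, the reduction to increasing multi-indices, the binomial count — is routine manipulation of the kind already carried out for $\mathcal{C}l(K^n)$ in Lemma~\ref{cliffbasis}, and indeed the whole argument is essentially parallel to that one with the relation $e_i^2 = -1$ replaced by $e_i\wedge e_i = 0$. Since the statement is attributed to \cite{Lee}, I would keep the write-up brief, emphasizing the analogy with the Clifford case and citing the standard reference for the independence part rather than reproducing the determinant computation in full.
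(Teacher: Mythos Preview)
Your proof outline is correct, but there is nothing to compare it to: the paper does not prove this lemma at all. The statement is stated with a citation to \cite{Lee} and closed with a $\square$, which in this paper is the convention for results quoted without proof (compare the other lemmas marked the same way). So your write-up already exceeds what the paper provides; if you wish to match the paper, simply cite \cite{Lee} and omit the argument entirely.
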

 
\begin{corollary}\label{extcliffiso}
$\wedge(K^n)$ and $\mathcal C l(K^n)$ are naturally isomorphic as vector spaces. \hfill $\Box$
\end{corollary}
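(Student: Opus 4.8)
The plan is to exhibit an explicit linear isomorphism $\wedge(K^n) \to \mathcal{C}l(K^n)$ that is canonical, i.e. does not depend on any choices beyond what the two algebras already carry by construction. Both spaces are $2^n$-dimensional by the preceding Lemma and Lemma~\ref{cliffbasis}, so once we have a well-defined linear map that carries a basis to a basis, we are done; the real content is in checking \emph{naturality}, meaning independence of the chosen orthonormal basis $\{e_1,\dots,e_n\}$. First I would define the map on an orthonormal basis by sending $1 \mapsto 1$ and $e_{i_1}\wedge e_{i_2}\wedge \cdots \wedge e_{i_r} \mapsto e_{i_1} e_{i_2}\cdots e_{i_r}$ for $1 \le i_1 < i_2 < \cdots < i_r \le n$, and extend linearly. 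Since these are exactly the two bases furnished by the two structure lemmas, linearity forces bijectivity immediately.

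Next I would verify the map is independent of the choice of orthonormal basis. The clean way to see this is to note that the map can be described basis-freely: the composite $K^n = \wedge^1(K^n) \hookrightarrow \wedge(K^n)$ and $K^n \hookrightarrow \mathcal{C}l(K^n)$ (the latter being the embedding of Lemma~\ref{cliffembed}) identifies the degree-one pieces canonically, and on a wedge of \emph{orthogonal} vectors $v_1,\dots,v_r$ the product $v_1 v_2 \cdots v_r$ in $\mathcal{C}l(K^n)$ is already totally antisymmetric in the $v_i$ (because $v_i v_j = -v_j v_i$ whenever $\langle v_i, v_j\rangle = 0$, by the defining relation), so it depends only on $v_1 \wedge \cdots \wedge v_r$. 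Hence the prescription ``$v_1\wedge\cdots\wedge v_r \mapsto v_1\cdots v_r$ for pairwise orthogonal $v_i$'' is unambiguous, and since it agrees with our basis definition it does not see the particular orthonormal basis chosen. One should also observe that every element of $\wedge(K^n)$ is a $K$-linear combination of such orthogonal wedges (indeed of the standard basis wedges), so the map is determined. This gives the natural vector-space isomorphism. I would remark, as the paper's phrasing ``naturally isomorphic as vector spaces'' (not as algebras) signals, that this map is emphatically \emph{not} an algebra homomorphism, since $e_i \wedge e_i = 0$ while $e_i e_i = -1$ in $\mathcal{C}l(K^n)$; it is the underlying-vector-space statement that matters here, and it is precisely what lets one later transport the Clifford module structure onto differential forms.

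The main obstacle is purely the naturality bookkeeping: making sure that when one switches from $\{e_i\}$ to another oriented orthonormal basis $\{e_i'\}$ related by $g \in O(n)$, the two candidate isomorphisms coincide. The orthogonal-wedge description above handles this, but to be careful one must check that expanding $e_{i_1}'\cdots e_{i_r}'$ in the $\{e_i\}$ basis of $\mathcal{C}l(K^n)$ produces exactly the same coefficients as expanding $e_{i_1}'\wedge\cdots\wedge e_{i_r}'$ in the $\{e_i\}$ basis of $\wedge(K^n)$ — both are governed by the same $r\times r$ minors of $g$ together with the antisymmetry relations, which hold identically in the two algebras on orthogonal generators. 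Everything else (dimension count, linearity, bijectivity) is immediate from the two structure lemmas already proved, so the proof is short.
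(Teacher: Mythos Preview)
Your proposal is correct and follows the same idea as the paper: match the $2^n$-element basis $\{1\}\cup\{e_{i_1}\wedge\cdots\wedge e_{i_r}\}$ of $\wedge(K^n)$ from the preceding lemma with the basis $\{1\}\cup\{e_{i_1}\cdots e_{i_r}\}$ of $\mathcal{C}l(K^n)$ from Lemma~\ref{cliffbasis}. The paper in fact gives no proof at all---the corollary is stated with an immediate $\Box$ as a direct consequence of those two structure lemmas---so your careful discussion of naturality (basis-independence via the orthogonal-wedge description) goes well beyond what the paper records, though it is the right justification for the word ``naturally.''
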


We define the {\it Hodge star operation} on $\wedge (\mathbb R^n)$ (induced by the usual orientation and the usual inner product $<,>$) to be the maps 
$$\ast :\wedge^k(\mathbb R^n) \rightarrow \wedge^{n-k}(\mathbb R^n)$$
given by $\ast (e_{i_1} \wedge \cdots e_{i_k}) = e_{i_{k+1}} \wedge e_{i_n}$ where the $e_i$ are an oriented orthonormal basis and $(i_1, \cdots , i_n)$ is an even permutation of $\{1,2, \cdots, n \}$.

In the $n=4$ case, we have
$$ \begin{array}{l l}
      e_1 \wedge e_2 \mapsto e_3 \wedge e_4 & e_1 \wedge e_3 \mapsto -e_2 \wedge e_4 \\
      e_1 \wedge e_4 \mapsto e_2 \wedge e_3 & e_2 \wedge e_3 \mapsto e_1 \wedge e_4 \\
      e_2 \wedge e_4 \mapsto -e_1 \wedge e_3 & e_3 \wedge e_4 \mapsto e_1 \wedge e_2 \\
   \end{array} 
$$
and $\ast :\wedge^2(\mathbb R^4) \rightarrow \wedge^2(\mathbb R^4)$ induces a splitting
$$ \wedge^2(\mathbb R^4) = \wedge^-(\mathbb R^4) \oplus \wedge^+(\mathbb R^4)$$
where $\wedge^\pm(\mathbb R^4) = \{ \varphi \in \wedge^2(\mathbb R^4) \mid \ast \varphi = \pm \varphi \}$.

\begin{lemma}\label{extcliff}
The natural isomorphism from Corollary~\ref{extcliffiso} induces a vector space isomorphism between the subspaces
$$\mathcal (C l_0(\mathbb R^4) \otimes \mathbb C)^+ \,\,\, \text{and} \,\,\, \pi^+_\mathbb C \mathbb C \oplus (\wedge^+(\mathbb R^4) \otimes \mathbb C).$$
\end{lemma}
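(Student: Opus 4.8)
The plan is a direct computation inside $\mathcal Cl(\mathbb C^4)\cong\mathcal Cl(\mathbb R^4)\otimes\mathbb C$ (Lemma~\ref{complexcliff}), identified with $\wedge(\mathbb R^4)\otimes\mathbb C$ through the natural isomorphism of Corollary~\ref{extcliffiso}, which I will call $\Phi$; under $\Phi$ the subspace $\wedge^2(\mathbb R^4)$ goes to $\mathrm{span}\{e_ie_j\}_{i<j}\subset\mathcal Cl_0(\mathbb R^4)$ and $\wedge^+(\mathbb R^4)$ to the span of the three self-dual combinations $e_1e_2+e_3e_4$, $e_1e_3-e_2e_4$, $e_1e_4+e_2e_3$ (matching the Hodge-star table above). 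Since $(\mathcal Cl_0(\mathbb R^4)\otimes\mathbb C)^+$ is by definition $\pi^+_\mathbb C\cdot(\mathcal Cl_0(\mathbb R^4)\otimes\mathbb C)$, the whole statement reduces to identifying the image of left multiplication by $\pi^+_\mathbb C$ on the $8$-dimensional space $\mathcal Cl_0(\mathbb R^4)\otimes\mathbb C$, whose standard basis is $\{1,\ e_1e_2e_3e_4\}\cup\{e_ie_j\}_{i<j}$. The first thing I would do is pin down $\omega_\mathbb C$ in this dimension: for $n=4$ one has $i^{[n(n-1)/2]}=i^6=-1$, so $\omega_\mathbb C=-e_1e_2e_3e_4$ and $\pi^+_\mathbb C=\tfrac12(1-e_1e_2e_3e_4)$.

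Next I would apply $\pi^+_\mathbb C$ to each basis vector. On the degree $0$ and degree $4$ part, $\pi^+_\mathbb C\cdot 1=\pi^+_\mathbb C$ and, using $(e_1e_2e_3e_4)^2=\omega^2=1$, $\pi^+_\mathbb C\cdot(e_1e_2e_3e_4)=-\pi^+_\mathbb C$; so these two contribute only the line $\mathbb C\,\pi^+_\mathbb C$. For the six $e_ie_j$ it is convenient to pass to the basis of $\wedge^2(\mathbb R^4)$ adapted to $\wedge^2=\wedge^+\oplus\wedge^-$, namely the three self-dual combinations above together with the three anti-self-dual ones. A short Clifford computation (moving $e_1e_2e_3e_4$ past a product $e_ie_j$ using $e_ke_\ell=-e_\ell e_k$ and $e_k^2=-1$) shows that $\omega_\mathbb C$ acts as $+1$ on each self-dual combination and as $-1$ on each anti-self-dual one; hence $\pi^+_\mathbb C$ fixes the former and annihilates the latter. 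It is worth recording explicitly that the sign in $\omega_\mathbb C=-e_1e_2e_3e_4$ is exactly what makes $\wedge^+$, rather than $\wedge^-$, appear here.

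Putting this together, the image $\pi^+_\mathbb C\cdot(\mathcal Cl_0(\mathbb R^4)\otimes\mathbb C)$ is spanned over $\mathbb C$ by $\pi^+_\mathbb C$ together with $\Phi$ of a basis of $\wedge^+(\mathbb R^4)$. These four elements are linearly independent: the three $2$-form images involve pairwise disjoint basis monomials and no term $1$ or $e_1e_2e_3e_4$, whereas $\pi^+_\mathbb C$ does; in particular the sum is direct. Hence $(\mathcal Cl_0(\mathbb R^4)\otimes\mathbb C)^+=\mathbb C\,\pi^+_\mathbb C\oplus\Phi\big(\wedge^+(\mathbb R^4)\otimes\mathbb C\big)$, and restricting $\Phi$ (with the usual abuse of identifying $\wedge^+(\mathbb R^4)\otimes\mathbb C$ with its image and $\pi^+_\mathbb C$ with its $\Phi$-preimage $\tfrac12(1-e_1\wedge e_2\wedge e_3\wedge e_4)$) yields the asserted vector space isomorphism.

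The only genuine obstacle is sign bookkeeping: getting $\omega_\mathbb C$ and hence $\pi^+_\mathbb C$ correct, since that choice is precisely what distinguishes $\wedge^+$ from $\wedge^-$; keeping the anticommutation signs straight in the products $e_1e_2e_3e_4\cdot e_ie_j$; and aligning the self-dual/anti-self-dual splitting with the Hodge-star table. If one wants to minimize computation, an alternative is to invoke Lemma~\ref{endiso} for $\dim(\mathcal Cl_0(\mathbb R^4)\otimes\mathbb C)^+=\dim\mathrm{End}(\mathbb C^4)^+=4$ and then only check the two containments $\pi^+_\mathbb C\in(\mathcal Cl_0(\mathbb R^4)\otimes\mathbb C)^+$ and $\Phi(\wedge^+(\mathbb R^4)\otimes\mathbb C)\subseteq(\mathcal Cl_0(\mathbb R^4)\otimes\mathbb C)^+$ plus triviality of their intersection, concluding by dimension; but the direct approach above is self-contained and essentially as short.
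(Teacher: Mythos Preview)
Your proof is correct and follows essentially the same approach as the paper: both write down the explicit basis $\{\pi^+_\mathbb C,\ e_1e_2+e_3e_4,\ e_1e_3-e_2e_4,\ e_1e_4+e_2e_3\}$ for $(\mathcal Cl_0(\mathbb R^4)\otimes\mathbb C)^+$ and identify it under the natural isomorphism with $\pi^+_\mathbb C\mathbb C\oplus(\wedge^+(\mathbb R^4)\otimes\mathbb C)$. The paper simply asserts this basis, whereas you carry out the supporting sign computations (for $\omega_\mathbb C$, for the action of $\pi^+_\mathbb C$ on the degree-two monomials, and for linear independence) in detail.
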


\begin{proof}
Let $\{e_1, \cdots, e_4 \}$ denote an oriented orthonormal basis for $\mathbb R^4 = \mathbb R^4 \otimes 1$. Notice $(C l_0(\mathbb R^4) \otimes \mathbb C)^+$ has the basis 
$$ \left \{ \pi^+_\mathbb C, e_1 e_2 + e_3 e_4, e_1 e_3 -e_2 e_4, e_1 e_4 + e_2 e_3 \right \}$$
so we see $(C l_0(\mathbb R^4) \otimes \mathbb C)^+ \mapsto  \pi^+_\mathbb C \mathbb C \oplus \wedge^+(\mathbb R^4)\otimes \mathbb C$.
\end{proof}

\end{section}

\begin{section}{$\bm{ Spin(n)}$ and $\bm{ Spin^c(n)}$}

Now we will define the Lie groups $Spin(n)$ and $Spin^c(n)$. Let $\mathcal C l^\times (\mathbb R^n)$ denote the multiplicative group of units in $\mathcal C l (\mathbb R^n)$. We define $Pin(n)$ to be the subgroup of $\mathcal C l^\times (\mathbb R^n)$ generated by elements $v \in \mathbb R^n \subset C l (\mathbb R^n)$ with $< v, v > = 1$. We define $Spin(n)$ to be the intersection of $Pin(n)$ and $\mathcal C l_0 (\mathbb R^n)$.

\begin{lemma}\label{spin4iso} We have an isomorphism $SU(2) \times SU(2) \rightarrow Spin(4)$ where the splitting on the left corresponds with the splitting $Spin(4) = Spin(4)^+ \times Spin(4)^-$. Here $Spin(4)^\pm$ denotes the splitting of $Spin(4)$ induced by the splitting $C l^\pm (\mathbb R^4)$. 
\end{lemma}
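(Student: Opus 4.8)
The plan is to realize $Spin(4)$ inside the algebra $\mathcal{Cl}_0(\mathbb{R}^4)$, identify that algebra with $\mathbb{H}\oplus\mathbb{H}$, recognize $Spin(4)$ there as the product of the two unit-quaternion groups, and then invoke the classical isomorphism $SU(2)\cong Sp(1)$. To set up the model, I would compose the algebra isomorphism $\mathcal{Cl}(\mathbb{R}^3)\cong\mathcal{Cl}_0(\mathbb{R}^4)$ of Lemma~\ref{cliffm1} with the isomorphism $\mathbb{H}\oplus\mathbb{H}\cong\mathcal{Cl}(\mathbb{R}^3)$ of Lemma~\ref{cliff3} to obtain an algebra isomorphism $\Psi\colon\mathbb{H}\oplus\mathbb{H}\to\mathcal{Cl}_0(\mathbb{R}^4)$. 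By the remarks following those two lemmas, $\Psi$ carries $\mathbb{H}\oplus0$ onto $\mathcal{Cl}^+_0(\mathbb{R}^4)$ and $0\oplus\mathbb{H}$ onto $\mathcal{Cl}^-_0(\mathbb{R}^4)$; in particular $\Psi(1\oplus0)=\pi^+$ and $\Psi(0\oplus1)=\pi^-$. A direct computation with the explicit formulas also records $\Psi^{-1}(e_1e_2)=(i,i)$ and $\Psi^{-1}(e_2e_3)=(j,j)$, which I use below.

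Next I would prove the containment $Spin(4)\subseteq\Psi\big(Sp(1)\times Sp(1)\big)$, where $Sp(1)=\{q\in\mathbb{H}:|q|=1\}$. Let $(\,\cdot\,)^t$ be the transpose anti-involution of $\mathcal{Cl}(\mathbb{R}^4)$, determined by $(v_1\cdots v_k)^t=v_k\cdots v_1$ for $v_i\in\mathbb{R}^4$. If $x=v_1\cdots v_{2m}\in Spin(4)$ with all $|v_i|=1$, then collapsing $xx^t=v_1\cdots v_{2m}v_{2m}\cdots v_1$ from the middle outward via $v_i^2=-1$ gives $xx^t=1$. Transporting $(\,\cdot\,)^t$ through $\Psi$: it fixes $\pi^{\pm}$ (reversing a degree-$4$ element introduces no sign) and negates every $e_ae_b$, so $\tau:=\Psi^{-1}\circ(\,\cdot\,)^t\circ\Psi$ fixes $1\oplus0$ and $0\oplus1$, hence preserves each $\mathbb{H}$ factor, and by the two recorded values it restricts on each factor to an $\mathbb{R}$-linear anti-automorphism of $\mathbb{H}$ that fixes $1$ and negates $i$ and $j$ — hence to quaternionic conjugation. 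Therefore, writing $x=\Psi(q_1,q_2)\in Spin(4)$, we get $1=xx^t=\Psi\big((q_1,q_2)(\bar q_1,\bar q_2)\big)=\Psi(|q_1|^2,|q_2|^2)$, so $|q_1|=|q_2|=1$.

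To upgrade this to an equality I would count dimensions. The containment just proved forces $\dim Spin(4)\le\dim\big(Sp(1)\times Sp(1)\big)=6$. Conversely, for each pair $1\le a<b\le 4$ the curve $t\mapsto\cos t+\sin t\,e_ae_b=(\cos\tfrac t2\,e_a+\sin\tfrac t2\,e_b)(-\cos\tfrac t2\,e_a+\sin\tfrac t2\,e_b)$ lies in $Spin(4)$ with velocity $e_ae_b$ at $t=0$, and the six elements $e_ae_b$ are linearly independent in $\mathcal{Cl}_0(\mathbb{R}^4)$ by Lemma~\ref{cliffbasis}, so $\dim Spin(4)=6$. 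Since $Spin(4)$ is connected (each generator $vw$ is joined to $1$ by $t\mapsto v\gamma(t)$ for a path $\gamma$ in $S^3$ from $w$ to $-v$) and compact, $\Psi^{-1}(Spin(4))$ is a compact connected $6$-dimensional Lie subgroup of the connected $6$-dimensional Lie group $Sp(1)\times Sp(1)$, hence open, hence closed, hence all of it. Precomposing with the standard isomorphism $SU(2)\cong Sp(1)$ on each factor yields an isomorphism $SU(2)\times SU(2)\xrightarrow{\ \sim\ }Spin(4)$; and since $\Psi$ is an \emph{algebra} isomorphism matching $(\mathbb{H}\oplus0)\times(0\oplus\mathbb{H})$ with $\mathcal{Cl}^+_0(\mathbb{R}^4)\oplus\mathcal{Cl}^-_0(\mathbb{R}^4)$, the two $SU(2)$ factors go exactly to $Spin(4)^{\pm}$, giving the asserted compatibility with $Spin(4)=Spin(4)^+\times Spin(4)^-$.

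The main obstacle is the surjectivity step: the transpose trick cheaply traps $Spin(4)$ \emph{inside} $\Psi(Sp(1)\times Sp(1))$, but equality genuinely needs the dimension-plus-connectedness argument, whose one delicate point is getting $\dim Spin(4)=6$ without circularity — which is why I take the upper bound from the containment rather than from the (true, but not yet established here) fact that $Spin(4)$ double-covers $SO(4)$. Everything else is bookkeeping: keeping straight the $\mathbb{R}^3$ and $\mathbb{R}^4$ bases of Lemma~\ref{cliffm1}, the two copies of $\mathbb{H}$, and $SU(2)$ versus $Sp(1)$, and matching the idempotents $\pi^{\pm}$ to the factor decomposition.
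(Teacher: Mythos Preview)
Your proof is correct and follows the same route as the paper: compose the algebra isomorphisms of Lemma~\ref{cliff3} and Lemma~\ref{cliffm1} to get $\mathbb{H}\oplus\mathbb{H}\cong\mathcal{Cl}_0(\mathbb{R}^4)$, then restrict to the unit spheres $S^3\times S^3\cong SU(2)\times SU(2)$, noting that the factor decomposition is carried to the $\pi^{\pm}$ splitting. You go well beyond the paper, which simply asserts that this restriction hits $Spin(4)$ exactly; your transpose/conjugation argument for the inclusion $Spin(4)\subseteq\Psi(Sp(1)\times Sp(1))$ together with the dimension-plus-connectedness count for equality fill in precisely the step the paper leaves unjustified.
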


\begin{proof}
Recall we have algebra isomorphisms
$$ \mathbb H \oplus \mathbb H \rightarrow \mathcal C l(\mathbb R^3) \rightarrow \mathcal C l_0(\mathbb R^4)$$
where the first map is the isomorphism from Lemma~\ref{cliff3} and the second map is the isomorphism from Lemma~\ref{cliffm1}.

If we identify $\mathbb H$ with $\mathbb R^4$, the group of unit quaternions is identified with $S^3 \subset \mathbb R^4$ and is isomorphic to $SU(2)$. Then by restricting the composition of the above maps to $S^3 \times S^3 \subset \mathbb H \oplus \mathbb H$, we obtain an isomorphism
$$ SU(2) \times SU(2) \rightarrow Spin(4).$$

This composition preserves the desired splittings as each of the isomorphisms above preserves its own corresponding splittings.  

\end{proof}

We define $Spin^c(n)$ to be the multiplicative group of units $[ Spin(n) \times S^1] \subset \mathcal C l (\mathbb R^n) \otimes \mathbb C$. Observe by using the algebra isomorphism from Lemma~\ref{complexcliff} , we can consider $Spin^c(n)$ to be a multiplicative group of units contained in $\mathcal C l(\mathbb C^4)$.

\begin{lemma}\label{spincspin} $Spin^c(n) \cong  Spin(n) \times S^1 / \{\pm(1,1) \}$.
\end{lemma}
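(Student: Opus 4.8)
The plan is to exhibit $Spin^c(n) = [Spin(n) \times S^1]$ (the multiplicative subgroup of $\mathcal C l(\mathbb R^n) \otimes \mathbb C$ generated by $Spin(n) \subset \mathcal C l_0(\mathbb R^n)$ and the central circle $S^1 = \{\lambda \cdot 1 : |\lambda| = 1\} \subset \mathbb C \subset \mathcal C l(\mathbb R^n) \otimes \mathbb C$) as the image of a surjective homomorphism out of the abstract product $Spin(n) \times S^1$, and then compute the kernel.

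First I would define the map
$$
\Phi : Spin(n) \times S^1 \longrightarrow \mathcal C l(\mathbb R^n) \otimes \mathbb C, \qquad \Phi(g, \lambda) = \lambda g,
$$
where on the right $g$ is viewed inside $\mathcal C l(\mathbb R^n) \otimes 1$ and $\lambda g$ is the product in the algebra $\mathcal C l(\mathbb R^n) \otimes \mathbb C$. The key point is that the scalars $S^1$ are central in $\mathcal C l(\mathbb R^n)\otimes\mathbb C$, so $\Phi$ is a group homomorphism; its image is exactly the subgroup generated by $Spin(n)$ and $S^1$, which is $Spin^c(n)$ by definition. Hence $\Phi$ is surjective onto $Spin^c(n)$, and by the first isomorphism theorem $Spin^c(n) \cong (Spin(n) \times S^1)/\ker\Phi$.

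The heart of the argument — and the step I expect to be the main obstacle — is identifying $\ker\Phi$. We have $(g,\lambda) \in \ker\Phi$ iff $\lambda g = 1$ in $\mathcal C l(\mathbb R^n)\otimes\mathbb C$, i.e. $g = \lambda^{-1} \cdot 1$ is a scalar multiple of the identity. So I must determine which elements of $Spin(n)$ are scalars. Since $Spin(n) \subset \mathcal C l_0(\mathbb R^n)$ and every element of $Spin(n)$ is a product of an even number of unit vectors, an element that is also a real scalar $c\cdot 1$ forces (taking the spinor norm / the canonical anti-involution, or simply examining products $v_1 v_2 \cdots v_{2k}$ of unit vectors and using $v^2 = -1$) that $c = \pm 1$; concretely $1 = v_1^2 \cdot 1$ and $-1 = v_1 v_2 v_1 v_2$ for orthonormal $v_1 \perp v_2$ both lie in $Spin(n)$, and no other real scalars do. Therefore $g \in \{\pm 1\}$, and correspondingly $\lambda = g^{-1} = \pm 1 \in S^1$, giving $\ker \Phi = \{(1,1), (-1,-1)\} = \{\pm(1,1)\}$. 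This subgroup is central (it sits in the center of $Spin(n)\times S^1$), so the quotient is a well-defined Lie group and we conclude $Spin^c(n) \cong Spin(n) \times S^1 / \{\pm(1,1)\}$, as claimed.

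A couple of routine verifications I would fold in without detail: that $\Phi$ is smooth (it is the restriction of multiplication in the algebra, which is polynomial in coordinates), so the isomorphism is one of Lie groups, not just abstract groups; and that the bracket notation $[\,\cdot\,]$ in the definition of $Spin^c(n)$ is consistent with "subgroup generated by," so that the image of $\Phi$ really is all of $Spin^c(n)$. The only genuinely substantive point is the determination that $Spin(n) \cap (\mathbb C \cdot 1) = \{\pm 1\}$, which is where I would spend the most care.
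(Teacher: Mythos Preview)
Your proposal is correct and follows essentially the same route as the paper: define the multiplication map $Spin(n)\times S^1 \to [Spin(n)\times S^1]=Spin^c(n)$, note it is a surjective homomorphism, compute the kernel to be $\{\pm(1,1)\}$, and apply the first isomorphism theorem. The only cosmetic difference is in the kernel computation: the paper observes that a kernel element $(c\cdot 1,\,c^{-1})$ must have $c$ real (since $Spin(n)\subset\mathcal C l(\mathbb R^n)$) and $c^{-1}\in S^1$, hence $c\in S^1\cap\mathbb R=\{\pm1\}$, and then simply checks $-1=e_1e_1\in Spin(n)$ --- so your detour through the spinor norm is unnecessary, though not wrong.
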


\begin{proof}
First we have the natural surjective group homomorphism 
$$Spin (n) \times S^1 \hookrightarrow [Spin(n) \times S^1].$$
Elements of the kernel of this map are of form $(c1, c^{-1})$ where $c \in S^1 \cap \mathbb R = \{-1, 1 \}$ and $c1 \in Spin(n)$. To see $-1 \in Spin(n) = Pin(n) \cap C l_0(\mathbb R^n)$, first $-1 \in Pin(n)$ since $e_1 e_1 = -1$ and $-1 \in \mathcal C l_0(\mathbb R^n)$ since $$\alpha(-1) = \alpha(e_1 e_1) = \alpha(e_1)\alpha(e_1) = (-e_1)(-e_1) = e_1 e_1 = -1.$$ 
Thus the kernel of our map is $\{\pm(1,1) \}$ so by the first isomorphism theorem, $$Spin^c(n) \cong  Spin(n) \times S^1 / \{\pm(1,1) \}.$$
\end{proof}

From this Lemma and Lemma~\ref{spin4iso}, we obtain an isomorphism $$Spin^c(4) \cong SU(2) \times SU(2) \times S^1 / \{\pm(I,I,1) \}.$$ Note that under this isomorphism, $Spin^c(4)^+ \oplus \pi^-_\mathbb C \subset \mathcal C l(\mathbb C^4)$ is identified with the subgroup $[SU(2) \times I \times S^1] \subset SU(2) \times SU(2) \times S^1 / \{\pm(I,I,1) \}$ and $Spin^c(4)^- \oplus \pi^+_\mathbb C$ is identified with the subgroup $[I \times SU(2) \times S^1]$.

\begin{lemma}\label {spinc3u2} We have a group isomorphism $\mu : Spin^c(3) \rightarrow U(2).$ \hfill $\Box$
\end{lemma}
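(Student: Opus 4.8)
The plan is to reduce everything to the exceptional isomorphism $Spin(3) \cong SU(2)$ together with the standard presentation of $U(2)$ as $SU(2) \times S^1$ modulo a central $\mathbb{Z}/2$. By Lemma~\ref{spincspin} we already have $Spin^c(3) \cong Spin(3) \times S^1 / \{\pm(1,1)\}$, so it is enough to (i) identify $Spin(3)$ with $SU(2)$ in a way that carries the central element $-1 \in Spin(3)$ to $-I \in SU(2)$, and then (ii) exhibit an isomorphism from $SU(2) \times S^1 / \{\pm(I,1)\}$ to $U(2)$ and compose.

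For step (i) I would proceed exactly as in the proof of Lemma~\ref{spin4iso}. Combining the isomorphism of Lemma~\ref{cliff3} with the $n=3$ case of Lemma~\ref{cliffm1} (which gives $\mathcal{C}l_0(\mathbb{R}^3) \cong \mathcal{C}l(\mathbb{R}^2)$), and noting that $\mathcal{C}l(\mathbb{R}^2) \cong \mathbb{H}$ by Lemma~\ref{cliffbasis} (send $e_1, e_2, e_1 e_2$ to a standard basis of the imaginary quaternions), one identifies $\mathcal{C}l_0(\mathbb{R}^3)$ with $\mathbb{H}$. Under this identification $Spin(3) = Pin(3) \cap \mathcal{C}l_0(\mathbb{R}^3)$ is the subgroup generated by products $uv$ of pairs of unit vectors $u,v \in \mathbb{R}^3$; writing $uv = -\langle u,v\rangle + (\text{degree-two part})$ one checks its quaternionic norm is $\langle u,v\rangle^2 + (1 - \langle u,v\rangle^2) = 1$, and conversely that every unit quaternion factors this way, so $Spin(3)$ is precisely the group $S^3 \subset \mathbb{H}$ of unit quaternions, i.e. $SU(2)$. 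Since any algebra isomorphism sends $-1$ to $-1$, the element $-1 \in Spin(3)$ goes to $-I$, so the two copies of $\mathbb{Z}/2$ being quotiented match up.

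For step (ii), define $\psi \colon SU(2) \times S^1 \to U(2)$ by $\psi(B,\lambda) = \lambda B$, the scalar matrix $\lambda I$ times $B$. This is a homomorphism since scalars are central in $Mat(2,\mathbb{C})$; it is surjective because for $A \in U(2)$ one has $\det A \in S^1$, and choosing $\lambda$ with $\lambda^2 = \det A$ makes $\lambda^{-1} A \in SU(2)$ with $\psi(\lambda^{-1} A, \lambda) = A$; and its kernel is $\{(B,\lambda) : B = \lambda^{-1} I \in SU(2)\} = \{(I,1),(-I,-1)\} = \{\pm(I,1)\}$. The first isomorphism theorem gives $SU(2) \times S^1 / \{\pm(I,1)\} \cong U(2)$, and composing with the identifications of Lemma~\ref{spincspin} and step (i) produces the required $\mu \colon Spin^c(3) \to U(2)$.

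I expect the one mildly delicate point to be the second half of step (i): verifying that $Pin(3) \cap \mathcal{C}l_0(\mathbb{R}^3)$ exhausts the unit quaternions, equivalently that every unit quaternion is a product of two unit vectors in $\mathbb{R}^3$. Everything else is bookkeeping, and the compatibility of the two $\mathbb{Z}/2$ quotients is automatic.
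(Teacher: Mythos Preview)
Your argument is correct. The paper actually gives no proof of this lemma at all --- it is stated with a bare $\Box$ --- so there is nothing to compare against directly. That said, your approach is exactly in the spirit of the surrounding material: it mirrors the paper's proof of Lemma~\ref{spinc4u2}, where the analogous isomorphism for $Spin^c(4)$ is built by the same recipe (choose a square root $\lambda$ of the determinant, divide it out to land in $SU$, and check the kernel is the diagonal $\mathbb{Z}/2$).

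Two minor remarks. First, your citation of Lemma~\ref{cliff3} in step~(i) is superfluous: the identification $\mathcal{C}l_0(\mathbb{R}^3) \cong \mathbb{H}$ you actually use comes from Lemma~\ref{cliffm1} with $n=3$ together with the direct check $\mathcal{C}l(\mathbb{R}^2) \cong \mathbb{H}$, whereas Lemma~\ref{cliff3} concerns the full algebra $\mathcal{C}l(\mathbb{R}^3) \cong \mathbb{H} \oplus \mathbb{H}$. Second, the point you flag as delicate --- that every unit quaternion is a product $uv$ of two unit vectors in $\mathbb{R}^3$ --- is settled by writing an arbitrary unit element of $\mathcal{C}l_0(\mathbb{R}^3)$ as $\cos\theta + (\sin\theta)\,\hat{n}$ for a unit bivector $\hat{n}$, and then taking $u,v$ to be unit vectors in the plane determined by $\hat{n}$ with $\langle u,v\rangle = -\cos\theta$.
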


\begin{lemma}\label {spinc4u2} We have a group isomorphism $$\{(A, B) \in U(2) \times U(2)\ \mid \det(A) = \det(B)\} \rightarrow Spin^c(4) \subset \mathcal C l(\mathbb C^4)$$ where the splitting of $\{(A, B) \in U(2) \times U(2)\ \mid \det(A) = \det(B)\}$ corresponds with the splitting $Spin^c(4) = Spin^c(4)^+ \times Spin^c(4)^-$.
\end{lemma}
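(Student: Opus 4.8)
The plan is to construct the isomorphism concretely as a composition, using the presentation $Spin^c(4) \cong SU(2)\times SU(2)\times S^1/\{\pm(I,I,1)\}$ recorded just above (from Lemma~\ref{spincspin} and Lemma~\ref{spin4iso}). Write $G = \{(A,B)\in U(2)\times U(2)\mid \det(A)=\det(B)\}$. First I would define a map $\Phi\colon G \to SU(2)\times SU(2)\times S^1/\{\pm(I,I,1)\}$ by choosing, for $(A,B)\in G$, a square root $z\in S^1$ of the common value $\det(A)=\det(B)$, and setting $\Phi(A,B) = [(z^{-1}A,\,z^{-1}B,\,z)]$. Since $\det(z^{-1}A) = z^{-2}\det(A) = 1$ the matrices $z^{-1}A$ and $z^{-1}B$ do lie in $SU(2)$, and the two possible square roots $\pm z$ give $[(z^{-1}A,z^{-1}B,z)] = [(-z^{-1}A,-z^{-1}B,-z)]$, the same class; so $\Phi$ is well defined.

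Next I would check $\Phi$ is a homomorphism: for $(A_1,B_1),(A_2,B_2)\in G$ with chosen square roots $z_1,z_2$, the scalar $z_1z_2$ is a square root of $\det(A_1A_2)=\det(B_1B_2)$, and since the $z_i$ are central in $U(2)$ one gets $\Phi(A_1A_2,B_1B_2) = [(z_1^{-1}A_1\,z_2^{-1}A_2,\,z_1^{-1}B_1\,z_2^{-1}B_2,\,z_1z_2)] = \Phi(A_1,B_1)\,\Phi(A_2,B_2)$. For injectivity, $\Phi(A,B)=[(I,I,1)]$ forces $(z^{-1}A,z^{-1}B,z)=\pm(I,I,1)$, and in either case $A=B=I$. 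For surjectivity, a class $[(g,h,w)]$ is hit by the pair $(wg,wh)\in G$ (both determinants equal $w^2$). Thus $\Phi$ is an isomorphism of abstract groups; as $G$ is a connected Lie group with $\dim G = 4+4-1 = 7 = \dim Spin^c(4)$ and $\Phi$ is smooth with smooth inverse (locally one may choose $z$ as a smooth square root of the determinant, while $[(g,h,w)]\mapsto(wg,wh)$ is directly smooth on the quotient), it is an isomorphism of Lie groups. Composing $\Phi^{-1}$ with the isomorphism $SU(2)\times SU(2)\times S^1/\{\pm(I,I,1)\}\cong Spin^c(4)\subset \mathcal C l(\mathbb C^4)$ gives the asserted map.

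Finally, for the statement about splittings I would trace the earlier identifications of $Spin^c(4)^+$ with $[SU(2)\times I\times S^1]$ and of $Spin^c(4)^-$ with $[I\times SU(2)\times S^1]$ through $\Phi^{-1}$: the class $[(g,I,w)]$ pulls back to $(wg,\,wI)$, whose second entry is a scalar matrix, so $Spin^c(4)^+$ corresponds to the subgroup of $G$ of pairs whose second coordinate is scalar, and symmetrically $Spin^c(4)^-$ to pairs whose first coordinate is scalar. That is the sense in which the two $U(2)$ factors of $G$ match the decomposition $Spin^c(4)=Spin^c(4)^+\times Spin^c(4)^-$.

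I expect the only genuinely delicate point to be the bookkeeping around the square-root choice and the quotient by $\{\pm(I,I,1)\}$ — i.e. making sure that well-definedness, the homomorphism property, and the splitting identification are all mutually compatible with that two-to-one ambiguity. Everything else is routine linear algebra.
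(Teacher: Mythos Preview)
Your proof is correct and takes essentially the same approach as the paper: both define the map $(A,B)\mapsto [(\lambda^{-1}A,\lambda^{-1}B,\lambda)]$ with $\lambda^2=\det A$ into $SU(2)\times SU(2)\times S^1/\{\pm(I,I,1)\}$ and check that the square-root ambiguity is absorbed by the quotient. The paper's version is terser --- it only writes down the map and the well-definedness check, then defers the rest (homomorphism, bijectivity, and the splitting statement) to ``the comments after Lemma~\ref{spincspin}'' --- whereas you have spelled out those verifications explicitly, which is fine.
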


\begin{proof}
First there is an isomorphism 
$$\{(A, B) \in U(2) \times U(2)\ \mid \det(A) = \det(B)\} \rightarrow SU(2) \times SU(2) \times S^1 / \{ \pm(I, I, 1)\}$$
defined by
$$(A, B) \mapsto [( A  
\left[ {\begin{array}{ll}
 \lambda^{-1} & 0 \\
 0 & \lambda^{-1}
\end{array} } \right]
, B 
\left[ {\begin{array}{ll}
\lambda^{-1} & 0 \\
 0 & \lambda^{-1}
\end{array} } \right]
, \lambda)]$$ where $\lambda^2 = \det A$. 
Note since 
$$ [(A, B, \lambda )] = [(-A, -B, -\lambda)] \,\,\, \text{ in } \,\,\, SU(2) \times SU(2) \times S^1 / \{ \pm(I, I, 1)\},$$
our map is the same for each of the two choices of $\lambda$ and hence is well-defined.

The rest follows from the comments after Lemma~\ref{spincspin}.

\end{proof}

The {\it adjoint representation} of $Spin(n)$ is the map $Ad : Spin(n) \rightarrow Aut(\mathcal C l(\mathbb R^n))$ defined by $\varphi \mapsto (y \mapsto \varphi y \varphi^{-1})$. Recall 
$$Pin(n) = \{v_1 \cdots v_r \in \mathcal C l(\mathbb R^n) \mid v_i \in \mathbb R^n \text{ with } <v_i, v_i> = 1 \}$$
so we see $Ad_\varphi (v) \in \mathbb R^n$ for each $\varphi \in Spin(n) \subset Pin(n)$ and $v \in \mathbb R^n$. Hence we can restrict the range to obtain a homomorphism $Ad : Spin(n) \rightarrow GL(n)$ . In fact:  

\begin{lemma}\label{spincover} \cite{SpinGeo} $Ad$ induces a group homomorphism, 
$$\xi : Spin(n) \rightarrow SO(n)$$
which is a double covering map. For $n > 2$, this is the universal double cover (up to isomorphism). \hfill $\square$
\end{lemma}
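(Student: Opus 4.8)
The plan is to realize $\xi$ as the adjoint action restricted to $Spin(n)$, identify its image with $SO(n)$ by generating $SO(n)$ out of reflections, pin down the kernel by a centrality argument, and then upgrade the resulting two‑to‑one homomorphism to a (universal, for $n>2$) covering map via a dimension count, connectedness, and a fact about $\pi_1(SO(n))$.

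\emph{Image and surjectivity.} For a unit vector $u\in\mathbb R^n\subset\mathcal C l(\mathbb R^n)$ we have $u^2=-\langle u,u\rangle=-1$, so $u^{-1}=-u$, and using $uv=-vu$ for $v\perp u$ a direct computation shows that $Ad_u\colon y\mapsto uyu^{-1}$ restricts on $\mathbb R^n$ to the orthogonal map fixing $u$ and negating $u^\perp$; in particular $Ad_u\in O(n)$ with $\det Ad_u=(-1)^{n-1}$. Since by definition every $\varphi\in Spin(n)$ is a product $v_1\cdots v_{2k}$ of an even number of unit vectors, $Ad_\varphi=Ad_{v_1}\circ\cdots\circ Ad_{v_{2k}}\in SO(n)$, so $\xi$ is a well‑defined homomorphism into $SO(n)$ (it is smooth, e.g. because on $Spin(n)$ one has $\varphi^{-1}=\varphi^{t}$ for the anti‑automorphism $^{t}$ fixing $\mathbb R^n$, whence $Ad_\varphi(v)=\varphi v\varphi^{t}$ is polynomial in $\varphi$). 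For surjectivity, note $R_u:=-Ad_u$ is the genuine reflection in $u^\perp$; by the Cartan--Dieudonn\'e theorem any element of $SO(n)$ is a product $R_{v_1}\cdots R_{v_{2k}}$ of an even number of such reflections, and the signs cancel so this equals $Ad_{v_1\cdots v_{2k}}$ with $v_1\cdots v_{2k}\in Spin(n)$. Hence $\xi$ is onto $SO(n)$.

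\emph{Kernel.} If $Ad_\varphi=\mathrm{id}$ then $\varphi$ commutes with every $v\in\mathbb R^n$, hence with all of $\mathcal C l(\mathbb R^n)$, so $\varphi$ lies in the center. Using the basis of Lemma~\ref{cliffbasis} and the commutation relations, the center is $\mathbb R$ when $n$ is even and $\mathbb R\oplus\mathbb R\,\omega$ (with $\omega=e_1\cdots e_n$ of odd degree) when $n$ is odd; in either case $\varphi\in\mathbb R\cap\mathcal C l_0(\mathbb R^n)=\mathbb R$. Writing $\varphi=v_1\cdots v_{2k}$ and applying $^{t}$ gives $\varphi\varphi^{t}=v_1\cdots v_{2k}v_{2k}\cdots v_1=(-1)^{2k}=1$, and since $\varphi$ is a scalar this forces $\varphi^2=1$, i.e. $\varphi=\pm1$. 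Both lie in $Spin(n)$ (for $-1$, see the argument in the proof of Lemma~\ref{spincspin}), and clearly $Ad_{\pm1}=\mathrm{id}$, so $\ker\xi=\{\pm1\}\cong\mathbb Z/2$.

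\emph{Covering map and universality.} Granting that $Spin(n)$ is a compact Lie group (it is a subgroup of $\mathcal C l^\times(\mathbb R^n)$ which, via the Cartan--Dieudonn\'e description above, is a finite union of continuous images of products of spheres), $\xi$ is a smooth surjective homomorphism with $0$‑dimensional kernel, so by the dimension formula $\dim Spin(n)=\dim\ker\xi+\dim\operatorname{im}\xi=\dim SO(n)$; thus $d\xi$ is an isomorphism at the identity, $\xi$ is a local diffeomorphism, and a local‑diffeomorphism surjective homomorphism with kernel of order $2$ is a $2$‑fold covering map. To see the cover is nontrivial, equivalently that $Spin(n)$ is connected, the path $\gamma(\theta)=\cos\theta+\sin\theta\,e_1e_2=(-e_1)(\cos\theta\,e_1-\sin\theta\,e_2)\in Spin(n)$ for $\theta\in[0,\pi]$ joins $1$ to $-1$. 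Finally, for $n>2$ it is classical that $\pi_1(SO(n))\cong\mathbb Z/2$, which has a unique index‑$2$ subgroup, so $SO(n)$ has (up to isomorphism) exactly one connected double cover, namely its universal cover; since $Spin(n)\to SO(n)$ is such a cover, it is the universal one.

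The routine part is the algebra (reflections, Cartan--Dieudonn\'e, the center of the Clifford algebra, the norm identity $\varphi\varphi^{t}=1$); the part that needs the most care is the topological upgrade, namely checking $Spin(n)$ is genuinely a Lie group of dimension $\dim SO(n)$ so that the $2$‑to‑$1$ homomorphism becomes a covering map, establishing connectedness of $Spin(n)$, and invoking $\pi_1(SO(n))\cong\mathbb Z/2$ to extract universality.
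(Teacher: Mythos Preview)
The paper does not prove this lemma; it is stated with a citation to \cite{SpinGeo} and closed with a $\square$. Your proof is correct and is essentially the standard argument one finds in that reference: identify $Ad$ on unit vectors as (negative) reflections, invoke Cartan--Dieudonn\'e for surjectivity, compute the kernel via the center of $\mathcal C l(\mathbb R^n)$, and then handle the topology. One small point worth tightening is the Lie-group status of $Spin(n)$: rather than the somewhat vague ``finite union of continuous images of products of spheres,'' it is cleaner to observe that $Spin(n)$ is a closed subgroup of the Lie group $\mathcal C l^\times(\mathbb R^n)$ (closed because it is cut out by the conditions $\varphi\varphi^{t}=1$ and $\alpha(\varphi)=\varphi$), hence a Lie subgroup by Cartan's theorem, and compact since $\varphi\varphi^{t}=1$ bounds the norm.
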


For $Spin^c(n)$, we can define a double-covering map of $SO(n) \times U(1)$ as follows. Let $\xi^c: Spin^c(n) \rightarrow SO(n) \times U(1)$ be the homomorphism $[(\varphi, \lambda)] \mapsto (\xi(\varphi), \lambda^2)$. Also observe the map $\xi : Spin(n) \rightarrow SO(n)$ induces the homomorphism $\xi : Spin^c(n) \rightarrow SO(n)$ given by $[(\varphi, \lambda)] \mapsto \xi(\varphi)$. The kernel of this map is $Z(Spin^c(n)) \cong S^1$.

\begin{lemma}\label{spincs1}
$$Spin^c(n) \cong Spin^c(n) \times_{S^1} S^1 =  Spin^c(n) \times S^1 / \{(\lambda 1, \lambda^{-1}) \mid \lambda \in S^1 \}$$
\end{lemma}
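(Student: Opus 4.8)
The plan is to realize the right-hand side as a quotient of $Spin^c(n) \times S^1$ by the kernel of an explicit surjective homomorphism onto $Spin^c(n)$, and then invoke the first isomorphism theorem. Recall from the discussion immediately preceding the lemma that the center $Z(Spin^c(n)) = \ker\!\left(\xi : Spin^c(n) \to SO(n)\right)$ consists exactly of the classes $[(1,\mu)]$ with $\mu \in S^1$ (since $\ker\xi$ is forced to have trivial $Spin(n)$-coordinate up to sign), and that $\mu \mapsto [(1,\mu)]$, which I will abbreviate $\mu \mapsto \mu 1$, is a group isomorphism $S^1 \to Z(Spin^c(n))$. In particular $\mu 1$ is a central element of $Spin^c(n)$ for every $\mu \in S^1$.

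First I would define $f : Spin^c(n) \times S^1 \to Spin^c(n)$ by $f(\varphi,\mu) = \varphi \cdot (\mu 1)$. Because $\mu 1$ is central, for any $(\varphi_1,\mu_1),(\varphi_2,\mu_2) \in Spin^c(n)\times S^1$ we get
$$f\big((\varphi_1,\mu_1)(\varphi_2,\mu_2)\big) = \varphi_1\varphi_2(\mu_1\mu_2 1) = \big(\varphi_1(\mu_1 1)\big)\big(\varphi_2(\mu_2 1)\big) = f(\varphi_1,\mu_1)\,f(\varphi_2,\mu_2),$$
so $f$ is a group homomorphism, and it is surjective since $f(\varphi,1) = \varphi$ for all $\varphi$.

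Next I would compute $\ker f$. One has $f(\varphi,\mu) = 1$ exactly when $\varphi = (\mu 1)^{-1} = \mu^{-1}1$, hence
$$\ker f = \{(\mu^{-1}1,\mu) \mid \mu \in S^1\} = \{(\lambda 1, \lambda^{-1}) \mid \lambda \in S^1\},$$
the last equality by the substitution $\lambda = \mu^{-1}$. The first isomorphism theorem then yields
$$Spin^c(n) \times S^1 / \{(\lambda 1, \lambda^{-1}) \mid \lambda \in S^1\} \;\cong\; Spin^c(n),$$
which is the assertion (the middle expression $Spin^c(n) \times_{S^1} S^1$ being merely notation for this quotient). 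There is no genuine obstacle: the two points that require care are verifying the homomorphism property, which uses centrality of $\mu 1$ in an essential way, and identifying $\ker f$ with the subgroup written in the statement, including the inversion in the $S^1$-coordinate.
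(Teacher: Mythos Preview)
Your proof is correct and essentially the same as the paper's: the paper defines the inverse map $\varphi \mapsto [\varphi,1]$ from $Spin^c(n)$ into the quotient and checks by hand that it is a bijection, whereas you package the same idea as a surjective homomorphism $Spin^c(n)\times S^1 \to Spin^c(n)$, $(\varphi,\mu)\mapsto \varphi\cdot(\mu 1)$, and invoke the first isomorphism theorem. The two constructions are mutual inverses, and your explicit verification of the homomorphism property (using centrality of $\mu 1$) is a small improvement in rigor over the paper's terse argument.
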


\begin{proof}
Define an isomorphism by $\varphi \mapsto [\varphi, 1]$. To see this is onto observe $[\varphi, \lambda] = [\lambda \varphi, 1]$ for each $\varphi \in Spin^c(n)$ and $\lambda \in S^1$. To see injectivity suppose $[\varphi_1, 1] = [\varphi_2, 1]$ for some $\varphi_i \in Spin^c(n)$. Then $(\varphi_1, 1) = (\lambda \varphi_2, \lambda^{-1})$ for some $\lambda \in S^1$. Hence $\lambda = 1$ and $\varphi_1 = \varphi_2$.
\end{proof}

\end{section}

\begin{section}{$\bm{Spin}$ and $\bm { Spin^c}$-structures}

In this section we will define $Spin$ and $Spin^c$-structures. Given an orientable manifold $X$, recall a choice of orientation and Riemannian metric reduces the structure group of $TX$ to $SO(n) \subset GL(n)$ hence we obtain a frame bundle $P_{SO(n)}$.

\begin{definition}
A $Spin${\it -structure} for an oriented Riemannian $n$-manifold $X$ is a principal $Spin(n)$-bundle $P_{Spin^c(n)} \rightarrow X$ together with a bundle map $P_{Spin(n)} \rightarrow P_{SO(n)}$ that is $\xi \colon Spin(n) \rightarrow SO(n)$ fibrewise (see Lemma \ref{spincover}).
\end{definition}

\begin{definition}
A $Spin^c${\it -structure} for an oriented Riemannian $n$-manifold $X$ is a principal $Spin^c(n)$-bundle $P_{Spin^c(n)} \rightarrow X$ together with a bundle map $P_{Spin^c(n)} \rightarrow P_{SO(n)}$ that is $\xi \colon Spin^c(n) \rightarrow SO(n)$ fibrewise.
\end{definition}

For $n=3$ and $n=4$, the {\it determinant line bundle} of a $Spin^c$-structure $P_{Spin^c(n)} \rightarrow P_{SO(n)}$ is defined to be the complex line bundle $L = P_{Spin^c(n)} \times_{\det} \mathbb C$ where $\det : Spin^c(n) \rightarrow U(1)$ is given by $[(\varphi, \lambda)] \mapsto \lambda^2$.

\begin{lemma} \cite{Morgan} We may equivalently define a $Spin^c${\it -structure} to be a principal $Spin^c(n)$-bundle $P_{Spin^c(n)} \rightarrow X$ together with a bundle map $P_{Spin^c(n)} \rightarrow P_{SO(n)} \oplus P_{U(1)}$ that is $\xi^c \colon Spin^c(n) \rightarrow SO(n) \oplus U(1)$ fibrewise. \hfill $\Box$
\end{lemma}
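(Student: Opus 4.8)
The plan is to produce a pair of mutually inverse constructions passing between the two notions of $Spin^c$-structure, using the fact that $\xi^c$ is essentially the product $(\xi,\det)$. Concretely, let $pr_1\colon SO(n)\times U(1)\to SO(n)$ and $pr_2\colon SO(n)\times U(1)\to U(1)$ be the projections. From the formula $\xi^c([(\varphi,\lambda)])=(\xi(\varphi),\lambda^2)$ and the definition $\det([(\varphi,\lambda)])=\lambda^2$ we get $pr_1\circ\xi^c=\xi$ and $pr_2\circ\xi^c=\det$. I would record this first, since every fibrewise compatibility below reduces to it. Here $P_{SO(n)}\oplus P_{U(1)}$ denotes the fibre product over $X$, a principal $SO(n)\times U(1)$-bundle, with its two projections to $P_{SO(n)}$ and $P_{U(1)}$ being $pr_1$ and $pr_2$ fibrewise.

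Next I would give the two constructions. Given a $Spin^c$-structure in the new sense, i.e.\ a bundle map $f\colon P_{Spin^c(n)}\to P_{SO(n)}\oplus P_{U(1)}$ that is $\xi^c$ fibrewise, compose with the projection to $P_{SO(n)}$; since $pr_1\circ\xi^c=\xi$, the composite $P_{Spin^c(n)}\to P_{SO(n)}$ is $\xi$ fibrewise, so we obtain a $Spin^c$-structure in the original sense. Conversely, given $\pi\colon P_{Spin^c(n)}\to P_{SO(n)}$ that is $\xi$ fibrewise, form the associated principal $U(1)$-bundle $P_{U(1)}:=P_{Spin^c(n)}\times_{\det}U(1)$ (the principal bundle underlying the determinant line bundle $L$), and define $P_{Spin^c(n)}\to P_{SO(n)}\oplus P_{U(1)}$ by $p\mapsto(\pi(p),[p,1])$; a fibrewise check using $pr_1\circ\xi^c=\xi$ and $pr_2\circ\xi^c=\det$ shows this map is $\xi^c$ fibrewise. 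One also checks both assignments respect isomorphism of $Spin^c$-structures, so they descend to the relevant sets of equivalence classes.

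The remaining, and I expect only genuinely delicate, step is to verify these two assignments are inverse to each other up to isomorphism. The round trip (original sense) $\to$ (new sense) $\to$ (original sense) returns $\pi$ on the nose, so that direction is immediate. For the other composite one starts with $f\colon P_{Spin^c(n)}\to P_{SO(n)}\oplus P_{U(1)}$, discards $P_{U(1)}$, and rebuilds it as $P_{Spin^c(n)}\times_{\det}U(1)$; one must show the rebuilt bundle is canonically isomorphic to the original $P_{U(1)}$ compatibly with all structure maps. This works because $pr_2\circ f$ is $\det$ fibrewise, hence $(p,\lambda)\mapsto (pr_2\circ f)(p)\cdot\lambda$ is a well-defined $U(1)$-equivariant map $P_{Spin^c(n)}\times_{\det}U(1)\to P_{U(1)}$ over $X$, and being an equivariant map of principal $U(1)$-bundles over $X$ it is automatically an isomorphism. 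The obstacle is purely the bookkeeping: checking well-definedness against the $\det$-relation, $U(1)$-equivariance, and that this isomorphism intertwines the two bundle maps into $P_{SO(n)}\oplus P_{U(1)}$ — no new idea is needed beyond $\xi^c=(\xi,\det)$ and the general fact that a morphism of principal bundles over a fixed base covering the identity is invertible.
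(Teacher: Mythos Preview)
Your argument is correct. The key observation that $\xi^c=(\xi,\det)$ reduces everything to routine bookkeeping, and you have carried out that bookkeeping accurately: the fibrewise checks, the well-definedness of the isomorphism $P_{Spin^c(n)}\times_{\det}U(1)\to P_{U(1)}$, and the two round trips all go through as you describe.

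However, there is nothing to compare against: the paper does not prove this lemma. It is stated with a citation to \cite{Morgan} and closed immediately with a $\Box$, so the paper simply imports the result. Your write-up therefore supplies a proof where the paper gives none.
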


\begin{lemma} \cite{bald2} For $n = 3$ (or $4$), we may equivalently define a $Spin^c${\it -structure} to be $2$ (or $4$) -dimensional complex vector bundle $W$ endowed with a Hermitian metric and a map $\rho : T^*M \rightarrow End(W)$ satisfying
$$\rho(v)\rho(w) + \rho(w)\rho(v) = -2<v,w>Id_W.$$ \hfill $\Box$   
\end{lemma}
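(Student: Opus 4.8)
The plan is to exhibit mutually inverse constructions in the two directions. Fix the "model" spinor module: $S = \mathbb C^2$ when $n = 3$ and $S = \mathbb C^4$ when $n = 4$, with its standard Hermitian metric. By Lemmas~\ref{spinc3u2} and~\ref{spinc4u2} the group $Spin^c(n)$ sits inside $U(S)$, and (using Lemmas~\ref{cliffmult} and~\ref{endiso}, resp.\ Lemmas~\ref{cliff3} and~\ref{endiso} for $n=3$) $S$ carries a fixed Clifford multiplication $c_0 \colon \mathbb R^n \times S \to S$, namely the restriction of Clifford multiplication on the spinor module, which satisfies $c_0(v, c_0(w, s)) + c_0(w, c_0(v, s)) = -2\langle v, w\rangle\, s$ by Lemma~\ref{cliffbasis} and polarization, and which is $Spin^c(n)$-equivariant: $c_0(\xi(g) v, g s) = g\, c_0(v, s)$, since conjugation by $g \in Spin^c(n)$ acts on $\mathbb R^n \subset \mathcal C l(\mathbb R^n)\otimes \mathbb C$ exactly as $\xi(g)$ (the map $\xi$ being induced by $Ad$; see Lemma~\ref{spincover}).

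\emph{From a $Spin^c$-structure to $(W, \rho)$.} Given $P_{Spin^c(n)} \to P_{SO(n)}$, set $W := P_{Spin^c(n)} \times_{Spin^c(n)} S$. Invariance of the Hermitian metric on $S$ under $U(S) \supset Spin^c(n)$ induces a Hermitian metric on $W$. Using the Riemannian metric to identify $T^*M \cong TM$ and the fact that $TM \cong P_{Spin^c(n)} \times_{Spin^c(n)} \mathbb R^n$ (the action on $\mathbb R^n$ factoring through $\xi$), the equivariant pairing $c_0$ descends to a bundle map $\rho \colon T^*M \to End(W)$, and the pointwise Clifford relation for $c_0$ gives the asserted identity for $\rho$.

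\emph{From $(W, h, \rho)$ to a $Spin^c$-structure.} By the defining Clifford relation, $\rho$ extends fibrewise to an algebra representation $\mathcal C l(T^*_xM)\otimes \mathbb C \to End(W_x)$. For $n = 4$, $\mathcal C l(\mathbb C^4) \cong Mat(4,\mathbb C)$ (Lemma~\ref{cliffmult}) is simple with unique irreducible module $\mathbb C^4 = S$, so each $W_x$ is isomorphic to $S$ as a Clifford module; for $n = 3$ the two $2$-dimensional irreducibles are separated by the sign of the action of $\omega_\mathbb C$, and after fixing the orientation convention $W_x \cong S$ again. Define $P \to M$ to be the bundle whose fibre over $x$ consists of pairs $(e, u)$ with $e = (e_1,\dots,e_n)$ an oriented orthonormal coframe of $T^*_xM$ and $u \colon S \xrightarrow{\sim} W_x$ a unitary isomorphism satisfying $u(c_0(e^0_i, s)) = \rho(e_i)\,u(s)$, where $e^0_i$ is the standard basis of $\mathbb R^n$. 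Such pairs exist locally (choose a smooth coframe; $u$ exists by the module isomorphism and is unique up to the relevant stabilizer by Schur's lemma), so $P$ is a smooth principal bundle whose structure group is exactly $\{(A,g) \in SO(n)\times U(S) : g\,c_0(v,\cdot)\,g^{-1} = c_0(Av,\cdot)\ \text{for all}\ v\}$; by the low-dimensional descriptions of $\mathcal C l(\mathbb C^n)$ together with Lemmas~\ref{spinc3u2} and~\ref{spinc4u2}, this group is $Spin^c(n)$, and $(e,u)\mapsto e$ is $\xi$ fibrewise, so $P \to P_{SO(n)}$ is a $Spin^c$-structure (its determinant line bundle being the one attached to $\det$, equivalently the $U(1)$-factor of the reformulation of a $Spin^c$-structure via $P_{SO(n)}\oplus P_{U(1)}$).

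\emph{Equivalence and the hard part.} One then checks the two constructions are inverse up to isomorphism of $Spin^c$-structures: a point of $P_{Spin^c(n)}$ over $x$ simultaneously determines its image coframe in $P_{SO(n)}$ and a unitary trivialization of $W_x = (P_{Spin^c(n)}\times_{Spin^c(n)} S)_x$, compatibly with $\rho$, hence a point of the reconstructed $P$; conversely the associated spinor bundle of that $P$ is canonically $(W,\rho)$. The main obstacle is the structure-group computation in the reverse direction — showing the group of pairs $(A,g)$ compatible with Clifford multiplication is neither larger nor smaller than $Spin^c(n)$. This is precisely where one must unwind the definition $Spin^c(n) = [Spin(n)\times S^1] \subset \mathcal C l(\mathbb R^n)\otimes \mathbb C$ and the explicit identifications $Spin^c(3) \cong U(2)$ and $Spin^c(4) \cong \{(A,B)\in U(2)\times U(2) : \det A = \det B\}$; for $n = 3$ there is the additional subtlety of singling out the correct one of the two inequivalent irreducible Clifford modules, i.e.\ pinning down an orientation convention so that $\rho$ genuinely takes values in $End(W)$ with $W$ the intended $2$-dimensional spinor module rather than its conjugate.
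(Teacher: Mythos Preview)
The paper does not actually prove this lemma: it is cited from \cite{bald2} and closed immediately with a $\Box$, so there is no proof in the paper to compare your proposal against.

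That said, your argument is the standard one and is essentially correct. The forward direction is exactly the associated-bundle construction the paper carries out anyway (in the section on Clifford bundles), and your reverse construction --- building the principal bundle of pairs (oriented orthonormal coframe, unitary Clifford-intertwiner) --- is the usual way to recover $P_{Spin^c(n)}$ from $(W,h,\rho)$. The identification of the structure group with $Spin^c(n)$ is, as you say, the crux: for $n=4$ it follows from the simplicity of $\mathcal C l(\mathbb C^4)\cong Mat(4,\mathbb C)$ and Schur's lemma (the unitary intertwiners of the unique irreducible that normalize $c_0(\mathbb R^4)$ form exactly $Spin^c(4)$), and for $n=3$ one must, as you note, impose the orientation convention that the complex volume element $\omega_\mathbb C$ acts as $+1$ (say) on $W$ to single out one of the two inequivalent $2$-dimensional irreducibles of $\mathcal C l(\mathbb C^3)\cong Mat(2,\mathbb C)\oplus Mat(2,\mathbb C)$. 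Strictly speaking that constraint is missing from the lemma as stated in the paper, so your flagging it is appropriate rather than a gap in your argument.
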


In the four-dimensional case, using the Clifford multiplication map $\mu : \mathcal C l(\mathbb C^4) \rightarrow Mat(\mathbb C, 4)$ from Lemma~\ref{cliffmult}, the {\it complex spinor bundle} associated to $\mu$ is defined to be the complex vector bundle $W = P_{Spin^c(n)} \times_{\mu} \mathbb C^4$.

We may split $W$ as $W = W^+ \oplus W^-$ where 
$$W^{\pm} = P_{Spin^c(4)}\times_{\mu^{\pm}} (\mathbb C^4)^\pm$$
where $\mu^\pm(\bullet) = \mu (\pi^\pm_\mathbb C\bullet)$. 
$W^+$ is called the {\it positive complex spinor bundle} and $W^-$ is called the {\it negative complex spinor bundle}. From Lemma~\ref{spinc4u2}, both $W^\pm$ have structure group $U(2)$.

Similarly in the three-dimensional case, we define the {\it complex spinor bundle} to be 
$W = P_{Spin^c(3)} \times_{\mu} \mathbb C^2.$

Now we will show $H^2(X; \mathbb Z)$ has an action on $Spin^c(X)$ (the set of isomorphism classes of $Spin^c$-structures on $X$). For $E \in H^2(X; \mathbb Z)$, let $P_{U(1)}$ denote the corresponding principal $U(1)$-bundle. We can define a new $Spin^c$-structure $\xi \otimes E$ as follows. Consider
$$ P_{Spin^c(4)} \times_{U(1)} P_{U(1)} = P_{Spin^c(4)} \times P_{U(1)}  / \sim $$
where $(\varphi, y) \sim (\varphi \cdot \lambda, y \cdot \lambda^{-1})$ for each $\lambda \in U(1)$. On the left, $U(1)$ is identified with $Z(Spin^c(n))$ in our usual way. From Lemma~\ref{spincs1}, this is a principal $Spin^c(n)$ bundle. We can define our bundle map $P_{Spin^c(4)} \times_{U(1)} P_{U(1)} \rightarrow P_{SO(n)}$ by $[\varphi, y] \mapsto \xi(\varphi)$.

\begin{lemma} \cite{4ManifoldsKirby} The above action is free and transitive. \hfill $\Box$
\end{lemma}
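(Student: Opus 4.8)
The plan is to reduce everything to \v Cech cocycles on a good cover that simultaneously trivializes $P_{SO(n)}$ and all the $U(1)$-bundles in sight, exploiting throughout that the relevant ``difference'' data is always forced into the central subgroup $S^1=Z(Spin^c(n))=\ker(\xi\colon Spin^c(n)\to SO(n))$ and therefore commutes with everything. I will do the $n=4$ case explicitly; $n=3$ is identical. Fix the metric and orientation, so that $P_{SO(n)}$ is a fixed bundle with transition functions $g_{\alpha\beta}\colon U_{\alpha\beta}\to SO(n)$ on a good cover $\{U_\alpha\}$. The first step is to set up a dictionary: a $Spin^c$-structure over $P_{SO(n)}$ is the same datum as a choice of lifts $\tilde g_{\alpha\beta}\colon U_{\alpha\beta}\to Spin^c(n)$ with $\xi(\tilde g_{\alpha\beta})=g_{\alpha\beta}$ satisfying $\tilde g_{\alpha\beta}\tilde g_{\beta\gamma}=\tilde g_{\alpha\gamma}$, and two such represent isomorphic $Spin^c$-structures (by an isomorphism covering $\mathrm{id}_{P_{SO(n)}}$) exactly when they differ by a coboundary $h_\alpha h_\beta^{-1}$ with $h_\alpha\colon U_\alpha\to\ker\xi=S^1$. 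A class $E\in H^2(X;\mathbb Z)$ corresponds via $c_1$ (the exponential sheaf sequence) to a class $[e_{\alpha\beta}]\in\check H^1(X;\underline{S^1})$, with Whitney sum of line bundles corresponding to addition in $H^2$.

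Next I would unwind the definition of $\xi\otimes E=P_{Spin^c(4)}\times_{U(1)}P_{U(1)}$, with $U(1)$ sitting in $Spin^c(4)$ as its center, and check that its transition cocycle is precisely $e_{\alpha\beta}\tilde g_{\alpha\beta}$. Centrality of the $e_{\alpha\beta}$ makes it routine to verify that this is again a lift of $g_{\alpha\beta}$ and again a cocycle, that $\xi\otimes 0\cong\xi$, and that $(\xi\otimes E)\otimes E'\cong\xi\otimes(E+E')$; i.e.\ $\otimes$ is genuinely an action. With the dictionary in place, transitivity is short: given two $Spin^c$-structures with cocycles $\tilde g_{\alpha\beta}$ and $\tilde g'_{\alpha\beta}$ over the same $g_{\alpha\beta}$, set $e_{\alpha\beta}:=\tilde g'_{\alpha\beta}\tilde g_{\alpha\beta}^{-1}$; since both sides lift $g_{\alpha\beta}$, this lies in $\ker\xi=S^1$, and writing $\tilde g'_{\alpha\beta}=e_{\alpha\beta}\tilde g_{\alpha\beta}$ and using centrality of the $e$'s, the cocycle identities for $\tilde g$ and $\tilde g'$ force $e_{\alpha\gamma}=e_{\alpha\beta}e_{\beta\gamma}$. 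Thus $[e_{\alpha\beta}]$ is a class $E\in H^2(X;\mathbb Z)$ with $\xi\otimes E$ equal, in these trivializations, to the second structure. Freeness is equally short: if $\xi\otimes E\cong\xi$ then $e_{\alpha\beta}\tilde g_{\alpha\beta}=h_\alpha\tilde g_{\alpha\beta}h_\beta^{-1}$ with $h_\alpha\colon U_\alpha\to S^1$, and centrality of the $h$'s cancels the $\tilde g$'s, leaving $e_{\alpha\beta}=h_\alpha h_\beta^{-1}$, i.e.\ $E=0$.

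The one genuine obstacle is bookkeeping around the noncommutativity of $Spin^c(n)$: each of the manipulations above is legitimate only because every correction term ($e_{\alpha\beta}$, $h_\alpha$) is pushed into the center $S^1$ by compatibility with $\xi$, and because isomorphisms of $Spin^c$-structures are required to cover $\mathrm{id}_{P_{SO(n)}}$. A coordinate-free alternative, which I might mention as a remark, is to form for two structures $\xi_i\colon P_i\to P_{SO(n)}$ the fiber product $P_1\times_{P_{SO(n)}}P_2$, quotient by the diagonal $Spin^c(n)$-action to obtain a principal $S^1$-bundle $Q\to X$ (the residual $S^1$ acting on the first factor only), and take $E=c_1(Q)$ as the difference class; but verifying $\xi_1\otimes E\cong\xi_2$ coordinate-free is no shorter than the \v Cech computation, so I would present the cocycle argument as the main proof.
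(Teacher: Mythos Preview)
Your \v Cech-cocycle argument is correct and is in fact the standard proof of this fact. Note, however, that the paper does not supply its own proof here: the lemma is stated with a citation to \cite{4ManifoldsKirby} and closed with a $\Box$, so there is nothing in the paper to compare your approach against.
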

 
Observe the induced map $\det : Spin^c(n) \times_{S^1} S^1 \rightarrow S^1$ is given by $[\varphi \otimes z, \lambda] \mapsto z^2 \lambda^2$. We can write this as $\det = \det_1 \det_2 \det_2$ where $\det_i : Spin^c(n) \times_{S^1} S^1 \rightarrow S^1$ are given by $det_1([\varphi \otimes z, \lambda]) = z^2$ and $det_2([\varphi \otimes z, \lambda]) = \lambda$. Hence
$$
\begin{array}{r l}
(P_{Spin^c(n)} \times_{U(1)} P_{U(1)}) \times_{\det} \mathbb C =&  (P_{Spin^c(n)} \times_{U(1)} P_{U(1)}) \times_{\det} \mathbb C \otimes \mathbb C \otimes \mathbb C  \\
 = & ((P_{Spin^c(n)} \times_{U(1)} P_{U(1)}) \times_{\det_1} \mathbb C) \otimes \\
 & ((P_{Spin^c(n)} \times_{U(1)} P_{U(1)}) \times_{\det_2} \mathbb C) \otimes \\
 & ((P_{Spin^c(n)} \times_{U(1)} P_{U(1)}) \times_{\det_2} \mathbb C) \\
 =& L \otimes E \otimes E
\end{array}
$$
So we see our action has the following effect on determinant line bundles: $L \mapsto L + 2E$.

In the $n=4$ case, observe the induced map 
$$\mu : Spin^c(4) \times_{S^1} S^1 \rightarrow Mat(\mathbb C, 4)$$
is given by $[\varphi \otimes z, \lambda] \mapsto \lambda \mu(\varphi \otimes z)$ which we will write as $\mu = \mu_1 \mu_2$. So
$$
\begin{array}{r l}
(P_{Spin^c(4)} \times_{U(1)} P_{U(1)}) \times_{\mu} \mathbb C^4 =&  (P_{Spin^c(4)} \times_{U(1)} P_{U(1)}) \times_{\mu} \mathbb C^4 \otimes \mathbb C  \\
 = & ((P_{Spin^c(4)} \times_{U(1)} P_{U(1)}) \times_{\mu_2} \mathbb C^4) \otimes \\
 & ((P_{Spin^c(4)} \times_{U(1)} P_{U(1)}) \times_{\mu_1} \mathbb C) \\
 =& W \otimes E 
\end{array}
$$
Thus we see our action has the following effect on complex spinor bundles: $W \mapsto W \otimes E$.

The $n=3$ case is similar and we also have $W \mapsto W \otimes E$.

\end{section}

\begin{section}{Clifford bundles}

\begin{definition}
Given a oriented Riemannian $n$-manifold $X$ with frame bundle $P_{SO(n)}$, we define the {\it Clifford bundle} of $X$ as $\mathcal C l (X) = P_{SO(n)} \times_{SO(n)} \mathcal C l (\mathbb R^n)$. We also have the complexified Clifford bundle $C l (X) \otimes \mathbb C = P_{SO(n)} \times_{SO(n)} (\mathcal C l (\mathbb R^n) \otimes \mathbb C)$.
\end{definition}

Let $X$ be a oriented Riemannian $n$-manifold with frame bundle $P_{SO(n)}$ and $Spin^c$-structure $\xi : P_{Spin^c(n)} \rightarrow P_{SO(n)}$. 

\begin{lemma}
The map $\xi : P_{Spin^c(n)} \rightarrow P_{SO(n)}$ induces a bundle isomorphism:
$$P_{Spin^c(n)} \times_{Ad} \mathcal C l (\mathbb R^n) \otimes \mathbb C \rightarrow  \mathcal C l(X) \otimes \mathbb C$$
where $Ad : Spin^c(n) \rightarrow Aut(\mathcal C l (\mathbb R^n) \otimes \mathbb C)$ is given by 
$$\varphi \otimes \lambda \mapsto (y \otimes v \mapsto \varphi y \varphi^{-1} \otimes \lambda v \lambda^{-1} = \varphi y \varphi^{-1} \otimes v).$$
\end{lemma}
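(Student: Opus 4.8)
The plan is to build the asserted bundle isomorphism fibrewise from the double cover $\xi \colon Spin^c(n) \to SO(n)$ and then check it is well-defined. First I would recall that the Clifford bundle is defined as $\mathcal{C}l(X) \otimes \mathbb{C} = P_{SO(n)} \times_{SO(n)} (\mathcal{C}l(\mathbb{R}^n) \otimes \mathbb{C})$, where $SO(n)$ acts on $\mathcal{C}l(\mathbb{R}^n) \otimes \mathbb{C}$ by extending its action on $\mathbb{R}^n \subset \mathcal{C}l(\mathbb{R}^n)$ multiplicatively (this is well-defined on the Clifford algebra since $g \in SO(n)$ preserves the quadratic form, so it respects the defining ideal). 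The key observation is that this $SO(n)$-action coincides, after pulling back along $\xi$, with the adjoint action: for $\varphi \in Spin(n) \subset Pin(n)$ and $v \in \mathbb{R}^n$, we have $\varphi v \varphi^{-1} = \xi(\varphi)(v) \in \mathbb{R}^n$ by the discussion preceding Lemma~\ref{spincover}, and since both $Ad_\varphi$ and the $SO(n)$-action are algebra automorphisms that agree on the generators $\mathbb{R}^n$, they agree on all of $\mathcal{C}l(\mathbb{R}^n)$; tensoring with $\mathbb{C}$, on which the $S^1$-factor of $Spin^c(n)$ acts trivially by conjugation, gives the stated $Ad$ on $\mathcal{C}l(\mathbb{R}^n) \otimes \mathbb{C}$.

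Next I would write down the candidate map explicitly. A point of $P_{Spin^c(n)} \times_{Ad} (\mathcal{C}l(\mathbb{R}^n) \otimes \mathbb{C})$ is an equivalence class $[p, w]$ with $p \in P_{Spin^c(n)}$, $w \in \mathcal{C}l(\mathbb{R}^n) \otimes \mathbb{C}$, modulo $[p \cdot \psi, w] \sim [p, Ad_\psi(w)]$ for $\psi \in Spin^c(n)$; I send this to $[\xi(p), w] \in \mathcal{C}l(X) \otimes \mathbb{C}$. Well-definedness is exactly the compatibility from the previous paragraph: $[\xi(p \cdot \psi), w] = [\xi(p) \cdot \xi(\psi), w] = [\xi(p), \xi(\psi) \cdot w] = [\xi(p), Ad_\psi(w)]$, using that $\xi$ is $\xi$-equivariant on principal bundles and that the $SO(n)$-action equals $Ad$. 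The inverse is built by choosing local lifts: over a trivializing open set, $\xi$ admits a section $s$ of $P_{Spin^c(n)} \to P_{SO(n)}$ up to the $S^1$-ambiguity, and $[q, w] \mapsto [s(q), w]$ is independent of the $S^1 \times \{\pm 1\}$ choice precisely because conjugation by central elements is trivial on $\mathcal{C}l(\mathbb{R}^n)\otimes\mathbb C$; these local inverses patch together. Finally I would note the map is fibrewise a linear isomorphism (it is the identity on the model fibre $\mathcal{C}l(\mathbb{R}^n)\otimes\mathbb C$ after trivializing), hence a bundle isomorphism.

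The main obstacle — really the only substantive point — is verifying that the $SO(n)$-action defining $\mathcal{C}l(X)\otimes\mathbb C$ pulls back along $\xi$ to the adjoint action $Ad$, i.e. that $\xi(\varphi)$ acting on $\mathbb{R}^n$ agrees with conjugation by $\varphi$; everything else is the standard formalism of associated bundles. This is immediate from how $\xi$ was constructed in Lemma~\ref{spincover} (it is induced by $Ad$ restricted to $\mathbb{R}^n$), together with the remark that the $S^1$-factor of $Spin^c(n)$ lies in the center of $\mathcal{C}l(\mathbb{R}^n)\otimes\mathbb C$ and therefore acts trivially by conjugation, which is why the formula for $Ad$ in the statement collapses $\lambda v \lambda^{-1}$ to $v$.
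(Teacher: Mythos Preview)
Your proposal is correct and follows essentially the same approach as the paper: both define the map $[p,w]\mapsto[\xi(p),w]$, verify well-definedness via the compatibility $Ad_\psi = \xi(\psi)\cdot(-)$ on $\mathcal Cl(\mathbb R^n)\otimes\mathbb C$, and then check bijectivity. The only minor difference is that the paper proves injectivity and surjectivity directly (using transitivity of the $Spin^c(n)$-action on fibres and surjectivity of $\xi$), whereas you construct a local inverse via sections and invoke triviality of conjugation by the central $S^1$; both are standard and equivalent.
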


\begin{proof}
Define a map
$$P_{Spin^c(n)} \times \mathcal C l (\mathbb R^n) \otimes \mathbb C \rightarrow  P_{SO(n)} \times \mathcal C l(\mathbb R^n) \otimes \mathbb C$$
by $(y, v) \mapsto (\xi(y), v)$. For $\varphi \otimes \lambda \in Spin^c(n)$ and $(y,v\otimes z) \in P_{Spin^c(n)} \times \mathcal C l (\mathbb R^n) \otimes \mathbb C$, we have
$$(y \cdot \varphi^{-1} \otimes \lambda^{-1}, \varphi v \varphi^{-1} \otimes z) \mapsto (\xi(y) \cdot \xi(\varphi)^{-1}, \xi(\varphi)v \otimes z)$$
so our map induces a bundle map
$$\xi^\prime : P_{Spin^c(n)} \times_{Ad} \mathcal C l (\mathbb R^4) \otimes \mathbb C \rightarrow  \mathcal C l(X) \otimes \mathbb C.$$

Surjectivity follows from the fact that $\xi$ is onto. To see $\xi^\prime$ is injective suppose 
$$\xi^\prime([y_1, v_1\otimes z_1 ]) = \xi ^\prime([y_2, v_2\otimes z_2 ]).$$

Then $[(\xi(y_1), v_1\otimes z_1)] = [(\xi(y_2), v_2 \otimes z_2 )]$ and hence  
$$(\xi(y_1) \cdot \xi(\varphi)^{-1}, \xi(\varphi)v_1 \otimes z_1) = (\xi(y_2), v_2 \otimes z_2)$$
for some $\varphi \otimes \lambda \in Spin^c(n)$. Since $Spin^c(n)$ acts transitively on the fibres of $P_{Spin^c(n)}$, we have $y_1 \cdot (\varphi^\prime \otimes \lambda^\prime)^{-1} = y_2$ for some $\varphi^\prime \otimes \lambda^\prime \in Spin^c(4)$. Observe $$\xi(y_2) = \xi(y_1 \cdot (\varphi^\prime \otimes \lambda^\prime)^{-1}) = \xi(y_1) \cdot \xi(\varphi^\prime)^{-1}$$ so since $SO(n)$ acts freely on the fibres of $P_{SO(n)}$, it follows $\xi(\varphi^\prime) = \xi (\varphi)$ and hence
$$(y_1 \cdot (\varphi^\prime \otimes \lambda^\prime)^{-1}, \xi(\varphi^\prime) v_1 \otimes z_1) = (y_2, v_2 \otimes z_2)$$ 
and therefore $\xi^\prime$ is a bundle isomorphism.

\end{proof}

Now additionally suppose $X$ is $4$-dimensional with complex spinor bundle $W = W^+ \oplus W^-$. We'll show $C l (X) \otimes \mathbb C$ has an action called Clifford multiplication on $W$. Define a map 
$$C : P_{Spin^c(4)} \times (\mathcal C l (\mathbb C^4) \otimes \mathbb C^4) \rightarrow P_{Spin^c(4)} \times \mathbb C^4$$
by $(q, \varphi \otimes v) \mapsto (q, \varphi \cdot v)$ where $\cdot$ denotes Clifford multiplication. For $g \in Spin^c(4)$, we have 
$$C(q g^{-1}, g \varphi g^{-1} \otimes g \cdot v) = (q g^{-1}, g \varphi g^{-1} \cdot g \cdot v) = (q g^{-1}, g \cdot (\varphi \cdot v))$$
so this induces a bundle map
$$C: (\mathcal C l(X) \otimes \mathbb C) \otimes W \rightarrow W$$ 
which we will refer to as the {\it Clifford multiplication} map yet again.

Finally from Lemma~\ref{cliffembed}, $\mathcal C l (X)$ contains the subbundle 
$$P_{SO(4)} \times_{SO(4)} \mathbb R^4 \subset P_{SO(4)} \times_{SO(4)} \mathbb \mathcal C l(\mathbb R^4) = C l (X)$$ which is canonically isomorphic to $TX$. It follows $C l (X) \otimes \mathbb C$ contains a subbundle canonically isomorphic to $TX \otimes \mathbb C$. Thus using the canonical identification of tangent and cotangent bundles, we can define a map
$$C: (T^*X \otimes \mathbb C) \otimes W \rightarrow W.$$
As a result of Lemma~\ref{cliffiso} , we have the restrictions
$$C: (T^*X \otimes \mathbb C) \otimes W^\pm \rightarrow W^\mp.$$

\end{section}

 \chapter{The Seiberg-Witten Invariants of 4-Manifolds}

In this chapter, we will define the Seiberg-Witten invariants of an oriented Riemannian $4$-manifold. Most of these results can be found in \cite{Morgan} and \cite{4ManifoldsKirby}.

\begin{section}{The Dirac operator}

There are two maps necessary for the Seiberg-Witten equations, the Dirac operator and the curvature map. Here we define the Dirac operator.

Let $X$ be a oriented Riemannian 4-manifold with metric $g$. Consider a $Spin^c(4)$ structure on $X$ with determinant line bundle $L$ and complex spinor bundles $W^{\pm}$.

Recall the Levi-Civita connexion on $X$ is an $SO(4)$-connexion on the vector bundle $TX \rightarrow X$ induced by $g$, so we may recover a principal connexion on the frame bundle of $X$, $P_{SO(4)}$.
Let $A \in \mathcal{A}_L$ where $\mathcal{A}_L$ is the space of $U(1)$-connexions on $L$. From this, we may recover a principal connexion on the frame bundle of $L$, $P_{U(1)}$. 
Together, these two connexions determine a principal connexion on $P_{SO(4)} \oplus P_{U(1)}$. Note $Lie(Spin^c(4)) \cong Lie(SO(4)) \oplus Lie(U(1))$ so we may pull back this connexion to a principal connexion on $P_{Spin^c(4)}$ via the given bundle map $P_{Spin^c(4)} \rightarrow P_{SO(4)} \oplus P_{U(1)}$.

$$
\begin{tikzpicture}[description/.style={fill=white,inner sep=2pt}]
\matrix (m) [matrix of math nodes, row sep=3em,
column sep=2.5em, text height=1.5ex, text depth=0.25ex]
{       & P_{Spin^c(4)}              & W = W^+ \oplus W^-   \\
    TX    & P_{SO(4)} \oplus P_{U(1)}  & P_{U(1)} & L  \\};
\path[->,font=\scriptsize]
(m-1-2) edge node[auto]{} (m-2-2);
\path[->,font=\scriptsize]
(m-2-2) edge node[auto]{$proj_2$} (m-2-3);
\end{tikzpicture}
$$
We point out that $P_{Spin^c(4)}$ is the frame bundle of $W$ and $P_{SO(4)}$ is the frame bundle of $TX$. We write the induced connexion on $W^+$ as $\nabla_A \colon \Gamma (X ; W^+ ) \rightarrow \Gamma (X ; W^+ \otimes T^* X )$. 

The {\it Dirac operator} $\slashed{\partial}_A \colon \Gamma (X ; W^+) \rightarrow \Gamma (X ; W^-)$ induced by $A \in \mathcal{A}_L$ is defined to be the composition of the two maps, $\slashed{\partial}_A = C \circ \nabla_A$ where $C \colon \Gamma (X, W^+ \otimes T^* X) \rightarrow \Gamma (X, W^-)$ is the Clifford multiplication map.

\end{section}

\begin{section}{The curvature map}

Now we define the curvature map. Let $X$ be an oriented Riemannian $4$-manifold together with a $Spin^c(4)$-structure $\xi$ having positive complex spinor bundle $W^+$. The following is a standard result that can be found in \cite{found}.

\begin{proposition} If $E \rightarrow X$ is an $n$-dimensional complex vector bundle then there is a natural one-to-one correspondence between Hermitian metrics on $E$ and reductions of the structure group of $E$ to $U(n) \subset GL(\mathbb C, n)$. \hfill $\Box$
\end{proposition}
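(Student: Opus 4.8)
The plan is to construct explicit maps in both directions and verify they are mutually inverse and compatible with bundle maps. Let $P_{GL(\mathbb C, n)}$ denote the frame bundle of $E$, whose fibre over $x \in X$ consists of the complex linear isomorphisms $u \colon \mathbb C^n \to E_x$, with $GL(\mathbb C, n)$ acting on the right by precomposition.

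First, given a Hermitian metric $h$ on $E$, I would set $P_{U(n)} \subset P_{GL(\mathbb C, n)}$ to be the subset of frames $u \colon \mathbb C^n \to E_x$ that carry the standard Hermitian inner product $\langle \cdot, \cdot\rangle$ on $\mathbb C^n$ to $h_x$. Since $U(n)$ is precisely the stabiliser of $\langle \cdot, \cdot\rangle$ inside $GL(\mathbb C, n)$ and acts simply transitively on the unitary frames of a fixed fibre, $P_{U(n)}$ is a principal $U(n)$-bundle provided it is locally trivial. This is where Gram--Schmidt enters: applying the Gram--Schmidt process fibrewise to an arbitrary local frame of $E$ (a smooth operation, as $h$ is smooth and positive definite) produces a smooth local section of $P_{U(n)}$, hence local triviality. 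The inclusion $P_{U(n)} \hookrightarrow P_{GL(\mathbb C, n)}$ is by construction equivariant for $U(n) \hookrightarrow GL(\mathbb C, n)$, so it is a reduction of structure group.

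Conversely, given a reduction $P_{U(n)} \hookrightarrow P_{GL(\mathbb C, n)}$, I would define a Hermitian metric by declaring every frame $u$ in the fibre of $P_{U(n)}$ over $x$ to be unitary; concretely $h_x(v, w) := \langle u^{-1}v, u^{-1}w\rangle$ for $v, w \in E_x$. This is independent of the choice of $u$ in the fibre, since any two such frames differ by right multiplication by an element of $U(n)$, which preserves $\langle \cdot, \cdot\rangle$; it is smooth because $P_{U(n)}$ admits smooth local sections; and it is conjugate-symmetric and positive definite because $\langle \cdot, \cdot\rangle$ is. Checking that the two constructions are mutually inverse: starting from $h$, passing to $P_{U(n)}$ and back to a metric $h'$, the frames of $P_{U(n)}$ are $h$-unitary by definition, so $h' = h$ since an inner product is recovered from any one of its orthonormal frames; starting from a reduction $P_{U(n)}$, passing to $h$ and back to the bundle $P'_{U(n)}$ of $h$-unitary frames, a frame lies in $P_{U(n)}$ iff it is $h$-unitary (one direction is the definition of $h$; for the other, an $h$-unitary frame $u'$ equals $u g$ for some $u \in P_{U(n)}$ and $g \in GL(\mathbb C, n)$, and comparing inner products forces $g \in U(n)$), so $P'_{U(n)} = P_{U(n)}$. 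Naturality with respect to pullback and bundle isomorphisms is immediate from the pointwise definitions.

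I expect the only genuinely technical step to be establishing smoothness and local triviality of $P_{U(n)}$ in the first direction, which rests on the smoothness of the Gram--Schmidt orthonormalisation; everything else reduces to the elementary fact that $U(n)$ is the isotropy group of the standard Hermitian form.
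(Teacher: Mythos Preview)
Your argument is correct and is exactly the standard proof of this classical fact. Note, however, that the paper does not supply its own proof of this proposition: it is stated with a citation to \cite{found} and marked with a terminal $\Box$, so there is nothing in the paper to compare your argument against. What you have written is essentially what one finds in the cited reference.
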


So since we already have a reduction of the structure group of $W^+$ to $U(2)$ via the associated bundle construction, we also have a Hermitian metric $h : W^+ \times W^+ \rightarrow \mathbb C$. Note $h$ induces an anti-complex bundle isomorphism $W^+ \rightarrow (W^+)^*$.

Consider the map,
$$ \begin{array}{r l}
 q : W^+ & \rightarrow End(W^+) \\
                                        
                   \varphi & \mapsto \varphi \otimes h(\varphi, \bullet) - \frac{1}{2} (\text{Tr}(\varphi \otimes h(\varphi, \bullet))Id.
   \end{array}
$$

From Lemma \ref{endiso}, Clifford multiplication induces a vector bundle isomorphism
$$End(W^+) \rightarrow P_{SO(4)} \times_{SO(4)} \mathcal C l^+_0(\mathbb C^4)$$
and from Lemma \ref{extcliff}, we have an isomorphism
$$P_{SO(4)} \times_{SO(4)} \mathcal C l^+_0(\mathbb C^4) \rightarrow P_{SO(4)} \times_{SO(4)} \pi_\mathbb C^+ \mathbb C \oplus (\wedge^+(\mathbb R^4) \otimes \mathbb C).$$

Observe when $C l^+_0(\mathbb C^4)$ acts on $(\mathbb C^4)^+$ by Clifford multiplication, $\pi_\mathbb C^+$ acts as the identity. Thus since all the endomorphisms in the image of $q$ are traceless, by composing these bundle maps we obtain the map
$$ q : W^+ \rightarrow P_{SO(4)} \times_{SO(4)} \wedge^+(\mathbb R^4) \otimes \mathbb C = \wedge^+(TX) \otimes \mathbb C$$

Finally using the canonical identification of tangent and cotangent bundles, we can define the map
$$ q : W^+ \rightarrow \wedge^+(X) \otimes \mathbb C$$

\begin{lemma}
\cite{Morgan} For each $\sigma \in \Gamma(W^+)$, $q(\sigma) \in i\Omega^+(X)$ is a purely imaginary self-dual $2$-form. \hfill $\Box$
\end{lemma}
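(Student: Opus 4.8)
The plan is to show that for every $\sigma\in\Gamma(W^+)$ the endomorphism $q(\sigma)$, viewed under the isomorphism $\mathrm{End}(W^+)\cong P_{SO(4)}\times_{SO(4)}\mathcal{C}l_0^+(\mathbb C^4)$ and then under the splitting of Lemma~\ref{extcliff}, lands in the purely imaginary part of $\wedge^+(X)\otimes\mathbb C$. The computation is entirely fibrewise and equivariant, so it suffices to work in a single fibre: fix $\varphi\in(\mathbb C^4)^+$ and study the endomorphism $q(\varphi)=\varphi\otimes h(\varphi,\bullet)-\tfrac12\mathrm{Tr}(\varphi\otimes h(\varphi,\bullet))\,\mathrm{Id}$ of the $2$-dimensional Hermitian space $(\mathbb C^4)^+$.

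First I would record the two properties of $q(\varphi)$ that pin down its image. It is traceless by construction, so by Lemma~\ref{endiso} and the observation (already made in the text, just before the statement of the curvature map's target) that $\pi_\mathbb C^+$ acts as the identity on $(\mathbb C^4)^+$, it corresponds to an element of $\wedge^+(\mathbb R^4)\otimes\mathbb C$ under the composite isomorphism of Lemma~\ref{extcliff} --- this is exactly how the map $q:W^+\to\wedge^+(X)\otimes\mathbb C$ was defined. The remaining point is to locate it in $i\,\wedge^+(\mathbb R^4)$. The key step is a self-adjointness computation: with respect to the Hermitian inner product $h$, the rank-one operator $\varphi\otimes h(\varphi,\bullet)$ sends $\psi\mapsto h(\varphi,\psi)\varphi$, and one checks $h\big((\varphi\otimes h(\varphi,\bullet))\psi,\chi\big)=h(\varphi,\psi)\,h(\varphi,\chi)=h\big(\psi,(\varphi\otimes h(\varphi,\bullet))\chi\big)$, so $\varphi\otimes h(\varphi,\bullet)$ is $h$-self-adjoint, hence so is $q(\varphi)$ (subtracting a real multiple of the identity preserves self-adjointness). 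Thus $q(\varphi)$ is a traceless self-adjoint endomorphism of $(\mathbb C^4)^+$.

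Next I would translate ``traceless self-adjoint'' through the algebra isomorphism $\mathcal{C}l_0^+(\mathbb C^4)\to\mathrm{End}(W^+)^+$ of Lemma~\ref{endiso}. Under the identification $Spin^c(4)^+\cong SU(2)$ (Lemmas~\ref{spin4iso}, \ref{spincspin}, \ref{spinc4u2}), the real Clifford subalgebra $\mathcal{C}l_0^+(\mathbb R^4)$ is the quaternions $\mathbb H$, whose span over $\mathbb R$ of $\{1,e_1e_2+e_3e_4,\,e_1e_3-e_2e_4,\,e_1e_4+e_2e_3\}$ acts on $(\mathbb C^4)^+=\mathbb C^2$ as $\{\mathrm{Id}, \text{three skew-Hermitian traceless matrices}\}$; equivalently the three bivector generators of $\wedge^+(\mathbb R^4)$ act as the $i\times$(Pauli-type) matrices, which are skew-Hermitian and traceless. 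Hence a \emph{real} combination of $\wedge^+(\mathbb R^4)$-generators is skew-Hermitian and traceless, while $i$ times such a combination is \emph{self-adjoint} and traceless. Since $q(\varphi)$ is traceless self-adjoint and lies in the image of $\wedge^+(\mathbb R^4)\otimes\mathbb C$, it must lie in $i\,\wedge^+(\mathbb R^4)$; reattaching the $P_{SO(4)}$-bundle and identifying $TX\cong T^*X$ gives $q(\sigma)\in i\Omega^+(X)$ as claimed, and the self-dual form is the image $2$-form.

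The main obstacle I anticipate is bookkeeping the several identifications consistently: the map $\mathrm{End}(W^+)\cong P_{SO(4)}\times_{SO(4)}\mathcal{C}l_0^+(\mathbb C^4)$ comes from Clifford \emph{multiplication} of $\mathcal{C}l_0^+$ on $(\mathbb C^4)^+$, and one must verify that under this action the adjoint (with respect to $h$) of $c(\xi)$ for a bivector $\xi$ is $-c(\xi)$ --- i.e.\ that the $U(2)$-structure making $h$ invariant is compatible with the Clifford action in the expected way. This is the content that makes ``self-adjoint $\Leftrightarrow$ imaginary bivector'' true rather than merely plausible, and it reduces to the explicit matrices of Lemma~\ref{cliffmult} restricted to the $+$ block: one checks directly from those matrices that $e_ie_j$ (for $i<j\le 4$, in the combinations appearing in the basis of $(\mathcal{C}l_0(\mathbb R^4)\otimes\mathbb C)^+$) act on $(\mathbb C^4)^+$ as skew-Hermitian operators. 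Once that is in hand, everything else is the short linear-algebra argument above.
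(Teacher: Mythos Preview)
The paper does not actually prove this lemma: it is stated with a citation to \cite{Morgan} and closed with a $\Box$, so there is no proof in the paper to compare against. Your argument is the standard one (and is essentially what appears in Morgan): tracelessness is by construction, self-adjointness of $\varphi\otimes h(\varphi,\bullet)$ follows from the Hermitian symmetry of $h$, and then one uses that the real bivectors $e_ie_j$ act skew-Hermitianly on $(\mathbb C^4)^+$ (since each $e_i$ does and $(e_ie_j)^*=e_j^*e_i^*=(-e_j)(-e_i)=-e_ie_j$), so a traceless self-adjoint endomorphism corresponds to a purely imaginary element of $\wedge^+(\mathbb R^4)\otimes\mathbb C$. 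One small bookkeeping point: in your self-adjointness line you dropped a complex conjugate --- with the paper's convention ($h$ conjugate-linear in the first slot, since $\varphi\mapsto h(\varphi,\bullet)$ is anti-complex) one gets $h\big((\varphi\otimes h(\varphi,\bullet))\psi,\chi\big)=\overline{h(\varphi,\psi)}\,h(\varphi,\chi)$, which then matches $h(\psi,(\varphi\otimes h(\varphi,\bullet))\chi)=h(\varphi,\chi)\,h(\psi,\varphi)$ by Hermitian symmetry $\overline{h(\varphi,\psi)}=h(\psi,\varphi)$. The conclusion is unaffected.
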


\end{section}

\begin{section}{The Seiberg-Witten equations and moduli space}

Let $X$ be an oriented Riemannian 4-manifold. Fix a $Spin^c(4)$ structure \\ $\xi : P_{Spin^c(4)} \rightarrow P_{SO(4)}$, determinant line bundle $L$, and complex spinor bundles $W^\pm$.

Consider the space $\mathcal C_\xi := \Gamma (W^+) \times \mathcal A_L$. The {\it Seiberg-Witten equations} are given by
$$ \slashed{\partial}_A \psi = 0 \text{ and } F_A^+ = q(\psi)$$
where $(\psi, A)  \in \Gamma (W^+) \times \mathcal A_L$. Recall the curvature of $A$, $F_A$, can be considered as a $Lie(U(1))$-valued two-form. Since $Lie(U(1)) \cong i \mathbb{R}$, $F_A \in i\Omega^2(X)$. Note $F_A = i\alpha$ for some $2$-form $\alpha \in \Omega^2(X)$ and $\alpha$ splits into self-dual and anti-self-dual parts $\alpha = \alpha^+ + \alpha^-$. Above $F_A^+ = i\alpha^+ \in i\Omega^+(X)$.  

We define the {\it gauge group} $\mathcal G$ of $\xi$ to be the group of automorphisms of $P_{Spin^c(4)}$ that are $id_{Spin^c(4)}$ fibrewise and cover the identity of $P_{SO(4)}$.

\begin{lemma}
We have the following group isomorphism,
$$\mathcal G \cong \text{Map}(X, S^1)$$
where $\text{Map}(X, S^1)$ is endowed with the group structure given by pointwise product on the target.
\end{lemma}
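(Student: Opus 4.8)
The plan is to exhibit an explicit isomorphism $\mathcal G \to \mathrm{Map}(X,S^1)$ and check it is a bijective group homomorphism. Recall that a gauge transformation $\Phi \in \mathcal G$ is an automorphism of $P_{Spin^c(4)}$ which is $\mathrm{id}$ fibrewise (meaning it commutes with the $Spin^c(4)$-action) and which covers the identity of $P_{SO(4)}$. Since $\Phi$ is $Spin^c(4)$-equivariant and covers the identity of the base $X$, on each fibre it must act as right multiplication by an element of the group; equivariance under the (free, transitive) right action forces that element to lie in the center $Z(Spin^c(4))$. By the discussion before Lemma~\ref{spincs1}, the kernel of $\xi : Spin^c(n) \to SO(n)$ is exactly $Z(Spin^c(n)) \cong S^1$, and covering the identity of $P_{SO(4)}$ is precisely the condition that $\Phi$ takes values in this kernel. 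So first I would establish: every $\Phi \in \mathcal G$ is of the form $\Phi(p) = p \cdot f(\tilde\pi(p))$ for a well-defined map $f : X \to S^1 = Z(Spin^c(4))$, where $\tilde\pi$ is the bundle projection, and conversely every such $f$ defines an element of $\mathcal G$. Well-definedness of $f$ uses $Spin^c(4)$-equivariance: if $p' = p\cdot g$ then $\Phi(p') = \Phi(p)\cdot g = p\cdot f(\tilde\pi(p))\cdot g = p'\cdot g^{-1} f(\tilde\pi(p)) g = p' \cdot f(\tilde\pi(p))$ since $f$ is central, so the value assigned does not depend on the choice of $p$ over a given point of $X$. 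Smoothness of $f$ follows from smoothness of $\Phi$ by working in local trivializations.

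Next I would verify this assignment $\Phi \mapsto f$ is a homomorphism. Given $\Phi_1, \Phi_2 \in \mathcal G$ with associated maps $f_1, f_2$, compute
$$(\Phi_1 \circ \Phi_2)(p) = \Phi_1\big(p \cdot f_2(\tilde\pi(p))\big) = \Phi_1(p)\cdot f_2(\tilde\pi(p)) = p \cdot f_1(\tilde\pi(p))\, f_2(\tilde\pi(p)),$$
using that $\Phi_1$ is equivariant and that $\tilde\pi\big(p\cdot f_2(\tilde\pi(p))\big) = \tilde\pi(p)$. Hence the map associated to $\Phi_1\circ\Phi_2$ is the pointwise product $f_1 f_2$, which is exactly the group law on $\mathrm{Map}(X,S^1)$. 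Injectivity is immediate: if $f$ is the constant map $1$ then $\Phi = \mathrm{id}$. Surjectivity: given any smooth $f : X \to S^1$, the formula $\Phi(p) := p\cdot f(\tilde\pi(p))$ defines a smooth bundle automorphism; it is $Spin^c(4)$-equivariant because $f$ is central-valued (the same computation as in the well-definedness check, run backwards), and it covers $\mathrm{id}_X$ by construction, hence covers $\mathrm{id}_{P_{SO(4)}}$ because its fibrewise values lie in $\ker \xi$. So $\Phi \in \mathcal G$ and maps to $f$.

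The only genuinely delicate point is the first one: showing that an arbitrary $\Phi \in \mathcal G$ really is given by right multiplication by a central element, rather than something more complicated. The key is that for a principal $G$-bundle, any $G$-equivariant bundle automorphism covering the identity of the base is automatically of the form $p \mapsto p\cdot \tau(p)$ for a map $\tau : P \to G$ satisfying $\tau(p\cdot g) = g^{-1}\tau(p)g$ — this is a standard fact, proved using that $G$ acts freely and transitively on fibres, so $\Phi(p)$ and $p$ lie in the same fibre and differ by a unique group element. The extra hypotheses in the definition of $\mathcal G$ — that $\Phi$ is $\mathrm{id}_{Spin^c(4)}$ fibrewise, interpreted as $\Phi$ being $Spin^c(4)$-equivariant, together with covering the identity of $P_{SO(4)}$ — pin down $\tau$ to be a central-valued map constant on fibres, i.e. the pullback of a map $X \to S^1$. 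I would treat this as the heart of the argument and spend most of the writeup making it precise; the homomorphism property and the bijectivity then follow by the short computations above.
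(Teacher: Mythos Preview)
Your proposal is correct and follows essentially the same route as the paper: both arguments show that for $\Phi\in\mathcal G$ and $p\in P_{Spin^c(4)}$ one has $\Phi(p)=p\cdot\varphi$ with $\varphi\in\ker\xi=Z(Spin^c(4))\cong S^1$ (using that $\Phi$ covers the identity of $P_{SO(4)}$), then use centrality to see $\varphi$ depends only on the base point, and finally verify the homomorphism and bijectivity directly. One small imprecision: your sentence ``equivariance \ldots\ forces that element to lie in the center'' is not quite right on its own---equivariance only gives the conjugation relation $\tau(pg)=g^{-1}\tau(p)g$; it is the covering-of-$P_{SO(4)}$ condition (which you correctly invoke immediately after) that forces $\tau(p)\in\ker\xi=Z(Spin^c(4))$, and centrality then yields constancy on fibres.
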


\begin{proof}

Let $g \in \mathcal G$, then we have the following commutative diagram.
$$
\begin{tikzpicture}[description/.style={fill=white,inner sep=2pt}]
\matrix (m) [matrix of math nodes, row sep=3em,
column sep=2.5em, text height=1.5ex, text depth=0.25ex]
{ P_{Spin^c(4)}              & P_{Spin^c(4)}   \\
  P_{SO(4)}   & P_{SO(4)}   \\};
\path[->,font=\scriptsize]
(m-1-1) edge node[auto]{$g$} (m-1-2);
\path[->,font=\scriptsize]
(m-1-1) edge node[auto]{$\xi$} (m-2-1);
\path[->,font=\scriptsize]
(m-1-2) edge node[auto]{$\xi$} (m-2-2);
\path[->,font=\scriptsize]
(m-2-1) edge node[auto]{$id$} (m-2-2);
\end{tikzpicture}
$$
Choose $y \in P_{Spin^c(4)}$. Since $Spin^c(4)$ acts transitively on the fibres of $P_{Spin^c(4)}$, we have $g(y) = y \cdot \varphi$ for some $\varphi \in Spin^c(4)$. Then 
$$\xi(y) = \xi (y \cdot \varphi) = \xi (y) \cdot \xi(\varphi).$$
Since $SO(4)$ acts freely on the fibres of $P_{SO(4)}$, we have $\xi(\varphi) = I$.
The kernel of this map is the center of $Spin^c(4)$, $1 \otimes S^1 \subset \mathcal C l(\mathbb R^4) \otimes \mathbb C$ which is isomorphic to $S^1$ via the map $\varphi \otimes \lambda \mapsto \lambda \varphi$ (since the center of $Spin(4)$ is $\pm 1$, this makes sense).

Now for each $y^\prime$ in the same fibre as $y$, we have $y^\prime = y \cdot \varphi^\prime$ for some $\varphi^\prime \in Spin^c(4)$. But then 
$$g(y^\prime) = g(y \cdot \varphi^\prime) = g(y) \cdot \varphi^\prime = y \cdot \varphi \varphi^\prime = y \cdot \varphi^\prime \varphi = y^\prime \cdot \varphi$$
so we see $g$ is given on this fibre by right multiplication by $\varphi$.

Therefore each automorphism in $\mathcal G$ is given by a (smoothly varying) choice of an element of $S^1 =Z(Spin^c(4))$ for each fibre and on the other hand a map $X \rightarrow S^1$ will determine an automorphism of $P_{Spin^c(4)}$ with the desired properties.

To see this induces a group isomorphism, consider $g,h \in \mathcal G$. Recall for $y \in P_{Spin^c(4)}$, we have $g(y) = y \cdot \varphi_1$ and $h(y^\prime) = y \cdot \varphi_2$ for some $\varphi_i \in Z(Spin^c(4)) = S^1$. Then 
$$g \circ h(y) = g(y \cdot \varphi_2) = y \cdot \varphi_2 \varphi_1$$
since earlier we showed $g(y^\prime) = y^\prime \cdot \varphi_1$ for each $y^\prime$ sharing a fibre with $y$. This is precisely the pointwise product on $\text{Map}(X, S^1)$ so we see the two groups are isomorphic.
\end{proof}

Next we will show the gauge group $\mathcal G$ has an action on $\mathcal{A}_L \times \Gamma (W^+)$. For the remainder of this section, we will consider $\mathcal G$ as the group $Map(X, S^1)$ where $S^1$ is identified with the center of $Spin^c(4)$ via the isomorphism $\varphi \otimes \lambda \mapsto \lambda\varphi$.

Let $g \in \mathcal G$ then for each $y \in P_{Spin^c(4)}$ (where $y$ is in the fibre over some $x \in X$), $v \in (\mathbb C^4)^+$, and $z \in \mathbb C$, we have  
$$ \begin{array}{l l}
  (y \cdot g(x) \varphi^{-1}, \mu^+(\varphi) v) &= (y \cdot \varphi^{-1} g(x) , \mu^+(\varphi) v) \\ 
  (y \cdot g(x) \varphi^{-1}, \det (\varphi) z) &= (y \cdot \varphi^{-1} g(x) , \det (\varphi) z) \\ 
 
   \end{array}
$$
for each $\varphi \in Spin^c(4)$ so $\mathcal G$ induces actions on $W^+$ and $L$. Note
$$ \begin{array}{r l}
  [ (y \cdot g(x) ,  v) ] &= [ (y  , \mu^+ (g(x)) v) ] \\ 
 \,  [ (y \cdot g(x) ,  z) ] &=  [ (y  , \det (g(x)) z) ] 
   \end{array}
$$
and each $\varphi \otimes \lambda \in S^1 \subset Spin^c(4)$ acts on $(\mathbb C^4)^+$ via $\mu^+$ by scalar multiplication by $\lambda \varphi \in S^1 \subset \mathbb C$ and on $\mathbb C$ via $\det$ by scalar multiplication by $\lambda^2$. Thus $\mathcal G$ acts on $\Gamma (W^+)$ by $\sigma(x) \mapsto g(x) \sigma(x)$ and on $\Gamma (L)$ by $\sigma(x) \mapsto g^{2}(x) \sigma(x)$ for each $x \in X$ through fibrewise scalar multiplication.

\begin{remark} For purely aesthetic reasons, we will have $\mathcal G$ act on $P_{Spin^c(4)}$ by {\bf anti-multiplication}, i.e. 
$g(x) \mapsto  (\varphi \mapsto \varphi \cdot g^{-1}(x))$
for each $g \in \mathcal G$, $x \in X$, and $\varphi \in P_{Spin^c(4)}$ living in the fibre over $x$. 
So to summarize, we have the induced fibrewise multiplication actions
$$ \begin{array}{r l l}
  g \mapsto & (\sigma \mapsto g^{-1} \sigma) & \text{for $\sigma \in \Gamma(W^+)$}  \\ 
  g \mapsto & (\sigma \mapsto g^{-2} \sigma) & \text{for $\sigma \in \Gamma(L)$}\\  
   \end{array}
$$
\end{remark}

The last action induces a pullback action on $\mathcal{A}_L$ where a $U(1)$-connexion $$A \colon \Gamma(TX) \times \Gamma (L) \rightarrow \Gamma(L)$$
on $L$ is acted upon by $\mathcal G$ via
$ A(V, \sigma) \mapsto g^{-2} A(V, g^2  \sigma).$
Using the Leibniz rule, we may rewrite $g^{-2}  A(V, g^2 \sigma)$ as 
$$g^{-2}  (g^2  A(V, \sigma) + d(g^2) \sigma) = A(V, \sigma) + 2g^{-1} dg  \sigma$$
so we see the above action is given by $A \mapsto A + 2g^{-1}dg.$

\begin{lemma} \label{gequi}
For $g \in \mathcal G = Map(X, S^1)$ and $(A, \psi) \in \mathcal A_L \times \Gamma (W^+)$, we have
$$   \slashed{\partial}_{(g^{-2})^* A} (g^{-1} \psi)  = g^{-1} \slashed{\partial}_{A} (\psi) \text{ , } 
  F_{(g^{-2})^* A}  = F_A \text {, and }
  q(g^{-1} \psi)  = q(\psi).
$$

\end{lemma}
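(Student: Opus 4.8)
The plan is to dispatch the three identities in increasing order of difficulty, starting with the two tensorial ones and finishing with the Dirac operator. The curvature identity is the easiest: we showed above that the induced action on $\mathcal A_L$ is $A \mapsto A + 2 g^{-1} dg$, so $F_{(g^{-2})^*A} = F_A + 2\, d(g^{-1} dg)$. Since $g$ takes values in $S^1 \subset \mathbb C$ we have $g^{-1} = \bar g$, whence $d(g^{-1} dg) = d\bar g \wedge dg = -g^{-2}\, dg \wedge dg = 0$ because $dg$ is a $1$-form; equivalently $g^{-1}dg$ is closed. Hence $F_{(g^{-2})^*A} = F_A$.

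For $q(g^{-1}\psi) = q(\psi)$ I would use the conjugate-linearity of the Hermitian metric $h$ together with $|g| \equiv 1$. Indeed $h(g^{-1}\psi, \bullet) = \overline{g^{-1}}\, h(\psi, \bullet) = g\, h(\psi, \bullet)$, so $(g^{-1}\psi) \otimes h(g^{-1}\psi, \bullet) = (g^{-1} g)\, \psi \otimes h(\psi, \bullet) = \psi \otimes h(\psi, \bullet)$, and the trace term transforms identically. Since $q(\varphi)$ is assembled from precisely these two pieces, the identity holds pointwise on $X$.

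The Dirac identity is the main point and the place where care is needed. Writing $\slashed{\partial}_A = C \circ \nabla_A$ with $C$ a bundle map (hence $C^\infty(X)$-linear, so it commutes with multiplication by the function $g^{-1}$), it suffices to prove the connexion identity $\nabla_{(g^{-2})^*A}(g^{-1}\psi) = g^{-1}\, \nabla_A \psi$ on $\Gamma(W^+)$. To prove this I would unwind the construction of $\nabla_A$: the connexion on $P_{Spin^c(4)}$ is pulled back along the bundle map $P_{Spin^c(4)} \to P_{SO(4)} \oplus P_{U(1)}$ from the Levi-Civita connexion and $A$. The gauge transformation $g$, regarded as an automorphism of $P_{Spin^c(4)}$, covers the identity on $P_{SO(4)}$ and covers multiplication by $g^2 = \det(g)$ on $P_{U(1)}$; therefore pulling the $P_{Spin^c(4)}$-connexion back by $g$ yields exactly the connexion built from Levi-Civita together with $A + 2 g^{-1}dg = (g^{-2})^*A$. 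Naturality of the associated-bundle covariant derivative under principal-bundle automorphisms then gives $\nabla_{(g^{-2})^*A}(g \cdot \psi) = g \cdot \nabla_A \psi$, and since $\mathcal G$ acts on $\Gamma(W^+)$ by $\psi \mapsto g^{-1}\psi$ (the anti-multiplication convention adopted above) this is the desired identity; applying $C$ then finishes the proof.

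A more hands-on alternative is to work in a local frame, where on $W^+$ one has $\nabla_A = \nabla_{\mathrm{LC}} + \tfrac12\, A$ — the spinor bundle carrying ``half'' the connexion on $L$, consistently with $L \mapsto L + 2E$ and $W \mapsto W \otimes E$ established above, and with $Z(Spin^c(4)) \cong S^1$ acting on $W^+$ by scalar multiplication by $g$ and on $L$ by $g^2$. The Leibniz rule then gives $\nabla_{(g^{-2})^*A}(g^{-1}\psi) = -g^{-2}dg\otimes\psi + g^{-1}\nabla_{\mathrm{LC}}\psi + \tfrac12 g^{-1}A\psi + g^{-2}dg\otimes\psi = g^{-1}\nabla_A\psi$, the two $g^{-2}dg\otimes\psi$ terms cancelling. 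The main obstacle is precisely this bookkeeping: correctly locating the factor $\tfrac12$ and pinning down how the central $S^1$ acts on $W^+$ versus on $L$; once that is fixed, each of the three identities is a short computation.
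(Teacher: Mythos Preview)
Your proof is correct and, for the $q$-identity and the Dirac operator, essentially matches the paper's argument: both of you reduce $q(g^{-1}\psi)=q(\psi)$ to $|g|=1$ via the sesquilinearity of $h$, and both obtain the Dirac identity from the equivariance of $A\mapsto\nabla_A$ together with the $C^\infty$-linearity of Clifford multiplication. Your unwinding of the $P_{Spin^c(4)}\to P_{SO(4)}\oplus P_{U(1)}$ pullback is exactly what the paper packages as ``the map $A\mapsto\nabla_A$ is equivariant,'' and your local-frame computation with the $\tfrac12 A$ term is a nice independent check that the paper does not include.

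The one place where your route differs is the curvature identity: you compute directly that $d(g^{-1}dg)=0$, whereas the paper observes abstractly that $A\mapsto F_A$ is equivariant for the adjoint action of $\mathcal G$ on $Lie(U(1))$-valued $2$-forms, which is trivial since $U(1)$ is abelian. Both arguments are one line; yours is more concrete and makes visible exactly which term vanishes, while the paper's version generalizes immediately to any abelian structure group without redoing a calculation.
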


\begin{proof}
First to see $q(g^{-1}  \psi)  = q(\psi)$, recall the map
$$ \begin{array}{r l}
 q : W^+ & \rightarrow End(W^+) \\
                                       
                   \varphi & \mapsto \varphi \otimes h(\varphi, \bullet) - \frac{1}{2} (\text{Tr}(\varphi \otimes h(\varphi, \bullet))Id.
   \end{array}
$$
which is the first map in the series of compositions in section 2 that defines the curvature map. Restricting ourselves to a fibre, observe for $\lambda \in S^1$ and $\varphi \in W^+$,
$$ \begin{array}{r l}
q(\lambda \varphi) &=  \lambda \varphi \otimes h(\lambda \varphi, \bullet) -\frac{1}{2} (\text{Tr}(\lambda \varphi \otimes h(\lambda \varphi, \bullet))Id \\
 &= \lambda \overline{\lambda} \varphi \otimes h(\varphi, \bullet) -\frac{1}{2} (\text{Tr}(\lambda \overline{\lambda} \varphi \otimes h(\varphi, \bullet))Id \\
 &= \varphi \otimes h(\varphi, \bullet) -\frac{1}{2} (\text{Tr}(\varphi \otimes h(\varphi, \bullet))Id \\
 &=q(\varphi)
   \end{array}
$$ 
so we see $q(g^{-1} \psi)  = q(\psi)$.

Now we'll show $\slashed{\partial}_{(g^{-2})^*A} (g^{-1} \cdot \psi)  = g^{-1} \cdot \slashed{\partial}_{A} (\psi)$. The map $A \mapsto \nabla_A$ is equivariant with respect to our $\mathcal G$ actions so
$$\nabla_{(g^{-2})^* A}(\bullet) = (g^{-1})^* \nabla_A(\bullet) = g^{-1} \cdot \nabla_A(g \cdot \bullet).$$
Thus for each $\psi \in \Gamma(W^+)$, we have
$\nabla_{(g^{-2})^* A}(g^{-1} \cdot \psi) = g^{-1} \cdot \nabla_A(\psi)$
and since Clifford multiplication commutes with the action of $\mathcal G$,
$$\slashed{\partial}_{(g^{-2})^*A} (g^{-1} \cdot \psi) = C(g^{-1} \cdot \nabla_A(\psi)) = g^{-1} \cdot C \circ \nabla_A(\psi) = g^{-1} \cdot \slashed{\partial}_{A} (\psi).$$   

Finally observe $\mathcal G$ has an action on the space of $Lie(U(1))$-valued 2-forms on $X$ defined fibrewise by $g(x) \mapsto (a \otimes \omega \mapsto ad_{g^{-2} (x)}(a) \otimes \omega)$. Note since $U(1)$ is abelian, this action is trivial. The map $A \mapsto F_A$ is equivariant with respect to this action and our action on $\mathcal A_L$ so $F_{(g^{-2})^* A}  = F_A$.

\end{proof}

It follows from Lemma \ref{gequi} that the solution set of the Seiberg-Witten equations descends to a subspace of $\mathcal B_\xi := \mathcal{A}_L \times \Gamma (W^+)  / \mathcal G$. For reasons that will become apparent later, we also define $\mathcal B^*_\xi \subset \mathcal B_\xi$ to be the subspace $\{(A, \psi) \mid \psi  \not \equiv 0 \} / \mathcal G$.

\begin{remark}
For analytic reasons, we will now consider certain Sobolev Completions of the spaces $\mathcal{A}_L \times \Gamma (X, W^+)$ and $\mathcal G$. One can show the quotient of appropriately chosen completions of $\mathcal{A}_L \times \Gamma (X, W^+)$ and $\mathcal G$ is a Banach manifold, thus the usual theorem of analysis (i.e.\ the Implicit Function Theorem) generalize to this infinite dimensional setting. Henceforth, we will consider $\mathcal B_\xi$ and $\mathcal B^*_\xi$ to be quotients of the appropriately chosen completions. See \cite{4ManifoldsKirby}.
\end{remark}

We will denote the solution set of the equations as $\mathcal M_\xi \subset \mathcal B_\xi$ and refer to it as the {\it moduli space} of the equations. 
$\mathcal M_\xi$ may not be a smooth manifold but if we require $b^+_2 (X) > 0$ and choose a generic perturbation $(g, \delta )$ where $g$ is a Riemannian metric on $X$ and $\delta \in \Omega^+_g(X)$ then the solution set $\mathcal M^\delta _\xi (g)$ of the {\it perturbed Seiberg-Witten equations}
$$ \slashed{\partial}_A \psi = 0 \text{ and } F_A^+ + i\delta = q (\psi)$$
gives rise to a smooth manifold.

Observe the set of perturbations is a subset of $Met(X) \times \Omega^2(X)$ which in turn is a subset of $C^\infty(TX \oplus TX, \mathbb R) \times C^\infty(TX \otimes TX, \mathbb R)$ so if we endow the later with the compact-open topology on each factor, we obtain a topology for the set of perturbations. By {\it generic}, we mean a perturbation chosen outside of a particular meagre subset. For details, we refer the reader to \cite{Morgan}.

Note the perturbed equations' solution set descends to a subspace of $\mathcal B_\xi(g)$ as a result of Lemma \ref{gequi}.
\end{section}

\begin{section}{The Seiberg-Witten invariants}

\begin{theorem}
(Seiberg-Witten) \cite{4ManifoldsKirby} Let $X$ be an oriented, closed 4-manifold with $b^+_2 (X) > 0$. Fix a $Spin^c(4)$-structure $\xi$ with determinant line bundle $L$. For a generic metric $g$ and perturbation $\delta \in \Omega^+_g (X)$, the moduli space $\mathcal M^\delta_\xi (g)$ is a smooth, closed submanifold of $\mathcal B^*_\xi$ of dimension
$$ d = \frac{1}{4} (c_1(L)^2 - (3 \sigma (X) + 2\chi (X))).$$ 
Furthermore, a homology orientation (an orientation of the vector space $H^0 (X; \mathbb R) \oplus H^1 (X; \mathbb R) \oplus H^+ (X; \mathbb R)$ ) determines an orientation of $\mathcal M^\delta_\xi (g)$. If $b^+_2 (X) > 1$, the homology class $[\mathcal M^\delta_\xi (g)]$ is independent of the choice of $g$ and $\delta$. \hfill $\Box$
\end{theorem}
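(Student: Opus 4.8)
The plan is to establish the four assertions—smoothness of $\mathcal M^\delta_\xi(g)$ inside $\mathcal B^*_\xi$, the dimension formula, the existence of an orientation, and the $(g,\delta)$-independence of the homology class when $b^+_2>1$—using the standard elliptic gauge-theory package adapted to the Seiberg--Witten equations. I would begin with \emph{compactness}. The engine is the Weitzenb\"ock--Lichnerowicz identity $\slashed{\partial}_A^*\slashed{\partial}_A=\nabla_A^*\nabla_A+\tfrac{s}{4}+\tfrac12 F_A^+\cdot$, where $s$ is the scalar curvature of $g$. Pairing with $\psi$, substituting both equations $\slashed{\partial}_A\psi=0$ and $F_A^+=q(\psi)-i\delta$, and using the pointwise identity $\langle q(\psi)\cdot\psi,\psi\rangle=\tfrac12|\psi|^4$, one obtains a differential inequality for $|\psi|^2$ whose maximum principle gives a $C^0$ bound on $\psi$ depending only on $\min s$ and $\|\delta\|_{C^0}$. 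A $C^0$ bound on $\psi$ bounds $F_A^+=q(\psi)-i\delta$, and since the de Rham class of $F_A$ is a fixed multiple of $c_1(L)$, this bounds $F_A$ in $L^2$; elliptic bootstrapping in a Coulomb slice then yields $C^\infty$ bounds modulo gauge, hence sequential compactness.

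Next, \emph{smoothness and the avoidance of reducibles}. A configuration with $\psi\equiv 0$ solves the perturbed equations iff $F_A^+=-i\delta$; as $A$ ranges over $\mathcal A_L$ the self-dual part $F_A^+$ sweeps out an affine subspace of $\Omega^+_g(X)$ with linear part $\mathrm{im}\,d^+$ and translate determined by the fixed de Rham class of $F_A$, so such solutions exist only if the harmonic projection of $-\delta$ equals a fixed element, a condition of codimension $b^+_2(X)$ on $\delta$. Hence for $b^+_2(X)>0$ a generic $\delta$ is reducible-free and $\mathcal M^\delta_\xi(g)\subset\mathcal B^*_\xi$. For smoothness I would linearize the map $(A,\psi)\mapsto(\slashed{\partial}_A\psi,\,F_A^++i\delta-q(\psi))$ together with the Coulomb gauge condition, obtaining an elliptic operator with the symbol of $\slashed{\partial}_A\oplus(d^*\oplus d^+)$, and then show that the \emph{universal} moduli space fibred over the Banach manifold of parameters $(g,\delta)$ is cut out transversally; the essential input is that at an irreducible solution the variation in $\delta$, combined with Aronszajn's unique-continuation theorem for $\slashed{\partial}_A$, makes the total linearization surjective. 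Applying the Sard--Smale theorem to the projection onto parameter space then gives smoothness of $\mathcal M^\delta_\xi(g)$ for generic $(g,\delta)$. The \emph{dimension} is the index of that elliptic operator, computed by Atiyah--Singer: the Dirac summand contributes the real index $2\,\mathrm{ind}_{\mathbb C}\slashed{\partial}_A=\tfrac14(c_1(L)^2-\sigma(X))$, the gauge-plus-anti-self-dual summand $d^*\oplus d^+\colon\Omega^1\to\Omega^0\oplus\Omega^+$ contributes $-\tfrac12(\chi(X)+\sigma(X))$, and adding gives $d=\tfrac14\bigl(c_1(L)^2-3\sigma(X)-2\chi(X)\bigr)$.

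For the \emph{orientation} I would trivialize the determinant line of the index of the deformation operator over $\mathcal B^*_\xi$: the Dirac part is a family of complex-linear operators and so has a canonically oriented determinant line, while the remaining part has determinant line $\det H^0\otimes(\det H^1)^*\otimes(\det H^+)^*$, trivialized exactly by an orientation of $H^0(X;\mathbb R)\oplus H^1(X;\mathbb R)\oplus H^+(X;\mathbb R)$, i.e.\ a homology orientation. Finally, for \emph{independence of $(g,\delta)$ when $b^+_2>1$}, join two generic pairs by a generic path $(g_t,\delta_t)$; the parametrized moduli space over $[0,1]$ is a smooth oriented cobordism between the two moduli spaces provided no reducibles occur along the path, and since the reducible locus has codimension $b^+_2(X)$ in parameter space a generic $1$-parameter family misses it precisely when $b^+_2(X)>1$. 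As $\mathcal B^*_\xi$ has the homotopy type of $\mathbb{CP}^\infty$, cobordant closed oriented submanifolds represent the same class in $H_d(\mathcal B^*_\xi;\mathbb Z)$, giving the claimed invariance.

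I expect the transversality step to be the main obstacle: making the universal moduli space smooth requires surjectivity of the linearization at every irreducible solution, and the only handle on the spinor component comes from perturbing $\delta$ in the curvature equation—turning that into surjectivity genuinely needs the unique-continuation property of $\slashed{\partial}_A$. The compactness estimate, though technical, is comparatively robust because the nonlinearity $q(\psi)$ enters the Weitzenb\"ock inequality with a favourable sign.
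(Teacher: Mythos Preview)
The paper does not prove this theorem; it is stated with a citation to \cite{4ManifoldsKirby} and closed immediately with a $\Box$, so there is no ``paper's own proof'' to compare against. Your sketch is the standard argument one finds in the cited references (Gompf--Stipsicz, Morgan): compactness from the Weitzenb\"ock identity and maximum principle, avoidance of reducibles for generic $\delta$ when $b_2^+>0$, transversality via the parametrized moduli space and Sard--Smale with unique continuation supplying surjectivity, the dimension from the Atiyah--Singer index of $\slashed{\partial}_A\oplus(d^*\oplus d^+)$, orientation from trivializing the determinant line, and the cobordism argument for invariance when $b_2^+>1$. One small correction: $\mathcal B^*_\xi$ has the homotopy type of $\mathbb{CP}^\infty\times K(H^1(X;\mathbb Z),1)$, not just $\mathbb{CP}^\infty$, since the components of the gauge group $\mathrm{Map}(X,S^1)$ are indexed by $H^1(X;\mathbb Z)$; this does not affect your cobordism conclusion.
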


For each $x_0 \in X$, we define the {\it based gauge space} $\mathcal G^0_{x_0} \subset \mathcal G$ to be the subgroup $\{ g \in \mathcal G \mid g(x_0) = 1 \} \subset \mathcal G$. 

\begin{lemma} \cite{Morgan} The quotient map, $$\mathcal C^*_\xi / \mathcal G^0_{x_0} \rightarrow C^*_\xi / \mathcal G = B^*_\xi$$ makes $\mathcal C^*_\xi / \mathcal G^0_{x_0}$ into a principal $U(1)$-bundle over $B^*_\xi$ whose isomorphism class is independent of the choice of $x_0$. \hfill $\Box$ 
\end{lemma}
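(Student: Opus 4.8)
The plan is to realize $\mathcal C^*_\xi/\mathcal G^0_{x_0}$ as the quotient of the principal $\mathcal G$-bundle $\mathcal C^*_\xi\to\mathcal B^*_\xi$ by the closed normal subgroup $\mathcal G^0_{x_0}$, and then to deduce basepoint independence from the connectedness of $X$. First I would set up the group theory. Under the identification $\mathcal G\cong\mathrm{Map}(X,S^1)$ from the preceding lemma, evaluation at $x_0$ is a continuous surjective homomorphism $\mathrm{ev}_{x_0}\colon\mathcal G\to S^1$ (surjective since constant maps lie in $\mathcal G$) whose kernel is exactly $\mathcal G^0_{x_0}$. Because $\mathcal G$ is abelian this kernel is automatically normal, it is closed, and $\mathcal G/\mathcal G^0_{x_0}\cong S^1=U(1)$.

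Next I would check freeness of the relevant actions. If $g\in\mathcal G$ fixes $(A,\psi)\in\mathcal C^*_\xi$, then from the formula $A\mapsto A+2g^{-1}dg$ established earlier we get $g^{-1}dg=0$, so $g$ is locally constant, hence constant since $X$ is connected; then $g^{-1}\psi=\psi$ with $\psi\not\equiv 0$ forces $g\equiv 1$. Hence $\mathcal G$, and a fortiori $\mathcal G^0_{x_0}$, act freely on $\mathcal C^*_\xi$; and the induced $U(1)=\mathcal G/\mathcal G^0_{x_0}$ action on $\mathcal C^*_\xi/\mathcal G^0_{x_0}$ is free as well, since if $[g]$ fixes $[(A,\psi)]$ then $gh^{-1}$ fixes $(A,\psi)$ for some $h\in\mathcal G^0_{x_0}$, forcing $g=h\in\mathcal G^0_{x_0}$. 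Passing to the appropriate Sobolev completions (as in the preceding Remark, following \cite{4ManifoldsKirby} and \cite{Morgan}), the free $\mathcal G$-action on $\mathcal C^*_\xi$ is proper and admits local slices, so $\mathcal C^*_\xi\to\mathcal B^*_\xi$ is a locally trivial principal $\mathcal G$-bundle. Now I invoke the standard fact that if $P\to P/G$ is a principal $G$-bundle and $H$ is a closed normal subgroup of $G$, then $P/H\to P/G$ is a principal $G/H$-bundle, its local trivializations obtained from those of $P$ by composing the $G$-valued transition data with $G\to G/H$. Taking $P=\mathcal C^*_\xi$, $G=\mathcal G$, $H=\mathcal G^0_{x_0}$ yields that $\mathcal C^*_\xi/\mathcal G^0_{x_0}\to\mathcal B^*_\xi$ is a principal $U(1)$-bundle.

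For independence of $x_0$ I would use that $\mathcal C^*_\xi=\mathcal A_L\times(\Gamma(W^+)\setminus\{0\})$ is contractible, since $\mathcal A_L$ is affine and the unit sphere of the infinite-dimensional space $\Gamma(W^+)$ is contractible; thus $\mathcal C^*_\xi\to\mathcal B^*_\xi$ is a universal principal $\mathcal G$-bundle, $\mathcal B^*_\xi\simeq B\mathcal G$, and the bundle $\mathcal C^*_\xi/\mathcal G^0_{x_0}$ is classified by $B\mathrm{ev}_{x_0}\colon B\mathcal G\to BU(1)$. Since principal $U(1)$-bundles over a space $B$ are classified by $[B,BU(1)]=H^2(B;\mathbb Z)$, the isomorphism class depends only on the homotopy class of $\mathrm{ev}_{x_0}$ as a map $\mathcal G\to S^1$. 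As $X$ is connected, a path $\gamma$ from $x_0$ to $x_1$ gives a homotopy $(g,t)\mapsto g(\gamma(t))$ from $\mathrm{ev}_{x_0}$ to $\mathrm{ev}_{x_1}$, so the two bundles are isomorphic. Equivalently and more concretely, the same path produces a continuous family of closed normal subgroups $\mathcal G^0_{\gamma(t)}\subset\mathcal G$ and hence a principal $U(1)$-bundle over $\mathcal B^*_\xi\times[0,1]$ restricting to the two bundles at $t=0,1$; homotopy invariance of bundles then concludes.

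The genuine content here is analytic rather than formal: the step I expect to be the main obstacle is verifying that, after Sobolev completion, the $\mathcal G$-action on $\mathcal C^*_\xi$ is free and proper and admits local slices, so that $\mathcal C^*_\xi\to\mathcal B^*_\xi$ really is a locally trivial principal $\mathcal G$-bundle (and that these slices descend to trivializations of $\mathcal C^*_\xi/\mathcal G^0_{x_0}$, and of the $[0,1]$-family in the independence argument). The group theory, the freeness computation, and the quotient-by-a-normal-subgroup principle are routine; for the analytic input I would simply cite \cite{4ManifoldsKirby} and \cite{Morgan}.
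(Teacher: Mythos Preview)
The paper does not supply its own proof of this lemma: it is stated with a citation to \cite{Morgan} and immediately closed with a $\Box$. So there is nothing in the paper to compare your argument against.

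That said, your argument is correct and is essentially the standard one. The identification $\mathcal G/\mathcal G^0_{x_0}\cong U(1)$ via $\mathrm{ev}_{x_0}$, the freeness computation on $\mathcal C^*_\xi$ (constant gauge transformations are the only ones fixing $A$, and nonvanishing $\psi$ then forces the constant to be $1$), and the reduction-of-structure-group principle are all sound. Your independence argument via a path in $X$ producing a homotopy $\mathrm{ev}_{x_0}\simeq\mathrm{ev}_{x_1}$ is the clean way to do it. You are right that the only substantive content is the analytic input (properness and local slices after Sobolev completion), and citing \cite{Morgan} or \cite{4ManifoldsKirby} for that is exactly what the paper itself does throughout.
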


We let $\mu \in H^2 (\mathcal B^*_\xi; \mathbb Z)$ denote the first Chern class of this bundle isomorphism class.

\begin{subsection}{$\bm{b^+_2 > 1}$ manifolds} 

\begin{definition} Let $X$ be an oriented, closed, and smooth 4-manifold with $b^+_2 (X) > 1$ together with a homology orientation $\alpha_X$ and a generic perturbation $(g, \delta)$. The {\it Seiberg-Witten invariant} of $X$ is the map $$SW_{X, \alpha_X} \colon Spin^c(X) \rightarrow \mathbb Z$$ defined as follows. 
For each $\xi \in Spin^c(X)$:
\begin{itemize}
\item If dim $M^\delta_\xi (g)$ is even then $SW_X (\xi) = <\mu^m, [\mathcal M^\delta_\xi (g)]>$ where dim $M^\delta_\xi (g) = 2m$. 
\item If dim $M^\delta_\xi (g)$ is odd, we define $SW_X (\xi) = 0$.
\end{itemize}

\end{definition}

\begin{theorem} (Seiberg-Witten) \cite{4ManifoldsKirby} The Seiberg-Witten function $SW_{X,\alpha_X} \colon Spin^c(X) \rightarrow \mathbb Z$ doesn't depend on the choice of generic metric $g$ or generic perturbation $\delta$. 

For an orientation-preserving diffeomorphism $f \colon X \rightarrow X^\prime$, we have 
$$SW_{X^ \prime,\alpha_{X^\prime}} (\xi) = \pm SW_{X,\alpha_X} (f^*\xi)$$
 where $f^*\xi$ is the induced $Spin^c(4)$-structure on $X$ and $\alpha_X$ is the induced homology orientation on $X$.\hfill $\Box$
\end{theorem}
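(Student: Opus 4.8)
The plan is to establish the two assertions separately and then combine them. For the first—independence of the generic metric $g$ and perturbation $\delta$—I would run a parametrized-moduli-space cobordism argument. The parameter space of pairs $(g,\delta)$ (metrics together with $g$-self-dual $2$-forms) is connected, so any two generic pairs $(g_0,\delta_0)$ and $(g_1,\delta_1)$ can be joined by a path $t \mapsto (g_t,\delta_t)$. The crucial input, and the only place where $b^+_2(X) > 1$ is used, is that the locus of \emph{bad} parameters—those at which transversality fails or at which a reducible solution ($\psi \equiv 0$) exists—has codimension at least $b^+_2(X) \geq 2$ in the parameter space; hence a generic path misses it entirely. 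Along such a generic path the total space $\bigcup_{t}\{t\}\times\mathcal M^{\delta_t}_\xi(g_t)$ is a smooth compact manifold with boundary $\mathcal M^{\delta_0}_\xi(g_0)\sqcup\bigl(-\mathcal M^{\delta_1}_\xi(g_1)\bigr)$, sitting inside the irreducible configuration space, i.e.\ inside the space $\mathcal B^*_\xi$ over which the $U(1)$-bundle with first Chern class $\mu$ lives.

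I would then finish the first assertion by Stokes' theorem: this cobordism $W$ pulls $\mu^m$ back from $\mathcal B^*_\xi$, so $\langle \mu^m,\partial W\rangle = 0$, giving $\langle \mu^m,[\mathcal M^{\delta_0}_\xi(g_0)]\rangle = \langle \mu^m,[\mathcal M^{\delta_1}_\xi(g_1)]\rangle$; a homology orientation $\alpha_X$ orients the parametrized moduli space compatibly with both ends, so the signs match. (When the dimension is odd both invariants are $0$ by definition, so nothing is to prove.) For the second assertion, an orientation-preserving diffeomorphism $f\colon X\to X'$ induces by pullback a bijection $f^*\colon Spin^c(X')\to Spin^c(X)$, and for a generic pair $(g',\delta')$ on $X'$ the pair $(f^*g', f^*\delta')$ is generic on $X$. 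Pullback along $f$ identifies the configuration space, gauge group, Dirac operator, curvature map, and hence the Seiberg-Witten equations for $(\xi,g',\delta')$ with those for $(f^*\xi, f^*g', f^*\delta')$, yielding a diffeomorphism $\mathcal M^{\delta'}_\xi(g')\cong \mathcal M^{f^*\delta'}_{f^*\xi}(f^*g')$ covered by an isomorphism of the associated $U(1)$-bundles, so it carries $\mu$ to $\mu$. Since $f$ is orientation-preserving it sends $\alpha_{X'}$ to a homology orientation $\alpha_X$ on $X$, and the resulting identification of moduli spaces is orientation-preserving up to a global sign determined by how $f$ acts on $H^0\oplus H^1\oplus H^+$. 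Pairing $\mu^m$ over the two identified moduli spaces gives $SW_{X',\alpha_{X'}}(\xi) = \pm\, SW_{X,\alpha_X}(f^*\xi)$ with metric $f^*g'$ on the right; applying the first assertion replaces $f^*g'$ by any generic metric on $X$, completing the proof.

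The step I expect to be the main obstacle is the transversality-and-compactness package underlying the first assertion: using the Sard--Smale theorem to show that a generic path of parameters produces a smooth, compact cobordism that avoids all reducibles, and making precise the codimension count that forces $b^+_2(X) > 1$. A secondary subtlety is orientation bookkeeping—checking that the boundary orientation of the cobordism agrees with the orientations on the two ends induced by $\alpha_X$, and that the diffeomorphism-induced identification of moduli spaces contributes only an overall sign; for the detailed construction of these orientations I would refer to \cite{Morgan} and \cite{4ManifoldsKirby}.
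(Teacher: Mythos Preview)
The paper does not give its own proof of this theorem: the statement is quoted from \cite{4ManifoldsKirby} and closed with a $\Box$, so there is nothing to compare against beyond the cited reference. Your proposal is the standard argument found in \cite{Morgan} and \cite{4ManifoldsKirby}---a parametrized-moduli cobordism for metric/perturbation independence (using $b_2^+>1$ to ensure the wall of reducibles has codimension $\geq 2$ so a generic path avoids it), followed by pullback-naturality for the diffeomorphism statement---and it is correct as a sketch; the obstacles you flag (Sard--Smale transversality, compactness, and orientation bookkeeping) are exactly the technical points those references work out in detail.
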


\end{subsection}

\begin{subsection}{$\bm{b_2^+ = 1}$ manifolds}

We can also define Seiberg-Witten invariants for $b_2^+ = 1$ manifolds. This will be the case we are primarily interested in. Let $X$ be a closed oriented $4$-manifold with $b_2^+(X) = 1$.

\begin{lemma}
\cite{bplus} For each Riemannian metric $g$, there exists a $g$-self-dual harmonic two-form $\omega_g \in \Omega_g^+(X)$ with $[\omega_g]^2 = 1$. \hfill $\Box$
\end{lemma}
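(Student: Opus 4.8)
The plan is to run a standard Hodge-theory argument on the closed oriented Riemannian $4$-manifold $(X,g)$. First I would recall that every class in $H^2(X;\mathbb R)$ has a unique $g$-harmonic representative, so the space $\mathcal H^2_g(X)$ of $g$-harmonic $2$-forms is isomorphic to $H^2(X;\mathbb R)$, and under this isomorphism the cup product corresponds to the pairing $(\alpha,\beta)\mapsto\int_X\alpha\wedge\beta$. Since $\dim X=4$, the Hodge star $\ast$ preserves $\mathcal H^2_g(X)$ (it commutes with the Laplacian) and satisfies $\ast^2=\mathrm{id}$ there, so it is a self-adjoint involution and yields an $L^2$-orthogonal eigenspace splitting $\mathcal H^2_g(X)=\mathcal H^+_g(X)\oplus\mathcal H^-_g(X)$; the space $\mathcal H^+_g(X)$ of $g$-self-dual harmonic forms is exactly $\mathcal H^2_g(X)\cap\Omega^+_g(X)$.

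Next I would diagonalize the intersection form against this splitting. For $\alpha\in\mathcal H^2_g(X)$ with $\alpha=\alpha^++\alpha^-$ and $\ast\alpha^\pm=\pm\alpha^\pm$, using $\ast\ast=\mathrm{id}$ on $2$-forms and the definition of the $L^2$ inner product one has
\[
[\alpha]^2=\int_X\alpha\wedge\alpha=\int_X\alpha\wedge\ast(\ast\alpha)=\langle\alpha,\ast\alpha\rangle_{L^2}=\|\alpha^+\|_{L^2}^2-\|\alpha^-\|_{L^2}^2,
\]
the last step using the orthogonality of $\mathcal H^+_g(X)$ and $\mathcal H^-_g(X)$. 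Hence the intersection form is positive definite on $\mathcal H^+_g(X)$ and negative definite on $\mathcal H^-_g(X)$; combined with $\mathcal H^2_g(X)\cong H^2(X;\mathbb R)$ this forces $\dim\mathcal H^+_g(X)=b_2^+(X)=1$.

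Finally, choosing any nonzero $\omega\in\mathcal H^+_g(X)$, the pointwise identity $\omega\wedge\omega=\omega\wedge\ast\omega=|\omega|_g^2\,\mathrm{dvol}_g$ gives $[\omega]^2=\|\omega\|_{L^2}^2>0$, so I would set $\omega_g:=\omega/\sqrt{[\omega]^2}$, a $g$-self-dual harmonic $2$-form with $[\omega_g]^2=1$. The only point requiring care is the identity $\dim\mathcal H^+_g(X)=b_2^+(X)$, i.e.\ that the Hodge-theoretic self-dual harmonic subspace is a maximal positive-definite subspace for the topological intersection form; but this is precisely what the displayed diagonalization delivers, so I do not expect a serious obstacle.
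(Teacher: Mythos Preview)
The paper does not actually prove this lemma; it is stated with a citation to \cite{bplus} and closed with a $\Box$, so there is no in-paper argument to compare against. Your argument is the standard Hodge-theoretic proof one would find in the cited reference: the harmonic forms represent $H^2(X;\mathbb R)$, the Hodge star splits them into $\pm1$-eigenspaces on which the intersection pairing is $\pm$-definite, so $\dim\mathcal H^+_g=b_2^+(X)=1$ and any nonzero self-dual harmonic form can be normalized. This is correct and complete given the ambient hypothesis $b_2^+(X)=1$ from the surrounding subsection.
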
 

\begin{lemma} \cite{4ManifoldsKirby} $\text{dim } H^+(X; \mathbb R) = b_2^+(X) = 1$. \hfill $\Box$
\end{lemma}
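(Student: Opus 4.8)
The statement $b_2^+(X)=1$ is simply part of the hypothesis, so the actual content of the lemma is the identification $\dim H^+(X;\mathbb R)=b_2^+(X)$. The plan is to realize $H^+(X;\mathbb R)$ as the space of self-dual harmonic $2$-forms for a chosen metric and then compare it with the intersection form of $X$.

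First I would invoke the Hodge decomposition theorem: for any Riemannian metric $g$ on the closed oriented manifold $X$, every de Rham class in $H^2(X;\mathbb R)$ has a unique $g$-harmonic representative, giving an isomorphism $H^2(X;\mathbb R)\cong\mathcal H^2_g(X)$, the space of harmonic $2$-forms, under which the cup-product pairing becomes $(\alpha,\beta)\mapsto\int_X\alpha\wedge\beta$. Since $\dim X=4$, the Hodge star satisfies $\ast^2=\mathrm{id}$ on $\Omega^2(X)$ and commutes with the Hodge Laplacian, so $\mathcal H^2_g(X)$ splits as the direct sum of the $(+1)$- and $(-1)$-eigenspaces of $\ast$, which I will call $\mathcal H^+_g$ and $\mathcal H^-_g$. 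By definition $\mathcal H^+_g=\mathcal H^2_g(X)\cap\Omega_g^+(X)$, and under the Hodge isomorphism this is precisely the subspace $H^+(X;\mathbb R)\subset H^2(X;\mathbb R)$ appearing in the statement.

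Next I would compute the intersection form on these pieces. For $\alpha\in\mathcal H^+_g$ we have $\int_X\alpha\wedge\alpha=\int_X\alpha\wedge\ast\alpha=\|\alpha\|_{L^2}^2>0$ unless $\alpha=0$, and similarly $\int_X\beta\wedge\beta=-\|\beta\|_{L^2}^2<0$ for nonzero $\beta\in\mathcal H^-_g$. Hence the intersection form is positive definite on $\mathcal H^+_g$ and negative definite on $\mathcal H^-_g$. Because $X$ is closed and oriented, Poincar\'e duality makes the intersection form nondegenerate on $H^2(X;\mathbb R)=\mathcal H^+_g\oplus\mathcal H^-_g$; therefore any subspace on which the form is positive definite meets $\mathcal H^-_g$ trivially and so has dimension at most $\dim\mathcal H^+_g$, while $\mathcal H^+_g$ itself is such a subspace. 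Consequently the maximal dimension of a positive-definite subspace, which is by definition $b_2^+(X)$, equals $\dim\mathcal H^+_g=\dim H^+(X;\mathbb R)$, and the hypothesis $b_2^+(X)=1$ then finishes the argument.

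There is no serious obstacle here; the only subtlety worth flagging is that the splitting $\mathcal H^2_g=\mathcal H^+_g\oplus\mathcal H^-_g$ and hence $H^+(X;\mathbb R)$ itself depend on the metric $g$, whereas $b_2^+(X)$ does not. The argument above resolves this: it shows $\dim\mathcal H^+_g$ always equals the metric-independent positive index of the intersection form, so the dimension of $H^+(X;\mathbb R)$ is well defined even though the subspace is not canonical.
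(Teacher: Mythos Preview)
Your argument is correct and is the standard Hodge-theoretic proof of this fact. The paper itself does not give a proof; it simply cites the result from \cite{4ManifoldsKirby} and marks it with a box, so there is nothing in the paper to compare your approach against. What you have written is essentially the argument one finds in that reference.
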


Here $H^+(X; \mathbb R)$ denotes the first factor of the induced splitting of $H^2(X; \mathbb R)$ by the Hodge splitting of $\Omega^2(X)$. As a result of this last lemma and the bijective correspondence between harmonic forms and cohomology elements, our form $\omega_g$ is determined by $g$ up to sign. If we choose a forward cone,
i.e. one of the two connected components of $\{ \Phi \in H^2(X; \mathbb R) \mid [\Phi]^2 > 0\}$, we can fix $\omega_g$ by taking the form whose cohomology class lies in the forward cone. Note a homology orientation for $X$ also determines a forward cone of $H^2(X; \mathbb R)$ hence a choice of $\omega_g$.

Now let $\xi$ be a $Spin^c(4)$-structure for $X$ with determinant line bundle $L$ and complex spinor bundles $W^\pm$.

\begin{definition}
A solution $(A, \psi ) \in \mathcal{A}_L \times \Gamma (W^+)$ to the (perturbed) Seiberg-Witten equations is called {\it reducible} if $\psi \equiv 0$.
\end{definition}

\begin{proposition} 
\cite{4ManifoldsKirby} If the (perturbed) Seiberg-Witten equations admit reducible solutions, $\mathcal G$ won't act freely the solution set of the equations and the moduli space will fail to be a smooth manifold. \hfill $\square$
\end{proposition}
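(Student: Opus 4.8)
The plan is to study the isotropy subgroups of the $\mathcal G$-action on $\mathcal C_\xi=\mathcal A_L\times\Gamma(W^+)$: I will show that the stabilizer of a reducible solution contains the circle of constant gauge transformations, conclude that $\mathcal G$ cannot act freely on the solution set once a reducible solution exists, and then explain why this jump in orbit type forces $\mathcal M^\delta_\xi(g)$ to be singular at the image of such a solution.

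For the stabilizer computation, recall from the discussion preceding the statement that $g\in\mathcal G=\mathrm{Map}(X,S^1)$ acts on $\mathcal A_L$ by $A\mapsto A+2g^{-1}dg$ and on $\Gamma(W^+)$ by $\psi\mapsto g^{-1}\psi$. If $g$ fixes $(A,\psi)$ then $g^{-1}dg=0$, so $g$ is locally constant; assuming $X$ connected, $g\equiv\lambda$ for some $\lambda\in S^1$, and fixing $\psi$ then forces $\lambda^{-1}\psi=\psi$. At an irreducible solution ($\psi\not\equiv0$) this gives $\lambda=1$, so the isotropy is trivial; but at a reducible solution $(A,0)$ the condition $\lambda^{-1}\cdot0=0$ is automatic, so the entire subgroup of constants $S^1\subset\mathcal G$ stabilizes $(A,0)$. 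Hence as soon as a reducible solution is present, $\mathcal G$ does not act freely on the solution set, and the orbit through $(A,0)$ has strictly larger stabilizer than any irreducible orbit.

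To see that $\mathcal M^\delta_\xi(g)$ then fails to be a smooth manifold, I would pass to the local slice description of $\mathcal B_\xi$ near $[(A,0)]$ furnished by Coulomb gauge fixing, in the appropriate Sobolev completions as in \cite{4ManifoldsKirby}. Linearizing the gauge action at $(A,0)$ gives the map $f\mapsto(2\,df,-f\psi)$ on $\Omega^1(X;i\mathbb R)\oplus\Gamma(W^+)$, which at $\psi=0$ degenerates to $f\mapsto(2\,df,0)$; consequently a slice through $(A,0)$ splits off a factor isomorphic to $H^1(X;i\mathbb R)$ together with the full spinor space $\Gamma(W^+)$, on which the residual stabilizer $S^1$ acts trivially on the first factor and by complex scalar multiplication on the second. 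Applying the Kuranishi finite-dimensional reduction $S^1$-equivariantly, a neighborhood of $[(A,0)]$ in $\mathcal M^\delta_\xi(g)$ is homeomorphic to $U\times(Z/S^1)$, where $U$ is open in $H^1(X;\mathbb R)$ and $Z$ is an $S^1$-invariant subset of a complex vector space containing $0$ as an isolated fixed point, i.e. a cone with a genuine cone point at the reducible. Such a quotient is not locally Euclidean there — for instance its local homology is not that of $\mathbb R^d$ — so the moduli space is not a smooth manifold at that point.

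The elementary part is the stabilizer computation; the real work, and the main obstacle, is the last paragraph, which needs the Coulomb-slice theorem and the Kuranishi model in the infinite-dimensional setting, together with the observation that the residual $S^1$ genuinely acts nontrivially on the finite-dimensional local model (equivalently, that this model is not merely a point). I would invoke \cite{Morgan} and \cite{4ManifoldsKirby} for the analytic underpinnings and carry out only the $S^1$-equivariance bookkeeping.
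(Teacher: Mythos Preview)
The paper does not supply its own proof of this proposition: it is stated with a citation to \cite{4ManifoldsKirby} and closed with a $\square$, so there is nothing to compare against beyond the referenced source. Your argument is the standard one that reference gives: the stabilizer computation is correct and is exactly the mechanism (constant gauge transformations fix $(A,0)$ but nothing with $\psi\not\equiv 0$), and the slice/Kuranishi picture producing a cone-type singularity at the reducible orbit is the usual explanation for the failure of smoothness.

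One small caution on the second half. Your description ``$U\times(Z/S^1)$'' and the assertion that the quotient is not locally Euclidean are morally right but slightly overstated as written. The honest statement coming out of the equivariant Kuranishi model is that a neighborhood of $[(A,0)]$ is the $S^1$-quotient of the zero set of an $S^1$-equivariant map between finite-dimensional representations, with $S^1$ acting trivially on the $H^1$-factor and by scalar multiplication on the spinor factor; the product splitting you wrote need not be literal, and in degenerate cases (e.g.\ negative expected dimension, or the spinor part of the obstruction killing everything) the reducible can sit as an isolated point, which is a $0$-manifold. What actually fails in general is that $\mathcal B_\xi$ itself is not a Banach manifold at the reducible orbit, so the moduli space cannot be cut out there as a smooth submanifold of the expected dimension. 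Since you already flag that this step leans on \cite{Morgan} and \cite{4ManifoldsKirby} for the analysis, this is a matter of phrasing rather than a gap.
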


\begin{lemma}
For a given perturbation $(g, \delta )$ where $g$ is a Riemannian metric on $X$ and $\delta \in \Omega^+_g(X)$, there exists a reducible solution of the perturbed Seiberg-Witten equations if and only if 
$$(c_1(L) - \frac{1}{2 \pi} \delta) \cdot \omega_g = 0.$$ \hfill $\Box$
\end{lemma}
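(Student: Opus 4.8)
The plan is to analyze the perturbed Seiberg--Witten equations $\slashed{\partial}_A\psi = 0$, $F_A^+ + i\delta = q(\psi)$ on a reducible solution, i.e. one with $\psi \equiv 0$, and translate the resulting equation $F_A^+ = -i\delta$ into a cohomological pairing condition. First I would observe that $\psi\equiv0$ automatically satisfies the Dirac equation, so a reducible solution is exactly a connection $A\in\mathcal{A}_L$ with $F_A^+ = -i\delta$, because $q(0)=0$. Writing $F_A = 2\pi i\,\eta_A$ for a closed real $2$-form $\eta_A$, the de Rham class $[\eta_A]$ equals $c_1(L)$ by Chern--Weil theory; this identification is the standard normalization, and any other connection in $\mathcal{A}_L$ changes $F_A$ by an exact imaginary form, so the cohomology class is connection-independent.

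Next I would decompose into self-dual and anti-self-dual parts with respect to $g$. The equation $F_A^+ = -i\delta$ reads $2\pi i\,\eta_A^+ = -i\delta$, i.e. $\eta_A^+ = -\tfrac{1}{2\pi}\delta$. The key step is to recognize $H^+(X;\mathbb{R})$ as one-dimensional (the earlier lemma: $\dim H^+(X;\mathbb{R}) = b_2^+(X) = 1$), spanned by the harmonic self-dual form $\omega_g$ with $[\omega_g]^2 = 1$. The self-dual harmonic representative of $c_1(L)$ is the harmonic projection of $\eta_A^+$; pairing the cohomological identity with $[\omega_g]$ (equivalently, taking the $L^2$ inner product of harmonic forms, which computes the cup product pairing) gives $c_1(L)\cdot[\omega_g] = [\eta_A^+]\cdot[\omega_g] = -\tfrac{1}{2\pi}[\delta]\cdot[\omega_g]$, since only the self-dual harmonic part of a closed form contributes to its pairing against the self-dual harmonic form $\omega_g$. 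Here $[\delta]$ denotes the cohomology class of the (not necessarily closed) perturbation's harmonic-projected self-dual part; more carefully, one uses that $\delta$ is a self-dual form and pairs it directly against $\omega_g$ in $L^2$, getting $\tfrac{1}{2\pi}\int_X \delta\wedge\omega_g$, which is what the statement abbreviates as $\tfrac{1}{2\pi}\delta\cdot\omega_g$. This yields the necessity direction: a reducible solution forces $(c_1(L) - \tfrac{1}{2\pi}\delta)\cdot\omega_g = 0$.

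For the converse, suppose $(c_1(L) - \tfrac{1}{2\pi}\delta)\cdot\omega_g = 0$. I would pick any connection $A_0\in\mathcal{A}_L$ and show it can be modified to a connection $A$ with $F_A^+ = -i\delta$. The obstruction to solving $F_A^+ = -i\delta$ within the affine space $A_0 + i\Omega^1(X)$ is measured precisely by the harmonic self-dual part: $F_A^+$ ranges over $-i$ times (the affine space of self-dual forms cohomologous, in the relevant sense, to the self-dual harmonic projection of $c_1(L)$), and using Hodge theory one can hit any prescribed self-dual form whose harmonic projection agrees with the harmonic self-dual representative of $c_1(L)$. Since $\dim H^+ = 1$ and the pairing with $\omega_g$ vanishes, the harmonic self-dual projection of $c_1(L)$ coincides with that of $-\tfrac{1}{2\pi}\delta$, so such an $A$ exists; then $(A,0)$ is a reducible solution.

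The main obstacle I anticipate is being careful about the bookkeeping between de Rham cohomology classes and the $L^2$-pairings of harmonic forms, and in particular making precise the meaning of ``$\delta\cdot\omega_g$'' when $\delta$ is merely a self-dual form rather than a closed one: the right statement is that a self-dual $2$-form pairs with the self-dual harmonic form $\omega_g$ via the $L^2$ inner product, and this equals the cup-product pairing of the harmonic projections. Establishing the surjectivity statement in the converse — that every admissible self-dual form arises as $F_A^+/(-i)$ for some $A\in\mathcal{A}_L$ — also requires invoking the Hodge decomposition on $\Omega^2(X)$ and the fact that $d$ maps $\Omega^1$ onto the orthogonal complement of the harmonic forms; this is routine but is where the genuine analytic content sits.
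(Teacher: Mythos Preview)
Your proposal is correct and follows essentially the same route as the paper: characterize reducibles by $F_A^+ = -i\delta$, use Chern--Weil to identify $[F_A]$ with $\tfrac{2\pi}{i}c_1(L)$, and then use that $H^+(X;\mathbb{R})$ is one-dimensional (spanned by $\omega_g$) together with the orthogonality of self-dual and anti-self-dual forms to reduce the self-dual cohomology condition to the single pairing $(c_1(L)-\tfrac{1}{2\pi}\delta)\cdot\omega_g=0$. Your treatment of the converse via Hodge theory and your remarks on interpreting $\delta\cdot\omega_g$ as an $L^2$ pairing are more explicit than the paper's, but the substance is the same.
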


\begin{proof}

Suppose the equations admit a reducible solution $(A, \psi)$. This is true if and only if $F_A^+ = \delta$. For each $A \in \mathcal A_L$, $F_A$ represents $\frac{2 \pi}{i} c_1(L) \in i H^2(X; \mathbb R)$ so the equations admit reducible solutions only when $[\delta] = \frac{2 \pi}{i} c_1(L)^+ \in H^+(X; \mathbb R)$. On the other hand, if $[\delta] = \frac{2 \pi}{i} c_1(L)^+$ then there is some $A$ with $F_A^+ = \delta$ so we see the equations admit reducible solutions if and only if $[\delta] = \frac{2 \pi}{i} c_1(L)^+$.

$$
\begin{array}{r l l}
[\delta] = & \frac{2 \pi}{i} c_1(L)^+ & \iff \\
 c_1(L)^+ = & \frac{i}{2 \pi} [\delta] & \iff \\
(c_1(L) - \frac{i}{2 \pi} \delta)^+ = & 0 & \iff \\
(c_1(L) - \frac{i}{2 \pi} \delta) \cdot \omega_g  = & 0 &    
\end{array}
$$

The last line follows from two facts. First for $v \in \Omega^+(X)$ and $w \in \Omega^-(X)$, we have $v \cdot w = 0$. To see this observe
$$ \begin{array}{r l}
\, v \cdot w  &= \int_X v \wedge w = -\int_X v \wedge \ast w = -\int_X g(v, w) dVol  \\
\, v \cdot w & =  -\int_X  w \wedge  v = -\int_X w \wedge -\ast v = \int_X g(v,  w) dVol.
\end{array}
$$
So $(c_1(L) - \frac{i}{2 \pi} \delta)^+ =0 $ implies
$$(c_1(L) - \frac{i}{2 \pi} \delta) \cdot \omega_g = (c_1(L) - \frac{i}{2 \pi} \delta)^- \cdot \omega_g + (c_1(L) - \frac{i}{2 \pi} \delta)^+ \cdot \omega_g = 0 + 0 = 0.$$
Next given $v, w \in \Omega^+_g(X)$,
$$v \cdot w = c_1 \omega_g \cdot c_2 \omega_g = c_1 c_2 \omega_g^2 = c_1 c_2.$$
This is true since $\text{dim } H^+(X) = b_2^+ = 1$. Hence we see $v \cdot w = 0$ if and only if $[v]= 0$ or $[w]= 0$. Hence $(c_1(L) - \frac{i}{2 \pi} \delta) \cdot \omega_g = 0$ implies  
$
(c_1(L) - \frac{i}{2 \pi} \delta)^- \cdot \omega_g + (c_1(L) - \frac{i}{2 \pi} \delta)^+ \cdot \omega_g = 0$ implies
$0 + (c_1(L) - \frac{i}{2 \pi} \delta)^+ \cdot \omega_g = 0$ implies $(c_1(L) - \frac{i}{2 \pi} \delta)^+ =0 $.

\end{proof}

By the previous lemma, we can divide the space of perturbations $\{ (g, \delta) \}$ for which there are no reducible solutions into a plus chamber and a minus chamber according to the sign of $(c_1(L) - \frac{1}{2 \pi} \delta) \cdot \omega_g$. 

\begin{proposition}
\cite{bplus} The Seiberg-Witten function is well-defined if we only consider perturbations from one of the two chambers. \hfill $\Box$.
\end{proposition}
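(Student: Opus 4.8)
The plan is to prove this by the standard cobordism argument of gauge theory, showing that within a fixed chamber the integer $SW_X(\xi)$ attached to a generic perturbation does not change. If $\dim \mathcal M^\delta_\xi(g)$ is odd there is nothing to prove, since $SW_X(\xi)$ is \emph{defined} to be $0$ in that case; so assume the dimension equals $2m$, and we must show $\langle \mu^m, [\mathcal M^{\delta_0}_\xi(g_0)]\rangle = \langle \mu^m, [\mathcal M^{\delta_1}_\xi(g_1)]\rangle$ whenever $(g_0,\delta_0)$ and $(g_1,\delta_1)$ are generic perturbations in the same chamber.

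First I would check that each chamber is path-connected. The space of perturbations $(g,\delta)$ with $g \in Met(X)$ and $\delta \in \Omega^+_g(X)$ fibers over the convex, hence contractible, space $Met(X)$, and for each fixed $g$ the locus where reducible solutions appear is, by the previous lemma, exactly the real affine hyperplane $\{(c_1(L) - \frac{1}{2\pi}\delta)\cdot\omega_g = 0\}$ inside the vector space $\Omega^+_g(X)$. Removing a hyperplane from a vector space leaves two components, and as $g$ ranges over the connected base these assemble into precisely the two global components — the plus chamber and the minus chamber — so each chamber is path-connected.

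Next, pick a path $(g_t,\delta_t)$, $t \in [0,1]$, joining $(g_0,\delta_0)$ to $(g_1,\delta_1)$ inside the chamber. Applying the Sard--Smale theorem to the parametrized Seiberg--Witten map, after a small perturbation of the path rel endpoints the parametrized moduli space $\widetilde{\mathcal M} = \bigcup_{t\in[0,1]} \mathcal M^{\delta_t}_\xi(g_t)\times\{t\}$ becomes a smooth manifold with boundary of dimension $2m+1$, with $\partial \widetilde{\mathcal M} = \mathcal M^{\delta_0}_\xi(g_0) \sqcup \mathcal M^{\delta_1}_\xi(g_1)$. Because the path never leaves a single chamber, no reducible solution occurs anywhere along it, so $\widetilde{\mathcal M}$ sits entirely inside the family of spaces $\mathcal B^*_\xi$ over which $\mathcal G$ acts freely and over which the $U(1)$-bundle defining $\mu$ lives; thus $\mu$ pulls back to a class on the total space of this family that restricts to the usual $\mu$ at each end. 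The a priori estimates from the Weitzenböck formula — a pointwise bound on $|\psi|$ in terms of the scalar curvature of $g_t$ and of $\delta_t$, uniform over the compact interval $[0,1]$ — together with elliptic bootstrapping show $\widetilde{\mathcal M}$ is compact. Evaluating $\mu^m$ on $[\partial\widetilde{\mathcal M}]$ then gives zero (a compact cobordism bounds), and the two ends carry opposite boundary orientations, both coherently induced by the fixed homology orientation, so $\langle \mu^m, [\mathcal M^{\delta_0}_\xi(g_0)]\rangle = \langle \mu^m, [\mathcal M^{\delta_1}_\xi(g_1)]\rangle$, which is the assertion.

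The main obstacle is the compactness of $\widetilde{\mathcal M}$: one must ensure that a sequence of solutions over parameters $t_n$ cannot escape to infinity, and in Seiberg--Witten theory (unlike Donaldson theory there is no bubbling) the only failure mode is convergence to a reducible solution — which is precisely what the chamber hypothesis rules out. Packaging this so that $\widetilde{\mathcal M}$ is simultaneously transverse (smooth) and compact, with boundary orientations matching the homology orientation, is the technical heart of the argument; everything else is bookkeeping with the wall and the definition of $\mu$.
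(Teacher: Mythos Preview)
The paper does not supply its own proof of this proposition: it is stated with a citation to \cite{bplus} and closed immediately with a $\Box$, so there is nothing against which to compare your argument. Your sketch is the standard cobordism proof and is correct in outline; one minor expository slip is the claim that ``the only failure mode'' of compactness is convergence to a reducible --- in Seiberg--Witten theory compactness follows from the Weitzenb\"ock $C^0$ bound regardless, and the absence of reducibles is needed rather for the smoothness of $\widetilde{\mathcal M}$ (free gauge action) and for $\mu$ to be defined over the whole cobordism, exactly as you use it earlier in the paragraph.
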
 

Mimicing the definition from the previous section but restricting to perturbations in the plus (or minus) chamber, we obtain the invariants 
$$SW^+_{X, \alpha X} \colon Spin^c(X) \rightarrow \mathbb Z \text{ and } SW^-_{X, \alpha X}\colon Spin^c(X) \rightarrow \mathbb Z. $$

Now suppose $\omega$ is a symplectic form on $X$. 

\begin{lemma} \cite{taubesconst}
$\omega$ determines a homology orientation for $X$.
\hfill $\Box$.
\end{lemma}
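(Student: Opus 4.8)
The plan is to extract from $\omega$ a canonical orientation of the line $\Lambda^{\max}\!\bigl(H^0(X;\mathbb R)\oplus H^1(X;\mathbb R)\oplus H^+(X;\mathbb R)\bigr)$. Since $X$ is connected, $H^0(X;\mathbb R)\cong\mathbb R$ carries the canonical orientation generated by the constant function $1$, so it suffices to orient $H^1(X;\mathbb R)\oplus H^+(X;\mathbb R)$. By the earlier Proposition on compatible triples, $\omega$ admits a compatible almost-complex structure $J$ and a compatible metric $g$, and the space of such triples $(\omega,J,g)$ with $\omega$ fixed is contractible; hence any construction continuous in $(\omega,J,g)$ ultimately depends only on $\omega$. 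Fix such a pair $(J,g)$. Because $\omega$ is closed and $g$-self-dual, it is $g$-harmonic, and $\int_X\omega\wedge\omega=\int_X\omega\wedge\ast\omega=\|\omega\|_{L^2}^{2}>0$; so $[\omega]$ is a distinguished nonzero element of $H^+(X;\mathbb R)$ with $[\omega]^2>0$ (this already orients $H^+$ when $b_2^+=1$, and below it is used to pin down one real direction in the general construction).

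First I would reinterpret a homology orientation as an orientation of a determinant line. Consider the two-step elliptic complex
$$0\longrightarrow\Omega^0(X)\xrightarrow{\ d\ }\Omega^1(X)\xrightarrow{\ d^+\ }\Omega^+(X)\longrightarrow 0,$$
whose cohomology groups are $H^0(X;\mathbb R)$, $H^1(X;\mathbb R)$, $H^+(X;\mathbb R)$; for the last two this is the standard Hodge argument (an exact anti-self-dual $2$-form on a closed $4$-manifold vanishes, by integrating $\beta\wedge\beta$ as in the reducible-solutions Lemma). Folding the complex into the single elliptic operator $\mathcal D=d^*\oplus d^+\colon\Omega^1(X)\to\Omega^0(X)\oplus\Omega^+(X)$, one computes $\ker\mathcal D\cong H^1(X;\mathbb R)$ and $\mathrm{coker}\,\mathcal D\cong H^0(X;\mathbb R)\oplus H^+(X;\mathbb R)$, so the real determinant line $\det\mathcal D=\Lambda^{\max}(\ker\mathcal D)^{*}\otimes\Lambda^{\max}(\mathrm{coker}\,\mathcal D)$ is canonically $\Lambda^{\max}(H^1)^{*}\otimes\Lambda^{\max}(H^0)\otimes\Lambda^{\max}(H^+)$. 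An orientation of this line is exactly a homology orientation, since orientations of a line and of its dual are canonically identified and $H^0$ is already oriented. So it remains to orient $\det\mathcal D$ from $(\omega,J,g)$.

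For that I would deform $\mathcal D$, through elliptic operators, to a complex-linear one and use that a complex vector space has a canonical orientation as a real vector space, so the real determinant line of a complex-linear elliptic operator is canonically oriented. Using $J$ one has the bundle splittings $\Lambda^1\otimes\mathbb C=\Lambda^{1,0}\oplus\Lambda^{0,1}$, $\Lambda^0\otimes\mathbb C=\mathbb C$, $\Lambda^+\otimes\mathbb C=\mathbb C\,\omega\oplus\Lambda^{2,0}\oplus\Lambda^{0,2}$, and complex conjugation identifies the real spaces $\Omega^1$, $\Omega^0$, $\Omega^+$ with the underlying real spaces of $\Omega^{0,1}$, $\Omega^{0,0}$ and $\Omega^{0,0}\oplus\Omega^{0,2}$ respectively (the last using $[\omega]^2>0$ to fix a real generator of the $\mathbb C\,\omega$ line). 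Compatibility of $g$ with $J$ makes the symbol-level Kähler identities hold, so under these identifications the principal symbol of $\mathcal D$ agrees with that of the complex-linear elliptic operator $\bar\partial^{*}\oplus\bar\partial\colon\Omega^{0,1}\to\Omega^{0,0}\oplus\Omega^{0,2}$. Since the real determinant line of an elliptic operator depends only on the homotopy class of its principal symbol (operators with a fixed symbol forming an affine, hence contractible, space), this transports the canonical complex orientation of $\det(\bar\partial^{*}\oplus\bar\partial)$ to an orientation of $\det\mathcal D$; contractibility of the space of compatible triples makes it independent of $(J,g)$, hence a function of $\omega$ alone.

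The main obstacle is the third step: one must make the symbol identification precise and check that the resulting orientation is genuinely canonical — independent of the compatible triple and of the auxiliary choices — which rests on the contractibility Proposition together with the fact that the determinant line of an elliptic symbol is well defined up to canonical isomorphism; I would defer the analytic bookkeeping to \cite{taubesconst}. Everything else is routine: the Hodge-theoretic identification of $\ker\mathcal D$ and $\mathrm{coker}\,\mathcal D$, the harmonicity and positivity of $\omega$, and the linear algebra identifying orientations of $\det\mathcal D$ with homology orientations.
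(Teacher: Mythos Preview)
Your outline is essentially the standard argument and is correct in broad strokes. Note, however, that the paper does not actually supply a proof of this lemma: it is stated with a citation to \cite{taubesconst} and closed with a box, so there is no ``paper's own proof'' to compare against. What you have written is precisely the argument one finds in the cited literature (and in Morgan's and Donaldson--Kronheimer's treatments): fold the complex $0\to\Omega^0\xrightarrow{d}\Omega^1\xrightarrow{d^+}\Omega^+\to 0$ into the operator $d^*\oplus d^+$, identify its determinant line with the homology-orientation line, use a compatible $J$ to homotope the symbol to that of the $\mathbb C$-linear operator $\bar\partial^*\oplus\bar\partial$, take the canonical complex orientation, and invoke contractibility of the space of compatible almost-complex structures for well-definedness. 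One small point worth tightening: the real identification $\Omega^+\cong\Omega^{0,0}\oplus\Omega^{0,2}$ uses that $\Lambda^+=\mathbb R\omega\oplus\operatorname{Re}\Lambda^{0,2}$ for the \emph{compatible} metric $g$, so the normalization of the $\mathbb R\omega$ summand is already fixed by $\omega$ itself rather than by the positivity $[\omega]^2>0$; your phrasing suggests the latter is doing more work than it is. Otherwise the sketch is sound and you have correctly flagged the only genuinely delicate step (canonical triviality of the determinant line bundle over the relevant family of elliptic operators), appropriately deferring it to the reference.
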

We will choose the chambers with respect to this choice (by choosing the $\omega_g$ sharing the same forward cone as $\omega$) and write the Seiberg-Witten functions as $SW^\pm_\omega$. We refer to the minus chamber of perturbations as {\it Taubes' chamber}.

\begin{theorem}\emph{(Taubes' Constraints)}\label{tconst}
\cite{taubesconst} Let $(X,\omega)$ be a closed symplectic $4$-manifold with $b_2^+(X) = 1$, canonical class $K$, and canonical $Spin^c(4)$-structure $\xi_{-K}$.  Then
$$SW^-_\omega(\xi_{K^{-1}}) =  1.$$
If $SW^-_\omega(\xi_{K^{-1}} \otimes E) \neq 0$ then $E \cdot [\omega] \geq 0$ with equality only when $E = 0$.
\hfill $\Box$
\end{theorem}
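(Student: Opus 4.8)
The plan is to follow Taubes' original strategy, adapting it to the $b_2^+ = 1$ setting where one must work in a fixed chamber. First I would recall that a symplectic form $\omega$, together with a compatible almost-complex structure $J$ and compatible metric $g$ (which exist and form a contractible family by the proposition of McDuff cited above), determines a canonical $Spin^c(4)$-structure $\xi_{-K}$ whose positive spinor bundle splits as $W^+ = \underline{\mathbb C} \oplus K^{-1}$ and whose determinant line bundle is $K^{-1}$. Under the $H^2(X;\mathbb Z)$-action constructed in Chapter~2, twisting by $E$ sends $W^+ \mapsto W^+ \otimes E$, so $\xi_{K^{-1}} \otimes E$ has positive spinor bundle $E \oplus (K^{-1}\otimes E)$ and determinant line bundle $K^{-1} \otimes E^2$. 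The key analytic input is Taubes' estimate: one perturbs the Seiberg-Witten equations by a large multiple $r$ of the symplectic form, $F_A^+ + i r \omega = q(\psi)$ (this sits in the minus chamber for $r$ large since $(c_1(L) - \tfrac{1}{2\pi}\delta)\cdot\omega_g < 0$ there), writes $\psi = (\alpha, \beta)$ with respect to the splitting of $W^+$, and derives a priori bounds forcing $\beta$ to be small and $\alpha$ to be close to a nonzero constant; this identifies the solution essentially uniquely and yields $SW^-_\omega(\xi_{K^{-1}}) = \pm 1$, with the sign fixed to $+1$ by Taubes' orientation computation (the homology orientation determined by $\omega$ per the cited lemma).

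For the second assertion, suppose $SW^-_\omega(\xi_{K^{-1}} \otimes E) \neq 0$. Then for every $r$ the perturbed equations for this $Spin^c$-structure admit a solution $(A_r, \psi_r)$. The strategy is to extract geometric information by integrating the curvature equation against $\omega$. Writing $L = K^{-1} \otimes E^2$, one has
$$
2\pi\, c_1(L)\cdot[\omega] = i \int_X F_{A_r}\wedge\omega = i\int_X F_{A_r}^+\wedge\omega = \int_X (q(\psi_r) - i r\omega)\wedge\omega,
$$
using that $\omega$ is self-dual and harmonic. Taubes' pointwise bound $|q(\psi_r)| \le c\, r$ combined with a sharper analysis of $\int_X q(\psi_r)\wedge\omega$ (which is controlled by $\|\beta_r\|^2$, shown to be $o(r)$) forces $c_1(L)\cdot[\omega] \le \tfrac{1}{2\pi}(-r\,[\omega]^2 + o(r))$; after subtracting the corresponding relation for the trivial twist $E = 0$, the divergent $r[\omega]^2$ terms cancel and one is left with $2 E\cdot[\omega] \ge 0$, i.e. $E\cdot[\omega]\ge 0$. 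The equality case requires showing $\beta_r \equiv 0$ in the limit forces the solution to be the canonical one, whence $E = 0$; this uses uniqueness of the symplectic solution established in the first part together with the fact that $c_1(K^{-1}\otimes E^2)\cdot[\omega] = c_1(K^{-1})\cdot[\omega]$ only when $E\cdot[\omega] = 0$, combined with a vanishing theorem ruling out a second solution in that chamber.

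The main obstacle is the analytic heart: establishing Taubes' a priori estimates on solutions of the $r$-perturbed equations — in particular the Weitzenböck/integration-by-parts arguments bounding $\|\beta_r\|_{L^2}^2$ in terms of $r$ and showing it is $o(r)$, and the companion pointwise bounds on $\alpha_r$ that pin down the solution up to gauge. These are genuinely hard PDE estimates rather than formal manipulations, and in the $b_2^+ = 1$ case one additionally has to verify carefully that the large-$r$ perturbations all lie in Taubes' (minus) chamber so that the chamber-dependent invariant $SW^-_\omega$ is the one being computed. Everything downstream — the curvature integration, the cancellation of the $r[\omega]^2$ terms, and the equality discussion — is comparatively routine once those estimates are in hand. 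For these reasons I would, as the excerpt itself does, ultimately cite \cite{taubesconst} for the estimates and present only the structural argument above.
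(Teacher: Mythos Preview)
The paper does not prove this theorem at all: it is stated with a citation to \cite{taubesconst} and closed with a $\Box$, so there is no ``paper's own proof'' to compare against. Your sketch is the standard Taubes argument (large-$r$ perturbation by $\omega$, Weitzenb\"ock estimates forcing $\beta$ small and $\alpha$ near constant, curvature integration against $\omega$ for the inequality), and you correctly note that the perturbation lands in the minus chamber and that the hard analytic estimates must ultimately be cited --- which is exactly what the paper does, only without the surrounding exposition.
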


Now we will prove the following corollary.

\begin{corollary} \label{tcol}
If $\xi \in Spin^c(X)$ satisfies $c_1(L_\xi) \cdot [\omega] = -K \cdot [\omega]$ then $\xi = \xi_{K^{-1}}$.
\end{corollary}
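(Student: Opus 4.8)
The plan is to locate $\xi$ relative to the canonical $Spin^c$-structure $\xi_{K^{-1}}$, using the free and transitive action of $H^2(X;\mathbb{Z})$ on $Spin^c(X)$, then translate the cohomological hypothesis into a numerical condition on the twisting class, and finally read off $\xi=\xi_{K^{-1}}$ from the sharp inequality in Taubes' Constraints (Theorem~\ref{tconst}).

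Concretely: since $H^2(X;\mathbb{Z})$ acts freely and transitively on $Spin^c(X)$, there is a unique $E\in H^2(X;\mathbb{Z})$ with $\xi=\xi_{K^{-1}}\otimes E$. By the computation of the effect of this action on determinant line bundles, $c_1(L_\xi)=c_1(L_{\xi_{K^{-1}}})+2E$, and since $c_1(L_{\xi_{K^{-1}}})=K^{-1}=-K$ this reads $c_1(L_\xi)=-K+2E$. Substituting into the hypothesis gives $-K\cdot[\omega]=c_1(L_\xi)\cdot[\omega]=-K\cdot[\omega]+2\,(E\cdot[\omega])$, hence $E\cdot[\omega]=0$.

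It remains to upgrade $E\cdot[\omega]=0$ to $E=0$, and this is where Theorem~\ref{tconst} is used: for the $Spin^c$-structure $\xi=\xi_{K^{-1}}\otimes E$, if $SW^-_\omega(\xi)\ne 0$ then its second clause gives $E\cdot[\omega]\ge 0$ with equality only when $E=0$; together with $E\cdot[\omega]=0$ this forces $E=0$, i.e. $\xi=\xi_{K^{-1}}$. The step that needs care --- indeed the only nonformal step --- is the input $SW^-_\omega(\xi)\ne 0$, without which Theorem~\ref{tconst} says nothing; this non-vanishing must come from the context in which the corollary is applied (where $\xi$ carries a nonzero Seiberg-Witten number in Taubes' chamber). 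If one wanted a version free of that proviso, one could instead try to force $SW^+_\omega(\xi)\ne 0$ or $SW^-_\omega(\xi)\ne 0$ from a dimension/wall-crossing count whenever $\dim\mathcal{M}^\delta_\xi(g)\ge 0$ and then compare the two chambers via charge-conjugation symmetry --- but this is more delicate (it forces a case split on the sign of $K\cdot[\omega]$) and is not needed for the applications.
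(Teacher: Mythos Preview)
Your proof is correct and follows essentially the same route as the paper's: write $\xi=\xi_{K^{-1}}\otimes E$, use $c_1(L_\xi)=-K+2E$ to deduce $E\cdot[\omega]=0$, then invoke the equality clause of Taubes' Constraints to conclude $E=0$. You are right to flag that the hypothesis $SW^-_\omega(\xi)\neq 0$ is needed and is not stated explicitly in the corollary; the paper's own proof begins by assuming it, and the corollary is only ever applied to basic classes, so this is indeed an implicit hypothesis coming from context rather than a gap in your argument.
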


\begin{proof}
Suppose $\xi_{K^{-1}} \otimes E \in Spin^c(X)$ has $SW^-_\omega(\xi = \xi_{K^{-1}} \otimes E) \neq 0$  and satisfies $c_1(L_\xi) \cdot [\omega] = -K \cdot [\omega]$. Then $(-K + 2E) \cdot [\omega] = -K \cdot [\omega]$. It follows $2E \cdot [\omega] = 0$ hence $E \cdot [\omega] = 0$. From Taubes' constraints, $E = 0$. 
\end{proof}

\begin{lemma}\label{sym}\emph{(Symmetry Lemma)}
\cite{taubesconst} Let $(X,\omega)$ be a closed symplectic $4$-manifold with $b_2^+(X) = 1$, canonical class $K$, and canonical $Spin^c(4)$-structure $\xi_{-K}$. Then
$$ SW^+_\omega(\xi_{K^{-1}} \otimes E) = (-1)^{1-b_1(X)/2}SW^-_\omega(\xi_{K^{-1}} \otimes (K-E)).$$\hfill $\Box$
\end{lemma}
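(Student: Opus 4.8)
The plan is to obtain the Symmetry Lemma from the \emph{charge conjugation} involution on the Seiberg--Witten equations together with the chamber analysis already set up for $b_2^+(X)=1$ manifolds. Charge conjugation is what exchanges $SW^+$ and $SW^-$, while the substitution $E\mapsto K-E$ will simply record what conjugation does to $Spin^c$-structures.

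First I would fix the conjugate $Spin^c$-structure. The Hermitian metric on $W^\pm$ gives an anti-complex-linear bundle isomorphism $W^\pm\to\overline{W^\pm}$, so every $\xi\in Spin^c(X)$ has a conjugate $\bar\xi$ with spinor bundles $\overline{W^\pm}$ and determinant line $L_{\bar\xi}=L_\xi^{-1}$; thus $c_1(L_{\bar\xi})=-c_1(L_\xi)$, conjugation is an involution, and $\overline{\xi\otimes E}=\bar\xi\otimes(-E)$ for $E\in H^2(X;\mathbb Z)$ (immediate from $\overline{W^\pm\otimes E}=\overline{W^\pm}\otimes E^{-1}$, using that the $H^2$-action is free and transitive). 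For the canonical structure, where $W^+$ is the sum of the trivial line bundle and $K^{-1}$, a direct check gives $\overline{W^+}\cong W^+\otimes K$, i.e.\ $\bar\xi_{-K}=\xi_{-K}\otimes K$. Hence $\overline{\xi_{-K}\otimes E}=\bar\xi_{-K}\otimes(-E)=\xi_{-K}\otimes(K-E)$, and the lemma becomes the assertion that $SW^+_\omega(\xi)=(-1)^{1-b_1(X)/2}\,SW^-_\omega(\bar\xi)$ for $\xi=\xi_{-K}\otimes E$.

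Next I would conjugate solutions. Given a solution $(A,\psi)$ of the $(g,\delta)$-perturbed equations for $\xi$, take the conjugate $U(1)$-connexion $\bar A$ on $L_\xi^{-1}$, so $F_{\bar A}=-F_A$, together with $\bar\psi\in\Gamma(\overline{W^+})$. Since $\slashed{\partial}_{\bar A}\bar\psi=\overline{\slashed{\partial}_A\psi}$ and $q(\bar\psi)=-q(\psi)$ (as self-dual $2$-forms, in the appropriate identification), the pair $(\bar A,\bar\psi)$ solves the $(g,-\delta)$-perturbed equations for $\bar\xi$; this descends to a diffeomorphism $\mathcal M^\delta_\xi(g)\cong\mathcal M^{-\delta}_{\bar\xi}(g)$. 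Now the chamber containing $(g,\delta)$ is read off from the sign of $(c_1(L_\xi)-\frac{1}{2\pi}\delta)\cdot\omega_g$, and replacing $(c_1(L_\xi),\delta)$ by $(-c_1(L_\xi),-\delta)$ reverses that sign; so the diffeomorphism carries plus-chamber moduli spaces for $\xi$ onto minus-chamber moduli spaces for $\bar\xi$. Pairing with the appropriate power of $\mu$ therefore gives $SW^+_\omega(\xi)=\pm\,SW^-_\omega(\bar\xi)$, with a sign depending only on how conjugation acts on the chosen orientations and on the line bundle defining $\mu$.

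The remaining, and genuinely delicate, step is to identify that sign as $(-1)^{1-b_1(X)/2}$. I would compute it from the determinant line of the linearized Seiberg--Witten operator $\slashed{\partial}_A\oplus(d^++d^*)$: conjugation fixes the $d^++d^*$ summand but acts $\mathbb C$-antilinearly on the Dirac summand, so the orientation transported across the diffeomorphism above is multiplied by $(-1)^{\mathrm{ind}_{\mathbb C}\slashed{\partial}_A}$, while conjugation inverts the $U(1)$-bundle defining $\mu$, contributing a further $(-1)^{d/2}$ in the $\mu$-pairing; since $\mathrm{ind}_{\mathbb C}\slashed{\partial}_A=\frac{d}{2}+\frac{\chi(X)+\sigma(X)}{4}$ by Atiyah--Singer and $d$ is even, the product is $(-1)^{(\chi(X)+\sigma(X))/4}$, and because $X$ is almost-complex with $b_2^+(X)=1$ this equals $(-1)^{(2-b_1(X))/2}=(-1)^{1-b_1(X)/2}$ (in particular $b_1(X)$ is even). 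Assembling the identification of $\overline{\xi_{-K}\otimes E}$, the chamber-swapping diffeomorphism, and this sign gives the formula; the main obstacle is precisely the orientation bookkeeping of this last step, which is carried out in \cite{taubesconst} and \cite{4ManifoldsKirby}.
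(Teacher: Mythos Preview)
The paper does not prove this lemma: it is stated with a citation to \cite{taubesconst} and closed immediately with $\Box$, so there is no argument in the paper to compare against. Your proposal supplies the standard charge-conjugation proof (conjugate $Spin^c$-structure, conjugate solution, observe the chamber swap, then track the orientation sign), which is correct in outline and is essentially the argument one finds in the references you cite; in particular your identification $\overline{\xi_{-K}\otimes E}=\xi_{-K}\otimes(K-E)$ and the chamber-reversal computation match the setup in the paper's Section on $b_2^+=1$ manifolds.

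One comment on the sign step: your bookkeeping $(-1)^{\mathrm{ind}_{\mathbb C}\slashed{\partial}_A}\cdot(-1)^{d/2}=(-1)^{(\chi+\sigma)/4}$ relies on $d$ being even, which is fine since $SW$ vanishes by definition in odd dimension, but the decomposition of the sign into an ``orientation'' piece and a ``$\mu$-pairing'' piece is somewhat schematic and depends on conventions that vary between sources. You correctly flag this as the delicate point and defer to \cite{taubesconst} and \cite{4ManifoldsKirby}; since the paper itself does exactly that (defers the whole lemma to \cite{taubesconst}), your treatment is already more detailed than what the paper offers.
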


\end{subsection}
\end{section}

 \chapter{Circle bundles over 3-manifolds} 

In this chapter, we will define twisted Alexander polynomials for $3$-manifolds and show how they are related to Seiberg-Witten invariants. We will also prove some results about circle bundles necessary for the theorem in the next chapter.

\begin{section}{Twisted Alexander polynomials for $3$-manifolds}

Here, we will define the twisted Alexander polynomials for a path connected, compact, orientable $3$-manifold $Y$ with $b_1(Y) \geq 1$ whose boundary (if any) is a union of disjoint tori.

Throughout this section let $H := H_1(Y; \mathbb Z) / Tor \cong \mathbb Z^{b_1(Y)}$. In order to define our polynomial, we need the following data. 
\begin{itemize}
\item A free abelian group $F$.
\item A nontrivial $\Phi \in Hom(H,F)$.
\item A finite group $G$.
\item An epimorphism $\alpha : \pi_1(Y) \rightarrow G$
\end{itemize}

First note $\alpha \times \Phi$ gives an action of $\pi_1(Y)$ on $G \times F$ as follows. 
$$\begin{array}{r l}
\alpha \times \Phi : \pi_1(Y) & \rightarrow Aut(G \times F) \\
h & \mapsto ((g,f) \mapsto (\alpha(h)(g), f + \Phi(h)))
\end{array}
$$
This induces an action of $\mathbb Z[\pi_1(Y)]$ on $\mathbb Z[G \times F]$. 

Now let $\tilde Y$ denote the universal cover of $Y$ and $C_* (\tilde Y)$ denote its cellular chain complex. $C_*(\tilde Y)$ is a left $\pi_1(Y)$-module via deck transformations. Observe we can extend this action linearly to obtain an action of $\mathbb Z[\pi_1(Y)]$ on $C_*(\tilde Y)$. 

We can also define a right action of $\mathbb Z[\pi_1(Y)]$ on $C_*(\tilde Y)$ by $\sigma \cdot h = h^{-1} \cdot \sigma$ hence we can form the chain complex
$$ C_*(\tilde Y) \otimes_{\mathbb Z[\pi_1(Y)]} \mathbb Z[G \times F].$$
Note that $\mathbb Z [G \times F]$ has a right action by $\mathbb Z[F]$ induced by the right multiplication of $\mathbb Z[F]$. This action and the left action of $\mathbb Z[\pi_1(Y)]$ are associative hence $\mathbb Z [G \times F]$ is a $\mathbb Z[\pi_1(Y)]-\mathbb Z[F]$ bimodule. Therefore the above chain complex has a right action by $\mathbb Z[F]$.

Now consider 
$$H_1^{\alpha \otimes \Phi} (Y; \mathbb Z[G \times F]) := H_1 (C_*(\tilde Y) \otimes_{\mathbb Z[\pi_1(Y)]} \mathbb Z[G \times F]))$$
This inherits the structure of a right $\mathbb Z[F]$-module and is called the {\it twisted Alexander module} of $(Y, \Phi, \alpha)$. Note if we endow $Y$ with a finite cell structure, our module becomes finitely generated. Also our module is finitely related since $\mathbb Z[F]$ is Noetherian.

The {\it twisted Alexander polynomial} of $(Y, \alpha, \Phi)$ is defined to be 
$$\Delta^\alpha_{Y, \Phi} := order \,\, H_1^{\alpha \otimes \Phi} (Y; \mathbb Z[G \times F]) \in \mathbb Z[F] / \pm F$$

\begin{itemize}
\item If $F = \mathbb Z$ then $\Phi \in Hom(H, \mathbb Z) = H^1(Y; \mathbb Z)$ and $\mathbb Z [ \mathbb Z] = \mathbb Z [t^{\pm 1}]$ (the Laurent polynomial ring over $\mathbb Z$) so $\Delta^\alpha_{Y, \Phi} \in \mathbb Z[t^{\pm 1}] / \{ \pm t^n \mid n \in \mathbb Z \}$. We will call this the {\it twisted single variable Alexander polynomial} of $(Y, \alpha, \Phi)$.

\item If $F = H$ and $\Phi \in Hom(H,H)$ is the identity map then we write $\Delta^\alpha_Y \in \mathbb Z[H] / \pm H$ and call it the {\it twisted multivariable Alexander polynomial} of $(Y, \alpha, \Phi)$.

\item If $G$ is the trivial group then we get the {\it ordinary Alexander polynomial} of $(Y, \Phi)$  which we will write $\Delta_{Y, \Phi}$.

\end{itemize}  

Observe if $\nu K \subset S^3$ denotes a tubular neighborhood for a knot $K$ in $S^3$ then $\Delta_{S^3 - \nu K}$ will be the classical Alexander polynomial of the knot $K$.

\begin{proposition} \label{tur} \cite{tur} For each twisted single variable polynomial $\Delta^\alpha_{Y, \Phi} \in \mathbb Z[t^{\pm 1}] / \{\pm t^n \mid n \in \mathbb Z \}$ there is a unique (up to sign) symmetric representative $\Delta^\alpha_{Y, \Phi} \in \mathbb Z[t^{\pm 1}]$. \hfill $\Box$
\end{proposition}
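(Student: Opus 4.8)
The plan is to separate the statement into an \emph{existence} part (a symmetric representative exists in $\mathbb Z[t^{\pm1}]$) and a \emph{uniqueness} part (it is unique up to sign), and to derive existence from a self-conjugacy property of the twisted single variable Alexander polynomial which reflects Poincar\'e--Lefschetz duality on $Y$.

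First I would fix notation: with $F=\mathbb Z$ write $R=\mathbb Z[t^{\pm1}]$ and $\Lambda=\mathbb Z[G\times\mathbb Z]=\mathbb Z[G][t^{\pm1}]$, a Noetherian ring that is free of finite rank over $R$, and equip $\Lambda$ with the ring involution $x\mapsto\bar x$ determined by $g\mapsto g^{-1}$ ($g\in G$) and $t\mapsto t^{-1}$; on $R$ this is $t\mapsto t^{-1}$. Recall that the order ideal of a finitely generated $R$-module is defined only up to the unit group $R^\times=\{\pm t^n:n\in\mathbb Z\}$, which is precisely the indeterminacy built into $\Delta^\alpha_{Y,\Phi}$, so it suffices to produce a single Laurent polynomial with the symmetry property in the given class.

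The core step is the duality. Give $Y$ a finite CW structure and let $C_*=C_*(\widetilde Y)\otimes_{\mathbb Z[\pi_1 Y]}\Lambda$, a finite complex of finitely generated free $\Lambda$-modules whose first homology is the twisted Alexander module. Since $Y$ is a compact oriented $3$-manifold whose boundary is a (possibly empty) union of tori, Poincar\'e--Lefschetz duality supplies a chain homotopy equivalence $C_*(Y)\simeq\overline{\mathrm{Hom}_\Lambda\bigl(C_{3-*}(Y,\partial Y),\Lambda\bigr)}$, where the bar denotes twisting the $\Lambda$-action by the involution. I would push this through the universal coefficient spectral sequence over $R$ --- localizing at the nonzero elements of $R$ to separate free from torsion parts, using that a $2$-torus has single variable Alexander polynomial $\doteq 1$ so that the boundary tori do not affect the relevant order, and using that $\mathbb Z[G]$ is a symmetric Frobenius algebra so that the finite-group coefficients introduce no asymmetry --- to conclude
$$\Delta^{\alpha}_{Y,\Phi}(t)\ \doteq\ \overline{\Delta^{\alpha}_{Y,\Phi}}(t)\ =\ \Delta^{\alpha}_{Y,\Phi}(t^{-1}),$$
that is, $\Delta^{\alpha}_{Y,\Phi}(t)=\varepsilon\,t^{m}\,\Delta^{\alpha}_{Y,\Phi}(t^{-1})$ for some $\varepsilon\in\{\pm1\}$ and $m\in\mathbb Z$, where the duality moreover pins $m$ down (in terms of $\chi(Y)$ and the dimension) to be even.

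Granting this, existence and uniqueness are formal. Writing a representative as $\sum_{i=a}^{b}c_it^i$ with $c_ac_b\neq0$ and comparing lowest and highest degrees in the displayed identity gives $m=a+b$; since $m$ is even, $P(t):=t^{-m/2}\,\Delta^{\alpha}_{Y,\Phi}(t)$ satisfies $P(t^{-1})=\varepsilon\,P(t)$ and represents the class, so a symmetric representative exists. For uniqueness, if $P$ and $Q$ are symmetric representatives with $P(t^{-1})=\varepsilon_P P(t)$, $Q(t^{-1})=\varepsilon_Q Q(t)$, and $Q=\pm t^{k}P$, then computing $Q(t^{-1})$ in two ways forces $t^{-k}=\pm t^{k}$ in the integral domain $\mathbb Z[t^{\pm1}]$, hence $k=0$ and $Q=\pm P$. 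The main obstacle is entirely the core step: carrying out the chain-level Poincar\'e--Lefschetz duality with the conjugated $\Lambda$-coefficients, controlling the boundary-torus and finite-group contributions, and converting the resulting homological duality into the equality of order ideals $\Delta^{\alpha}_{Y,\Phi}(t)\doteq\Delta^{\alpha}_{Y,\Phi}(t^{-1})$ with even shift; the normalization and the uniqueness argument are routine.
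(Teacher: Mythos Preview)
The paper does not actually prove this proposition: it is stated with a citation to Turaev and closed immediately with a $\Box$, so there is no argument in the paper to compare against. Your sketch is the standard route such results take---derive the self-conjugacy $\Delta^\alpha_{Y,\Phi}(t)\doteq\Delta^\alpha_{Y,\Phi}(t^{-1})$ from Poincar\'e--Lefschetz duality with twisted coefficients, then normalize---and your uniqueness argument is correct and routine.

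The one place that deserves a flag is the step where you assert that the shift $m$ in $\Delta(t)=\varepsilon t^{m}\Delta(t^{-1})$ is even, justified only parenthetically ``in terms of $\chi(Y)$ and the dimension.'' This is the genuinely delicate point, and $\chi(Y)=0$ alone does not obviously force it; if $m$ were odd you could only center the polynomial in $\mathbb Z[t^{\pm 1/2}]$, not in $\mathbb Z[t^{\pm 1}]$. In Turaev's treatment the evenness (equivalently, the existence of a symmetric lift) is obtained through the refined, sign-determined torsion associated to Euler structures (equivalently $Spin^c$ structures) on $Y$, which pins down the representative well enough to kill the odd-shift ambiguity. You correctly identify the duality step as the main obstacle, but the specific mechanism you name for the parity of $m$ is not the right one; that is exactly the content being outsourced by the citation.
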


We define the {\it Laurent degree} of a Laurent polynomial to be the difference between its highest and lowest exponents, or $-\infty$ for the zero polynomial. When we are discussing Laurent polynomials, we will refer to Laurent degree as simply degree. Note given $f, g \in \mathbb Z[t^{\pm 1}]$, we have $\text{deg}(f \cdot g) = \text{deg}(f) \cdot \text{deg}(g)$ where $-\infty \cdot n = - \infty$ for each possible degree $n$. In particular, multiplication by powers of $t$ and $\pm 1$ doesn't change the Laurent degree, so it is well-defined for twisted single variable Alexander polynomials.  

\begin{lemma} \label{twistedtocover} \cite{twisted}
$$ \Delta_{Y, \Phi}^\alpha = \Delta_{Y_\alpha, \Phi_\alpha} $$
where $\pi_\alpha : Y_\alpha \rightarrow Y$ is the $G$-cover of $Y$ induced by $\alpha$ and $\Phi_\alpha = \pi^*_\alpha(\Phi)$.
\end{lemma}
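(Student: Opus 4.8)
The plan is to reduce the twisted chain complex over $Y$ to the ordinary Alexander chain complex over the cover $Y_\alpha$ by an induced-module (Shapiro-type) argument. Write $\Gamma = \pi_1(Y)$ and $N = \ker \alpha$, so that $\Gamma/N \cong G$ and $\pi_1(Y_\alpha) = N$. Since $\tilde Y$ is simply connected and the covering $\tilde Y \to Y$ factors through $Y_\alpha$, the space $\tilde Y$ is also the universal cover of $Y_\alpha$; hence $C_*(\widetilde{Y_\alpha}) = C_*(\tilde Y)$ as chain complexes, with $N$ acting by restriction of the $\Gamma$-action by deck transformations. Let $\bar\Phi \colon \Gamma \to F$ be the composite $\Gamma \twoheadrightarrow H \xrightarrow{\Phi} F$. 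Then $\Phi_\alpha = \pi_\alpha^*\Phi$ corresponds at the $\pi_1$-level to $\bar\Phi|_N$, so the right $\mathbb Z[F]$-module $\mathbb Z[F]$ underlying the ordinary Alexander module of $(Y_\alpha,\Phi_\alpha)$ is exactly $\mathbb Z[F]$ equipped with the left $\mathbb Z[N]$-action $n\cdot f = f + \bar\Phi(n)$.

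The key step is to establish an isomorphism of $(\mathbb Z[\Gamma],\mathbb Z[F])$-bimodules
$$\mathbb Z[G\times F] \;\cong\; \mathbb Z[\Gamma]\otimes_{\mathbb Z[N]}\mathbb Z[F].$$
I would define it explicitly by $h\otimes f \mapsto (\alpha(h),\, f + \bar\Phi(h))$ for $h\in\Gamma$, $f\in F$, extended $\mathbb Z$-linearly, and then check four things: (i) it is well-defined over $\otimes_{\mathbb Z[N]}$, using $\alpha(n)=e$ for $n\in N$; (ii) it intertwines the left $\Gamma$-actions, which follows because $\alpha$ and $\bar\Phi$ are homomorphisms and the $\Gamma$-action on $G$ is by left translation; (iii) it intertwines the right $\mathbb Z[F]$-actions; and (iv) after choosing coset representatives $\{\gamma_g\}_{g\in G}$ it sends the $\mathbb Z$-basis $\{\gamma_g\otimes f\}$ bijectively onto the basis $G\times F$, hence is an isomorphism. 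The one point that requires care here is that $\bar\Phi$ is defined on all of $\Gamma$ (because $\Phi$ is a homomorphism out of $H$), which is precisely what allows the $F$-coordinate to "untwist" relative to the cosets; this is where I expect the main (though not deep) obstacle to lie.

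Granting this isomorphism, associativity of the tensor product gives, as chain complexes of right $\mathbb Z[F]$-modules,
$$C_*(\tilde Y)\otimes_{\mathbb Z[\Gamma]}\mathbb Z[G\times F] \;\cong\; C_*(\tilde Y)\otimes_{\mathbb Z[\Gamma]}\bigl(\mathbb Z[\Gamma]\otimes_{\mathbb Z[N]}\mathbb Z[F]\bigr) \;\cong\; C_*(\tilde Y)\otimes_{\mathbb Z[N]}\mathbb Z[F].$$
Using $C_*(\widetilde{Y_\alpha}) = C_*(\tilde Y)$ and $\pi_1(Y_\alpha)=N$, the right-hand side is exactly the chain complex whose first homology is the ordinary Alexander module $H_1^{\Phi_\alpha}(Y_\alpha;\mathbb Z[F])$. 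Taking $H_1$ therefore yields an isomorphism of right $\mathbb Z[F]$-modules $H_1^{\alpha\otimes\Phi}(Y;\mathbb Z[G\times F]) \cong H_1^{\Phi_\alpha}(Y_\alpha;\mathbb Z[F])$.

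Finally, the order ideal of a finitely generated $\mathbb Z[F]$-module depends only on its isomorphism class, so passing to orders in $\mathbb Z[F]/\pm F$ gives $\Delta^\alpha_{Y,\Phi} = \Delta_{Y_\alpha,\Phi_\alpha}$, as claimed. The requisite finiteness hypotheses are automatic: since $G$ is finite and $Y$ is compact, the cover $Y_\alpha$ inherits a finite cell structure from a finite one on $Y$, so both modules are finitely generated over the Noetherian ring $\mathbb Z[F]$ and their orders are defined.
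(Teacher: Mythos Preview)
The paper does not prove this lemma; it is stated with a citation to \cite{twisted} and no argument is given. Your proposal supplies a correct proof via the standard Shapiro--type identification $\mathbb Z[G\times F]\cong \mathbb Z[\Gamma]\otimes_{\mathbb Z[N]}\mathbb Z[F]$ of $(\mathbb Z[\Gamma],\mathbb Z[F])$-bimodules, followed by associativity of tensor products and the identification $C_*(\widetilde{Y_\alpha})=C_*(\tilde Y)$ as $\mathbb Z[N]$-complexes; the verifications you outline (well-definedness over $\otimes_{\mathbb Z[N]}$, equivariance for both actions, and bijectivity on coset representatives) all go through, and the passage to orders is legitimate since $\mathbb Z[F]$ is Noetherian and the modules are finitely presented.
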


\end{section}

\begin{section}{The Alexander and Thurston norms}

\begin{definition}
If $Y$ is a connected, closed, oriented $3$-manifold and $\Phi \in H^1(Y; \mathbb Z)$, we define the {\it Alexander norm} of $\Phi$ to be 
$$\| \Phi \|_A := \text{sup}_{i,j} \Phi (h_i - h_j)$$
where $\sum a_i \cdot h_i  \in \mathbb Z[H]$ represents $\Delta_Y$ (we assume the $a_i$ are nonzero and the $h_i$ are nonzero and distinct). 
By convention $\| \cdot \|_A \equiv 0$ if $\Delta_X = 0$.
\end{definition}

\begin{definition}
For a $3$-manifold $Y$, we say a class $\phi \in H_2(Y; \mathbb Z)$ is {\it represented by} an oriented closed surface $\Sigma$ if there is an embedding $i \colon \Sigma \hookrightarrow Y$ such that $i_*([\Sigma]) = \phi$.
\end{definition}

\begin{definition}
If $Y$ is a $3$-manifold and $\phi \in H_2(Y; \mathbb Z)$, we define the {\it Thurston norm} of $\phi$ to be 
$$\| \phi \|_T := \text{min} \{\chi\_ (\Sigma) \mid \Sigma \text{ is an embedded oriented closed surface representing } \phi \}.$$ 
\end{definition}
Here, given a surface $S$ with connected components $S_1 \sqcup \cdots \sqcup S_k$, we define 
$$\chi\_ (S) = \sum_{i=1}^k \text{max} \{-\chi(S_i), 0 \}.$$
Observe the Thurston norm is always nonnegative and even valued as for each connected component $S_i$ of $S$, we have $\chi (S_i) = 2 - 2 \text{ genus}(S_i)$. 

Next we will prove the following standard result.

\begin{lemma} \label{thurstk}
The Thurston norm satisfies $\|k\Phi \|_T = | k | \|\Phi\|_T$ for each $k \in \mathbb Z$.
\end{lemma}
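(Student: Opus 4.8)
The plan is to reduce to $k>0$ and then prove the two opposite inequalities $\|k\Phi\|_T\le k\|\Phi\|_T$ and $\|k\Phi\|_T\ge k\|\Phi\|_T$. Since the Thurston norm was defined on $H_2$, I identify $\Phi$ with the class $\phi\in H_2(Y;\mathbb{Z})$ Poincar\'e dual to it (working, say, in the closed case) and argue with $\phi$. For $k=0$ the class $0$ is represented by the empty surface, so both sides are $0$; for $k<0$, reversing the orientation of a surface representing $\phi$ produces a surface with the same components representing $-\phi$, whence $\|-\phi\|_T=\|\phi\|_T$. So it suffices to treat $k\ge1$, and we may also assume $\phi\ne0$ (otherwise everything vanishes).

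The inequality $\|k\phi\|_T\le k\|\phi\|_T$ is the easy half. I would choose an embedded oriented closed surface $\Sigma$ representing $\phi$ with $\chi\_(\Sigma)=\|\phi\|_T$, take a bicollar $\Sigma\times(-1,1)\subset Y$, and let $\Sigma_1,\dots,\Sigma_k$ be $k$ disjoint parallel copies $\Sigma\times\{t_j\}$. Their disjoint union is an embedded oriented surface representing $k\phi$ with $\chi\_$ equal to $k\,\chi\_(\Sigma)=k\|\phi\|_T$, so $\|k\phi\|_T\le k\|\phi\|_T$.

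For the reverse inequality $\|k\phi\|_T\ge k\|\phi\|_T$ --- the substantive part --- I would take \emph{any} embedded oriented closed surface $S$ representing $k\phi$. A standard construction (feed a bicollar $S\times(-1,1)$ and an Urysohn function on the complement into a map to $S^1$) realizes $S$ as a regular fiber $S=g^{-1}(p)$ of a smooth map $g\colon Y\to S^1$, and then the homotopy class of $g$ equals the Poincar\'e dual $k\Phi$ of $[S]=k\phi$ under $H^1(Y;\mathbb{Z})\cong[Y,S^1]$. The key step is that $k\Phi$ is divisible by $k$ in the torsion-free group $H^1(Y;\mathbb{Z})$, so $g_*\pi_1(Y)\subseteq k\mathbb{Z}=(\psi_k)_*\pi_1(S^1)$, where $\psi_k\colon S^1\to S^1$ is the connected $k$-fold cover; by the lifting criterion $g$ factors as $g=\psi_k\circ f$ for some $f\colon Y\to S^1$ with $[f]=\Phi$. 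Then $S=f^{-1}(\psi_k^{-1}(p))$, and $\psi_k^{-1}(p)=\{p_1,\dots,p_k\}$ consists of $k$ distinct points, each a regular value of $f$ because $d\psi_k$ is an isomorphism and $p$ is a regular value of $g$; hence $S=\bigsqcup_{j=1}^{k}f^{-1}(p_j)$ is a disjoint union of $k$ embedded oriented closed surfaces, each of which represents $\phi$ since every regular fiber of $f$ represents the Poincar\'e dual of $[f]=\Phi$, namely $\phi$. Therefore $\chi\_(S)=\sum_{j=1}^{k}\chi\_(f^{-1}(p_j))\ge k\|\phi\|_T$, and taking $S$ to minimize $\chi\_$ among surfaces representing $k\phi$ gives $\|k\phi\|_T\ge k\|\phi\|_T$, which with the easy direction completes the proof.

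I expect the conceptual heart --- and the one point I would single out as the real content --- to be the divisibility/covering-space lifting: it converts a single surface in the class $k\phi$ into $k$ surfaces in the class $\phi$ with no loss of Euler characteristic, and nothing like this is available for a generic non-integer scaling. The remaining points are only mildly delicate: checking that the bicollar-plus-Urysohn construction really yields $g$ with $S$ as a regular fiber and with homotopy class $PD([S])$, and checking that after lifting through $\psi_k$ each of the $k$ preimage points is still a regular value of $f$; both are routine transversality bookkeeping. For $Y$ with toroidal boundary one makes the analogous construction with $g$ constant near $\partial Y$ (or works with properly embedded surfaces and relative classes), and the argument is otherwise unchanged.
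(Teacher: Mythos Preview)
Your proof is correct and follows essentially the same route as the paper's: parallel copies for the easy inequality, and for the hard inequality realizing a minimizer for $k\Phi$ as a regular fiber of a map to $S^1$ and then using the degree-$k$ self-cover of $S^1$ to split it into $k$ surfaces each representing $\Phi$. The only cosmetic difference is that the paper phrases the covering step as homotoping a map $f_\Phi$ representing $\Phi$ so that $\pi\circ f_\Phi=f_{k\Phi}$ (via the covering homotopy property), whereas you lift $g=f_{k\Phi}$ through $\psi_k$ via the lifting criterion; these produce the same commuting triangle, and your treatment of the edge cases $k\le 0$ and of regularity of the lifted preimages is more explicit than the paper's.
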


\begin{proof}

Let $S$ be a surface representing $\Phi$ with minimal complexity (ie with minimal genus). $k$ parallel copies of $\Phi$ represent $k \Phi$ so $\| k \|_T \leq k \| \Phi \|_T$.

Now let $S$ be a surface representing $k \Phi$ with minimal complexity. By the Pontrajin construction, there exists a map $f_{k \Phi} : Y \rightarrow S^1$ induced by $k \Phi$ with a regular value $y \in S^1$ where $f_{k \Phi}^{-1}(y) = S$. Let $f_\Phi$ denote the map induced by $\Phi$. We have the following diagram that commutes up to homotopy.

$$
\begin{tikzpicture}[description/.style={fill=white,inner sep=2pt}]
\matrix (m) [matrix of math nodes, row sep=3em,
column sep=2.5em, text height=1.5ex, text depth=0.25ex]
{   Y  & S^1  \\
       &  S^1 \\
};
\path[->,font=\scriptsize]
(m-1-1) edge node[auto]{$f_\Phi$} (m-1-2);
\path[->,font=\scriptsize]
(m-1-1) edge node[auto, swap]{$f_{k \Phi}$} (m-2-2);
\path[->,font=\scriptsize]
(m-1-2) edge node[auto]{$\pi$} (m-2-2);
\end{tikzpicture}
$$
where $\pi$ denotes the covering map of degree $k$.

By the covering homotopy property, we can homotope $f_\Phi$ to make the diagram commute. Then $S = f_{k \Phi}^{-1}(y) = f_{\Phi}^{-1}(y_1) \cdots f_{\Phi}^{-1}(y_k)$ where $y_1, \cdots , y_k$ are the preimages of $y$ under $\pi$. Since each $f_{\Phi}^{-1}(y_i)$ represents $\Phi$, we have $k \| \Phi \|_T \leq \| k \Phi \|_T$.
\end{proof}

\begin{definition}
For a homomorphism $\Phi \in \text{Hom}(H,\mathbb Z)$, we define the divisibility of $\Phi$ as 
$$\text{div}(\Phi) = \text{max} \{k \mid \Phi = k \Phi^\prime \text{ for some } \Phi^\prime \in \text{Hom}(H,\mathbb Z) \}.$$  
\end{definition}

\begin{theorem} (McMullen's Inequality) \label{mcmullen} \cite{mcmullen} If $Y$ is a connected, closed, oriented $3$-manifold with $b_1(Y) \neq 0$ then for each $\Phi \in H^1(Y; \mathbb Z) \cong Hom(H_1(Y), \mathbb Z) = Hom(H, \mathbb Z)$ we have
$$ \| \Phi \|_A \leq \| \text{PD}(\Phi) \|_T + 
  \left\{
  \begin{array}{l l}
    0 & \quad \text{if $b_1(Y) > 1$,}\\
    2div\Phi & \quad \text{if $b_1(Y) = 1$.}\\
  \end{array} \right.$$ \hfill $\Box$  
\end{theorem}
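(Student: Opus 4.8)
The plan is to follow McMullen's original route: deduce the norm inequality from a one-variable degree bound, which in turn comes from cutting $Y$ along a Thurston-norm-minimizing surface and running Mayer--Vietoris over the Laurent ring $R=\mathbb{Q}[t^{\pm1}]$.

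\textbf{Reductions.} Both sides are homogeneous of degree one in $\phi$: the Thurston norm by Lemma~\ref{thurstk}, the Alexander norm by its definition through the Newton polytope of $\Delta_Y$, and $\operatorname{div}(k\phi)=|k|\operatorname{div}(\phi)$; so it suffices to treat primitive $\phi$, where $\operatorname{div}\phi=1$. The multivariable statement is equivalent to saying that $\operatorname{Newt}(\Delta_Y)$ has width at most $\|\mathrm{PD}(\phi)\|_T$ (for $b_1>1$), resp. $\|\mathrm{PD}(\phi)\|_T+2$ (for $b_1=1$), in every direction $\phi$; since the dual Thurston ball is a rational polytope and a convex body is determined by its support function on rational directions, it is enough to check this for each primitive integral $\phi$. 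For such $\phi$ the width equals $\deg\big(\phi_\ast\Delta_Y\big)$ as long as the extreme vertices do not cancel under $\phi_\ast\colon\mathbb{Z}[H]\to\mathbb{Z}[t^{\pm1}]$; a genericity argument (perturb $\phi$ to a nearby primitive class not normal to any edge of $\operatorname{Newt}(\Delta_Y)$, then pass to the limit using piecewise-linearity of both norms) reduces us to bounding $\deg(\phi_\ast\Delta_Y)$, and when $b_1=1$ everything is just $|k|$ times the case of the generator, so no limiting is needed there.

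\textbf{Key step: the degree bound from a minimal surface.} Let $S\hookrightarrow Y$ be embedded, oriented, Poincar\'e dual to $\phi$, with $\chi_-(S)=\|\mathrm{PD}(\phi)\|_T$; if $Y$ is reducible, split along a connect-sum sphere first, using that $\Delta_Y$ is multiplicative and $\|\cdot\|_T$ additive over prime summands, so we may assume $Y$ prime and $S$ without sphere components, whence $\chi_-(S)=-\chi(S)$. Cutting $Y$ along $S$ produces a compact $M$ with $\partial M\supset S_+\sqcup S_-$ and presents $Y$ as $M$ with $S_+$ glued to $S_-$; then the infinite cyclic cover is $Y_\phi=\big(\bigsqcup_{n\in\mathbb{Z}}M_n\big)/\!\sim$, the $M_n$ glued along consecutive $S_\pm$'s, with deck group generated by the shift $t\colon M_n\mapsto M_{n+1}$. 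In the Mayer--Vietoris sequence of this decomposition over $R$ one has $H_\ast(\bigsqcup M_n;\mathbb{Q})\cong R\otimes H_\ast(M;\mathbb{Q})$ and $H_\ast(\bigsqcup S_n;\mathbb{Q})\cong R\otimes H_\ast(S;\mathbb{Q})$, all free $R$-modules, and the connecting maps take the form $x\mapsto(i_+)_\ast x-t\,(i_-)_\ast x$, hence are given by matrices with $t$-entries of degree $\le1$. So $H_1(Y_\phi;\mathbb{Q})$ has a finite $R$-presentation with degree-$\le1$ entries and size controlled by $b_1(S)$, and its torsion order $\Delta_{Y_\phi}(t)$ has degree at most the size of a maximal square block; after accounting for the $H_0$- and $H_2$-terms and the free part of rank $b_1(Y)-1$, this works out to $\deg\Delta_{Y_\phi}\le-\chi(S)=\|\mathrm{PD}(\phi)\|_T$ when $b_1(Y)>1$, and $\deg\Delta_{Y_\phi}\le\|\mathrm{PD}(\phi)\|_T+1$ when $b_1(Y)=1$ (the extra $1$ coming from the $b_0-b_1$ mismatch of $Y_\phi$).

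\textbf{From the cover back to $\Delta_Y$, and conclusion.} It remains to compare $\phi_\ast\Delta_Y$ with $\Delta_{Y_\phi}(t)$. Via the Turaev/Milnor identification of the multivariable Alexander polynomial with Reidemeister torsion and its behaviour under the change of rings $\mathbb{Z}[H]\to\mathbb{Z}[t^{\pm1}]$, one gets $\phi_\ast\Delta_Y\doteq\Delta_{Y_\phi}(t)$ up to a unit when $b_1(Y)>1$, giving $\|\phi\|_A=\deg\phi_\ast\Delta_Y=\deg\Delta_{Y_\phi}\le\|\mathrm{PD}(\phi)\|_T$; when $b_1(Y)=1$ the Torres-type relation between $\Delta_Y$ and the infinite cyclic cover of the primitive class carries an extra factor of the form $(t-1)$, contributing $+2$, i.e. $+2\operatorname{div}\phi$ after rehomogenizing, and one obtains $\|\phi\|_A\le\|\mathrm{PD}(\phi)\|_T+2\operatorname{div}\phi$. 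Combining with the reductions of the first paragraph finishes the proof.

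\textbf{Main obstacle.} The delicate point is the homological bookkeeping in the key step: extracting the sharp bound $\deg\Delta_{Y_\phi}\le\chi_-(S)$ rather than the crude $b_1(S)$ or $-\chi(S)+2(\#\text{components})$. This needs (i) using norm-minimality of $S$ to discard sphere and disk components so that $\chi_-(S)=-\chi(S)$, and to control torus components (which otherwise inflate $b_1(S)$ and are precisely where the $(t-1)$ factors originate); (ii) carefully tracking how the $H_0$- and $H_2$-contributions and the rank-$(b_1(Y)-1)$ free summand of $H_1(Y_\phi)$ cut down the effective size of the presentation matrix; and (iii) pinning down exactly where the extra $+1$, hence the $2\operatorname{div}\phi$, enters when $b_1(Y)=1$. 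A secondary, routine subtlety is the genericity argument in the first paragraph passing from $\deg(\phi_\ast\Delta_Y)$ to the actual Alexander-norm width.
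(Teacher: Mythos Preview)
The paper does not give its own proof of this statement: it is quoted from McMullen with a citation and closed with a $\Box$. So there is nothing in the paper to compare your argument against; you have supplied a proof where the thesis only records the result.

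That said, your sketch is essentially McMullen's own argument, and the architecture is correct: reduce to primitive $\phi$, cut along a norm-minimizing surface $S$ dual to $\phi$, read off a presentation of $H_1(Y_\phi;\mathbb{Q})$ over $\mathbb{Q}[t^{\pm1}]$ from Mayer--Vietoris with entries of $t$-degree $\le 1$, and bound the degree of the order ideal by the size of the presentation, which is governed by $b_1(S)$ and hence by $-\chi(S)=\chi_-(S)$. The genericity reduction from the Newton polytope width to the one-variable degree is also the standard move.

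One point in your write-up is muddled and would need cleaning up before it counts as a proof. In the $b_1(Y)=1$ case there is no specialization and no Torres factor: $H=\mathbb{Z}$, $\Delta_Y$ is already one-variable, and $\|\phi\|_A=\deg\Delta_Y$ for primitive $\phi$. The extra $+2$ comes entirely from the Mayer--Vietoris bookkeeping (the $H_0$ and $H_2$ contributions of $S$ and $M$ no longer cancel against a free summand of rank $b_1(Y)-1=0$), not from any $(t-1)$ correction between two different Alexander polynomials. Your split into ``$+1$ from the cover'' and ``$+2$ from Torres'' does not parse; you should redo that paragraph so the $+2$ appears directly in the degree bound for $\Delta_{Y_\phi}=\Delta_Y$. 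The $b_1>1$ case, where the Torres-type comparison $\phi_\ast\Delta_Y\doteq\Delta_{Y_\phi}$ genuinely enters, is handled correctly.
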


\begin{proposition} \label{degprop} \cite{twisted}
If $b_1(Y) > 1$ then for each $\Phi \in H^1(Y; \mathbb Z)$, 
$$\text{deg} \Delta_{Y, \Phi} \leq \| \Phi \|_A + 2\text{ div} (\Phi).$$ \hfill $\Box$
\end{proposition}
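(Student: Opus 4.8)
The plan is to express $\Delta_{Y,\Phi}$ in terms of the one-variable specialization of $\Delta_Y$ under the ring homomorphism $\Phi_*\colon\mathbb Z[H]\to\mathbb Z[t^{\pm1}]$ determined by $h\mapsto t^{\Phi(h)}$, using Reidemeister--Turaev torsion as the bridge. If $\Delta_{Y,\Phi}=0$ the inequality is vacuous (the zero polynomial has degree $-\infty$), so assume $\Delta_{Y,\Phi}\neq 0$; equivalently $H_1\big(C_*(\tilde Y)\otimes_{\mathbb Z[\pi_1(Y)]}\mathbb Z[t^{\pm1}]\big)$ is a torsion $\mathbb Z[t^{\pm1}]$-module and the torsion $\tau(Y;\mathbb Z[t^{\pm1}])$ is defined (an element of $\mathbb Q(t)$, up to units $\pm t^n$). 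Since $b_1(Y)\geq 2$, a Poincar\'e duality plus $\mathrm{Ext}$ computation gives $H_q(Y;\mathbb Z[H])=0$ for $q=2,3$ and shows $H_1(Y;\mathbb Z[H])$ is $\mathbb Z[H]$-torsion; in particular $\Delta_Y\neq 0$, so $\|\Phi\|_A$ genuinely is the width of the Newton polytope of $\Delta_Y$ in the direction $\Phi$.

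There are three inputs. First, Milnor's evaluation of the torsion of a closed oriented $3$-manifold from its $\mathbb Z[t^{\pm1}]$-homology: one has $H_0(Y;\mathbb Z[t^{\pm1}])\cong\mathbb Z[t^{\pm1}]/(t^{\text{div}\Phi}-1)$, and by Poincar\'e duality together with the universal coefficient sequence $0\to\mathrm{Ext}^1_{\mathbb Z[t^{\pm1}]}(H_0,\mathbb Z[t^{\pm1}])\to H^1\to\mathrm{Hom}(H_1,\mathbb Z[t^{\pm1}])\to 0$ (the $\mathrm{Hom}$ term vanishing because $H_1$ is torsion) also $H_2(Y;\mathbb Z[t^{\pm1}])\cong\mathbb Z[t^{\pm1}]/(t^{\text{div}\Phi}-1)$, while $H_3(Y;\mathbb Z[t^{\pm1}])=0$; hence $\tau(Y;\mathbb Z[t^{\pm1}])\doteq\Delta_{Y,\Phi}\cdot(t^{\text{div}\Phi}-1)^{-2}$. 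Second, Turaev's identity that for $b_1(Y)\geq 2$ the multivariable torsion is the multivariable Alexander polynomial itself, $\tau(Y;\mathbb Z[H])\doteq\Delta_Y$ --- here the corresponding $(h-1)^{\pm2}$ factors drop out precisely because $\mathrm{ord}_{\mathbb Z[H]}H_0(Y;\mathbb Z[H])=\mathrm{ord}_{\mathbb Z[H]}H_2(Y;\mathbb Z[H])=1$ once $\mathrm{rank}\,H\geq 2$ (the classes $t_i-1$ have no common factor in $\mathbb Z[H]$, and $H_2$ vanishes). Third, the naturality of Reidemeister torsion: $\Phi_*$ carries the $\mathbb Q(H)$-acyclic complex $C_*(\tilde Y)\otimes_{\mathbb Z[\pi_1(Y)]}\mathbb Z[H]$ to the (by assumption) $\mathbb Q(t)$-acyclic complex $C_*(\tilde Y)\otimes_{\mathbb Z[\pi_1(Y)]}\mathbb Z[t^{\pm1}]$, whence $\tau(Y;\mathbb Z[t^{\pm1}])\doteq\Phi_*\big(\tau(Y;\mathbb Z[H])\big)$.

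Combining the three gives $\Delta_{Y,\Phi}\doteq(t^{\text{div}\Phi}-1)^2\,\Phi_*(\Delta_Y)$. Writing $\Delta_Y=\sum_i a_i h_i$ with $a_i\neq 0$ and the $h_i\in H$ distinct, one has $\Phi_*(\Delta_Y)=\sum_i a_i t^{\Phi(h_i)}$, whose Laurent degree is at most $\max_i\Phi(h_i)-\min_j\Phi(h_j)=\sup_{i,j}\Phi(h_i-h_j)=\|\Phi\|_A$. Therefore $\text{deg}\,\Delta_{Y,\Phi}=2\,\text{div}(\Phi)+\text{deg}\,\Phi_*(\Delta_Y)\leq\|\Phi\|_A+2\,\text{div}(\Phi)$, as claimed.

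I expect the main obstacle to be the careful bookkeeping in Milnor's and Turaev's torsion formulas --- in particular pinning down the two factors of $(t^{\text{div}\Phi}-1)$, which come from $H_0$ and (via duality) $H_2$, and verifying the parallel $\mathbb Z[H]$-side vanishings that make $\tau(Y;\mathbb Z[H])\doteq\Delta_Y$ hold exactly when $b_1(Y)\geq 2$. A self-contained alternative avoiding torsion would instead run the base-change spectral sequence $E^2_{p,q}=\mathrm{Tor}^{\mathbb Z[H]}_p\big(H_q(Y;\mathbb Z[H]),\mathbb Z[t^{\pm1}]\big)\Rightarrow H_{p+q}(Y;\mathbb Z[t^{\pm1}])$ and compute the relevant $\mathrm{Tor}$ groups from a Koszul resolution of $H_0=\mathbb Z$ over $\mathbb Z[H]$; but there the cancellations that torsion packages automatically reappear as a delicate interplay between the $d_2$-differentials and those $\mathrm{Tor}$ groups, and extracting the clean bound $2\,\text{div}(\Phi)$ (rather than a $b_1$-dependent quantity) from that interplay is exactly the subtle point.
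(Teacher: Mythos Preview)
Your argument is correct. The paper does not actually prove this proposition --- it is stated with a citation to \cite{twisted} and marked $\Box$ without proof --- so there is no ``paper's own proof'' to compare against. What you have done is supply the standard torsion-based proof: you derive the identity $\Delta_{Y,\Phi}\doteq(t^{\operatorname{div}\Phi}-1)^2\,\Phi_*(\Delta_Y)$ via Milnor's and Turaev's formulas together with naturality, and then read off the degree bound from the definition of the Alexander norm. In fact, the identity you establish is precisely Proposition~\ref{norm} of the paper (also cited from \cite{twisted} without proof), so your argument proves both propositions at once. The final paragraph's concern about bookkeeping is well placed but you have handled it correctly: the two factors of $(t^{\operatorname{div}\Phi}-1)$ come from $H_0$ and $H_2$ on the one-variable side, while on the multivariable side with $b_1\geq 2$ the corresponding orders are units, which is exactly why the $b_1>1$ case of Proposition~\ref{norm} has the extra factor that the $b_1=1$ case lacks.
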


Observe each $\Phi \in H^1(Y; \mathbb Z) = Hom(H, \mathbb Z)$ induces a ring homomorphism $$\Phi : \mathbb Z[H] / \pm H \rightarrow \mathbb Z[t^{\pm 1}] / \{ \pm t^n \mid n \in \mathbb Z \}.$$

\begin{proposition} \label{norm} \cite{twisted}
$$\Delta_{Y, \Phi} =
  \left\{
  \begin{array}{l l}
  (t^{\text{div}(\Phi)}-1)^2 \cdot \Phi(\Delta_Y) & \text{if $b_1(Y) > 1$.} \\
  \Phi(\Delta_Y) & \text{if $b_1(Y) = 1$.}
  \end{array} \right.
$$
\hfill $\Box$
\end{proposition}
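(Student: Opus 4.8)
The plan is to realise both $\Delta_{Y,\Phi}$ and $\Delta_Y$ as orders of the first homology of one and the same chain complex of free modules, related by a change of coefficient rings, and then to read the comparison off from the functoriality of Reidemeister torsion together with Turaev's computation of the torsion of a closed $3$-manifold.

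Fix a finite cell structure on $Y$ and write $C_*=C_*(\tilde Y)$ for the cellular chain complex of the universal cover, a bounded complex of finitely generated free $\mathbb Z[\pi_1(Y)]$-modules. In the definition of $\Delta_{Y,\Phi}$ we take $G$ trivial and $F=\mathbb Z$, so the coefficients are $\mathbb Z[\mathbb Z]=\mathbb Z[t^{\pm1}]$ with $\pi_1(Y)$ acting through $\pi_1(Y)\to H\xrightarrow{\Phi}\mathbb Z$; denote this module $\mathbb Z[t^{\pm1}]_\Phi$. Since $\mathbb Z[t^{\pm1}]_\Phi\cong\mathbb Z[H]\otimes_{\mathbb Z[H]}\mathbb Z[t^{\pm1}]_\Phi$, associativity of tensor products gives
$$C_*\otimes_{\mathbb Z[\pi_1(Y)]}\mathbb Z[t^{\pm1}]_\Phi\;\cong\;\bigl(C_*\otimes_{\mathbb Z[\pi_1(Y)]}\mathbb Z[H]\bigr)\otimes_{\mathbb Z[H],\,\Phi}\mathbb Z[t^{\pm1}],$$
so the complex whose $H_1$ has order $\Delta_{Y,\Phi}$ is the base change, along the ring homomorphism $\Phi\colon\mathbb Z[H]\to\mathbb Z[t^{\pm1}]$, of the complex of free $\mathbb Z[H]$-modules whose $H_1$ has order $\Delta_Y$. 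The content of the proposition is that this base change does \emph{not} act on the first order ideal by the naive $\Phi(-)$: the complex is not acyclic on the nose, and $H_0$ together with $H_2$ (via Poincar\'e duality) contributes a correction factor which is trivial precisely when the augmentation ideal of $\mathbb Z[H]$ is non-principal.

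To make this precise I would use Reidemeister torsion. As $\Phi$ is nontrivial and $b_1(Y)\geq1$, the homology of $Y$ with coefficients in the quotient field $Q(\mathbb Z[H])$, and likewise in $Q(\mathbb Z[t^{\pm1}])_\Phi$, vanishes — assume for now $\Delta_Y\neq0$ — so $\tau(Y;\mathbb Z[H])$ and $\tau(Y;\mathbb Z[t^{\pm1}]_\Phi)$ are defined and, by Milnor's formula, equal $\prod_i\text{ord}(H_i)^{(-1)^{i+1}}$. For a closed oriented $3$-manifold, Poincar\'e duality kills $H_3\cong H^0$ and makes $\text{ord}(H_2)=\text{ord}(H^1)$ the conjugate of $\text{ord}(H_0)$ (over the principal ideal domain $\mathbb Z[t^{\pm1}]$ this is the universal coefficient theorem, $H^1\cong\text{Ext}^1(H_0,\mathbb Z[t^{\pm1}])$ once $H_1$ is torsion; over $\mathbb Z[H]$ this is Turaev's theorem), whence
$$\tau(Y;\mathbb Z[H])\doteq\Delta_Y\cdot\text{ord}\bigl(H_0(Y;\mathbb Z[H])\bigr)^{-2},\qquad\tau(Y;\mathbb Z[t^{\pm1}]_\Phi)\doteq\Delta_{Y,\Phi}\cdot(t^{\text{div}(\Phi)}-1)^{-2},$$
using that $H_0(Y;\mathbb Z[t^{\pm1}]_\Phi)\cong\mathbb Z[t^{\pm1}]/(t^{\text{div}(\Phi)}-1)$ because the image of $\Phi$ in $\mathbb Z$ is $\text{div}(\Phi)\,\mathbb Z$. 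Here $\text{ord}(H_0(Y;\mathbb Z[H]))=1$ when $b_1(Y)>1$ — the generators $h-1$ of the augmentation ideal are then coprime in the unique factorisation domain $\mathbb Z[H]$ — while it equals $t-1$ when $b_1(Y)=1$; this is exactly where the dichotomy in the statement originates.

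Finally, Reidemeister torsion is natural under the change of rings $\Phi$ whenever $Q$-acyclicity is preserved (it is), so $\Phi_*\bigl(\tau(Y;\mathbb Z[H])\bigr)=\tau(Y;\mathbb Z[t^{\pm1}]_\Phi)$. Substituting the two displayed formulas, and using $\Phi(\text{ord}(H_0(Y;\mathbb Z[H])))=1$ when $b_1(Y)>1$, together with $\Phi(s-1)=t^{\text{div}(\Phi)}-1$ when $b_1(Y)=1$ and $\mathbb Z[H]=\mathbb Z[s^{\pm1}]$, the correction factors either persist or cancel, giving $\Delta_{Y,\Phi}=(t^{\text{div}(\Phi)}-1)^2\,\Phi(\Delta_Y)$ in the first case and $\Delta_{Y,\Phi}=\Phi(\Delta_Y)$ in the second. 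The degenerate case $\Delta_Y=0$, where torsion is undefined, must be handled separately: both sides are then to be read as zero, which follows once one checks that $H_1(Y;\mathbb Z[t^{\pm1}]_\Phi)$ also has positive $\mathbb Z[t^{\pm1}]$-rank (via the universal coefficient spectral sequence for the base change along $\Phi$). I expect the real work to be the bookkeeping of the duality correction terms — verifying that $\text{ord}(H_0)$ and $\text{ord}(H_2)$ combine to exactly the stated powers in both the $b_1=1$ and $b_1>1$ regimes — together with this degenerate case; the reduction to a base change and the naturality of torsion are comparatively formal.
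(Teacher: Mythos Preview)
The paper does not supply a proof of this proposition: it is quoted verbatim from the reference \cite{twisted} and closed immediately with a $\Box$. There is therefore no in-paper argument to compare against.

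That said, your torsion-based outline is the standard route to this identity and is essentially correct. The reduction to a base change along $\Phi\colon\mathbb Z[H]\to\mathbb Z[t^{\pm1}]$, the Milnor formula $\tau=\prod_i(\mathrm{ord}\,H_i)^{(-1)^{i+1}}$, the naturality of torsion under change of rings, and the computation of $\mathrm{ord}(H_0)$ in the two regimes are exactly the ingredients Turaev uses. Two places deserve care when you fill in details: first, the duality step identifying $\mathrm{ord}(H_2)$ with the conjugate of $\mathrm{ord}(H_0)$ over $\mathbb Z[H]$ with $\mathrm{rk}\,H>1$ is not the bare universal coefficient theorem (the ring is not a PID) and genuinely needs Turaev's symmetry result for torsion of closed $3$-manifolds; second, the degenerate case $\Delta_Y=0$ does require the separate argument you flag, since torsion is then undefined over $Q(\mathbb Z[H])$. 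With those caveats, your sketch matches what one finds in the cited source.
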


\end{section}

\begin{section}{The 3-dimensional Seiberg-Witten invariants}

Similar to the four dimensional case, we can also define Seiberg-Witten invariants for an oriented Riemannian $3$-manifold $Y$. We have the map
$$ SW_{Y, \alpha Y} : Spin^c(Y) \rightarrow \mathbb Z$$
where $Spin^c(Y)$ denotes the set of equivalence classes of $Spin^c(3)$ structures. For a detailed explanation, we refer the reader to \cite{swthree}.

The following theorem relates these invariants with the multivariable Alexander polynomial of $Y$.

\begin{theorem}\label{meng} \emph{(Meng-Taubes)} \cite{meng}
Let $Y$ be a closed oriented $3$-manifold with $b_1(Y) > 1$ and let $H = H_1(Y; \mathbb Z) / Tor$. Then
$$ \Delta_Y =  \pm  \sum_{\zeta \in Spin^c(Y)} SW_{Y, \alpha Y} (\zeta) \cdot \frac{1}{2} f(c_1(L_\zeta)) \in \mathbb Z[H] / \pm H$$
where $f$ denotes the composition of Poincare duality and the quotient map $H_1(Y; \mathbb Z) \rightarrow H$.  \hfill $\Box$
\end{theorem}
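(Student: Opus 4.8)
Since a complete proof requires gauge-theoretic input beyond the scope of this chapter, I will only outline the strategy --- that of Meng and Taubes \cite{meng}, in the streamlined form later supplied by Turaev. The identity in question is the three-dimensional ``Seiberg--Witten equals Milnor torsion'' theorem, reorganized. The plan is to reduce it to a comparison of two invariants living in $\mathbb Z[H]/\pm H$ and then to pin those invariants down by a surgery calculus. First I would package the left-hand side using the Reidemeister--Turaev (Milnor) torsion $\tau(Y)\in\mathbb Z[H]/\pm H$ of $Y$; a classical fact is that for $b_1(Y)>1$ one has $\tau(Y)=\pm\Delta_Y$ in $\mathbb Z[H]/\pm H$, and it is precisely here that the hypothesis $b_1(Y)>1$ enters, the case $b_1(Y)=1$ carrying the familiar $(t-1)^2$ correction term. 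Then I would package the right-hand side into the Seiberg--Witten series $\mathbf{SW}_Y:=\sum_{\zeta\in Spin^c(Y)}SW_{Y,\alpha Y}(\zeta)\cdot\tfrac12 f(c_1(L_\zeta))\in\mathbb Z[H]/\pm H$ (here $\tfrac12 c_1(L_\zeta)$ is integral because $3$-manifolds are spin, so $\tfrac12 f(c_1(L_\zeta))$ is a well-defined element of $H$); this is a finite sum because for $b_1(Y)>1$ only finitely many $Spin^c$-structures carry nonzero invariant and the wall-crossing subtleties of the $b_1(Y)=1$ case are absent. So it suffices to prove $\mathbf{SW}_Y=\pm\tau(Y)$.

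For this I would verify that $\mathbf{SW}_Y$ satisfies the same structural properties that characterize $\tau$: a vanishing/multiplicativity property under connected sum, a Dehn-surgery formula relating $\mathbf{SW}_Y$ to $\mathbf{SW}_{Y'}$ when $Y'$ is obtained from $Y$ by surgery on a knot, and a normalization on suitable model manifolds; Turaev's axiomatic characterization of $\tau$ shows these properties determine the invariant on all $Y$ with $b_1>1$. For the normalization the mapping tori are the natural test case: if $Y$ fibres over $S^1$ with monodromy $h$, the three-dimensional monopole equations can be analyzed explicitly (or reduced to Taubes' theory on $S^1\times Y$), yielding $\mathbf{SW}_Y$ as the Lefschetz zeta function of $h$, while Milnor's classical computation identifies $\Delta_Y$ with the Alexander polynomial of $h$; the two agree. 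Combining the surgery calculus with the mapping-torus computation would give $\mathbf{SW}_Y=\pm\tau(Y)=\pm\Delta_Y$, with all sign and $H$-translation ambiguities absorbed into the quotient $\mathbb Z[H]/\pm H$; the factor $\tfrac12$ appearing in the statement is exactly the exponent $\tfrac12 f(c_1(L_\zeta))$ occurring in $\mathbf{SW}_Y$.

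The hard part will be the surgery formula (equivalently, the cut-and-paste formula along a torus) for the three-dimensional Seiberg--Witten invariant. Establishing it requires the gluing theory for the monopole equations on $3$-manifolds with toroidal ends --- in the Meng--Taubes approach this is imported from Taubes' analysis of the symplectic $4$-manifold $S^1\times Y$ (the identification of Seiberg--Witten with Gromov invariants together with the four-dimensional gluing theorems), which is the genuine analytic content of the theorem. The remaining work is bookkeeping: matching $Spin^c$-structures on the surgered pieces with those on $Y$, and tracking the homology orientation through each step so that the indeterminacy really does collapse to a global sign. $\hfill\square$
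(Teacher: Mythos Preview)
The paper does not prove this statement at all: it is quoted as a result of Meng and Taubes \cite{meng}, marked with a terminal $\Box$ and no argument. So there is no ``paper's own proof'' to compare against; the theorem is treated as a black box imported from the literature and then applied (via Lemma~\ref{baldcor}) to relate $\Phi(\Delta_Y)$ to a sum of four-dimensional Seiberg--Witten invariants.

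Your outline is a faithful sketch of the actual Meng--Taubes/Turaev argument --- identifying $\Delta_Y$ with Milnor torsion, packaging the Seiberg--Witten invariants into a series in $\mathbb Z[H]/\pm H$, and matching the two via a surgery calculus normalized on mapping tori --- and you correctly flag the gluing theory for monopoles along tori as the analytic core. That is more than the paper attempts, and as a proof \emph{sketch} it is accurate; just be aware that in the context of this thesis the appropriate response is simply to cite \cite{meng} and move on, which is what the author does.
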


$c_1(L_\zeta)$ has even divisibility for each $\zeta \in Spin^c(Y)$ so multiplication by $\frac{1}{2}$ is well-defined. To see this, recall $Y$ is parallelizable so it admits a trivial $Spin^c(3)$-structure hence each $c_1(L_\zeta)$ may be written $0 + 2E$ for some $E \in H^2(Y)$.

\end{section}

\begin{section}{Pullback of $\bm{Spin^c(3)}$-structures}

Throughout this section, we will let $X$ be an oriented, closed, Riemannian \\ $4$-manifold, $Y$ be a closed, oriented, oriented $3$-manifold, and $p : X \rightarrow Y$ be a principal $S^1$-bundle.

Let $\eta \in \wedge^1(Y)$ denote a principal $S^1$-connexion of $p$ and let $g_Y$ be the metric on $Y$. We can endow
$X$ with the metric $g_X = \eta \otimes \eta + p^*(g_Y)$. Using this metric, there is an
orthogonal splitting
$$T^* X = \mathbb R \eta \oplus p^*(T^* Y).$$
Note this splitting is independent of the choice of $\eta$.

If $\zeta = (W, \mu)$ is a $Spin^c(3)$-structure on $Y$, we define the {\it pullback} of $\zeta$ to be
$(p^*(W) \oplus p^*(W), \mu^\prime)$ where $ \mu^\prime : T^* X \rightarrow End_\mathbb C (p^*(W) \oplus p^*(W))$ is given by
$$ \mu^\prime (b \eta + p^*(a)) = 
\left[ {\begin{array}{ll}
 0 & p^*(\mu(a)) + b Id_{p^*(W)} \\
 p^*(\mu(a)) - b Id_{p^*(W)} & 0
\end{array} } \right] .
$$
We will write the pullback map as $p^* : Spin^c(Y) \rightarrow p^*Spin^c(Y) \subset Spin^c(X)$.

\begin{lemma}
\cite{bald2} We have the following relationship between determinant line bundles: $c_1(L_{p^*(\zeta)}) = p^*(c_1(L_\zeta))$ for each $\zeta \in Spin^c(Y)$. \hfill $\Box$
\end{lemma}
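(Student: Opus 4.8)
The plan is to reduce the identity to two facts: first, that the determinant line bundle of a $Spin^c$-structure, in dimension $3$ or $4$, is the top complex exterior power of (a half of) the associated spinor bundle; and second, that both $\det_{\mathbb C}$ and $c_1$ are natural under pullback of bundles. To establish the first fact I would unwind the associated-bundle constructions. Under the isomorphism $Spin^c(3)\cong U(2)$ of Lemma~\ref{spinc3u2}, the homomorphism $\det\colon Spin^c(3)\to U(1)$, $[(\varphi,\lambda)]\mapsto\lambda^{2}$, goes over to the ordinary determinant of $U(2)$; hence for a $Spin^c(3)$-structure $\zeta=(W,\mu)$ on $Y$ one gets $L_{\zeta}=P_{Spin^{c}(3)}\times_{\det}\mathbb C\cong\det_{\mathbb C}W$. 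The same bookkeeping with Lemma~\ref{spinc4u2}, under which $Spin^{c}(4)\cong\{(A,B)\in U(2)\times U(2)\mid\det A=\det B\}$ and the value of $\det$ on $[(\varphi,\lambda)]$ becomes $\det A=\det B$, shows $L_{\xi}\cong\det_{\mathbb C}W^{+}\cong\det_{\mathbb C}W^{-}$ for a $Spin^c(4)$-structure $\xi$ with spinor bundle $W^{+}\oplus W^{-}$.

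Next I would identify the positive spinor bundle of the pullback structure $p^{*}(\zeta)$. Its underlying spinor bundle is $p^{*}(W)\oplus p^{*}(W)$, and the Clifford multiplication $\mu'(b\eta+p^{*}(a))$ given in the definition of the pullback is block-anti-diagonal, so Clifford multiplication by any cotangent vector interchanges the two summands. Since the complex volume element $\omega_{\mathbb C}\in\mathcal C l(\mathbb C^{4})$ is, up to a unit scalar, a product of four covectors, it therefore acts block-diagonally on $p^{*}(W)\oplus p^{*}(W)$; and since $\omega_{\mathbb C}^{2}=1$ while $\omega_{\mathbb C}$ anticommutes with covectors and Clifford multiplication by covectors is injective, $\omega_{\mathbb C}$ cannot act as a scalar on the whole module and so acts as $+1$ on one summand and $-1$ on the other. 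Hence the canonical self-dual/anti-self-dual splitting of $p^{*}(W)\oplus p^{*}(W)$ is precisely the given direct-sum splitting, so $W^{+}_{p^{*}(\zeta)}\cong p^{*}(W)$ as complex vector bundles (which copy is $W^+$ and which is $W^-$ is immaterial here).

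Finally I would assemble the pieces:
$$L_{p^{*}(\zeta)}\;\cong\;\det_{\mathbb C}\bigl(W^{+}_{p^{*}(\zeta)}\bigr)\;\cong\;\det_{\mathbb C}\bigl(p^{*}(W)\bigr)\;\cong\;p^{*}\bigl(\det_{\mathbb C}W\bigr)\;\cong\;p^{*}(L_{\zeta}),$$
using that the top exterior power commutes with bundle pullback, and then pass to first Chern classes, invoking the naturality $c_{1}(p^{*}\mathcal L)=p^{*}c_{1}(\mathcal L)$, to conclude $c_{1}(L_{p^{*}(\zeta)})=p^{*}(c_{1}(L_{\zeta}))$. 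I expect the only genuine content to be in the middle step — verifying that Baldridge's block description of $\mu'$ really reproduces the canonical positive/negative spinor splitting — while the identifications of the determinant line bundles and the final naturality argument are purely formal. It is worth noting that one should avoid arguing through $c_1$ of the full rank-four spinor bundle, since that only yields the desired identity after dividing by $2$, which is not legitimate if $H^{2}(X;\mathbb Z)$ has $2$-torsion; working with $W^{+}$ as above sidesteps this.
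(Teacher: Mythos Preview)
The paper does not actually prove this lemma: it is stated with the citation \cite{bald2} and closed with a $\Box$, so there is no argument in the text to compare against. Your proposal therefore supplies a proof where the paper gives none, and the strategy you outline --- identify $L$ with $\det_{\mathbb C}W$ (resp.\ $\det_{\mathbb C}W^{\pm}$), check that $W^{+}_{p^{*}(\zeta)}\cong p^{*}(W)$, and invoke naturality of $\det_{\mathbb C}$ and $c_{1}$ under pullback --- is sound and standard.

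One point deserves a little more care. In the middle step you argue that $\omega_{\mathbb C}$ acts block-diagonally on $p^{*}(W)\oplus p^{*}(W)$ with blocks squaring to the identity, that it cannot be a global scalar, and \emph{therefore} it is $+1$ on one summand and $-1$ on the other. The last inference is not automatic: a block-diagonal involution could in principle have mixed eigenvalues within a block. What actually forces each block to be scalar is that the restriction of $\omega_{\mathbb C}$ to a block is (up to a unit) the action of the $3$-dimensional volume element $e_{1}e_{2}e_{3}$ on $p^{*}(W)$, and this element is central in $\mathcal C l(\mathbb R^{3})$ while $W$ is a rank-$2$, hence irreducible, Clifford module; by Schur's lemma it acts as a scalar. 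Once you insert this observation (or simply compute $\mu'(e_{1})\mu'(e_{2})\mu'(e_{3})\mu'(\eta)$ directly from the block formula for $\mu'$), your conclusion that the two summands are exactly the $\pm 1$-eigenspaces of $\omega_{\mathbb C}$ is justified, and the rest of the argument goes through. Your closing remark about avoiding the rank-$4$ bundle to sidestep $2$-torsion issues is well taken.
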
 

\begin{lemma}
The pullback map $p^* : Spin^c(Y) \rightarrow Spin^c(X)$ is equivariant with respect to the action of $H^2(Y; \mathbb Z)$ on $Spin^c(Y)$ and the action of $H^2(Y; \mathbb Z)$ on $p^*Spin^c(Y)$ via $p^* : H^2(Y; \mathbb Z) \rightarrow H^2(X; \mathbb Z)$. Hence $p^*H^2(Y; \mathbb Z)$ acts freely and transitively on $p^* Spin^c(Y) \subset Spin^c(X)$.
\end{lemma}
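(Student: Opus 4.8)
The plan is to verify the two assertions of the lemma in turn: first, that $p^*$ intertwines the $H^2(Y;\mathbb Z)$-action on $Spin^c(Y)$ with the $H^2(Y;\mathbb Z)$-action on $Spin^c(X)$ obtained by composing the standard $H^2(X;\mathbb Z)$-action with $p^*\colon H^2(Y;\mathbb Z)\to H^2(X;\mathbb Z)$; and second, that freeness and transitivity of the resulting action on $p^*Spin^c(Y)$ follow formally from this equivariance together with the fact (already established in the excerpt) that $H^2(Y;\mathbb Z)$ acts freely and transitively on $Spin^c(Y)$.

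For the equivariance, recall from the earlier section that the action of a class $E\in H^2(Y;\mathbb Z)$ on a $Spin^c$-structure, described via the $W\mapsto W\otimes E$ picture, sends the spinor bundle $W$ to $W\otimes \mathcal L_E$, where $\mathcal L_E$ is the line bundle with $c_1(\mathcal L_E)=E$, with Clifford multiplication extended by the identity on the $\mathcal L_E$ factor. So first I would fix $\zeta=(W,\mu)\in Spin^c(Y)$ and $E\in H^2(Y;\mathbb Z)$, write $\zeta\otimes E=(W\otimes\mathcal L_E,\mu\otimes\mathrm{id})$, and compute $p^*(\zeta\otimes E)$ from the pullback formula given just above the lemma. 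This yields the bundle $p^*(W\otimes\mathcal L_E)\oplus p^*(W\otimes\mathcal L_E)\cong (p^*W\oplus p^*W)\otimes p^*\mathcal L_E$, and the pullback Clifford multiplication $\mu'$ for $\zeta\otimes E$ is visibly the one for $\zeta$ tensored with the identity on $p^*\mathcal L_E$ — the block matrix formula only involves $p^*(\mu(a))$ and $b\,\mathrm{Id}$, both of which commute past the $p^*\mathcal L_E$ factor. Since $c_1(p^*\mathcal L_E)=p^*(E)$, this identifies $p^*(\zeta\otimes E)$ with $p^*(\zeta)\otimes p^*(E)$, i.e.\ with the image of $p^*(\zeta)$ under the class $p^*(E)\in H^2(X;\mathbb Z)$ acting in the usual way. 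That is exactly the claimed equivariance; the compatible-determinant-line-bundle computation $c_1(L_{p^*(\zeta)})=p^*(c_1(L_\zeta))$ from the preceding lemma can be used as a consistency check, since the $H^2$-action shifts $c_1(L)$ by $2E$ on $Y$ and by $2p^*(E)$ on $X$.

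For the second assertion, the argument is purely group-theoretic. The set $p^*Spin^c(Y)$ is by definition the image of $p^*\colon Spin^c(Y)\to Spin^c(X)$. Transitivity: given $p^*(\zeta_1),p^*(\zeta_2)\in p^*Spin^c(Y)$, by transitivity of the $H^2(Y;\mathbb Z)$-action on $Spin^c(Y)$ there is $E$ with $\zeta_2=\zeta_1\otimes E$, and then equivariance gives $p^*(\zeta_2)=p^*(\zeta_1)\otimes p^*(E)$, so $p^*(E)\in p^*H^2(Y;\mathbb Z)$ carries one to the other. Freeness: suppose $p^*(\zeta)\otimes p^*(E)=p^*(\zeta)$ for some $E\in H^2(Y;\mathbb Z)$; by equivariance this says $p^*(\zeta\otimes E)=p^*(\zeta)$. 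Here I need that $p^*\colon Spin^c(Y)\to Spin^c(X)$ is injective, so that $\zeta\otimes E=\zeta$; this follows because $p^*$ admits a left inverse essentially by restriction to a fibre-complement, or more simply because the pullback construction determines $\zeta$ back from $p^*(\zeta)$ via the canonical splitting $T^*X=\mathbb R\eta\oplus p^*(T^*Y)$ displayed above the lemma — the off-diagonal blocks of $\mu'$ recover $p^*(\mu(a))\pm b\,\mathrm{Id}$, hence $p^*\mu$, hence $\mu$ since $p^*$ on forms is injective. Then $\zeta\otimes E=\zeta$ forces $E=0$ by freeness of the action on $Spin^c(Y)$, so the action of $p^*H^2(Y;\mathbb Z)$ on $p^*Spin^c(Y)$ is free.

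The main obstacle is the injectivity of $p^*\colon Spin^c(Y)\to Spin^c(X)$, which is needed for freeness and which the lemma statement tacitly uses. I expect to handle it exactly as above: the pullback $Spin^c$-structure on $X$ retains, in the $p^*(T^*Y)$ summand of the splitting $T^*X=\mathbb R\eta\oplus p^*(T^*Y)$, a faithful copy of the data $(W,\mu)$ on $Y$ because $p^*$ is injective on both line bundles (pullback along a bundle projection with connected fibres, or via the Gysin sequence since the Euler class considerations are not even needed here — injectivity of $p^*$ on $H^2$ for a principal $S^1$-bundle holds whenever the Euler class is not a torsion element, and in the torsion/trivial case one argues directly) and on differential forms. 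Everything else is formal manipulation with the $W\mapsto W\otimes E$ description of the $H^2$-action and the explicit block-matrix pullback formula, so no genuine analytic or geometric difficulty arises.
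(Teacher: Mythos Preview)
Your equivariance argument is essentially identical to the paper's: both compute the spinor bundle of $p^*(\zeta\otimes E)$ as $(p^*W\oplus p^*W)\otimes p^*\mathcal L_E$ and then check that the block-matrix Clifford multiplication pulls back compatibly, concluding $p^*(\zeta\otimes E)=p^*(\zeta)\otimes p^*(E)$. The paper in fact stops there and treats the ``hence'' clause as immediate.

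Your treatment of the second assertion is correct but overcomplicated. Transitivity is fine. For freeness you invoke injectivity of $p^*\colon Spin^c(Y)\to Spin^c(X)$, which you then have to justify via the splitting $T^*X=\mathbb R\eta\oplus p^*(T^*Y)$ and injectivity of $p^*$ on forms. None of this is needed: $p^*H^2(Y;\mathbb Z)$ is by definition a subgroup of $H^2(X;\mathbb Z)$, and the restriction of a free action to a subgroup is free. Concretely, if $p^*(\zeta)\otimes p^*(E)=p^*(\zeta)$ in $Spin^c(X)$, then freeness of the $H^2(X;\mathbb Z)$-action on $Spin^c(X)$ (already established earlier in the paper) gives $p^*(E)=0$ directly. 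Your detour through injectivity of $p^*$ on $Spin^c$-structures is valid but buys nothing here, and the sketch you give for that injectivity is somewhat informal.
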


\begin{proof}

Choose $\zeta \in Spin^c(Y)$ and let $W$ denote its complex spinor bundle and let $\mu : T^*Y \rightarrow End(W)$ denote its Clifford multiplication map. For $E \in H^2(Y)$, $p^*(\zeta \otimes E)$ has complex spinor bundle
$$ p^*(W \otimes E) \oplus p^*(W \otimes E) = (p^*(W) \oplus p^*(W)) \otimes p^*(E).$$
This is precisely the complex spinor bundle of $p^*(\zeta) \otimes p^*(E)$

Now $p^*(\zeta) \otimes p^*(E)$ has the induced map
$$
\begin{array}{r l}
 &
\left[ {\begin{array}{ll}
 0 & p^*(\mu(a)) + b Id_{p^*(W)} \\
 p^*(\mu(a)) - b Id_{p^*(W)} & 0
\end{array} } \right] \otimes Id_{p^*(E)}  \\  = &
\left[ {\begin{array}{ll}
 0 & (p^*(\mu(a)) + b Id_{p^*(W)}) \otimes Id_{p^*(E)} \\
 (p^*(\mu(a)) - b Id_{p^*(W)}) \otimes Id_{p^*(E)} & 0
\end{array} } \right]
\\  = &
\left[ {\begin{array}{ll}
 0 & p^*(\mu(a) \otimes Id_E) + b Id_{p^*(W \otimes E)} \\
 p^*(\mu(a) \otimes Id_E) - b Id_{p^*(W \otimes E)}) & 0
\end{array} } \right].
\end{array}
$$
which is precisely the Clifford multiplication map for $p^*(\zeta \otimes E)$ so we see $p^*(\zeta \otimes E) = p^*(\zeta) \otimes p^*(E)$.
\end{proof}

An easy consequence of the Gysin sequence is the following lemma.

\begin{lemma}\label{b+} \cite{vanish}
If $p: X \rightarrow Y$ has a nontorsion Euler class, $e(X) \in H^2(Y; \mathbb Z)$ then $b_2^+(X) = b_1(Y)-1$. \hfill $\Box$
\end{lemma}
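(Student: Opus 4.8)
The plan is to read off the cohomology of $X$ from the Gysin sequence of the circle bundle $p\colon X\to Y$ with real coefficients, and then to pin down the signature of the intersection form using an isotropy argument.

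First I would dispose of a degenerate case: since $Y$ is a closed oriented $3$-manifold, Poincar\'e duality gives $H^2(Y;\mathbb Z)\cong H_1(Y;\mathbb Z)$, so if $b_1(Y)=0$ then $H^2(Y;\mathbb Z)$ is entirely torsion, contradicting the hypothesis that $e(X)$ is nontorsion. Hence $b_1(Y)\geq 1$, and the image $e_{\mathbb R}$ of $e(X)$ in $H^2(Y;\mathbb R)$ is nonzero. (Throughout I assume $Y$, hence $X$, connected, consistent with the standing assumptions.)

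Next I would write out the relevant stretch of the Gysin sequence
$$\cdots \to H^{k-2}(Y;\mathbb R)\xrightarrow{\cup e_{\mathbb R}} H^k(Y;\mathbb R)\xrightarrow{p^*} H^k(X;\mathbb R)\xrightarrow{p_!} H^{k-1}(Y;\mathbb R)\xrightarrow{\cup e_{\mathbb R}} H^{k+1}(Y;\mathbb R)\to\cdots$$
Because $\cup e_{\mathbb R}\colon H^0(Y;\mathbb R)\to H^2(Y;\mathbb R)$ sends $1\mapsto e_{\mathbb R}\neq 0$, it is injective, so at $k=2$ the kernel of $p^*\colon H^2(Y;\mathbb R)\to H^2(X;\mathbb R)$ is one-dimensional and $\dim\,\mathrm{im}\,p^* = b_2(Y)-1$. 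On the other side, Poincar\'e duality on $Y$ makes the cup-product pairing $H^1(Y;\mathbb R)\times H^2(Y;\mathbb R)\to H^3(Y;\mathbb R)\cong\mathbb R$ perfect, so since $e_{\mathbb R}\neq 0$ the map $\cup e_{\mathbb R}\colon H^1(Y;\mathbb R)\to H^3(Y;\mathbb R)$ is surjective; by exactness its kernel equals $\mathrm{im}\,p_!\subset H^1(Y;\mathbb R)$, which therefore has dimension $b_1(Y)-1$. Combining these, and using $b_2(Y)=b_1(Y)$ (Poincar\'e duality on $Y$ again), I obtain $b_2(X)=\bigl(b_2(Y)-1\bigr)+\bigl(b_1(Y)-1\bigr)=2\bigl(b_1(Y)-1\bigr)$.

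The last step is the signature count. The key point is that $\mathrm{im}\bigl(p^*\colon H^2(Y;\mathbb R)\to H^2(X;\mathbb R)\bigr)$ is isotropic for the intersection form $Q_X$: for $a,b\in H^2(Y;\mathbb R)$ one has $p^*a\cup p^*b=p^*(a\cup b)$, and $a\cup b\in H^4(Y;\mathbb R)=0$ since $Y$ is $3$-dimensional. This isotropic subspace has dimension $b_1(Y)-1=\tfrac12 b_2(X)$. Since $X$ is a closed oriented $4$-manifold, $Q_X$ is nondegenerate on $H^2(X;\mathbb R)$, and a nondegenerate symmetric bilinear form of rank $2n$ that admits an isotropic subspace of dimension $n$ must have signature $0$ (any isotropic subspace injects into both a maximal positive-definite and a maximal negative-definite subspace, so its dimension is at most $\min(b_2^+,b_2^-)$). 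Therefore $b_2^+(X)=b_2^-(X)=\tfrac12 b_2(X)=b_1(Y)-1$, as claimed. I expect the only genuinely delicate ingredient to be the surjectivity of $\cup e_{\mathbb R}\colon H^1(Y;\mathbb R)\to H^3(Y;\mathbb R)$, which is exactly where nontorsionality of the Euler class feeds in through Poincar\'e duality on $Y$; the rest is bookkeeping in the Gysin sequence together with elementary linear algebra of isotropic subspaces.
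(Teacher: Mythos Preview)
Your argument is correct. The paper does not actually supply a proof of this lemma; it merely remarks that it is ``an easy consequence of the Gysin sequence'' and cites \cite{vanish}, so your Gysin computation of $b_2(X)=2(b_1(Y)-1)$ together with the half-dimensional isotropic subspace $\mathrm{im}\,p^*$ forcing signature zero is exactly the standard fleshing-out of that remark.
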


\begin{theorem}\label{bald} \cite{bald2} \emph{(Baldridge)} Suppose $p : X \rightarrow Y$ has a nontorsion Euler class and $Y$ has $b_1(Y) = 2$ (note $b_2^+(X) = 1$ from the previous lemma).
If $\xi \in Spin^c(X)$ is pulled back from a $Spin^c(3)$-structure on $Y$ then
$$ SW_X^{\pm}(\xi) = \sum_{\zeta \in (p^*)^{-1}(\xi)} SW_Y^3(\zeta).  $$
\hfill $\Box$
\end{theorem}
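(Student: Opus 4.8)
The plan is to identify the four-dimensional Seiberg--Witten moduli space of $\xi$ on $X$ with a disjoint union of three-dimensional Seiberg--Witten moduli spaces on $Y$, by passing to an adiabatic limit that collapses the circle fibres of $p$ and by exploiting the free $S^1$-action carried by $X$. Note first that the relevant moduli space is zero-dimensional: since $\xi = p^*\zeta$ for some $Spin^c(3)$-structure $\zeta$ on $Y$, we have $c_1(L_\xi) = p^*c_1(L_\zeta)$, so $c_1(L_\xi)^2 = p^*\bigl(c_1(L_\zeta)^2\bigr) = 0$ because $c_1(L_\zeta)^2 \in H^4(Y;\mathbb Z) = 0$; moreover $\chi(X)=0$ and $\sigma(X)=0$ (reversing the orientation of $X$ gives the circle bundle over $Y$ with Euler class $-e(X)$, still nontorsion, so $b_2^-(X) = b_2^+(X) = b_1(Y)-1$ by Lemma~\ref{b+}). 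The dimension formula for $\mathcal M^\delta_\xi(g)$ then gives
\[
 d \;=\; \tfrac14\bigl(c_1(L_\xi)^2 - (3\sigma(X)+2\chi(X))\bigr) \;=\; 0 ,
\]
so both $SW_X^{\pm}(\xi)$ and each $SW_Y^3(\zeta')$ are signed counts of points, and the theorem should follow from an orientation-preserving bijection between these point sets.

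First I would introduce the adiabatic metrics: fix a principal connexion $\eta$ for $p$ and, for $T>0$, take $g_T = \tfrac1T\,\eta\otimes\eta + p^*(g_Y)$ on $X$, together with a connexion on $L_\xi = p^*L_\zeta$ that is pulled back from $Y$ in the base directions. With respect to the orthogonal splitting $T^*X = \mathbb R\,\eta\oplus p^*(T^*Y)$ and the identifications $W^{\pm}_\xi \cong p^*(W_\zeta)$, the Dirac operator $\slashed{\partial}_A$ decomposes as the pullback of the three-dimensional Dirac operator of $(Y,\zeta)$ plus the covariant derivative along the fibre, and the self-dual curvature $F_A^+$ splits into a fibre piece and the pullback of the three-dimensional curvature term perturbed by a multiple of $d\eta$. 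Writing the equations $\slashed{\partial}_A\psi = 0$, $F_A^+ = q(\psi)$ out in this splitting turns them into the three-dimensional perturbed Seiberg--Witten equations on $Y$ coupled to an elliptic equation in the fibre variable, and the free $S^1$-action decomposes every configuration into Fourier weights $n \in \mathbb Z$ along the fibre.

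The heart of the proof, and the step I expect to be the main obstacle, is an a priori estimate showing that in the adiabatic limit $T\to\infty$ every solution becomes, after a gauge transformation, $S^1$-equivariant of a single weight $n$. The Seiberg--Witten equations always provide a universal $C^0$-bound on $\psi$ and, via the Weitzenb\"ock formula, a uniform $L^2$-bound on $\nabla_A\psi$; but in the metric $g_T$ the fibre direction is a unit vector only after rescaling by $\sqrt T$, so that $L^2$-bound forces $\int_X \lvert\nabla_{\partial_\eta}\psi\rvert^2 = O(1/T) \to 0$, i.e.\ $\psi$ is covariantly constant along the fibre in the limit. A parallel estimate, using the Bianchi identity and the $C^0$-control of $F_A^+ = q(\psi)$, makes the fibre-variation of the connexion negligible as well, so that after a gauge transformation absorbing the fibre holonomy into a weight $n$ the configuration $(A,\psi)$ becomes $S^1$-equivariant of weight $n$. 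Keeping all these bounds uniform in $T$, and checking that the gauge fixing along the fibre does not disturb the rest of the configuration, is the delicate technical core; this is also where one arranges (perturbing slightly if necessary) that $g_T$ admits no reducible solutions, so that it lies in one of the two $b_2^+ = 1$ chambers.

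Granting this, a weight-$n$ $S^1$-equivariant solution on $X$ is exactly a solution over $Y$ of the three-dimensional perturbed Seiberg--Witten equations for the $Spin^c(3)$-structure $\zeta_n := \zeta \otimes n\,e(X)$ (the weight-$n$ sections of $p^*W_\zeta$ over $X$ being the sections over $Y$ of $W_\zeta$ twisted by the line bundle with first Chern class $n\,e(X)$); and since $b_1(Y)=2>1$, the three-dimensional invariant is independent of the perturbation, so $SW_Y^3(\zeta_n)$ is the pertinent count. Hence $\mathcal M_X(\xi) = \bigsqcup_{n\in\mathbb Z}\mathcal M_Y(\zeta_n)$, a finite disjoint union because $\mathcal M_X(\xi)$ is compact (equivalently, by Meng--Taubes, Theorem~\ref{meng}, only finitely many $\zeta_n$ carry a nonzero invariant). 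Because $p^*e(X)=0$, each $\zeta_n$ lies in $(p^*)^{-1}(\xi)$; conversely the Gysin sequence gives $\ker\bigl(p^*\colon H^2(Y;\mathbb Z)\to H^2(X;\mathbb Z)\bigr) = \mathbb Z\cdot e(X)$, which is infinite cyclic because $e(X)$ is nontorsion, so $(p^*)^{-1}(\xi) = \{\zeta_n : n\in\mathbb Z\}$. Matching the homology orientation of $X$ with that of $Y$ via the splitting $T^*X = \mathbb R\eta\oplus p^*(T^*Y)$ and the Gysin sequence shows the signs agree, so $SW_X^{\epsilon}(\xi) = \sum_{\zeta'\in(p^*)^{-1}(\xi)} SW_Y^3(\zeta')$ for the chamber $\epsilon$ containing $g_T$. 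Finally, since $b_1(X)=b_1(Y)=2$ is even and $d=0$, the $b_2^+=1$ wall-crossing term for $\xi$ vanishes (equivalently, the free $S^1$-action forces both chambers to see the same $S^1$-invariant solutions), so $SW_X^+(\xi) = SW_X^-(\xi)$ and the formula holds for both signs.
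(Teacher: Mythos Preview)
The paper does not prove this theorem at all: it is quoted from Baldridge \cite{bald2} and closed immediately with a $\Box$, so there is no in-paper argument to compare your proposal against. Your sketch is broadly in the spirit of Baldridge's actual proof (and of the earlier Mrowka--Ozsv\'ath--Yu work it builds on): pass to an adiabatic/$S^1$-invariant setting, identify $S^1$-equivariant four-dimensional solutions of weight $n$ with three-dimensional solutions for $\zeta\otimes n\,e(X)$, and use the Gysin sequence to enumerate $(p^*)^{-1}(\xi)$. So as a reconstruction of the cited result your outline is reasonable, though the analytic core (the uniform estimates forcing $S^1$-invariance and the transversality/orientation matching) is only gestured at, and your final wall-crossing remark is not quite right as stated: the vanishing of the wall-crossing term is what Baldridge actually establishes, not an automatic consequence of $b_1$ being even and $d=0$.
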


\begin{remark}
Baldridge's designation of the two chambers into {\it plus} and {\it minus} is different from the approach taken in this text but as long as we only consider pulled back structures, this will not affect us.
\end{remark}

\begin{proposition}\cite{vanish}\label{kpullback} If $X$ admits a symplectic structure $\omega$ and $p : X \rightarrow Y$ has a nontorsion Euler class then the canonical class $K$ is contained in the image of $p^*:H^2(Y; \mathbb Z) \rightarrow H^2(X; \mathbb Z)$. \hfill $\Box$
\end{proposition}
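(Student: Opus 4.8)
The plan is to deduce the statement from two inputs: (i) that the canonical class of a symplectic $4$-manifold is realized by a $Spin^c$ structure with nonvanishing Seiberg-Witten invariant, and (ii) that on the total space of a free circle action with nontorsion Euler class, any $Spin^c$ structure with nonvanishing Seiberg-Witten invariant is pulled back from the base. Granting these, the argument is short. Since $X$ is symplectic we have $[\omega]^2>0$, so $b_2^+(X)\geq 1$, hence $b_2^+(X)=b_1(Y)-1\geq 1$ by Lemma~\ref{b+}. The canonical $Spin^c$ structure $\xi_{K^{-1}}$ has determinant line with $c_1(L_{\xi_{K^{-1}}})=-K$, and by (i) it has nonzero Seiberg-Witten invariant: this is $SW^-_\omega(\xi_{K^{-1}})=1$ of Theorem~\ref{tconst} when $b_2^+(X)=1$, and Taubes' theorem \cite{taubesconst} that the canonical class is a basic class when $b_2^+(X)>1$. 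By (ii), $\xi_{K^{-1}}$ is then pulled back from $Y$, so $-K=c_1(L_{\xi_{K^{-1}}})=p^*c_1(L_\zeta)$ for some $\zeta\in Spin^c(Y)$, and therefore $K=p^*(-c_1(L_\zeta))$ lies in the image of $p^*$.

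It is worth recording the underlying topological picture, which makes the statement plausible and explains why a gauge-theoretic input is genuinely needed. A connection on $p$ splits $TX\cong p^*TY\oplus\underline{\mathbb R}$, and $TY$ is trivial because $Y$ is orientable, so $TX$ is trivial; hence $\sigma(X)=0$, while $\chi(X)=0$ because the generator of the action is a nowhere-zero vector field, so $K^2=c_1(X)^2=2\chi(X)+3\sigma(X)=0$. On the other hand, from exactness of the Gysin sequence $H^2(Y;\mathbb Z)\xrightarrow{p^*}H^2(X;\mathbb Z)\xrightarrow{p_*}H^1(Y;\mathbb Z)$ the image of $p^*$ equals $\ker p_*$; and since $H^4(Y;\mathbb R)=0$ we get $p^*a\cup p^*b=p^*(a\cup b)=0$ for all $a,b\in H^2(Y;\mathbb R)$, so $p^*H^2(Y;\mathbb R)$ is an isotropic subspace of the intersection form of $X$, of dimension $b_1(Y)-1=b_2^+(X)=\tfrac12 b_2(X)$ (using $\sigma(X)=0$), hence a maximal isotropic subspace. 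So the assertion is that the isotropic class $K$ lies in this one distinguished maximal isotropic subspace, which no purely cohomological consideration pins down.

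For input (ii), which is the heart of \cite{vanish}, I would follow Baldridge's analysis of the Seiberg-Witten equations under a free circle action, in the spirit of Theorem~\ref{bald}. One equips $X$ with the $S^1$-invariant metric $g_X=\eta\otimes\eta+p^*(g_Y)$ of the previous section and uses a generic $S^1$-invariant perturbation; one then shows that the moduli space for $\xi$ reduces to the solution set of a dimensionally reduced equation over $Y$, and that when $\xi$ is not pulled back from $Y$ the component of the equation in the fibre direction, together with nontriviality of $e(X)$, obstructs all solutions, forcing the signed count to vanish. A minor bookkeeping point is that Baldridge's labelling of the two chambers differs from the convention used here, but as noted in the remark following Theorem~\ref{bald} this is immaterial for pulled-back structures, and in any case the implication ``$SW_X(\xi)\neq 0\Rightarrow c_1(L_\xi)\in p^*H^2(Y;\mathbb Z)$'' is chamber-independent.

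The main obstacle is precisely establishing input (ii): it requires the gauge-theoretic reduction of the four-dimensional Seiberg-Witten equations, compatibly with the free $S^1$-symmetry, to a problem over the base $Y$, together with a transversality argument valid for $S^1$-invariant perturbations. Everything else — the Gysin-sequence bookkeeping and the appeal to Taubes' constraints — is routine once that reduction is available.
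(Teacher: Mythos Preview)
The paper does not supply its own proof of this proposition; it is stated with a citation to \cite{vanish} and closed with $\Box$. Your proposal is correct and is essentially the argument of that reference: Taubes' nonvanishing for the canonical $Spin^c$ structure, combined with Baldridge's vanishing theorem asserting that on a principal circle bundle with nontorsion Euler class every Seiberg--Witten basic class is pulled back from the base, forces $-K=c_1(L_{\xi_{K^{-1}}})\in p^*H^2(Y;\mathbb Z)$. Your identification of input (ii) as the genuine content, requiring the $S^1$-equivariant dimensional reduction of the Seiberg--Witten equations, is accurate; everything else is bookkeeping. The topological aside showing that $K^2=0$ and that $p^*H^2(Y;\mathbb R)$ is a maximal isotropic subspace, while not needed for the proof, is a helpful explanation of why the gauge-theoretic input cannot be avoided.
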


\begin{lemma}
Suppose $p : X \rightarrow Y$ has a nontorsion Euler class. If $\xi \in Spin^c(X)$ has $c_1(L_\xi) \in p^*H^2(Y;\mathbb Z)$ then $\xi \in p^*Spin^c(Y)$. In particular, if $X$ admits a symplectic structure $\omega$ then the canonical $Spin^c(4)$-structure $\xi_{K^{-1}}$ is contained in  $p^*Spin^c(Y)$.
\end{lemma}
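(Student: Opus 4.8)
The plan is to show that the map $p^* : Spin^c(Y) \to Spin^c(X)$ has image exactly the set of those $\xi \in Spin^c(X)$ with $c_1(L_\xi) \in p^*H^2(Y;\mathbb Z)$; the first assertion is then immediate, and the second follows by applying Proposition~\ref{kpullback}. Recall from the preceding lemmas that $p^*Spin^c(Y)$ is an orbit of the action of $p^*H^2(Y;\mathbb Z) \subset H^2(X;\mathbb Z)$ (the pullback map is equivariant with respect to the $H^2(Y;\mathbb Z)$-actions, so $p^*Spin^c(Y)$ is a single free transitive $p^*H^2(Y;\mathbb Z)$-orbit), and that $c_1(L_{p^*\zeta}) = p^*(c_1(L_\zeta))$ for every $\zeta$. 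Thus $c_1$ maps $p^*Spin^c(Y)$ into the coset $p^*(c_1(L_{\zeta_0})) + 2p^*H^2(Y;\mathbb Z) \subseteq p^*H^2(Y;\mathbb Z)$, which gives the inclusion $\subseteq$. The content is the reverse inclusion.

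First I would fix a basepoint structure: choose any $\zeta_0 \in Spin^c(Y)$ and set $\xi_0 = p^*(\zeta_0)$, with $c_1(L_{\xi_0}) = p^*(c_1(L_{\zeta_0}))$. Now suppose $\xi \in Spin^c(X)$ satisfies $c_1(L_\xi) \in p^*H^2(Y;\mathbb Z)$. Since $H^2(X;\mathbb Z)$ acts freely and transitively on $Spin^c(X)$, write $\xi = \xi_0 \otimes F$ for a unique $F \in H^2(X;\mathbb Z)$; then $c_1(L_\xi) = c_1(L_{\xi_0}) + 2F$, so $2F = c_1(L_\xi) - p^*(c_1(L_{\zeta_0})) \in p^*H^2(Y;\mathbb Z)$. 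The goal is to promote ``$2F$ is a pullback'' to ``$F$ is a pullback'', and then conclude $\xi = \xi_0 \otimes p^*(E) = p^*(\zeta_0 \otimes E) \in p^*Spin^c(Y)$ by the equivariance lemma. So the crux is: if $2F \in p^*H^2(Y;\mathbb Z)$ then $F \in p^*H^2(Y;\mathbb Z)$.

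This divisibility step is the main obstacle, and it is where the nontorsion-Euler-class hypothesis enters. I would invoke the Gysin sequence of the circle bundle $S^1 \hookrightarrow X \xrightarrow{p} Y$:
$$
\cdots \to H^0(Y;\mathbb Z) \xrightarrow{\,\cup\, e\,} H^2(Y;\mathbb Z) \xrightarrow{p^*} H^2(X;\mathbb Z) \to H^1(Y;\mathbb Z) \xrightarrow{\,\cup\, e\,} H^3(Y;\mathbb Z) \to \cdots
$$
so that $\ker(p^* \colon H^2(X) \to H^2(Y)\text{-side map})$... more precisely $p^*H^2(Y;\mathbb Z) = H^2(X;\mathbb Z)$ modulo the image of the Euler-class multiplications is controlled, and the cokernel of $\cup e : H^0(Y;\mathbb Z) \to H^2(Y;\mathbb Z)$ is $e$-torsion-free precisely because $e$ is nontorsion (an integer multiple $ne$ being zero forces $n=0$). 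Concretely: from $2F \in \operatorname{im} p^*$ I would show $F \in \operatorname{im} p^*$ by chasing the Gysin sequence — writing $2F = p^*(c)$ and using that the next map $H^1(Y) \xrightarrow{\cup e} H^3(Y)$ together with nontorsionality of $e$ forces the relevant obstruction class of $F$ (its image under $H^2(X) \to H^1(Y)$) to vanish after passing from $2F$ to $F$, since that image lands in a group on which multiplication by $2$ is injective whenever the only torsion obstruction would come from $e$-torsion, which nontorsionality excludes. Once $F = p^*(E)$ is established, the equivariance lemma gives $\xi = p^*(\zeta_0) \otimes p^*(E) = p^*(\zeta_0 \otimes E)$, proving $\xi \in p^*Spin^c(Y)$. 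For the final sentence, when $X$ is symplectic, Proposition~\ref{kpullback} gives $K \in p^*H^2(Y;\mathbb Z)$, hence $c_1(L_{\xi_{K^{-1}}}) = -K \in p^*H^2(Y;\mathbb Z)$, so $\xi_{K^{-1}} \in p^*Spin^c(Y)$ by the first part.
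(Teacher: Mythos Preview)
Your overall strategy is sound and, if executed correctly, is actually cleaner than the paper's argument. Both proofs reduce to the same essential step: writing $\xi = \xi_0 \otimes F$ for a pulled-back basepoint $\xi_0$ and showing that $F \in p^*H^2(Y;\mathbb Z)$. The paper takes the basepoint to be the pullback of the trivial $Spin^c(3)$-structure on $Y$ and then passes to the Gysin sequence with $\mathbb Z_2$ coefficients to arrange that a preimage of $c_1(L_\xi)$ in $H^2(Y;\mathbb Z)$ is even modulo $e(X)$, afterwards mopping up a residual $2$-torsion class using the integral Gysin sequence. Your route avoids the $\mathbb Z_2$ detour entirely: from $2F \in \operatorname{im} p^*$ you want $F \in \operatorname{im} p^*$, and by exactness of the integral Gysin sequence
\[
H^2(Y;\mathbb Z) \xrightarrow{p^*} H^2(X;\mathbb Z) \xrightarrow{\pi_*} H^1(Y;\mathbb Z)
\]
this amounts to showing that $2\pi_*(F)=0$ forces $\pi_*(F)=0$.

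The gap is in your justification of this last implication. You attribute it to the nontorsion hypothesis on $e(X)$ and to the subsequent map $H^1(Y)\xrightarrow{\cup e}H^3(Y)$, but neither is relevant here. The reason multiplication by $2$ is injective on $H^1(Y;\mathbb Z)$ is simply that $H^1$ of any space with integer coefficients is torsion-free: by the universal coefficient theorem $H^1(Y;\mathbb Z)\cong \operatorname{Hom}(H_1(Y;\mathbb Z),\mathbb Z)$, which is a subgroup of a free abelian group. So $2\pi_*(F)=0$ immediately gives $\pi_*(F)=0$, hence $F\in p^*H^2(Y;\mathbb Z)$, and your equivariance argument finishes the proof. (In particular this step does not use that $e(X)$ is nontorsion; that hypothesis is carried along because the surrounding results need it.) Once you replace the muddled paragraph about ``$e$-torsion'' with this one-line observation, your argument is complete and slightly more direct than the paper's.
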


\begin{proof}

Let $\xi \in Spin^c(X)$ with $c_1(L_\xi) \in p^*H^2(Y; \mathbb Z)$. First recall all oriented $3$-manifolds are parallelizable so we have a trivial $Spin^c(3)$-structure $\Xi_Y$. From the free and transitive action of $H^2(X; \mathbb Z)$, we can write $\xi = p^*(\Xi_Y) \otimes E$ where $2E = c_1(\xi)$ hence $c_1(\xi)$ has even divisibility. Now the Gysin sequence descends to one with coefficients in $\mathbb Z_2$.
$$
\begin{tikzpicture}[description/.style={fill=white,inner sep=2pt}]
\matrix (m) [matrix of math nodes, row sep=3em,
column sep=2.5em, text height=1.5ex, text depth=0.25ex]
{ <e(X)>     &  H^2(Y; \mathbb Z) & H^2(X; \mathbb Z)  \\
  <\pi(e(X))> & H^2(Y; \mathbb Z_2) & H^2(X; \mathbb Z_2)  \\
};
\path[->,font=\scriptsize]
(m-1-2) edge node[auto]{$p^*$} (m-1-3);
\path[->,font=\scriptsize]
(m-2-2) edge node[auto]{$p^*$} (m-2-3);

\path[->,font=\scriptsize]
(m-1-1) edge node[auto]{} (m-1-2);
\path[->,font=\scriptsize]
(m-2-1) edge node[auto]{} (m-2-2);

\path[->,font=\scriptsize]
(m-1-3) edge node[auto]{$\pi$} (m-2-3);
\path[->,font=\scriptsize]
(m-1-1) edge node[auto]{$\pi$} (m-2-1);
\path[->,font=\scriptsize]
(m-1-2) edge node[auto]{$\pi$} (m-2-2);
\end{tikzpicture}
$$
where $\pi$ denotes the corresponding quotient maps.

Choose a $B \in (p^*)^{-1}(c_1(L_\xi))$. Since $c_1(L_\xi)$ has even divisibility, $\pi(c_1(L_\xi)) = 0$ hence from the diagram, $p^*(\pi(B)) = 0$. Thus we have $B \equiv n \, e(X) \text{ (mod 2)}$ for some $n \in \mathbb Z$ so $B = n \, e(X) + F$ for some $F \in H^2(Y; \mathbb Z)$ with even divisibility. Now choose a $D \in H^2(Y; \mathbb Z)$ with $2D = F$. Note $E = p^*(B) = p^*(n \, e(X)) + p^*(F) = p^*(F)$. Using the invariance of the pullback map, $p^*(\Xi_Y \otimes D) = p^*(\Xi_Y)  \otimes p^*(D)$ and this pulled back $Spin^c(4)$-structure's determinant line bundle $L$ will have $c_1(L) = 2p^*(D) = p^*(2D) = p^*(F) = E$.

Using the free and transitive action of $H^2(X; \mathbb Z)$ again, we have $\xi = p^*(\Xi_Y \otimes D) \otimes A$ for some $2$-torsion $A \in H^2(X; \mathbb Z)$. But, it follows from the Gysin sequence
$$ \cdots \rightarrow H^2(Y; \mathbb Z)  \xrightarrow{p^*} H^2(X; \mathbb Z) \xrightarrow{p_*} H^1(Y; \mathbb Z) \rightarrow \cdots $$
that all the torsion elements are contained in $p^*H^2(Y; \mathbb Z)$. Therefore since $p^*H^2(Y; \mathbb Z)$ acts freely and transitively on $p^*Spin^c(Y)$, the result follows.
\end{proof}

\begin{lemma}
Suppose $p : X \rightarrow Y$ has a nontorsion Euler class, $Y$ has $b_1(Y) = 2$ (note $b_2^+(X) = 1$), and $X$ admits a symplectic structure $\omega$ with canonical class $K$. Then $SW^-(\xi_{K^{-1}} \otimes K) \neq 0$. If $\xi \in p^*Spin^c(Y)$ satisfies $SW^-(\xi) \neq 0$ and $c_1(L_\xi) \cdot [\omega] = K \cdot [\omega]$ then $\xi = \xi_{K^{-1}} \otimes K$.
\end{lemma}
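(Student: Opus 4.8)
The plan is to deduce both assertions from Taubes' constraints (Theorem~\ref{tconst}), the Symmetry Lemma (Lemma~\ref{sym}), and Baldridge's theorem (Theorem~\ref{bald}). The one extra ingredient that makes everything fit together is the remark that, for a $Spin^c$-structure pulled back from $Y$, the right-hand side $\sum_{\zeta\in(p^*)^{-1}(\xi)}SW^3_Y(\zeta)$ of Baldridge's formula has no dependence on the chamber, so that $SW^+_\omega$ and $SW^-_\omega$ coincide on such a structure. I would begin by recording that the preceding lemma together with Proposition~\ref{kpullback} gives $\xi_{K^{-1}}\in p^*Spin^c(Y)$, and that since $c_1(L_{\xi_{K^{-1}}\otimes K})=-K+2K=K\in p^*H^2(Y;\mathbb Z)$, the preceding lemma also gives $\xi_{K^{-1}}\otimes K\in p^*Spin^c(Y)$.

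For the first claim I would apply the Symmetry Lemma with $E=K$, which (since $\xi_{K^{-1}}\otimes(K-K)=\xi_{K^{-1}}$) yields
$$SW^+_\omega(\xi_{K^{-1}}\otimes K)=(-1)^{1-b_1(X)/2}\,SW^-_\omega(\xi_{K^{-1}}).$$
By Taubes' constraints $SW^-_\omega(\xi_{K^{-1}})=1$, so $SW^+_\omega(\xi_{K^{-1}}\otimes K)=\pm1\neq0$. Since $\xi_{K^{-1}}\otimes K$ is pulled back from $Y$, Baldridge's theorem forces $SW^-_\omega(\xi_{K^{-1}}\otimes K)=SW^+_\omega(\xi_{K^{-1}}\otimes K)\neq0$, which is the first assertion.

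For the second claim, write $\xi=\xi_{K^{-1}}\otimes E$ for the unique $E\in H^2(X;\mathbb Z)$ afforded by the free transitive action. The hypothesis $c_1(L_\xi)\cdot[\omega]=K\cdot[\omega]$ becomes $(-K+2E)\cdot[\omega]=K\cdot[\omega]$, that is, $(K-E)\cdot[\omega]=0$. Because $\xi$ is pulled back from $Y$, Baldridge's theorem gives $SW^+_\omega(\xi)=SW^-_\omega(\xi)\neq0$; applying the Symmetry Lemma,
$$0\neq SW^+_\omega(\xi_{K^{-1}}\otimes E)=(-1)^{1-b_1(X)/2}\,SW^-_\omega(\xi_{K^{-1}}\otimes(K-E)),$$
so $SW^-_\omega(\xi_{K^{-1}}\otimes(K-E))\neq0$. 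Now Taubes' constraints applied to this last $Spin^c$-structure give $(K-E)\cdot[\omega]\geq0$, with equality only when $K-E=0$; since we have already shown $(K-E)\cdot[\omega]=0$, we get $E=K$, i.e.\ $\xi=\xi_{K^{-1}}\otimes K$.

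The step I expect to be the main obstacle is justifying that $SW^+_\omega$ and $SW^-_\omega$ may be identified on pulled-back structures: one must verify that the chamber decomposition Baldridge works with is compatible with the $\omega$-chamber used in Theorem~\ref{tconst} — equivalently, that the two ``chambers'' are the same pair of components merely labelled differently — which is exactly the content of the Remark following Theorem~\ref{bald}. I would also note in passing that $b_1(X)=b_1(Y)=2$ by the Gysin sequence (as in Lemma~\ref{b+}), so the exponent $1-b_1(X)/2$ appearing in the Symmetry Lemma is meaningful here (and in fact equals $0$).
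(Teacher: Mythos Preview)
Your proposal is correct and follows essentially the same route as the paper: use the Symmetry Lemma together with Taubes' constraints to obtain nonvanishing in the $+$ chamber, then invoke Baldridge's theorem on pulled-back structures to pass to the $-$ chamber, and for the second part run the same trio in reverse to force $K-E=0$. Your added remarks---that $\xi_{K^{-1}}\otimes K$ is pulled back via the preceding lemma, that $b_1(X)=2$ so the sign in the Symmetry Lemma is well defined, and that the chamber-compatibility issue is exactly what the Remark after Theorem~\ref{bald} handles---make the argument more explicit than the paper's own proof.
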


\begin{proof}

From the Symmetry Lemma (Lemma \ref{sym}) and Taubes' constraints (Theorem \ref{tconst}), we have $SW^+(\xi_{K^{-1}} \otimes K) \neq 0$. Observe $\xi_{K^{-1}} \otimes K \in p^*H^2(Y; \mathbb Z)$ from the free and transitive action of $p^*H^2(Y; \mathbb Z)$ on $p^*Spin^c(Y)$.

From Baldridge (Theorem \ref{bald}), we have $SW^+(\xi)= SW^-(\xi)$ for $\xi \in p^*Spin^c(Y)$ so $SW^-(\xi_{K^{-1}} \otimes K) = SW^+(\xi_{K^{-1}} \otimes K) \neq 0$.

Now suppose $\xi = \xi_{K^{-1}} \otimes E \in p^*Spin^c(X)$ with $E \in p^*H^2(Y, \mathbb Z)$ satisfies $SW^-(\xi) \neq 0$ and $c_1(L_\xi) \cdot [\omega] = K \cdot [\omega]$. Then $(-K+2E) \cdot [\omega] = K \cdot [\omega]$ implies $-2(K-E) \cdot [\omega] = 0$ hence $E \cdot [\omega] = K \cdot [\omega]$. Now from the free and transitive action of $p^*H^2(Y; \mathbb Z)$, $\xi_{K^{-1}} \otimes (K-E) \in p^*Spin^c(Y)$. By the Symmetry Lemma and Baldridge, $SW^-(\xi_{K^{-1}} \otimes (K-E)) \neq 0$. But $(K-E) \cdot [\omega] = K \cdot [\omega] - E \cdot [\omega] = 0$ so by Taubes' constraints, $K-E = 0$ thus $E = K$.
\end{proof}

\begin{lemma} \label{baldcor}
Suppose $p : X \rightarrow Y$ has a nontorsion Euler class $e(X) \in H^2(Y; \mathbb Z)$ and $Y$ has $b_1(Y) = 2$ (note $b_2^+(X) = 1$). Then for each $\Phi \in H^1(Y; \mathbb Z)$ and $\sigma \in H^2(X, \mathbb Z)$ satisfying $\Phi = p_*(\sigma)$, we have
$$ \Phi(\Delta_Y) = \pm \sum_{\xi \in p^* Spin^c(Y)}  SW_X^-(\xi) t^{\frac{1}{2} c_1(L_\xi) \cdot \sigma}.$$
\end{lemma}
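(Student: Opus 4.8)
The plan is to obtain the identity by pushing the Meng--Taubes formula (Theorem~\ref{meng}) through the ring homomorphism induced by $\Phi$ and then regrouping the resulting sum over $Spin^c(Y)$ into a sum over its image $p^*Spin^c(Y)\subset Spin^c(X)$ by means of Baldridge's theorem (Theorem~\ref{bald}). First I would record that, because $b_1(Y)=2>1$, only finitely many $\zeta\in Spin^c(Y)$ have $SW_Y^3(\zeta)\neq 0$, so every sum below is finite and Baldridge's formula $SW_X^-(\xi)=\sum_{\zeta\in(p^*)^{-1}(\xi)}SW_Y^3(\zeta)$ is a finite sum (and for pulled-back $\xi$ the two chambers agree, so $SW_X^-$ is the correct invariant to use). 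Applying the ring homomorphism $\Phi\colon \mathbb Z[H]/\pm H\to \mathbb Z[t^{\pm1}]/\{\pm t^n\}$, which sends a group element $h\in H$ to $t^{\Phi(h)}$, to the Meng--Taubes identity gives
\[ \Phi(\Delta_Y)=\pm\sum_{\zeta\in Spin^c(Y)}SW_Y^3(\zeta)\,t^{\frac12\Phi(f(c_1(L_\zeta)))}. \]

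The crucial step is a projection-formula identity: for every $a\in H^2(Y;\mathbb Z)$ one has $\Phi(f(a))=p^*(a)\cdot\sigma$. Indeed $f(a)$ is the image in $H$ of $\mathrm{PD}_Y(a)\in H_1(Y;\mathbb Z)$, and since a homomorphism to $\mathbb Z$ annihilates torsion, $\Phi(f(a))=\langle\Phi,\mathrm{PD}_Y(a)\rangle=\langle a\cup\Phi,[Y]\rangle$. Now substitute $\Phi=p_*(\sigma)$ and use the Gysin projection formula $p_*(p^*a\cup\sigma)=a\cup p_*\sigma$ together with the compatibility of the Gysin pushforward with fundamental classes, $\langle p_*c,[Y]\rangle=\langle c,[X]\rangle$ for $c\in H^4(X)$; this turns the expression into $\langle p^*a\cup\sigma,[X]\rangle=p^*(a)\cdot\sigma$. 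Applying this with $a=c_1(L_\zeta)$ and invoking the lemma $c_1(L_{p^*\zeta})=p^*(c_1(L_\zeta))$, the exponent becomes $\tfrac12 c_1(L_{p^*\zeta})\cdot\sigma$, which is an integer since $Y$ is parallelizable, so $c_1(L_\zeta)$ has even divisibility (as already noted after Theorem~\ref{meng}). Hence
\[ \Phi(\Delta_Y)=\pm\sum_{\zeta\in Spin^c(Y)}SW_Y^3(\zeta)\,t^{\frac12 c_1(L_{p^*\zeta})\cdot\sigma}. \]

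Finally I would re-index the sum: the exponent depends on $\zeta$ only through $\xi:=p^*\zeta$, and the fibres $(p^*)^{-1}(\xi)$ partition $Spin^c(Y)$ as $\xi$ runs over $p^*Spin^c(Y)$. Grouping terms accordingly and applying Baldridge's theorem to each inner sum yields
\[ \Phi(\Delta_Y)=\pm\sum_{\xi\in p^*Spin^c(Y)}\Big(\sum_{\zeta\in(p^*)^{-1}(\xi)}SW_Y^3(\zeta)\Big)t^{\frac12 c_1(L_\xi)\cdot\sigma}=\pm\sum_{\xi\in p^*Spin^c(Y)}SW_X^-(\xi)\,t^{\frac12 c_1(L_\xi)\cdot\sigma}, \]
which is the asserted equality in $\mathbb Z[t^{\pm1}]/\{\pm t^n\}$. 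The main obstacle is the projection-formula step in the middle paragraph: one must carefully identify the abstract exponent $\tfrac12\Phi(f(c_1(L_\zeta)))$ produced by Meng--Taubes with the intersection number $\tfrac12 c_1(L_\xi)\cdot\sigma$ on $X$, which uses the Gysin pushforward $p_*$, its interaction with cup products and fundamental classes, and the integrality of the exponent; once that is in place the remainder is bookkeeping with the already-established lemmas.
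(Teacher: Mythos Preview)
Your proof is correct and follows essentially the same route as the paper: apply $\Phi$ to the Meng--Taubes formula, convert the exponent via the projection-formula identity $\Phi(f(c_1(L_\zeta)))=c_1(L_\zeta)\cdot p_*(\sigma)=p^*(c_1(L_\zeta))\cdot\sigma=c_1(L_\xi)\cdot\sigma$, then regroup by $\xi=p^*\zeta$ and invoke Baldridge. Your treatment of the projection-formula step is more explicit than the paper's (which simply asserts $c_1(L_\zeta)\cdot p_*(\sigma)=p^*(c_1(L_\zeta))\cdot\sigma$), but the argument is otherwise identical.
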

\begin{proof}

Recall by Meng and Taubes's Theorem \ref{meng}, we have
$$ \Delta_Y = \pm \sum_{\zeta \in Spin^c(Y)} SW_Y(\zeta) \cdot \frac{1}{2} f(c_1(L_\zeta)) \in \mathbb Z[H].$$
Note $\Phi(f(c_1(L_\zeta))) = c_1(L_\zeta) \cdot \Phi$ so 
$$\Phi(\Delta_Y) = \pm \sum_{\zeta \in Spin^c(Y)} SW_Y(\zeta) t^{\frac{1}{2} c_1(L_\zeta) \cdot \Phi}.$$
Now observe for each $\zeta \in Spin^c(Y)$, we have 
$$c_1(L_\zeta) \cdot \Phi = c_1(L_\zeta) \cdot p_*(\sigma) = p^*(c_1(L_\zeta)) \cdot \sigma  = c_1(L_\xi) \cdot \sigma $$
where $\xi = p^* (\zeta) \in p^*Spin^c(Y) \subset Spin^c(X)$. Grouping the terms in the sum by their induced pullback $Spin^c(4)$-structures and using a result of Baldridge (Theorem \ref{bald}), we obtain
$$ \begin{array}{l l}
     \Phi(\Delta_Y) &= \pm \sum_{\xi \in p^* Spin^c(Y)} \sum_{\zeta \in (p^*)^{-1}(\xi)} SW_Y(\zeta) t^{\frac{1}{2} c_1(L_\xi) \cdot \sigma} \\  & \\
                       &= \pm \sum_{\xi \in p^* Spin^c(Y)}  SW_X(\xi)^- t^{\frac{1}{2} c_1(L_\xi) \cdot \sigma}.
   \end{array}
$$
\end{proof}

\begin{corollary} If $X$ admits a symplectic form $\omega$ and $p_*([\omega]) = \Phi$ for some $\Phi \in H^1(Y; \mathbb Z)$ then $\text{deg}(\Phi(\Delta_{Y})) = K \cdot [\omega]$ where $K \in p^*H(Y; \mathbb Z)$ is the canonical class of $\omega$.
\end{corollary}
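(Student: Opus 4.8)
The plan is to evaluate $\Phi(\Delta_Y)$ using Lemma~\ref{baldcor} and then read off its Laurent degree by locating the largest and smallest exponents that occur with a nonzero coefficient. Since $p_*([\omega])=\Phi$, I would take $\sigma=[\omega]$ in Lemma~\ref{baldcor} (under the standing hypotheses $b_1(Y)=2$ and $e(X)$ nontorsion, which give $b_2^+(X)=1$), obtaining
\[ \Phi(\Delta_Y) = \pm \sum_{\xi \in p^*Spin^c(Y)} SW_X^-(\xi)\, t^{\frac12 c_1(L_\xi)\cdot[\omega]}. \]
By the lemma just before Lemma~\ref{baldcor} together with Proposition~\ref{kpullback}, $\xi_{K^{-1}}\in p^*Spin^c(Y)$, and since $p^*H^2(Y;\mathbb Z)$ acts freely and transitively on $p^*Spin^c(Y)$, every $\xi$ in the sum has the form $\xi=\xi_{K^{-1}}\otimes E$ with $E\in p^*H^2(Y;\mathbb Z)$, of exponent $\tfrac12 c_1(L_\xi)\cdot[\omega]=\tfrac12(-K+2E)\cdot[\omega]$. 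I would record once and for all that on pulled-back structures the relevant Seiberg--Witten functions agree up to sign, $SW_X^-(\xi)=SW_X^+(\xi)=SW_\omega^\pm(\xi)$, by Baldridge (Theorem~\ref{bald}) and the remark on chamber conventions, so that Taubes' constraints (Theorem~\ref{tconst}) and the Symmetry Lemma (Lemma~\ref{sym}) apply to these $\xi$.

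For the smallest exponent: if $SW_X^-(\xi_{K^{-1}}\otimes E)\neq 0$ then Taubes' constraints give $E\cdot[\omega]\geq 0$ with equality only for $E=0$, so $\tfrac12 c_1(L_\xi)\cdot[\omega]\geq -\tfrac12 K\cdot[\omega]$, and equality is attained uniquely at $\xi=\xi_{K^{-1}}$, whose coefficient is $\pm SW_\omega^-(\xi_{K^{-1}})=\pm 1\neq 0$. No other term sits at exponent $-\tfrac12 K\cdot[\omega]$, so there is no cancellation and this is exactly the least exponent present.

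For the largest exponent: the Symmetry Lemma gives $SW_\omega^+(\xi_{K^{-1}}\otimes E)=\pm SW_\omega^-(\xi_{K^{-1}}\otimes(K-E))$, and since $K,E\in p^*H^2(Y;\mathbb Z)$ we have $K-E\in p^*H^2(Y;\mathbb Z)$, so $\xi_{K^{-1}}\otimes(K-E)\in p^*Spin^c(Y)$ and Baldridge applies to it as well. Thus $SW_X^-(\xi_{K^{-1}}\otimes E)\neq 0$ forces $SW_\omega^-(\xi_{K^{-1}}\otimes(K-E))\neq 0$, and the inequality from the previous paragraph applied to $K-E$ yields $(K-E)\cdot[\omega]\geq 0$, i.e. $E\cdot[\omega]\leq K\cdot[\omega]$, with equality only for $E=K$. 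Since $SW_X^-(\xi_{K^{-1}}\otimes K)\neq 0$ (by the lemma preceding Lemma~\ref{baldcor}), the exponent $\tfrac12(-K+2K)\cdot[\omega]=\tfrac12 K\cdot[\omega]$ is present and, being attained uniquely among the nonzero terms, does not cancel. Subtracting, $\deg(\Phi(\Delta_Y))=\tfrac12 K\cdot[\omega]-\big(-\tfrac12 K\cdot[\omega]\big)=K\cdot[\omega]$ (so in passing $\Phi(\Delta_Y)\neq 0$ and $K\cdot[\omega]\geq 0$).

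I expect the main obstacle to be bookkeeping rather than anything deep: one must be careful reconciling the several Seiberg--Witten functions in play --- $SW_X^-$ in Lemma~\ref{baldcor}, the $\omega$-chamber functions $SW_\omega^\pm$ in Taubes' constraints and the Symmetry Lemma, and Baldridge's own plus/minus convention --- but these all coincide up to an irrelevant sign on the index set $p^*Spin^c(Y)$ of the sum, and once that identification is in place the argument is just two applications of Taubes' inequality, the second routed through the Symmetry Lemma.
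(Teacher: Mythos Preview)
Your proposal is correct and follows essentially the same approach as the paper: you invoke Lemma~\ref{baldcor} with $\sigma=[\omega]$, bound the exponents of nonzero terms between $-\tfrac12 K\cdot[\omega]$ and $\tfrac12 K\cdot[\omega]$ via Taubes' constraints (the upper bound routed through the Symmetry Lemma and Baldridge's $SW^+=SW^-$ on pulled-back classes), and use the uniqueness of the extremal classes $\xi_{K^{-1}}$ and $\xi_{K^{-1}}\otimes K$ to rule out cancellation. One minor citation slip: the statement that $\xi_{K^{-1}}\in p^*Spin^c(Y)$ is the lemma \emph{two} before Lemma~\ref{baldcor}, not the one immediately preceding it.
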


\begin{proof}

We will show for each $\xi \in p^*Spin^c(Y)$ with $SW^-(\xi) \neq 0$, $-K \cdot [\omega] \leq c_1(L_\xi) \cdot [\omega] \leq K \cdot [\omega]$. 

Using the free and transitive action of $p^*H(Y; \mathbb Z)$ on $p^*Spin^c(Y)$, we can write $\xi = \xi_{K^{-1}} \otimes E$ for some $E \in p^*H(Y; \mathbb Z)$ where $\xi_{K^{-1}} \in p^*Spin^c(Y)$ denotes the canonical $Spin^c(4)$-structure of $\omega$.

First suppose $c_1(L_\xi) \cdot [\omega] \leq -K \cdot [\omega] $. This implies $(-K+2E) \cdot [\omega] \leq -K \cdot [\omega]$ hence $E \cdot [\omega] \leq 0$. Therefore by Taubes' constraints, $E = 0$. So we have $c_1(L_\xi) \cdot [\omega] = -K \cdot [\omega]$.

Now suppose $c_1(L_\xi) \cdot [\omega] \geq K \cdot [\omega]$. Hence $(-K+2E) \cdot [\omega] \geq K \cdot [\omega]$ and it follows $0 \geq (K-E) \cdot [\omega]$. By the symmetry lemma, $SW^-(\xi_{K^{-1}} \otimes (K-E)) \neq 0$ so we have $E = K$ by Taubes' constraints. Therefore $c_1(L_\xi) \cdot [\omega]= K \cdot [\omega]$.

Recall in this section, we showed $SW^-(\xi_{K^{-1}}) \neq 0$ and $SW^-(\xi_{K^{-1}} \otimes K) \neq 0$ so the desired result follows from the previous lemma. 
\end{proof}

\end{section}

\begin{section}{Fibred Classes}

\begin{definition}
Given a manifold $Y$, we say $\Phi \in H^1(Y; \mathbb Z)$ {\it fibres over} $S^1$ if the homotopy class of maps $\Phi \in H^1(Y; \mathbb Z) = [Y; K(\mathbb Z, 1)] = [Y; S^1]$ contains a representative that makes $Y$ into a fibre bundle over $S^1$; in this case we will call $\Phi$ a {\it fibred class}.
\end{definition}

\begin{lemma}\label{fibredk} For a manifold $Y$, $\Phi \in H^1(Y; \mathbb Z)$ is a fibred class if and only if $k\Phi$ is also a fibred class for each $k \in \mathbb Z$. \hfill $\Box$
\end{lemma}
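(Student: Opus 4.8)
The plan is to prove the slightly sharper statement that \emph{for each nonzero $k \in \mathbb Z$}, the class $\Phi$ is fibred if and only if $k\Phi$ is fibred; the case $k = 0$ is vacuous, since the zero class is represented only by constant maps and a constant map is never a fibre bundle projection onto $S^1$. For a nonzero integer $n$ let $\rho_n \colon S^1 \to S^1$ denote $z \mapsto z^n$. This is a covering map of degree $|n|$, it is a homeomorphism exactly when $n = \pm 1$, and it induces multiplication by $n$ on $H^1(S^1;\mathbb Z) \cong \mathbb Z$; hence for any $f \colon Y \to S^1$ the composite $\rho_n \circ f$ represents $n \cdot [f] \in H^1(Y;\mathbb Z)$. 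Using the homeomorphism $\rho_{-1}$, which gives $\Phi$ fibred $\iff$ $-\Phi$ fibred, we may assume $k > 0$ throughout, and (since the statement is about $S^1$-bundles) we take $Y$ connected.

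For the forward implication I would take a locally trivial fibre bundle $f \colon Y \to S^1$ representing $\Phi$ and check that $\rho_k \circ f$, which represents $k\Phi$, is again a fibre bundle. Given $b \in S^1$, pick a short arc $U \ni b$ small enough that $\rho_k^{-1}(U) = V_1 \sqcup \cdots \sqcup V_k$ with each $\rho_k|_{V_j} \colon V_j \to U$ a homeomorphism, and small enough that $f$ is trivial over every $V_j$. Then
$$(\rho_k \circ f)^{-1}(U) = \bigsqcup_{j=1}^k f^{-1}(V_j) \cong \bigsqcup_{j=1}^k V_j \times F \cong U \times \bigl(F \times \{1, \dots, k\}\bigr),$$
so $\rho_k \circ f$ is locally trivial with fibre a disjoint union of $k$ copies of the fibre $F$ of $f$; hence $k\Phi$ is fibred.

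For the converse, suppose $g \colon Y \to S^1$ is a fibre bundle representing $k\Phi$, and realize $Y$ as the mapping torus of the monodromy $\sigma$ of $g$ acting on its fibre $F$. As $Y$ is connected, $\sigma$ permutes the components $F_1, \dots, F_c$ of $F$ in a single $c$-cycle; reorganizing the $c$ ``slabs'' $F_i \times [0,1]$ identifies $Y$ with the mapping torus of $\tau := \sigma^c|_{F_1} \colon F_1 \to F_1$, and under this identification $g = \rho_c \circ \tilde g$, where $\tilde g \colon Y \to S^1$ is the standard fibration of that mapping torus, a fibre bundle with connected fibre $F_1$. Thus $k\Phi = c \cdot [\tilde g]$, and $[\tilde g]$ is a primitive class because a connected fibre forces $\tilde g_* \colon \pi_1(Y) \to \mathbb Z$ to be surjective. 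Writing $\Phi = m\Psi$ with $m = \text{div}(\Phi) > 0$ and $\Psi$ primitive, we obtain $km\,\Psi = c\,[\tilde g]$ in the torsion-free group $H^1(Y;\mathbb Z)$; comparing divisibilities forces $c = km$ and then $\Psi = [\tilde g]$. So $\Psi$ is fibred, and $\Phi = m\Psi$ is fibred by the forward implication applied with $m$ in place of $k$.

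I expect the only step with genuine content to be the reorganization in the converse: the claim that a fibre bundle over $S^1$ whose fibre has $c$ connected components refines canonically as $\rho_c \circ \tilde g$ with $\tilde g$ a fibre bundle with connected fibre. This amounts to checking, in the model $Y = (F \times \mathbb R)/\mathbb Z$ with $\mathbb Z$ acting by $(x,t)\mapsto(\sigma x, t+1)$, that the subspace $F_1 \times \mathbb R$ meets every orbit and that two of its points are $\mathbb Z$-equivalent only when they are $c\mathbb Z$-equivalent, so that $Y \cong (F_1 \times \mathbb R)/c\mathbb Z$ with $g$ becoming the projection $(F_1 \times \mathbb R)/c\mathbb Z \to \mathbb R/\mathbb Z$, which factors as $(F_1 \times \mathbb R)/c\mathbb Z \to \mathbb R/c\mathbb Z \xrightarrow{\rho_c} \mathbb R/\mathbb Z$. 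Everything else is bookkeeping with the maps $\rho_n$ and with primitive classes in $H^1(Y;\mathbb Z)$.
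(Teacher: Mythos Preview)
The paper states this lemma without proof (it is marked with a $\Box$), so there is no argument to compare against. Your proof is correct and self-contained: the forward direction via post-composition with the degree-$k$ self-cover $\rho_k$ is the standard move, and your converse---factoring a bundle $g\colon Y\to S^1$ with disconnected fibre as $\rho_c\circ\tilde g$ where $\tilde g$ has connected fibre, then matching divisibilities in the torsion-free group $H^1(Y;\mathbb Z)$ to identify $[\tilde g]$ with the primitive class underlying $\Phi$---is the cleanest elementary route. The mapping-torus reorganization you single out is exactly the substantive step, and your $(F\times\mathbb R)/\mathbb Z$ model handles it. You are also right to exclude $k=0$: as literally written the lemma is false there (take $Y=S^1$ with $\Phi$ a generator), and in any case the paper only invokes the lemma for $k=\pm 1$.
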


\begin{theorem} (Friedl-Vidussi) \cite{lubotzsky} Let $(X, \omega)$ be a closed symplectic manifold with trivial canonical class. If
$X \rightarrow Y$ is a principal $S^1$-bundle with nontorsion Euler class $e(X) \in H^2 (Y ; \mathbb Z)$, then $Y$ is a torus bundle over a circle. \hfill $\Box$
\end{theorem}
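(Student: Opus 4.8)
The plan is to push $[\omega]$ forward along $p$ to an integral class on $Y$ and to prove that this class is a fibred class whose fibre is a torus; every ingredient comes from the Seiberg--Witten and twisted Alexander identities already assembled. First I would record the reductions. Since $X$ is symplectic, $b_2^+(X)\ge 1$, so Lemma~\ref{b+} gives $b_1(Y)=b_2^+(X)+1\ge 2$. As being symplectic is an open condition, I may perturb $\omega$ so that $[\omega]\in H^2(X;\mathbb Q)$ and then rescale so that $\Phi:=p_*([\omega])\in H^1(Y;\mathbb Z)$ is primitive; this changes neither the deformation class of $\omega$ nor the canonical class $K$, which is trivial, hence by Proposition~\ref{kpullback} lies in $p^*H^2(Y;\mathbb Z)$ and satisfies $K\cdot[\omega]=0$, and by Lemma~\ref{fibredk} the final conclusion is insensitive to the rescaling. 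I would run the argument in detail when $b_1(Y)=2$, where Baldridge's formula (Theorem~\ref{bald}) applies directly; the general case is the same once one uses the analogue of Baldridge's formula for $b_1>2$ and applies it after passing to the finite covers introduced below, all of whose hypotheses --- symplectic total space, trivial canonical class, nontorsion Euler class (the last by the transfer relation $\pi_!\pi^*=\,\cdot\,|G|$) --- are inherited.

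Next, the Seiberg--Witten input. With $K=0$, Taubes' constraints (Theorem~\ref{tconst}) give $SW^-_\omega(\xi_0)=1$ and, with the Symmetry Lemma~\ref{sym}, $SW^+_\omega(\xi_0)\ne 0$, while Corollary~\ref{tcol} (with $K=0$) pins the only basic class with $c_1(L_\xi)\cdot[\omega]=0$ to $\xi_0$. Feeding this through Baldridge's theorem and Meng--Taubes (Theorem~\ref{meng}), exactly as in Lemma~\ref{baldcor} and the corollary following it, shows that $\Phi(\Delta_Y)$ is $\pm t^{k}$ for some $k$ (Laurent degree $K\cdot[\omega]=0$, leading coefficient $\pm SW^-_\omega(\xi_0)=\pm 1$); in particular $\Delta_Y\ne 0$. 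Carrying out the same computation over each finite cover $Y_\alpha\to Y$ --- using $\Delta^\alpha_{Y,\Phi}=\Delta_{Y_\alpha,\Phi_\alpha}$ (Lemma~\ref{twistedtocover}) and Proposition~\ref{norm} to strip off the universal $(t^{\mathrm{div}}-1)^2$ factor --- shows that \emph{every} twisted single-variable Alexander polynomial $\Delta^\alpha_{Y,\Phi}$ is monic of the Laurent degree predicted by the (vanishing) Thurston norm. In particular $\|\Phi\|_A=\|\mathrm{PD}(\Phi)\|_T=0$ by Proposition~\ref{degprop} and McMullen's inequality (Theorem~\ref{mcmullen}).

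Monicness together with the correct degree for the whole family $\{\Delta^\alpha_{Y,\Phi}\}$ is precisely the hypothesis of the Friedl--Vidussi fibering criterion --- the reason the twisted polynomials were introduced --- so $\Phi$ is a fibred class. Its fibre $F$ then realizes the Thurston norm, $\chi_-(F)=\|\mathrm{PD}(\Phi)\|_T=0$; $F$ is connected (otherwise re-project through a covering $S^1\to S^1$), closed, orientable with $\chi(F)\ge 0$, and cannot be $S^2$ since $S^2\times S^1$ has $b_1=1$, contradicting $b_1(Y)\ge 2$. Hence $F=T^2$ and $Y$ is a torus bundle over the circle.

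The main obstacle is the last step: that monicness and the correct degree of \emph{all} the twisted Alexander polynomials genuinely force fiberedness. This is the deep direction of the Friedl--Vidussi theorem, and it rests on subgroup separability (LERF) of $3$-manifold groups --- the ingredient for which \cite{lubotzsky} is cited --- used to produce finite covers certifying that a putative non-fibred norm-minimising surface cannot be matched to a fibre. A secondary nuisance is the bookkeeping over covers $Y_\alpha$ with $b_1(Y_\alpha)=1$, where one must invoke the $b_1=1$ versions of Meng--Taubes and of the norm identity; this is routine but has to be tracked.
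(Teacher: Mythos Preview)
The paper does not give its own proof of this theorem: it is stated with a terminal $\Box$ and attributed to Friedl--Vidussi via the citation, then used as a black box (through Corollary~\ref{vcol}) in the proof of the main theorem in Chapter~5. So there is no paper proof to compare against; what you have written is essentially a sketch of the Friedl--Vidussi argument itself, and your identification of the deep step --- that monicness and correct degree of all twisted polynomials force fibering (Theorem~\ref{detect}) --- is exactly right.

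That said, there is a genuine gap in the part you present as routine. You assert $\|\Phi\|_A=\|\mathrm{PD}(\Phi)\|_T=0$ ``by Proposition~\ref{degprop} and McMullen's inequality,'' but those results give \emph{upper} bounds on the degree of $\Delta_{Y,\Phi}$ in terms of the norms, not upper bounds on the norms in terms of the degree. Concretely: you have computed $\deg\Delta_{Y,\Phi}=2$, and Proposition~\ref{degprop} plus Theorem~\ref{mcmullen} yield $2\le\|\Phi\|_A+2\le\|\Phi\|_T+2$, which is vacuous. Yet to invoke Theorem~\ref{detect} you must match $\deg\Delta^\alpha_{Y,\Phi}=|G|\,\|\Phi\|_T+2\,\mathrm{div}\,\Phi_\alpha$, and your computation gives degree $2\,\mathrm{div}\,\Phi_\alpha$; these agree only if you already know $\|\Phi\|_T=0$. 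Knowing that the only pulled-back basic class on $X$ is $\xi_0$ does not settle this either, because Baldridge's formula controls only the \emph{fibre sums} $\sum_{\zeta\in(p^*)^{-1}(\xi)}SW_Y(\zeta)$, not the individual $SW_Y(\zeta)$, so individual $3$-dimensional basic classes with large $|c_1(L_\zeta)\cdot\Phi|$ could in principle survive while cancelling in pairs. The missing input is the inequality $K\cdot[\omega]\ge\|\Phi\|_T$ (equivalently, the $3$-dimensional Seiberg--Witten/Thurston norm comparison of Kronheimer--Mrowka type), which is precisely what Friedl--Vidussi import at this point and which is not established anywhere in this paper. Once that is in hand, $K=0$ forces $\|\Phi\|_T=0$ and the rest of your argument goes through. (A minor remark: your worry about covers with $b_1(Y_\alpha)=1$ is unnecessary, since $b_1(Y)\ge 2$ and the transfer map force $b_1(Y_\alpha)\ge b_1(Y)\ge 2$ for every finite cover.)
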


\begin{corollary}\label{vcol} Under the hypotheses of the previous theorem, the Thurston norm of $Y$ vanishes everywhere.
\end{corollary}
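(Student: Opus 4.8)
The plan is to feed the conclusion of the theorem just invoked into a purely $3$-dimensional argument. By the Friedl--Vidussi theorem the hypotheses (trivial canonical class, nontorsion Euler class) force $Y$ to be a torus bundle over a circle, say $T^2 \hookrightarrow Y \xrightarrow{\pi} S^1$ with monodromy $\phi \colon T^2 \to T^2$; since $Y$ is orientable, $\phi$ is orientation-preserving. The corollary then reduces to the statement that a closed orientable $T^2$-bundle over $S^1$ has identically vanishing Thurston norm, which I would prove by exhibiting enough norm-zero classes.

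First I would read off $H_2(Y;\mathbb Z)$ from the Wang exact sequence of the fibration,
$$H_2(T^2) \xrightarrow{\phi_* - \mathrm{id}} H_2(T^2) \longrightarrow H_2(Y;\mathbb Z) \longrightarrow H_1(T^2) \xrightarrow{\phi_* - \mathrm{id}} H_1(T^2).$$
As $\phi$ is orientation-preserving, $\phi_*$ acts as the identity on $H_2(T^2) \cong \mathbb Z$, so the sequence gives a (split, since the right-hand kernel is free) extension $H_2(Y;\mathbb Z) \cong \mathbb Z\langle [T^2] \rangle \oplus \ker\big(\phi_* - \mathrm{id} \colon H_1(T^2) \to H_1(T^2)\big)$. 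The fiber class $[T^2]$ is carried by the embedded torus $T^2$, so $\|[T^2]\|_T \le \chi_-(T^2) = 0$. For a primitive class $\gamma$ in the kernel, choose a simple closed curve $c \subset T^2$ representing it; then $\phi_*[c] = [c]$, and since homologous essential simple closed curves on $T^2$ are isotopic, $\phi(c)$ is isotopic to $c$. After an isotopy of the monodromy, which alters neither $Y$ nor its Thurston norm, we may take $\phi(c) = c$ setwise, and the mapping torus of $\phi|_c \colon c \to c$ is then an embedded torus or Klein bottle $\Sigma_c \subset Y$ representing $\gamma$ (or $2\gamma$, if one must go around the base twice to obtain an orientable surface). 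In every case $\chi_-(\Sigma_c) = 0$, so $\|\gamma\|_T = 0$, using Lemma~\ref{thurstk} to remove the factor of $2$ when needed.

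To finish I would invoke subadditivity of the Thurston norm (Thurston): $\|\cdot\|_T$ is a seminorm on $H_2(Y;\mathbb Z)$, it vanishes on the generating set just exhibited, and $\|k\phi\|_T = |k|\,\|\phi\|_T$ by Lemma~\ref{thurstk}, so it vanishes on all of $H_2(Y;\mathbb Z)$, hence on all of $H^1(Y;\mathbb Z)$ by Poincar\'e duality. I expect the middle step to be the only genuine obstacle: correctly identifying the classes not proportional to the fiber class and checking each is carried by a surface with $\chi_- = 0$, in particular resolving the torus-versus-Klein-bottle orientability issue. (One could instead avoid the homology computation entirely: the fibered class dual to $[T^2]$ has Thurston norm $0$, and by Thurston's theory of fibered faces a neighborhood of it in $H^1(Y;\mathbb R)$ consists of fibered classes with torus fibers and hence norm $0$, so the zero set of the seminorm $\|\cdot\|_T$ is open and therefore all of $H^1(Y;\mathbb R)$ — with the case $b_1(Y) = 1$ handled directly by Lemma~\ref{thurstk}.)
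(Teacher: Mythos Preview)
Your proof is correct and follows the same strategy as the paper: both arguments show that $H_2(Y)$ is generated by embedded tori (the fiber, together with the mapping tori of monodromy-invariant simple closed curves), from which vanishing of the Thurston norm is immediate by subadditivity. The paper reaches this conclusion by pointing to an explicit CW decomposition of the mapping torus (worked out later in Section~6.1), whereas you extract the same generators via the Wang sequence and build the tori by hand; your version is more careful, and in fact your Klein bottle worry never arises, since $\phi_*[c]=[c]$ with $[c]$ primitive in $H_1(T^2)$ forces $\phi|_c$ to preserve orientation once $\phi$ has been isotoped to fix $c$ setwise.
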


\begin{proof}
Let $T^2 \hookrightarrow Y \rightarrow S^1$ denote a torus bundle over a circle. This may be realized as the mapping torus for a diffeomorphism $f : T^2 \rightarrow T^2$ with fixed point $x_0$. We now see from our CW-decomposition in 6.1 that $H_2(Y)$ can be generated by tori so the Thurston norm of $Y$ must vanish everywhere.
\end{proof}

\begin{theorem} (Friedl-Vidussi) \label{detect} \cite{detect} Let $Y$ be a closed oriented connected $3$-manifold.
Let $\Phi \in H^1 (Y ; \mathbb Z)$ be a nontrivial class. If for any homomorphism $\alpha \colon \pi_1 (Y ) \rightarrow G$ to a finite group, the twisted Alexander polynomial $\Delta_{Y,\Phi}^\alpha \in \mathbb Z[t^{\pm 1}]$ is monic and
$$\text{deg}(\Delta_{Y,\Phi}^\alpha ) = |G| \|\Phi \|_T + (1 + b_3 (Y))div\Phi_\alpha$$
holds, then $\Phi$ is a fibred class. \hfill $\Box$
\end{theorem}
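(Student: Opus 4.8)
The plan is to deduce the conclusion from Stallings' fibration theorem: after a couple of harmless reductions, it suffices to show that the stated hypothesis forces the kernel of $\Phi\colon\pi_1(Y)\to\mathbb Z$ to be finitely generated, whence Stallings gives the fibration.

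First I would normalize. Using Lemma~\ref{fibredk} together with the scaling behaviour of the Thurston norm (Lemma~\ref{thurstk}) and of the divisibility, one reduces to the case that $\Phi$ is primitive; the degree identity in the hypothesis is compatible with replacing $\Phi$ by $k\Phi$ (both sides scale by $|k|$ once $\mathrm{div}$ is accounted for), as is monicity. Next I would dispose of the reducible case: if $Y$ is not irreducible it is either $\#^{k}(S^{1}\times S^{2})$ with $k\ge 2$ or has a prime summand with finite fundamental group, so no class on $Y$ fibres; on the other hand a vanishing theorem for twisted Alexander polynomials of $3$-manifolds containing an essential sphere (Friedl--Vidussi), transported through Lemma~\ref{twistedtocover}, produces a finite quotient $\alpha$ with $\Delta^{\alpha}_{Y,\Phi}=0$, contradicting monicity. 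Hence I may assume $Y$ irreducible with $\pi_{1}(Y)$ infinite, so $\pi_{1}(Y)$ is torsion-free and Stallings applies: $\Phi$ is a fibred class as soon as $\ker\Phi$ is finitely generated.

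For the heart of the argument, suppose for contradiction that $\ker\Phi$ is not finitely generated. Choose a connected Thurston-norm-minimizing surface $\Sigma\subset Y$ Poincar\'e dual to $\Phi$; by Gabai's work $\Sigma$ is incompressible and the complementary sutured manifold $M=Y\setminus\nu\Sigma$ is taut. The class $\Phi$ fibres exactly when $M$ is the product sutured manifold $\Sigma\times[0,1]$, so by hypothesis $M$ is not a product, and I would detect this in a finite cover using two inputs. The first is the twisted refinement of McMullen's inequality (the twisted forms of Theorem~\ref{mcmullen} and Proposition~\ref{degprop}, due to Friedl--Kim and Friedl--Vidussi): for every finite quotient $\alpha\colon\pi_{1}(Y)\to G$ one has the universal bound $\deg\Delta^{\alpha}_{Y,\Phi}\le |G|\,\|\Phi\|_{T}+(1+b_{3}(Y))\,\mathrm{div}\,\Phi_{\alpha}$, and equality combined with monicity reflects that the lift of $\Sigma$ is a fibre surface in the cover $Y_{\alpha}$ (equivalently, that $M_{\alpha}=Y_{\alpha}\setminus\nu(p_{\alpha}^{-1}\Sigma)$ is a product), where Lemma~\ref{twistedtocover} identifies $\Delta^{\alpha}_{Y,\Phi}$ with $\Delta_{Y_{\alpha},\Phi_{\alpha}}$. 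The second input is the separability/virtual-fibering technology for $3$-manifold groups: since $M$ is taut and not a product, a sutured manifold hierarchy of $M$ is nontrivial, and the results of Wise and Agol furnish a finite regular cover $Y_{\alpha}\to Y$ in which $M_{\alpha}$ is still not a product. For that $\alpha$ the equality-detects-product criterion fails, so either $\Delta^{\alpha}_{Y,\Phi}$ is not monic or $\deg\Delta^{\alpha}_{Y,\Phi}<|G|\,\|\Phi\|_{T}+(1+b_{3}(Y))\,\mathrm{div}\,\Phi_{\alpha}$, contradicting the hypothesis. Hence $\ker\Phi$ is finitely generated and $\Phi$ is fibred.

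The main obstacle is the second input of the core step: promoting ``$\Sigma$ is not a fibre in $Y$'' to ``$\Sigma$ lifts to a non-product, hence detectable, surface in some \emph{finite regular} cover of $Y$''. Arranging the cover to be regular and to come from a finite quotient of $\pi_{1}(Y)$---so that its sutured structure and Thurston norm feed back into the twisted Alexander polynomial of $(Y,\Phi)$ via Lemma~\ref{twistedtocover}---is exactly the subtle part of Friedl--Vidussi's proof, and it is what genuinely relies on the deep separability theorems of Wise and Agol (equivalently, on Agol's virtual fibering theorem). By contrast, the reductions to the primitive and irreducible cases, the scaling identities, the vanishing theorem in the reducible case, and the twisted McMullen inequality are comparatively mechanical once the untwisted statements recorded above are in hand.
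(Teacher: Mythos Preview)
The paper does not prove this theorem at all: it is quoted from \cite{detect} and closed with a $\Box$, i.e.\ it is used as a black box from the literature. So there is no ``paper's own proof'' to compare your proposal against.

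Your sketch is a reasonable high-level outline of the actual Friedl--Vidussi argument, and you have correctly identified where the depth lies: the reductions to primitive and irreducible classes, the twisted McMullen-type inequality, and Stallings' criterion are indeed the routine parts, while the serious content is producing a finite regular cover in which the non-fibredness is visible to the twisted polynomial, which ultimately rests on the separability results of Wise and Agol. One caution: your phrasing of the equality-detects-product step is slightly glib. In the actual proof, monicity plus the degree equality for \emph{all} finite quotients is not used to certify that a single cover is a product sutured manifold; rather, one shows that failure to fibre produces \emph{some} finite quotient for which the twisted polynomial either vanishes, is non-monic, or has strictly smaller degree. The logical flow is the contrapositive you indicate, but the mechanism by which Agol/Wise is invoked (via residual finiteness and virtual specialness of the sutured complement, not directly via virtual fibering of $Y$) is more delicate than your one-line summary suggests. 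For the purposes of this dissertation, though, the theorem is simply cited, and no proof is expected of you here.
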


\end{section}

 \chapter{A partial converse to Fern\'andez-Gray-Morgan's \\ theorem}

\begin{remark}
Suppose we are given an oriented, closed, symplectic $4$-manifold $(X, \omega)$ and a closed, oriented, connected $3$-manifold $Y$ together with a principal $S^1$-bundle $p: X \rightarrow Y$ with nontorsion Euler class. Using the openness of the symplectic condition, we can assume that $[\omega] \in H^2(X ; \mathbb R)$ lies in the rational lattice, which we will identify with $H^2(X ; \mathbb Q)$. After suitably scaling $\omega$ by a rational number if needed and using Poincare duality, the class $p_*[\omega]$ will be a primitive nonzero class in $H^1(Y ; \mathbb Z)$. We will assume all our symplectic structures to be of this form.
\end{remark}

Fern\'andez-Gray-Morgan's theorem is given as follows. Note this gives many examples of non-K\"ahler symplectic $4$-manifolds.

\begin{theorem}\emph{(Fern\' andez-Gray-Morgan)} Let $Y$ be a closed, oriented, connected $3$-manifold. Suppose $p : X \rightarrow Y$ is a principal $S^1$-bundle with Euler class $e(X) \in H^2(Y; \mathbb Z)$.  If there exists a nonzero fibred class $\Phi \in H^1(Y; \mathbb Z)$ satisfying $e(X) \cdot \Phi = 0$ then $X$ is an oriented closed $4$-manifold that admits a symplectic structure. \hfill $\Box$
\end{theorem}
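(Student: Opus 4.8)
The plan is to construct a symplectic form on $X$ by hand out of two ingredients: the fibration of $Y$ over $S^1$ supplied by the fibred class $\Phi$, and a principal connexion on $p\colon X\to Y$. By Lemma~\ref{fibredk} we may assume $\Phi$ is primitive (this preserves the hypothesis $e(X)\cdot\Phi=0$), so a representative of $\Phi$ realises $Y$ as the mapping torus of an orientation-preserving diffeomorphism $\phi\colon\Sigma\to\Sigma$ of a closed connected oriented surface $\Sigma$ (orientability of $\Sigma$ follows from that of $Y$, since the normal bundle of a fibre is trivial). Write $\lambda=\pi^*(d\theta)\in\Omega^1(Y)$ for the pullback of the angular form of $S^1$ under the fibration $\pi\colon Y\to S^1$; it is closed, nowhere zero, and represents $\Phi$ in de Rham cohomology. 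Since $\phi$ has degree $1$, every area form $\sigma_0$ on $\Sigma$ satisfies $\int_\Sigma\phi^*\sigma_0=\int_\Sigma\sigma_0$, so by Moser's theorem $\phi$ is isotopic to a $\sigma_0$-symplectomorphism; replacing $\phi$ within its isotopy class, $\sigma_0$ descends to a closed $2$-form $\omega_Y$ on $Y$ restricting to $\sigma_0$ on every fibre. In particular $\omega_Y\wedge\lambda$ is a nowhere-vanishing volume form on $Y$.

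Next I would fix a principal connexion $\alpha\in\Omega^1(X)$ on the circle bundle $p$, so that $\alpha$ is nonzero on the vertical tangent line at each point and $d\alpha=p^*\rho$ for a closed $2$-form $\rho$ on $Y$ whose de Rham class is a nonzero real multiple of $e(X)$. The hypothesis $e(X)\cdot\Phi=0$ is exactly the statement that $\langle[\rho]\cup[\lambda],[Y]\rangle=0$, so the closed $3$-form $\rho\wedge\lambda$ on $Y$ is exact; write $\rho\wedge\lambda=d\mu$ with $\mu\in\Omega^2(Y)$. For a real parameter $N$ set
\[
\tilde\omega_N \;=\; p^*\!\bigl(N\omega_Y-\mu\bigr)\;+\;\alpha\wedge p^*\lambda\ \in\ \Omega^2(X).
\]
Using $d\omega_Y=0$, $d\lambda=0$, $d\alpha=p^*\rho$ and $d\mu=\rho\wedge\lambda$ one gets $d\tilde\omega_N=-\,p^*(\rho\wedge\lambda)+p^*(\rho\wedge\lambda)=0$, so $\tilde\omega_N$ is closed for every $N$.

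Finally I would check nondegeneracy for $N$ large. The terms $\bigl(p^*(N\omega_Y-\mu)\bigr)^{2}$ and $(\alpha\wedge p^*\lambda)^{2}$ vanish for dimensional reasons, so $\tilde\omega_N^{\,2}=2\,\alpha\wedge p^*\!\bigl((N\omega_Y-\mu)\wedge\lambda\bigr)$. Writing $\mu\wedge\lambda=c\,(\omega_Y\wedge\lambda)$ for a smooth function $c$ on the compact manifold $Y$ and taking $N>\max_Y c$, the form $(N\omega_Y-\mu)\wedge\lambda=(N-c)\,\omega_Y\wedge\lambda$ is a nowhere-vanishing volume form on $Y$; since $\alpha$ is nonzero on the fibres of $p$, the wedge $\alpha\wedge p^*(\,\cdot\,)$ of it is a nowhere-vanishing $4$-form on $X$, so $\tilde\omega_N^{\,2}$ is a volume form and $\tilde\omega_N$ is symplectic. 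As $X$ is the total space of a principal $S^1$-bundle over a closed oriented $3$-manifold it is a closed oriented smooth $4$-manifold, which completes the proof. I expect the only delicate points to be the production of the fibrewise area form $\omega_Y$ — this is where the \emph{fibred} hypothesis (rather than merely $H_2(Y;\mathbb R)\neq 0$) is genuinely used, via Moser's theorem — and verifying that the rescaling by $N$ restores nondegeneracy without spoiling closedness.
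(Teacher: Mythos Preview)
The paper does not actually prove this theorem: it is stated as a result of Fern\'andez--Gray--Morgan, marked with $\Box$, and used only to motivate the partial converse proved in Chapter~5. So there is no ``paper's own proof'' to compare against.

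That said, your argument is correct and is essentially the original Fern\'andez--Gray--Morgan construction (itself a variant of Thurston's symplectic-form-on-surface-bundles argument adapted to circle bundles). The key steps --- producing a closed fibrewise area form $\omega_Y$ on $Y$ via Moser, using $e(X)\cdot\Phi=0$ to write $\rho\wedge\lambda=d\mu$, and then rescaling by $N$ to force nondegeneracy --- are exactly the standard ones. One small imprecision: you write that $[\rho]$ is a ``nonzero real multiple of $e(X)$'', but the theorem does not assume $e(X)$ is nontorsion, so $[\rho]$ may well vanish in real cohomology; this is harmless, since what you actually need is $[\rho\wedge\lambda]=0$, and that follows from $e(X)\cdot\Phi=0$ regardless. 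Your identification of where the fibred hypothesis is genuinely used (the existence of $\omega_Y$) is also on the mark.
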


We will prove the following partial converse.

\begin{theorem}
Let $(X, \omega)$ be an oriented, closed, symplectic $4$-manifold and $Y$ be a closed, oriented, connected $3$-manifold with $b_1(Y) = 2$. Suppose $p : X \rightarrow Y$ is a principal $S^1$-bundle with Euler class $e(X) \in H^2 (Y; \mathbb Z)$. 
If $e(X)$ is nontorsion, $\Phi \in H^1(Y; \mathbb Z)$ satisfies $e(X) \cdot \Phi = 0$ and $\|\Phi \|_T = 2$ then $\Phi$ is a fibred class.
\end{theorem}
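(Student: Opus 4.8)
\emph{Strategy.} The plan is to verify the hypotheses of the Friedl--Vidussi detection theorem (Theorem~\ref{detect}). Since $Y$ is closed and connected, $b_3(Y)=1$, so I must show: for every epimorphism $\alpha\colon\pi_1(Y)\to G$ onto a finite group, the twisted Alexander polynomial $\Delta^\alpha_{Y,\Phi}\in\mathbb Z[t^{\pm1}]$ is monic with Laurent degree $|G|\,\|\Phi\|_T+2\operatorname{div}\Phi_\alpha=2|G|+2\operatorname{div}\Phi_\alpha$. By Lemma~\ref{twistedtocover} this polynomial equals $\Delta_{Y_\alpha,\Phi_\alpha}$, where $\pi_\alpha\colon Y_\alpha\to Y$ is the induced $G$-cover and $\Phi_\alpha=\pi_\alpha^*\Phi$, so the whole problem takes place on $Y_\alpha$ equipped with the pulled-back circle bundle $p_\alpha\colon X_\alpha:=\pi_\alpha^*X\to Y_\alpha$ and the degree-$|G|$ covering $q\colon X_\alpha\to X$.

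\emph{Normalising the data.} First I would arrange $\Phi=p_*[\omega]$. Because $e(X)$ is non-torsion and $b_1(Y)=2$, the Gysin sequence identifies $\{\Psi\in H^1(Y;\mathbb Z): e(X)\cdot\Psi=0\}$ with the image of $p_*\colon H^2(X;\mathbb Z)\to H^1(Y;\mathbb Z)$, a rank-one saturated subgroup of the torsion-free group $H^1(Y;\mathbb Z)$; it contains both $\Phi$ and $p_*[\omega]$, which are therefore proportional. Rescaling $\omega$ by a positive rational (as in the opening remark of this chapter), and replacing $\Phi$ by $-\Phi$ if necessary --- harmless for the conclusion and for all the hypotheses by Lemmas~\ref{fibredk} and \ref{thurstk} --- I may assume $p_*[\omega]=\Phi$; and since the Thurston norm is even-valued while $\|\Phi\|_T=2$, Lemma~\ref{thurstk} forces $\operatorname{div}\Phi=1$. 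All of this survives pullback: $X_\alpha$ is symplectic with form $\omega_\alpha=q^*\omega$ and canonical class $K_\alpha=q^*K$ (pull back a compatible almost-complex structure along $q$); $e(X_\alpha)=\pi_\alpha^*e(X)$ is non-torsion by the transfer homomorphism; $p_{\alpha*}[\omega_\alpha]=\Phi_\alpha$ by naturality of the Gysin map in the pullback square; and $q_*[X_\alpha]=|G|[X]$ gives $K_\alpha\cdot[\omega_\alpha]=|G|\,(K\cdot[\omega])$. Note also $b_1(Y_\alpha)\geq b_1(Y)=2$, so $b_1(Y_\alpha)>1$ in every case.

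\emph{The key equality $K\cdot[\omega]=2$.} Next I would pin down $K\cdot[\omega]$. The corollary of Chapter~4 identifying $\deg(\Phi(\Delta_Y))$ with $K\cdot[\omega]$, combined with Proposition~\ref{norm} (giving $\Delta_{Y,\Phi}=(t-1)^2\,\Phi(\Delta_Y)$), Proposition~\ref{degprop}, and McMullen's inequality (Theorem~\ref{mcmullen}, the $b_1>1$ case), yields $2+K\cdot[\omega]=\deg\Delta_{Y,\Phi}\leq\|\Phi\|_A+2\leq\|\Phi\|_T+2=4$, so $K\cdot[\omega]\leq2$. On the other hand $K=p^*K_Y$ for some $K_Y\in H^2(Y;\mathbb Z)$ (Proposition~\ref{kpullback}), and since $Y$ is parallelisable every $c_1$ of a $Spin^c(3)$-structure on $Y$ has even divisibility, hence so does $K_Y$ and $K\cdot[\omega]=K_Y\cdot\Phi\in2\mathbb Z$. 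Moreover the analysis of the $SW^-$-basic classes in $p^*Spin^c(Y)$ (the lemmas preceding that corollary) shows $K\cdot[\omega]\geq0$, with $\xi_{K^{-1}}$ and $\xi_{K^{-1}}\otimes K$ the unique basic classes realising the values $-K\cdot[\omega]$ and $+K\cdot[\omega]$; if $K\cdot[\omega]=0$ these two coincide, which forces $K=0$ because $H^2(X;\mathbb Z)$ acts freely on $Spin^c(X)$, and then $\|\Phi\|_T=0$ by Corollary~\ref{vcol} --- contradicting $\|\Phi\|_T=2$. Hence $K\cdot[\omega]=2$, so $K_\alpha\cdot[\omega_\alpha]=2|G|$ for every $\alpha$.

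\emph{Finishing each $\alpha$, and the main obstacle.} Fix $\alpha$. Meng--Taubes (Theorem~\ref{meng}) on $Y_\alpha$ together with Baldridge's theorem, after grouping $Spin^c$-structures by their pullbacks as in Lemma~\ref{baldcor}, gives the identity $\Phi_\alpha(\Delta_{Y_\alpha})=\pm\sum_{\xi\in p_\alpha^*Spin^c(Y_\alpha)} SW_{X_\alpha}(\xi)\,t^{\frac12\,c_1(L_\xi)\cdot[\omega_\alpha]}$. Because $X_\alpha$ is symplectic with canonical class $K_\alpha$, running Taubes' constraints (Theorem~\ref{tconst}) and the symmetry lemma (Lemma~\ref{sym}) on $X_\alpha$ exactly as in the Chapter~4 lemma on $SW^-$-basic classes shows that for $\xi\in p_\alpha^*Spin^c(Y_\alpha)$ with $SW_{X_\alpha}(\xi)\neq0$ one has $-K_\alpha\cdot[\omega_\alpha]\leq c_1(L_\xi)\cdot[\omega_\alpha]\leq K_\alpha\cdot[\omega_\alpha]$, the two ends attained only by $\xi_{K_\alpha^{-1}}$ and $\xi_{K_\alpha^{-1}}\otimes K_\alpha$, each with $SW_{X_\alpha}=\pm1$; hence $\Phi_\alpha(\Delta_{Y_\alpha})$ is monic of degree $K_\alpha\cdot[\omega_\alpha]=2|G|$. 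Proposition~\ref{norm} then gives $\Delta^\alpha_{Y,\Phi}=\Delta_{Y_\alpha,\Phi_\alpha}=(t^{\operatorname{div}\Phi_\alpha}-1)^2\,\Phi_\alpha(\Delta_{Y_\alpha})$, monic of degree $2\operatorname{div}\Phi_\alpha+2|G|$, which is exactly the Friedl--Vidussi value, so Theorem~\ref{detect} yields that $\Phi$ is a fibred class. The step I expect to require the most care is this last one when $b_1(Y_\alpha)>2$: then $b_2^+(X_\alpha)>1$, so one must invoke Baldridge's theorem and Taubes' constraints in more generality than the $b_1(Y)=2$, $b_2^+=1$ statements quoted above (Theorems~\ref{bald} and \ref{tconst}); both are available in \cite{bald2} and \cite{taubesconst}, and in the range $b_2^+>1$ the chamber ambiguity simply disappears, so the argument carries over --- checking this is the one genuine technical point.
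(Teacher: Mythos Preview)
Your proposal is correct and follows essentially the same route as the paper: normalise so that $\Phi=p_*[\omega]$ is primitive, use McMullen's inequality together with the Meng--Taubes/Baldridge identification and Taubes' constraints to pin down the base case, then pass to all finite covers and invoke the Friedl--Vidussi detection theorem. The only organisational difference is that the paper splits the base step into a case analysis on $\deg\Delta_{Y,\Phi}\in\{-\infty,0,2,4\}$ and rules out the first three, whereas you compute $K\cdot[\omega]=2$ directly (bounded above by McMullen, even by the $Spin^c$ argument, nonnegative and nonzero by Taubes' constraints plus Corollary~\ref{vcol}); these are the same exclusions repackaged, and your version is arguably cleaner. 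You also correctly flag the one genuine technical point --- that on the covers $Y_\alpha$ one may have $b_1(Y_\alpha)>2$ and $b_2^+(X_\alpha)>1$, so the cited $b_2^+=1$ versions of Baldridge's formula and Taubes' constraints must be replaced by their general forms --- which the paper handles in exactly the same implicit way.
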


\begin{proof}

Consider $p_*[w] \in H^1(Y; \mathbb Z)$. First we'll show that up to sign, $\Phi$ equals $p_*[\omega]$.
From the Gysin sequence, $e(X) \cdot p_*[\omega] = 0$. Since $p_*[\omega]$ is primitive and the subspace $<e(X), \cdot> = 0$ is one-dimensional, we have $p_*[\omega]$ must be a generator of $<e(X), \cdot> = 0$.
Now we'll show $\Phi$ must also be primitive. Suppose $\Phi = k \Phi^\prime$ for some primitive $\Phi^\prime$ then by Lemma \ref{thurstk}, $$\|\Phi\|_T = \|k\Phi^\prime \|_T = k \| \Phi^\prime \|_T = 2.$$ But the Thurston norm is always even so $k = \pm 1$ and $\Phi$ must be primitive.
So we may write $\Phi = \pm p_*[\omega]$. Note $\|p_*[\omega] \|_T = 2$. By Lemma \ref{fibredk}, it is sufficent to show $p_*[\omega]$ is fibred. Henceforth we will denote $p_*[\omega]$ by $\Phi$.

From Proposition \ref{degprop}, Mcmullen's inequality (Theorem \ref{mcmullen}), and the fact $\Phi$ is primitive, we obtain 
$$\text{deg} \Delta_{Y, \Phi} \leq \| \Phi \|_A + 2 \text{ div} (\Phi) \leq \| \Phi \|_T+2 \leq 4.$$ 

Since $\Delta_{Y, \Phi}$ has a symmetric representative (Proposition \ref{tur}), its degree is either even and nonnegative or $-\infty$ so $\text{deg}(\Delta_{Y, \Phi}) = -\infty$, $0$, $2$, or $4$. We will examine the consequences of each case. \\

\noindent
{\bf Case 1: $\bm{\text{deg}(\Delta_{Y, \Phi}) = -\infty}$}
\newline

This means $\Delta_{Y, \Phi} = 0$. Since $b_2^+(X) = 1$ (Lemma \ref{b+}), we can use Proposition \ref{norm} and Lemma \ref{baldcor} to obtain  
$$ \sum_{\xi \in p^* Spin^c(Y)}  SW^-_X(\xi) t^{\frac{1}{2} c_1(L_\xi) \cdot [\omega]} = 0.$$

Now we will apply Taubes' constraints (Theorem \ref{tconst}). Let $\xi_{K^{-1}} \in Spin^c(X)$ denote the canonical $Spin^c(4)$-structure of $(X, \omega)$, Note this is a pullback class from Proposition \ref{kpullback}. From Corollary \ref{tcol}, $\xi_{K^{-1}}$ is the only basic class whose determinant line bundle $L$ satisfies $c_1(L) \cdot [\omega] = -K \cdot [\omega]$. Thus the term $SW^-_X(\xi_{K^{-1}}) t^{\frac{1}{2} K^{-1} \cdot [\omega]}$  cannot be killed off by another term as its power is unique so we reach a contradiction. \\

\noindent
{\bf Case 2: $\bm{\text{deg}(\Delta_{Y, \Phi}) = 0}$}
\newline

Since $\Delta_{Y,\Phi} = (t-1)^2 \cdot \Phi(\Delta_Y)$, we have $\text{deg}(\Phi(\Delta_Y)) = -2$ but this is impossible. \\

\noindent
{\bf Case 3: $\bm{\text{deg}(\Delta_{Y, \Phi}) = 2}$}
\newline

Here $\text{deg}(\Phi(\Delta_Y)) = 0$. By Corollary \ref{kpullback}, $\xi_{K^{-1}} \otimes K$ is the only pulled back basic class whose determinant line bundle $L$ satisfies $c_1(L) \cdot [\omega] = K \cdot [\omega]$. So $K = 0$ or the polynomial
$$ \Phi(\Delta_Y) = \pm \sum_{\xi \in p^* Spin^c(Y)}  SW^-_X(\xi) t^{\frac{1}{2} c_1(L_\xi) \cdot [\omega]}$$
(when simplified) would have at least two nonzero terms hence not have zero degree.

Since $K = 0$, as a result of a theorem of Friedl and Vidussi (Corollary \ref{vcol}), the Thurston norm of $Y$ must vanish everywhere. But this contradicts our hypotheses. \\

\noindent
{\bf Case 4: $\bm{\text{deg}(\Delta_{Y, \Phi}) = 4}$}
\newline

Let $\alpha : \pi_1(Y) \rightarrow G$ be an onto homomorphism for some finite group $G$. $\alpha$ induces $G$-covers of $X$ and $Y$ that fit into the following commutative diagram.
$$
\begin{tikzpicture}[description/.style={fill=white,inner sep=2pt}]
\matrix (m) [matrix of math nodes, row sep=3em,
column sep=2.5em, text height=1.5ex, text depth=0.25ex]
{ X_\alpha     &  Y_\alpha  \\
  X & Y  \\
};
\path[->,font=\scriptsize]
(m-1-1) edge node[auto]{$p^\prime$} (m-1-2);
\path[->,font=\scriptsize]
(m-2-1) edge node[auto]{$p$} (m-2-2);

\path[->,font=\scriptsize]
(m-1-1) edge node[auto]{$\pi_\alpha$} (m-2-1);
\path[->,font=\scriptsize]
(m-1-2) edge node[auto]{$\pi_\alpha$} (m-2-2);
\end{tikzpicture}
$$

Slightly abusing notation, we denote both covering maps by $\pi_\alpha$. $\omega_\alpha = \pi_\alpha^*(\omega)$ will be a symplectic form on $X_\alpha$ with canonical class $K_\alpha = \pi^*_\alpha(\omega)$ and canonical $Spin^c(4)$-structure $\xi_{K^{-1}_\alpha}$. Note $X_\alpha$ is closed with $b_2^+(X_\alpha) \geq b_2^+(X) = 1$ and $Y_\alpha$ is connected with $b_1(Y_\alpha) \geq b_1(Y) = 2$.

By Lemma \ref{twistedtocover}, we have $\Delta_{Y, \Phi}^\alpha = \Delta_{Y_\alpha, \Phi_\alpha}$ where $\Phi_\alpha = \pi_\alpha^*(\Phi)$. Hence by Lemma \ref{norm},
$$ \Delta_{Y_\alpha, \Phi_\alpha} = \pm (t^{\text{div} \Phi_\alpha} + 1)^2 \sum_{\xi \in (p^\prime)^* Spin^c(Y_\alpha)}  SW^-_{X_\alpha}(\xi) t^{\frac{1}{2} c_1(L_\xi) \cdot [\omega_\alpha]}.$$ 

By Lemma \ref{degprop}, $\text{deg}(\Delta_{Y_\alpha, \Phi_\alpha}) = 2 \text{div}(\Phi_\alpha) + K_\alpha \cdot [\omega_\alpha]$. But $$K_\alpha \cdot [\omega_\alpha] = |G| K \cdot [\omega] = |G| \text{deg}(\Phi(\Delta_Y) = 2 |G| = |G| \|\Phi\|_T.$$

Note $\Delta_{Y_\alpha, \Phi_\alpha}$ is monic as $SW^-_{X_\alpha}(\xi_{K^{-1}_\alpha} \otimes K_\alpha) = \pm 1$ and $\xi_{K^{-1}_\alpha} \otimes K_\alpha$ is the only pulled back $Spin^c(4)$-structure with $c_1(L_{\xi_{K^{-1}_\alpha} \otimes K_\alpha}) \cdot [\omega_\alpha] = K_\alpha \cdot [\omega_\alpha]$ AND whose evaluation in $SW^-_{X_\alpha}$ is nonzero. Therefore by Theorem \ref{detect}, $\Phi$ must be a fibred class.
\end{proof}

 \chapter{Surface Bundles over Tori}

The purpose of this chapter is to work towards deciding whether surface bundles over a surface are spin. Ron Stern originally raised the question, ``is there an orientable aspherical surface bundle over the torus that is not spin'' \cite{hillman}. In the case of torus bundles over the torus, the total spaces are indeed spin manifolds. Our strategy will be to use the canonical class from Thurston's symplectic form construction to show the manifolds given below have an even intersection forms. This together with the fact that they have no $2$-torsion in their first homology groups implies they are spin.

Now we will give a simple example of a family of surface bundles whose total spaces are spin. Let $\Sigma_h \hookrightarrow M \rightarrow \Sigma_g$ be a surface bundle with genus $h$ fibre over a genus $g$ surface. If $M$ admits a symplectic structure $\omega$ with canonical class $K = 0$ then the $M$ will be spin. To see this, recall $-K$ is the determinant line bundle of the canonical $Spin^c$-structure of $\omega$ so $0 \equiv w_2(M) \text{ (mod 2)}$ and $M$ is spin.

\begin{section}{The mapping torus of a Dehn twist}

First we will list the fundamental group and a CW-decomposition for the mapping torus for a basepoint preserving map $f : T^2 \rightarrow T^2$.

Consider a torus $T^2$ with $\pi_1(T^2) = <a,b \mid >$ where $a$ and $b$ are the usual generators. 

Now let $N$ denote the mapping torus for $f$. We know
$$ \begin{array}{r l}
\pi_1(N) &=  <a,b,c \mid [a,b \,], acf(a)^{-1}c^{-1},  bc f(b)^{-1} c^{-1}>. \\ 
   \end{array}
$$

Now we will define a CW-structure for $N$. We have:
$$\begin{array}{l l}
\text{1 0-cell} & e^0 \\
\text{3 1-cells} & a, b, c \\
\text{3 2-cells} & e_1^2, e^2_2, e^2_3 \\
\text{1 3-cell} & e^3 \\
\end{array}$$
with boundary maps 
$$\begin{array}{r l}
\partial e^3 = & e_1^2 + e^2_2 + e^2_3 - e_1^2 - e^2_2 - e^2_3 = 0 \\
\partial e_1^2 = & a + b - a - b = 0 \\
\partial e_2^2 = & a + c - f(a) - c \\
\partial e_3^2 = & b + c - f(b) - c \\
\end{array}$$
All other attaching maps are trivial. 

\begin{figure}
\begin{center}
\begin{tikzpicture}
\draw [fill=lightgray,thick] (0,0) rectangle (2,2);
\draw[thick] (0,0) -- node[below] {$a$} ++ (2,0);
\draw[thick, ->] (0,0) -- (1,0);

\draw[thick] (0,0) -- node[left] {$b$} ++ (0,2);
\draw[thick, ->] (0,0) -- (0,1);

\draw[thick] (0,2) -- node[above] {$a$} ++ (2,0);
\draw[thick, ->] (0,2) to (1,2);

\draw[thick] (2,0) -- node[right] {$b$} ++ (0,2);
\draw[thick, ->] (2,0) to (2,1);

\draw[thick, ->] (2,0) to [out=90,in=90] (4,0);
\draw[thick] (2,0) to [out=-90,in=-90] (4,0);
 \node [label={$c$},shift={(3,0)}]{};
\end{tikzpicture}
\end{center}
\caption{The $1$-skeleton of our mapping torus.}
\end{figure}
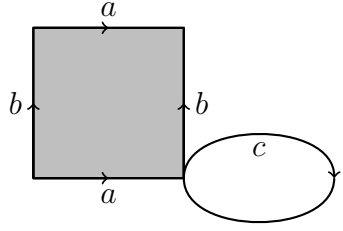

Now let $\tau : T^2 \rightarrow T^2$ denote the Dehn twist about $a$. Note $\tau_*(a) = a$ and $\tau_*(b) = ab$. From the above, we have

\newpage

$$ 
\begin{array}{r l}
\pi_1(N_a) &=  <a,b,c \mid [a,b \,], ac\tau(a)^{-1}c^{-1},  bc \tau(b)^{-1} c^{-1}> \\
                &=  <a,b,c \mid [a,b \,], [a,c \,],  bc b^{-1}a^{-1} c^{-1} >.   
\end{array}
$$
and the following CW-decomposition.
$$\begin{array}{l l}
\text{1 0-cell} & e^0 \\
\text{3 1-cells} & a, b, c \\
\text{3 2-cells} & e_1^2, e^2_2, e^2_3 \\
\text{1 3-cell} & e^3 \\
\end{array}$$
with boundary maps 
$$\begin{array}{r l}
\partial e^3 = & e_1^2 + e^2_2 + e^2_3 - e_1^2 - e^2_2 - e^2_3 = 0 \\
\partial e_1^2 = & a + b - a - b = 0 \\
\partial e_2^2 = & a + c - \tau(a) - c = a +c -a -c = 0 \\
\partial e_3^2 = & b + c - \tau(b) - c = b + c - a - b -c = -a \\
\end{array}$$
All other attaching maps are trivial. 

From this, we calculate the homology to be $H_0(N_a) = \mathbb Z$, $H_1(N_a) = \mathbb Z^2$, $H_2(N_a) = \mathbb Z^2$, and $H_3(N_a) = \mathbb Z$.

\end{section}
\begin{section} {Monodromy trivial in one direction, Dehn twist in the other}

Our desired fibre bundle is given by 
$$
\begin{tikzpicture}[description/.style={fill=white,inner sep=2pt}]
\matrix (m) [matrix of math nodes, row sep=3em,
column sep=2.5em, text height=1.5ex, text depth=0.25ex]
{ S^1 \times S^1     & N_a \times S^1   \\
                     & S^1 \times S^1        \\
};
\path[right hook->,font=\scriptsize]
(m-1-1) edge node[auto]{} (m-1-2);
\path[->,font=\scriptsize]
(m-1-2) edge node[auto]{} (m-2-2);
\end{tikzpicture}
$$
whose projection map is the projection map of $N_a$ on the first factor and the identity on the second factor.
\begin{subsection}{The homology of $\bm{N_a \times S^1}$}

If we endow $S^1$ with the usual CW-structure having one 0-cell $f^0$ and one 1-cell $f^1$, $S^1 \times N_a$ has a CW-structure consisting of:
$$\begin{array}{l l}
\text{1 0-cell} & f^0 \times e^0 \\
\text{4 1-cells} & f^0 \times a, f^0 \times b, f^0 \times c, f^1 \times e^0 \\
\text{3 2-cells} & f^0 \times e_1^2, f^0 \times e^2_2, f^0 \times e^2_3, f^1 \times a, f^1 \times b, f^1 \times c  \\
\text{4 3-cells} & f^0 \times e^3, f^1 \times e_1^2, f^1 \times e^2_2, f^1 \times e^2_3 \\
\text{1 4-cell}  & f^1 \times e^3 \\
\end{array}$$

From this, we calculate the homology to be $H_0(S^1 \times N_a) = \mathbb Z$, $H_1(S^1 \times N_a) = \mathbb Z^3$, $H_2(S^1 \times N_a) = \mathbb Z^4$, and $H_3(S^1 \times N_a) = \mathbb Z^3$, and $H_4(S^1 \times N_a) = \mathbb Z$. 

\end{subsection}

\begin{subsection} {A symplectic form for $\bm{N_a \times S^1}$}

We will construct a symplectic form on $N_a \times S^1$. First consider $\pi : N_a \rightarrow S^1$. Let $\alpha = \pi^*d \theta$ where $d \theta$ is the usual volume form for $S^1$. This will be a closed $1$-form. Now we will show it is possible to choose a Riemannian metric $g$ on $N_a$ that makes $\alpha$ harmonic. We will use the following theorem. Before we state it, we need the following definitions.

We call a $1$-form $\alpha$ {\it intrinsically harmonic} if there exists a metric that makes $\alpha$ harmonic and {\it transitive}, if for each point $p$ not contained in $\alpha$'s zero set, there is a $1$-submanifold containing $p$ on which $\alpha$ restricts to a volume form.

\begin{theorem} (Calabi) \cite{Calabi} A closed $1$-form $\alpha$ having only Morse-type zeros on a closed oriented manifold $M$ is intrinsically harmonic if and only if it is transitive. \hfill $\Box$
\end{theorem}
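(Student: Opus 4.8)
The plan is to route both implications through the reformulation that, on a closed manifold, a closed form $\alpha$ is harmonic for a metric $g$ exactly when $\ast_g\alpha$ is again closed; so $\alpha$ is intrinsically harmonic iff there is a metric whose Hodge star carries $\alpha$ to a closed $(n-1)$-form. This in turn is governed by the existence of a single closed $(n-1)$-form $\beta$ that is \emph{positive relative to $\alpha$}, meaning $\alpha\wedge\beta>0$ at every point of $M\setminus Z(\alpha)$ and, near each zero, $\beta$ fits the Morse normal form of $\alpha$. Indeed, at a point $x$ with $\alpha_x\neq 0$ the inner products $g_x$ satisfying $\ast_{g_x}\alpha_x=\beta_x$ form a nonempty set precisely when $\alpha_x\wedge\beta_x>0$, and Calabi's patching argument glues such local solutions into a global metric; near a Morse zero one works in a chart where $\alpha=df$ with $f$ a nondegenerate quadratic form and checks the local model by hand. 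So the theorem becomes: \emph{$\alpha$ is transitive if and only if such a closed positive $(n-1)$-form $\beta$ exists.}

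First I would do the easy direction, ``harmonic $\Rightarrow$ transitive.'' If $\alpha$ is $g$-harmonic then $\beta=\ast_g\alpha$ is closed and $\alpha\wedge\beta=|\alpha|_g^2\,\mathrm{dvol}>0$ off $Z(\alpha)$; alternatively, its metric dual $X=\alpha^\sharp$ is divergence-free (co-closedness of $\alpha$) with $\alpha(X)=|X|^2>0$, so the flow of $X$ preserves volume. Given a regular point $p$, Poincare recurrence returns the $X$-orbit arbitrarily close to $p$; since $X$ is everywhere transverse to the (integrable, as $d\alpha=0$) foliation $\ker\alpha$, the long almost-closed orbit can be closed up inside a foliation chart by a short transverse arc, producing an embedded closed curve on which $\alpha$ stays positive, i.e. a closed transversal, which a small slide carries through $p$ itself. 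One can instead argue dually: if $p$ lay on no closed transversal, Schwartzman--Sullivan asymptotic-cycle theory would furnish a nonzero foliation cycle directed by $\ker\alpha$ and supported on the closure of the leaf through $p$, and this cycle must annihilate the closed form $\beta$, contradicting $\alpha\wedge\beta>0$.

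The hard direction is ``transitive $\Rightarrow$ $\beta$ exists,'' and I expect this to be the main obstacle. The idea is a de Rham / Hahn--Banach separation argument in the space of $(n-1)$-currents (equivalently, in Poincare-dual language, $1$-currents): one considers the closed convex cone spanned by exact currents together with the currents of integration over closed transversals, and uses transitivity --- the fact that \emph{every} regular point is met by a closed transversal --- to show this cone contains no nonzero current that $\alpha$ annihilates while being supported away from the transversals. Hahn--Banach then yields a closed $(n-1)$-form $\beta$ whose pairings with closed transversals are all positive, which forces $\alpha\wedge\beta>0$ on the regular set. It remains to control $\beta$ at the Morse zeros: transitivity rules out zeros of index $0$ or $n$ (no closed curve can be everywhere transverse to the level sets of a primitive $f$ near a local extremum of $f$), and for indices $1,\dots,n-1$ Calabi's explicit computation in the quadratic chart shows $\beta$ can be chosen to match a standard model there and be glued in. Assembling these pieces --- upgrading the separating current to an honest smooth closed form with prescribed behaviour at the singular set --- is exactly the technical heart of Calabi's proof, which is the argument I would follow.
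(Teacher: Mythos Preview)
The paper does not prove this theorem: it is stated with a citation to Calabi and a $\Box$, and is then merely \emph{applied} to show that the pulled-back form $\alpha=\pi^*d\theta$ on $N_a$ is intrinsically harmonic. So there is no proof in the paper against which to compare your proposal.

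That said, your outline is a reasonable sketch of how Calabi's theorem is actually proved. The reformulation via a closed $(n-1)$-form $\beta$ with $\alpha\wedge\beta>0$ off the zero set, and the ``harmonic $\Rightarrow$ transitive'' argument via the volume-preserving gradient flow and Poincar\'e recurrence, are exactly Calabi's. For the hard direction, however, Calabi's original 1969 argument is \emph{constructive} rather than current-theoretic: one takes finitely many closed transversals whose tubular neighbourhoods cover the regular set (compactness plus transitivity), builds $\beta$ as a sum of closed bump $(n-1)$-forms supported in those tubes (so that each summand is Poincar\'e-dual to its core transversal and hence pairs positively with $\alpha$), and patches in an explicit local model near each Morse zero. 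Your Hahn--Banach/Sullivan separation approach can be made to work and is conceptually clean, but it is not the route Calabi took, and the passage from a separating current to a smooth closed form with the prescribed Morse-chart behaviour is nontrivial and would need to be spelled out; in Calabi's constructive proof that step is essentially built in.
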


Choose $p \in N_a$. $\alpha$ has no zeros since $\text{ker } \alpha_p = T_p F \neq T_p N_a$ where $F$ is the fibre containing $p$. To see $\alpha$ is transitive it is sufficient to find a section of $N_a$ containing $p$ since $\alpha$ will restrict to a volume form on each section. Write $p = [x,t_x]$ and choose a path $\gamma : I \rightarrow T^2$ with $\gamma (1) = \tau \circ \gamma(0)$ and $\gamma(t_x) = x$. Then $\sigma = \{ [\gamma(t), t] \mid t \in I \}$ will be a section containing $p$. Therefore by Calabi's theorem, there exists a metric $g$ on $N_a$ that makes $\alpha$ harmonic. Recall since $\alpha$ is intrinsically harmonic, it is a closed form.
 
Let $\omega_1 = \star \alpha \in \Omega^2(N_a)$. This is also closed since since the Hodge operator sends harmonic forms to harmonic forms. Locally, if we choose an oriented orthonormal frame $\{e_1, e_2, e_3 \}$ for $T_p N_a = T_p F \oplus V_p F $ with $\text{span}(e_1, e_2) = T_p F$ and $\text{span}(e_3) = V_p F$. Additionally assume $e_1$ is in the direction of $a$ and $e_2$ is in the direction of $b$. Then we can write $\alpha = c_1 e^3$ since $\text{ker}(\alpha) = T_p F$ and $\omega_1 = \star \alpha = c_1 e^1 \wedge e^2$ since $(3,1,2)$ is an even permutation.

$\omega_1$ induces a $2$-form, $\omega_1 \in \Omega^2(N_a \times S^1)$. Define $\omega = \omega_1 + \omega_2$ where $\omega_2 = \pi^*(\beta \, \, )$, here $\beta \, \,$ is the usual volume form for $T^2$. Locally, if we extend our basis to a basis for $T_p (N_a \times S^1)$, we can write $\omega = c_1 e^1 \wedge e^2 + c_2 e^3 \wedge e^4$. $\omega \wedge \omega = c_1 c_2 e^1 \wedge e^2 \wedge e^3 \wedge e^4$ which is a volume form so $\omega$ is a symplectic form for $N_a \times S^1$.

\end{subsection}

\begin{subsection} {A homology basis for $\bm{H_2(N_a \times S^1)}$}

Now we will find a homology basis for $H_2(N_a \times S^1)$ consisting of surfaces that are symplectic and Lagrangian with respect to the symplectic form we just constructed.

First consider a fibre of $N_a \times S^1$ which we can write $B_1 = F \times \{ pt \}$ where $F$ is a fibre of $N_a$. Locally $T_p B_1$ is spanned by $e_1$ and $e_2$. Let $v = \lambda_1 e_1 + \lambda_2 e_2$ and $w =  \mu_1 e_1 + \mu_2 e_2$. Then
$$\begin{array}{ r l} 
\omega (v, w) = &  \omega_1 (v, w) + \omega_2 (v, w) \\
              = &  \omega_1 (v, w) + e^3 \wedge e^4 (v, w) \\
              = &  \omega_1(v, w) + e^3(v)e^4(w) - e^3(w)e^4(v) \\
              = &  \omega_1(v, w) + 0 - 0  = \omega_1(v, w) \\
\end{array}$$
so $\omega \mid_{B_1} = \omega_1 \mid_{B_1}$ so $B_1$ is a symplectic submanifold. 

Now consider the submanifold $B_2 = c \times S^1$. This is a section of $N_a \times S^1$. Locally $T_p B_2$ is spanned by $e_3$ and $e_4$. Let $v = \lambda_1 e_3 + \lambda_2 e_4$ and $w =  \mu_1 e_3 + \mu_2 e_4$. Then
$$\begin{array}{ r l} 
\omega (v, w) = &  \omega_1 (v, w) + \omega_2 (v, w) \\
              = &  e^1 \wedge e^2 (v, w) + \omega_2 (v, w) \\
              = &  e^1(v)e^2(w) - e^1(w)e^2(v) + \omega_2 (v, w) \\
              = &  0 - 0 + \omega_2 (v, w) = \omega_2 (v, w) \\
\end{array}$$
so $\omega \mid_{B_2} = \omega_2 \mid_{B_2}$ so $B_2$ is also a symplectic submanifold.

Now consider $B_3 = b \times S^1$. Locally $T_p B_3$ is spanned by $e_2$ and $e_4$. Let $v = \lambda_1 e_2 + \lambda_2 e_4$ and $w =  \mu_1 e_2 + \mu_2 e_4$. Then
$$\begin{array}{ r l} 
\omega (v, w) = &  \omega_1 (v, w) + \omega_2 (v, w) \\
              = &  e^1 \wedge e^2 (v, w) + \omega_2 (v, w) \\
              = &  e^1(v)e^2(w) - e^1(w)e^2(v) + e^3(v)e^4(w) - e^3(w)e^4(v) \\
              = &  0e^2(w) - 0e^2(v) + 0e^4(w) - 0e^4(v) = 0 \\
\end{array}$$
so $\omega \mid_{B_3} = 0$ so $B_3$ is a Lagrangian submanifold.

Finally let  $B_4 = [a \times I] \times \{ pt \}$. Locally $T_p B_4$ is spanned by $e_1$ and $e_3$. Let $v = \lambda_1 e_1 + \lambda_2 e_3$ and $w =  \mu_1 e_1 + \mu_2 e_3$. Then
$$\begin{array}{ r l} 
\omega (v, w) = &  \omega_1 (v, w) + \omega_2 (v, w) \\
              = &  e^1 \wedge e^2 (v, w) + \omega_2 (v, w) \\
              = &  e^1(v)e^2(w) - e^1(w)e^2(v) + e^3(v)e^4(w) - e^3(w)e^4(v) \\
              = &  e^1(v)0 - e^1(w)0 + e^3(v)0 - e^3(w)0 = 0 \\
\end{array}$$
so $\omega \mid_{B_4} = 0$ so $B_4$ is also a Lagrangian submanifold.

The fundamental classes of these submanifolds in our CW decomposition are $[f^0 \times e_2^1]$, $[f^1 \times c]$, $[f^1 \times b]$, and $[f^0 \times e_2^2]$ which generate $H_2(N_a \times S^1)$.

\end{subsection}

\end{section}

\begin{section} {Monodromy Dehn twists in both directions}
This is given by the fibre bundle,
$$
\begin{tikzpicture}[description/.style={fill=white,inner sep=2pt}]
\matrix (m) [matrix of math nodes, row sep=3em,
column sep=2.5em, text height=1.5ex, text depth=0.25ex]
{ \sum_2     & N_a \times S^1 \#_{T^2} S^1 \times N_a = M  \\
                     & S^1 \times S^1        \\
};
\path[->,font=\scriptsize]
(m-1-1) edge node[auto]{} (m-1-2);
\path[->,font=\scriptsize]
(m-1-2) edge node[auto]{} (m-2-2);
\end{tikzpicture}
$$
where on the left $T^2$ is the submanifold $[\{ pt \} \times I] \times S^1 \subset N_a \times S^1$. Here the $\{ pt \}$ is chosen so it misses the curve $a$ hence $[I \times \{ pt \}] \cong S^1$ in $N_a$. Similarly on the right, we will cut out an $S^1 \times [\{ pt \} \times I]$. Note $v(T^2) \cong D^2 \times T^2$. 

More specifically, we will embed $T^2$ into $N_a \times S^1$ as follows: $(z_0, z_1) \mapsto ([\{ pt \}, i(z_0)], z_1)$. Here $i : S^1 \rightarrow I$ is a map where $q \circ i :S^1 \rightarrow I \rightarrow \frac{I}{0 \sim 1} = S^1$ is the identity. Note $i$ is not continuous but the composition $q \circ i$ is smooth. Observe $\pi_1([\{ pt \}, i(z_0)], z_1) = (z_0, z_1)$ where $\pi_1$ is the projection map of $N_a \times S^1$.

Similarly we will embed $T^2$ into $S^1 \times N_a$ via the map $(z_0, z_1) \mapsto (z_0, [\{ pt \}, i(z_1)])$. Also observe $\pi_2(z_0, [\{ pt \}, i(z_1)]) = (z_0, z_1)$ where $\pi_2$ is the projection map of $S^1 \times N_a$. 

Now we will define a diffeomorphism 
$$\begin{array}{r l}
\phi : T^2 \subset N_a \times S^1  & \rightarrow T^2 \subset S^1 \times N_a \\
([\{ pt \}, i(z_0)], z_1) & \mapsto (z_0, [\{ pt \}, i(z_0)]) \\ 
\end{array}
$$ 
Note from earlier, this preserves fibres so the above construction is indeed a fibre bundle.

If we lift $\phi$ to an orientation-reversing diffeomorphism $\Phi : \upsilon T^2 \rightarrow \upsilon T^2$, our manifold becomes a symplectic normal connected sum.

\begin{subsection} {The homology of $\bm{M}$}

Now we will use Mayer-Vietoris to calculate the homology of $M$. First we will define a CW structure for $N_a - \upsilon S^1 = N_a - D^2 \times S^1$ where $D^2 \times S^1 = [D^2 \times I]$. Here the $D^2 \subset T^2$ is chosen so it misses $a$. We have 
$$\begin{array}{l l}
\text{1 0-cell} & e^0 \\
\text{4 1-cells} & a, b, c, d \\
\text{4 2-cells} & e_1^2, e^2_2, e^2_3, e^2_4 \\
\text{1 3-cell} & e^3 \\
\end{array}$$
with boundary maps 
$$\begin{array}{r l}
\partial e^3 = & e_1^2 + e^2_2 + e^2_3 - e_1^2 - e^2_2 - e^2_3 + e^2_4 = e^2_4 \\
\partial e_1^2 = & a + b - a - b + d = d \\
\partial e_2^2 = & a + c - \tau(a) - c = a +c -a -c = 0 \\
\partial e_3^2 = & b + c - \tau(b) - c = b + c - a - b -c = -a \\
\partial e_4^2 = & c + d - c - d = 0 \\
\end{array}$$
All other attaching maps are trivial. From this, we calculate the homology to be $H_0(N_a - \upsilon S^1) = \mathbb Z$, $H_1(N_a - \upsilon S^1) = \mathbb Z^2$, and $H_2(N_a - \upsilon S^1) = \mathbb Z$.

Now for $N_a \times S^1 - \upsilon T^2 \cong (N_a - S^1 \times D^2) \times S^1$. If we endow $S^1$ with the usual CW-structure having one 0-cell $f^0$ and one 1-cell $f^1$, we have a CW-structure consisting of:
$$\begin{array}{l l}
\text{1 0-cell} & f^0 \times e^0 \\
\text{5 1-cells} & f^0 \times a, f^0 \times b, f^0 \times c, f^0 \times d, f^1 \times e^0 \\
\text{8 2-cells} & f^0 \times e_1^2, f^0 \times e^2_2, f^0 \times e^2_3, f^0 \times e^2_4, f^1 \times a, f^1 \times b, f^1 \times c, f^1 \times d   \\
\text{5 3-cells} & f^0 \times e^3, f^1 \times e_1^2, f^1 \times e^2_2, f^1 \times e^2_3, f^1 \times e^2_4 \\
\text{1 4-cell}  & f^1 \times e^3 \\
\end{array}$$
From this, we calculate the homology to be $H_0(N_a \times S^1 - \upsilon T^2) = \mathbb Z$, $H_1(N_a \times S^1 - \upsilon T^2) = \mathbb Z^3$, $H_2(N_a \times S^1 - \upsilon T^2) = \mathbb Z^3$, and $H_3(N_a \times S^1 - \upsilon T^2) = \mathbb Z$.

Note $v (T^2) \backsimeq T^3$ has a CW structure inherited from our CW-structure for $N_a \times S^1 - \upsilon T^2$ consisting of
$$\begin{array}{l l}
\text{1 0-cell} & f^0 \times e^0 \\
\text{3 1-cells} & f^0 \times c, f^0 \times d, f^1 \times e^0 \\
\text{3 2-cells} & f^0 \times e_4^2, f^1 \times c, f^1 \times d   \\
\text{1 3-cells} & f^1 \times e^2_4 \\
\end{array}$$
From this, we see the inclusion map $i : T^3 \hookrightarrow N_a \times S^1 - D^2 \times T^2$ has $i_* H_0(T^3) = \mathbb Z$, $i_* H_1(T^3) = \mathbb Z^2$, $i_* H_2(T^3) = \mathbb Z$, and $i_* H_3(T^3) = 0$. 

Let $A$ denote and $B$ denote $N_a \times S^1 - \upsilon T^2$ and $S^1 \times N_a - \upsilon T^2$. Now finally writing out the Mayer-Vietoris sequence:
$$
\begin{tikzpicture}[description/.style={fill=white,inner sep=2pt}]
\matrix (m) [matrix of math nodes, row sep=1em,
column sep=1em, text height=1.5ex, text depth=0.25ex]
{ 
        0             &        &   \mathbb Z                   & \,  \\       
H_4(A) \oplus H_4(B) & H_4(M) & H_3(A \cap B)        & \, \\
        \mathbb Z^2             &        &  \mathbb Z^3                     & \, \\
H_3(A) \oplus H_3(B) & H_3(M) & H_2(A \cap B)        & \, \\
        \mathbb Z^6             &        & \mathbb Z^3                     & \,\\
H_2(A) \oplus H_2(B) & H_2(M) & H_1(A \cap B)        &  \,\\
        \mathbb Z^6             &        &  \mathbb Z                    &  \,\\
H_1(A) \oplus H_1(B) & H_1(M) & H_0(A \cap B)        &  \, \\
        \mathbb Z^2            &        &                      &  \,\\
H_0(A) \oplus H_0(B) & H_0(M) & 0        &  \, \\
};
\path[->,font=\scriptsize]
(m-1-1) edge node[auto]{$\cong$} (m-2-1);
\path[->,font=\scriptsize]
(m-1-3) edge node[auto]{$\cong$} (m-2-3);

\path[->,font=\scriptsize]
(m-3-1) edge node[auto]{$\cong$} (m-4-1);
\path[->,font=\scriptsize]
(m-3-3) edge node[auto]{$\cong$} (m-4-3);

\path[->,font=\scriptsize]
(m-5-1) edge node[auto]{$\cong$} (m-6-1);
\path[->,font=\scriptsize]
(m-5-3) edge node[auto]{$\cong$} (m-6-3);

\path[->,font=\scriptsize]
(m-7-1) edge node[auto]{$\cong$} (m-8-1);
\path[->,font=\scriptsize]
(m-7-3) edge node[auto]{$\cong$} (m-8-3);

\path[->,font=\scriptsize]
(m-9-1) edge node[auto]{$\cong$} (m-10-1);

\path[->,font=\scriptsize]
(m-2-1) edge node[auto]{$\psi_4$} (m-2-2);
\path[->,font=\scriptsize]
(m-2-2) edge node[auto]{$\partial_4$} (m-2-3);
\path[->,font=\scriptsize]
(m-2-3) edge node[auto]{$\Phi_3$} (m-2-4);

\path[->,font=\scriptsize]
(m-4-1) edge node[auto]{$\psi_3$} (m-4-2);
\path[->,font=\scriptsize]
(m-4-2) edge node[auto]{$\partial_3$} (m-4-3);
\path[->,font=\scriptsize]
(m-4-3) edge node[auto]{$\Phi_2$} (m-4-4);

\path[->,font=\scriptsize]
(m-6-1) edge node[auto]{$\psi_2$} (m-6-2);
\path[->,font=\scriptsize]
(m-6-2) edge node[auto]{$\partial_2$} (m-6-3);
\path[->,font=\scriptsize]
(m-6-3) edge node[auto]{$\Phi_1$} (m-6-4);

\path[->,font=\scriptsize]
(m-8-1) edge node[auto]{$\psi_1$} (m-8-2);
\path[->,font=\scriptsize]
(m-8-2) edge node[auto]{$\partial_1$} (m-8-3);
\path[->,font=\scriptsize]
(m-8-3) edge node[auto]{$\Phi_0$} (m-8-4);

\path[->,font=\scriptsize]
(m-10-1) edge node[auto]{$\psi_0$} (m-10-2);
\path[->,font=\scriptsize]
(m-10-2) edge node[auto]{} (m-10-3);
\end{tikzpicture}
$$

First $H_4(M) = \mathbb Z$. From our previous work, we know $\text{Im } \Phi_3 = 0$ and $\text{ker } \Phi_2 = \mathbb Z^2$ which implies $H_3(M) = \mathbb Z^{2+2} = \mathbb Z^4$. $\text{Im } \Phi_2 = \mathbb Z$ and $\text{ker } \Phi_1 = \mathbb Z$ implies $H_2(M) = \mathbb Z^{(6-1)+1} = \mathbb Z^6$. $\text{Im } \Phi_1 = \mathbb Z^2$ and $\text{ker } \Phi_0 = 0$ implies $H_1(M) = \mathbb Z^{(6-2)+0} = \mathbb Z^4$. Finally $\text{Im } \Phi_0 = \mathbb Z$ implies $H_0(M) = \mathbb Z$.

\end{subsection}

\begin{subsection}{A homology basis for $\bm{H_2(M)}$}

Now we will find a basis for $H^2(M)$ consisting of symplectic and Lagrangian submanifolds. Let $i : N_a \times S^1 - \upsilon T^2  \hookrightarrow M$ and $j : S^1 \times N_a - \upsilon T^2 \hookrightarrow M$. By switching the order of the summands, we will also denote the corresponding surfaces in $S^1 \times N_a - \upsilon T^2$ with $B_i$. Note only the fibre $B_1$ doesn't miss $\upsilon T^2 $. From the above Mayer-Vietoris sequence, observe $\text{ker} \psi_2 = < [c \times S^1], [S^1 \times c]> = <[B_2], [B_2]>$ so $[i(B_2)]$, $[i(B_3)]$, $[i(B_4)]$, $[j(B_3)]$, $[j(B_4)]$ are all nonzero and linearly independent in $H_2(M)$. To show these are symplectic and Lagrangian, we'll use the following theorem.

\begin{theorem} (Gompf) \cite{constr} Let $M_1$ and $M_2$ be closed symplectic $4$-manifolds and $\psi_1 : N \hookrightarrow M_1$ and $\psi_2 : N \hookrightarrow M_2$ be symplectic embeddings of a closed connected symplectic $2$-manifold $N$. Then $M_1 \#_N M_2$ admits a symplectic structure. Moreover this structure can be chosen in such a way that the symplectic (Lagrangian) submanifolds of $M_i$ missing $\upsilon\psi_i(N)$ are symplectic (Lagrangian) in $M_1 \#_N M_2$. \hfill $\Box$
\end{theorem}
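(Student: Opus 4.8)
The plan is to follow Gompf's original construction of the symplectic normal (fiber) sum, whose only serious analytic input is Weinstein's symplectic neighborhood theorem: a tubular neighborhood of a closed symplectic surface $N$ inside a symplectic $4$-manifold is determined, up to symplectomorphism, by the diffeomorphism type of $N$, its symplectic area $\int_N \omega$, and the Euler number of its normal bundle (which, being the normal bundle of a symplectic surface, is naturally a complex line bundle). First I would arrange that the normal Euler numbers of $\psi_1(N) \subset M_1$ and $\psi_2(N) \subset M_2$ are opposite — in the present application both are zero, since the relevant tori $[\{pt\}\times I]\times S^1$ and $S^1\times[\{pt\}\times I]$ have trivial normal bundles — and then rescale $\omega_{M_2}$ by a positive constant so that the two copies of $N$ acquire the same symplectic area.

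Next I would write down an explicit symplectic model for a punctured neighborhood of $N$: realize the normal bundle as a Hermitian line bundle with a connection, and on the disk bundle $\{|v|<\varepsilon\}$ build a symplectic form that restricts to the area form on the zero section and is nondegenerate on fibers; by Weinstein this model is symplectomorphic to a neighborhood of $N$ in each $M_i$ after shrinking. The heart of the matter is the ``turning inside out'' step: on an annulus bundle $\{\delta<|v|<\varepsilon\}$ there is an orientation-reversing, fiber-preserving diffeomorphism sending the radial coordinate $r$ to a decreasing function of $r$, which can be chosen to be a symplectomorphism from the $M_1$-model minus its zero section to the $M_2$-model minus its zero section; this is precisely where the equality of areas and the opposite Euler numbers are both used. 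Gluing $M_1\setminus\nu_\delta(\psi_1(N))$ to $M_2\setminus\nu_\delta(\psi_2(N))$ along this symplectomorphism of overlapping annulus bundles yields a closed $4$-manifold with a globally well-defined symplectic form, and one checks it is diffeomorphic to $M_1\#_N M_2$ in the usual sense.

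The ``moreover'' clause then comes essentially for free: every modification of the symplectic forms takes place inside the tubular neighborhoods $\nu(\psi_i(N))$, so any symplectic or Lagrangian submanifold of $M_i$ disjoint from $\nu\psi_i(N)$ sits unchanged inside $M_1\#_N M_2$, with the same tangent planes and the same restricted form, hence remains symplectic, respectively Lagrangian. In the application this guarantees that the surfaces $B_2,B_3,B_4$ (in either summand), which avoid the glued-in region, keep their symplectic/Lagrangian status in $M$.

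I expect the main obstacle to be the fiberwise ``inversion'' symplectomorphism between the two punctured disk-bundle models: getting the radial reparametrization, the contribution of the connection $1$-form, and the area normalization to conspire so that this is an \emph{honest} symplectomorphism on a full collar — not merely a diffeomorphism that matches cohomology classes — requires delicate bookkeeping. Matching the normal Euler numbers (trivial here, essential in general) and tracking orientations through the orientation-reversing gluing are the other points demanding care.
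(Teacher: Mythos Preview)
The paper does not give its own proof of this theorem: it is stated with a terminal $\Box$ and attributed to Gompf via \cite{constr}, so there is nothing to compare your attempt against. Your sketch is a faithful outline of Gompf's original symplectic normal sum construction and correctly isolates the key ingredients (Weinstein's neighborhood theorem, opposite normal Euler numbers, equal areas after rescaling, the fiberwise inversion symplectomorphism) as well as the reason the ``moreover'' clause holds.
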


So with this choice of symplectic structure, we see $i(B_2)$ is symplectic and $i(B_3)$, $i(B_4)$, $j(B_3)$, $j(B_4)$ will all be Lagrangian.

The final element of our basis will be the fibre of $M$, which we will denote by $D$. To see $[D]$ isn't contained in $\text{img } \psi_2$ observe if we write $D = D_1 \cup D_2$ where $D_1 = D \cap A$ and $D_2 = D \cap B$, $\partial D_1 = d \times \{ pt \}$. Also this shows $[D] \neq 0$. To see $D$ is symplectic we'll use the following corollary.

\begin{corollary} \cite{constr}
If in addition, $D_i \subset M_i$ are closed symplectically embedded surfaces that intersect $\psi_1(N)$ and $\psi_2(N)$ transversally in $\ell$ points that have positive sign ($\ell$ independent of $i$) then we may assume $D_1 \#_L D_2$ is a symplectic submanifold of $M_1 \#_N M_2$ where $L$ is the set of $\ell$ points. \hfill $\Box$
\end{corollary}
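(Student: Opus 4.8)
The plan is to carry out Gompf's symplectic normal connected sum ``relative to'' the surface $D$, so that the very gluing that produces $M_1 \#_N M_2$ simultaneously produces $D_1 \#_L D_2$ as a symplectic submanifold. Recall how the sum is built: by Weinstein's symplectic neighbourhood theorem the symplectic form near $\psi_i(N)$ is determined up to symplectomorphism by $\omega_N$ together with the normal Euler number, so (after the usual normalization, which requires the two normal Euler numbers to be opposite) one may identify a punctured tubular neighbourhood of $\psi_i(N)$ with an annular region in the total space of the normal bundle carrying a fibrewise-standard form; then $M_1 \#_N M_2$ is obtained from $(M_1 \setminus \psi_1(N)) \sqcup (M_2 \setminus \psi_2(N))$ by gluing these annular regions along an orientation-reversing, fibre-orientation-reversing bundle isomorphism $\Phi$ covering a diffeomorphism $\phi \colon N \to N$, and the resulting symplectic form coincides with the given forms of $M_1$ and $M_2$ outside a collar of the gluing locus.

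Next I would put the $D_i$ into standard position near $N$. Since $D_i$ and $\psi_i(N)$ are symplectic surfaces meeting transversally at $\ell$ points, and the hypothesis is that each of these intersections has positive sign, near each intersection point $q$ the plane $T_q D_i$ is a complex line transverse to $T_q\psi_i(N)$ for a suitable $\omega_i$-compatible almost complex structure; a relative symplectic neighbourhood argument, supported near those points, then isotopes $D_i$, through symplectic surfaces, so that inside the normal-bundle model it coincides with $\ell$ fibre discs $\{q_{i1}\} \times \mathbb{D}, \dots, \{q_{i\ell}\} \times \mathbb{D}$. The matching of these $\ell$ points is part of the choice of sum data: I take $\phi$ with $\phi(q_{1j}) = q_{2j}$ for each $j$.

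The sum then finishes the argument. Deleting $\psi_i(N)$ removes the centre of each fibre disc, so $D_i \cap (M_i \setminus \psi_i(N))$ is a compact symplectic surface whose boundary is the union of the $\ell$ circle fibres over $q_{i1}, \dots, q_{i\ell}$; since $\Phi$ sends the circle fibre over $q_{1j}$ to the circle fibre over $q_{2j}$, reversing its orientation, these two pieces glue along the $\ell$ circles into a closed embedded surface $D_1 \#_L D_2 \subset M_1 \#_N M_2$. Because the symplectic form is unchanged near the gluing locus and $D_i$ is a standard symplectic fibre disc there, $D_1 \#_L D_2$ is a symplectic submanifold.

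The step I expect to be the main obstacle is this last compatibility point: making sure the piece of $D_1$ and the piece of $D_2$ along a matched pair of boundary circles fuse into a genuinely symplectic, not merely smooth, surface across the collar in which the two fibrewise-standard forms are interpolated. This is handled by a Moser-type cut-off argument localized in that collar, interpolating between the two fibre forms while keeping every fibre disc symplectic throughout; it is exactly here that one uses both that $\ell$ is independent of $i$ and that every intersection point is positive, so that the two fibre halves induce matching symplectic orientations and no ``symplectic corner'' is created.
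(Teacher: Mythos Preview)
The paper does not supply its own proof of this corollary: it is stated with a citation to Gompf's construction paper \cite{constr} and closed with a $\Box$, so there is nothing in the paper to compare your argument against line by line. What you have written is a faithful sketch of Gompf's actual argument: model a neighbourhood of $\psi_i(N)$ on the normal bundle via the symplectic neighbourhood theorem, isotope each $D_i$ so that near the intersection points it coincides with normal fibre discs (this is where positivity of the intersections is used, exactly as you say), choose the gluing data so the $\ell$ marked points match, and then observe that the fibre discs survive the collar interpolation as symplectic pieces and glue to the closed surface $D_1 \#_L D_2$.

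Your identification of the delicate step is also the right one. In Gompf's paper the interpolation across the collar is done explicitly with a radial cut-off in the fibre coordinate, and the point is that each fibre disc stays symplectic throughout because the interpolated form restricts on every fibre to a positive area form; your Moser-type description is an acceptable paraphrase of that. So your proposal is correct and aligned with the source the paper cites, even though the paper itself offers no argument beyond the reference.
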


Thus since the fibre of $S^1 \times N_a$ is symplectic, $D$ is symplectic in $M$.

We can use this basis to show that $M$ is spin. Recall the adjunction formula for symplectic $4$-manifolds.

\begin{theorem}(Adjunction formula) \cite{mcduff} Suppose $(M, \omega)$ is a symplectic $4$-manifold. If $\Sigma \subset M$ is a symplectic surface then
$$K \cdot \Sigma + \Sigma \cdot \Sigma = 2g(\Sigma) - 2$$
where $K \in H_2(M)$ is the canonical class of $(M, \omega)$. \hfill $\Box$
\end{theorem}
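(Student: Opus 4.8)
The plan is to realize $\Sigma$ as a pseudoholomorphic curve and then apply the standard splitting of the restricted tangent bundle. First I would invoke the compatibility proposition cited from \cite{mcduff}: since $\Sigma$ is symplectic, $\omega|_\Sigma$ is a nondegenerate $2$-form on the surface, so one can choose an $\omega$-compatible almost-complex structure $J$ on $M$ that additionally preserves $T\Sigma$ — pick a compatible $J_\Sigma$ on $\Sigma$, extend it over a tubular neighborhood of $\Sigma$, and then extend over all of $M$ while staying inside the (contractible, hence nonempty and path-connected) space of $\omega$-compatible structures. Because that space is connected, the Chern classes $c_i(M,J)$ are independent of the choice, so they agree with $c_i(M,\omega)$; in particular $c_1(TM) = -K$ by the definition of the canonical class.

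With such a $J$, $\Sigma$ is a $J$-holomorphic curve, so its normal bundle $N_\Sigma = TM|_\Sigma / T\Sigma$ inherits a complex line-bundle structure and one obtains a splitting of complex vector bundles $TM|_\Sigma \cong T\Sigma \oplus N_\Sigma$, where $T\Sigma$ carries the complex structure $J_\Sigma$. Taking first Chern classes and pairing with the fundamental class $[\Sigma] \in H_2(\Sigma)$ gives
\[ \langle c_1(TM)|_\Sigma, [\Sigma] \rangle = \langle c_1(T\Sigma), [\Sigma] \rangle + \langle c_1(N_\Sigma), [\Sigma] \rangle . \]
Now $\langle c_1(T\Sigma), [\Sigma] \rangle = \chi(\Sigma) = 2 - 2g(\Sigma)$ by Gauss--Bonnet (equivalently Poincar\'e--Hopf), and $\langle c_1(N_\Sigma), [\Sigma] \rangle$ is the Euler number of the normal bundle, which equals the self-intersection number $\Sigma \cdot \Sigma$. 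On the left, $\langle c_1(TM)|_\Sigma, [\Sigma] \rangle = \langle c_1(TM), i_*[\Sigma] \rangle = -K \cdot \Sigma$. Rearranging the resulting identity $-K \cdot \Sigma = (2 - 2g(\Sigma)) + \Sigma \cdot \Sigma$ yields $K \cdot \Sigma + \Sigma \cdot \Sigma = 2g(\Sigma) - 2$.

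The routine ingredients — Gauss--Bonnet and the identification of the normal Euler number with the self-intersection — I would not belabor. The one step deserving care is the construction of an $\omega$-compatible $J$ preserving $T\Sigma$: one must check that a compatible complex structure on the symplectic surface extends to a compatible one on $M$, which follows from the fibrewise convexity of the space of compatible complex structures on a symplectic vector space together with a partition-of-unity argument, exactly as in the proof of the cited proposition. This is the main (and fairly mild) obstacle; everything else is Chern-class bookkeeping.
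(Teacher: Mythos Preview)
Your argument is correct and is the standard proof of the adjunction formula. Note, however, that the paper does not actually prove this statement: it is quoted from \cite{mcduff} and closed with a $\Box$, indicating the proof is omitted. So there is nothing in the paper to compare against; your proof simply supplies what the paper chose to cite.
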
   

For the fibre $D$, we have $K \cdot D + D \cdot D = 2$. We can perturb $D$ horizontally to obtain $D \cdot D = 0$ hence $K \cdot D = 2$. Similarly for $i(B_2)$ if we perturb vertically, we obtain $i(B_2) \cdot i(B_2) = 0$. Thus $K \cdot i(B_2) = 0$. 

\begin{lemma} \cite{4ManifoldsKirby} \label{lagrange} Given a Lagrangian surface $\Sigma$ of a symplectic $4$-manifold $M$, we have $\Sigma \cdot \Sigma = 2g(\Sigma) - 2$. \hfill $\Box$
\end{lemma}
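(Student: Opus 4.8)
The plan is to compute $\Sigma \cdot \Sigma$ as the Euler number of the normal bundle $\nu\Sigma \to \Sigma$, since by definition $\Sigma \cdot \Sigma = \langle e(\nu\Sigma), [\Sigma]\rangle$. The essential point is that for a Lagrangian surface the symplectic form sets up a canonical isomorphism between $\nu\Sigma$ and the cotangent bundle $T^*\Sigma$, after which the computation is immediate.

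First I would fix an (arbitrary) orientation on $\Sigma$, which then orients $\nu\Sigma$ via the symplectic orientation of $M$; the formula is insensitive to this choice. Because $\omega|_{T\Sigma} = 0$, the bilinear pairing $T_p\Sigma \times (T_pM / T_p\Sigma) \to \mathbb R$ induced by $\omega$ is well defined, and nondegeneracy of $\omega$ together with $\dim T_p\Sigma = \dim \nu_p\Sigma = 2$ forces it to be a perfect pairing. This yields a bundle isomorphism $\nu\Sigma \xrightarrow{\ \cong\ } (T\Sigma)^* = T^*\Sigma$. Combined with the fact that dualizing a rank-two oriented real bundle negates its Euler class, one gets $e(\nu\Sigma) = -\,e(T\Sigma)$. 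Then $\langle e(T\Sigma), [\Sigma]\rangle = \chi(\Sigma) = 2 - 2g(\Sigma)$ by Poincar\'e--Hopf, so $\Sigma \cdot \Sigma = -(2 - 2g(\Sigma)) = 2g(\Sigma) - 2$.

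The main obstacle is the orientation bookkeeping: one must check that the identification $\nu\Sigma \cong T^*\Sigma$ carries the orientation coming from the symplectic orientation of $M$ to the orientation on $T^*\Sigma$ for which the Euler-class comparison above holds (and not its opposite). The cleanest way to avoid delicate sign chasing is to invoke Weinstein's Lagrangian neighborhood theorem, which provides a symplectomorphism from a tubular neighborhood of $\Sigma$ in $M$ onto a neighborhood of the zero section of $T^*\Sigma$ with its canonical symplectic form; this identifies $\nu\Sigma$ with $T^*\Sigma$ on the nose, orientations included, and the Euler-number computation then goes through verbatim. Alternatively, choosing an $\omega$-compatible almost-complex structure $J$ and its associated metric $g$, one shows $g(Jv,w) = \omega(Jv,Jw) = \omega(v,w) = 0$ for $v,w \in T\Sigma$, so $J$ maps $T\Sigma$ isomorphically onto $\nu\Sigma$; this reduces the lemma to comparing $e(\nu\Sigma)$ with $\pm e(T\Sigma)$ through the induced complex line bundle structures, where the sign can be pinned down by a local model.
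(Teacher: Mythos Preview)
The paper does not prove this lemma; it is stated with a citation and closed with a $\Box$, so there is no proof to compare against. Your argument is the standard one and is essentially correct, but one step is misstated.

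The claim that ``dualizing a rank-two oriented real bundle negates its Euler class'' is false: a fibre metric gives an orientation-preserving isomorphism $E \cong E^*$, so $e(E^*) = e(E)$. The minus sign you need comes from a different place, namely that the $\omega$-induced isomorphism $\nu\Sigma \to T^*\Sigma$, $n \mapsto \omega(\,\cdot\,, n)|_{T\Sigma}$, is itself orientation-\emph{reversing} when $\nu\Sigma$ carries the orientation determined by the symplectic orientation of $M$ and $T^*\Sigma$ carries the dual orientation. In local Darboux coordinates $(q_1,q_2,p_1,p_2)$ with $\Sigma = \{p=0\}$ one has
\[
\tfrac{1}{2}\,\omega \wedge \omega \;=\; dq_1 \wedge dp_1 \wedge dq_2 \wedge dp_2 \;=\; -\,dq_1 \wedge dq_2 \wedge dp_1 \wedge dp_2,
\]
so if $\Sigma$ is oriented by $dq_1 \wedge dq_2$ the induced normal orientation is $dp_2 \wedge dp_1$, opposite to the natural fibre orientation $dp_1 \wedge dp_2$ of $T^*\Sigma$. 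Your alternative routes already encode this correctly: Weinstein identifies the tubular neighbourhood with $(T^*\Sigma, \omega_{\mathrm{can}})$ and the self-intersection of the zero section there is $-\chi(\Sigma)$ precisely because of this orientation mismatch; likewise a compatible $J$ sends the positive frame $\partial_{q_1},\partial_{q_2}$ of $T\Sigma$ to $\partial_{p_1},\partial_{p_2}$, a \emph{negative} frame of $\nu\Sigma$, so $J\colon T\Sigma \to \nu\Sigma$ is orientation-reversing and $e(\nu\Sigma) = -e(T\Sigma)$. Once you replace the incorrect justification with either of these, the proof is complete.
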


All the other surfaces in our basis are Lagrangian tori so by Lemma \ref{lagrange} for each of these $\Sigma$, we have $\Sigma \cdot \Sigma = 0$.

From the above and the fact $K$ is a characteristic element, we have 
$$\alpha \cdot \alpha \equiv K \cdot \alpha \text{ (mod $2$)} \equiv 0 \text{ (mod $2$)}$$
 for each $\alpha \in H_2(M)$. Therefore $M$ has an even intersection form and using the following theorem, we see $M$ is indeed spin.

\begin{theorem} \cite{Wild}
A closed orientable $4$-manifold $M$ with no $2$-torsion in $H_1(M)$ and an even intersection form is spin and conversely a spin manifold must have an even intersection form. \hfill $\Box$
\end{theorem}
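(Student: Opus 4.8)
The plan is to reduce the statement to the standard fact that an orientable closed $4$-manifold $M$ is spin precisely when its second Stiefel--Whitney class $w_2(M)\in H^2(M;\mathbb Z_2)$ vanishes, and then to recognize $w_2(M)$ as exactly the obstruction to the mod-$2$ intersection form being identically zero. First I would invoke Wu's formula: on a closed $n$-manifold the Wu class $v=1+v_1+v_2+\cdots$ is characterized by $\langle v_i\smile x,[M]\rangle=\langle \mathrm{Sq}^i(x),[M]\rangle$ for all $x\in H^{n-i}(M;\mathbb Z_2)$, and $w=\mathrm{Sq}(v)$, so $w_1=v_1$ and $w_2=v_2+v_1^2$. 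Orientability gives $w_1=v_1=0$, hence $w_2=v_2$. Since $\dim M=4$, for $x\in H^2(M;\mathbb Z_2)$ we have $\mathrm{Sq}^2(x)=x\smile x$, so $v_2$ is characterized by $\langle v_2\smile x,[M]\rangle=\langle x\smile x,[M]\rangle$ for every such $x$. Because the cup-product pairing on $H^2(M;\mathbb Z_2)$ is nondegenerate (Poincar\'e duality with $\mathbb Z_2$ coefficients) and evaluation against fundamental classes detects $H^4(M;\mathbb Z_2)$, it follows that $M$ is spin $\iff w_2(M)=v_2=0 \iff x\smile x=0$ for every $x\in H^2(M;\mathbb Z_2)$.

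For the converse implication (spin $\Rightarrow$ even intersection form), which needs no hypothesis on torsion, the argument is short: if $M$ is spin then $x\smile x=0$ for all $x\in H^2(M;\mathbb Z_2)$ by the above, so for any integral class $\alpha\in H^2(M;\mathbb Z)$ with mod-$2$ reduction $\bar\alpha$ we get $\alpha\cdot\alpha=\langle\alpha\smile\alpha,[M]\rangle\equiv\langle\bar\alpha\smile\bar\alpha,[M]\rangle=0\pmod 2$. Via Poincar\'e duality this is precisely the statement that the intersection form on $H_2(M;\mathbb Z)$ is even.

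For the forward implication, assume the intersection form is even, so that $\bar\alpha\smile\bar\alpha=0$ for every $\bar\alpha$ in the image of the reduction map $\rho\colon H^2(M;\mathbb Z)\to H^2(M;\mathbb Z_2)$. To conclude $x\smile x=0$ for \emph{all} $x\in H^2(M;\mathbb Z_2)$ (which then forces $w_2(M)=0$ and hence $M$ spin), it suffices to show $\rho$ is surjective, and this is exactly where the hypothesis that $H_1(M)$ has no $2$-torsion enters. From the Bockstein exact sequence $H^2(M;\mathbb Z)\xrightarrow{\rho}H^2(M;\mathbb Z_2)\xrightarrow{\beta}H^3(M;\mathbb Z)$, surjectivity of $\rho$ is equivalent to $\beta=0$; since $2\beta=0$, the image of $\beta$ lies in the $2$-torsion subgroup of $H^3(M;\mathbb Z)$. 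But Poincar\'e duality gives $H^3(M;\mathbb Z)\cong H_1(M;\mathbb Z)$, which has no $2$-torsion by hypothesis, so $\beta=0$ and $\rho$ is onto.

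The hard part is the forward implication, and within it the surjectivity of mod-$2$ reduction; this is the single place the hypothesis on $H_1(M)$ is used, and it cannot be removed --- for instance the Enriques surface has even intersection form and $H_1\cong\mathbb Z_2$, yet is not spin. The remaining ingredients (Wu's formula, nondegeneracy of the $\mathbb Z_2$ cup pairing, and the Bockstein sequence together with Poincar\'e duality) are standard, so once surjectivity is in hand the rest is a short diagram chase at the level of cup products.
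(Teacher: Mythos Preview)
Your argument is correct and is precisely the standard proof: Wu's formula reduces the question to whether $v_2=w_2$ vanishes, Poincar\'e duality over $\mathbb Z_2$ identifies this with the vanishing of all mod-$2$ self-cups, and the Bockstein sequence together with $H^3(M;\mathbb Z)\cong H_1(M;\mathbb Z)$ shows that the no-$2$-torsion hypothesis forces mod-$2$ reduction $H^2(M;\mathbb Z)\to H^2(M;\mathbb Z_2)$ to be surjective. The Enriques example is also apt.

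There is nothing to compare against in the paper itself: the theorem is quoted from \cite{Wild} and closed with a box, with no proof given. Your write-up supplies exactly the argument one would expect to find behind that citation.
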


\end{subsection}

\begin{subsection}{The fundamental group of $\bm{M}$} Now we will use van Kampen to find $\pi_1(M)$. First from our CW structure, we obtain 
$$\pi_1(A) = \pi_1(B)  = <a,b,c,d,f | [a,b \,] d, [a,c \, ],bca^{-1}b^{-1}c^{-1},[c, d \, ]>$$

Now using van-Kampen
thus we can write 
$$\begin{array}{r l l}
\pi_1(M) &= <a_1,b_1,c_1,d_1,f_1,a_2,b_2,c_2,d_2,f_2 |& [a_1,b_1 \,]d_1, [a_1,c_1 \,],b_1c_1a^{-1}_1b^{-1}_1c^{-1}_1,[c_1,d_1 \,], \\
         &                        & [a_2,b_2 \,]d_2, [a_2,c_2 \,],b_2c_2a^{-1}_2b^{-1}_2c^{-1}_2,[c_2,d_2 \,],  \\
         &                        & f_1 c_2^{-1}, c_1 f_2^{-1}, d_1 d_2^{-1}> \\
         &= <a_1,b_1,c_1,d_1,a_2,b_2,c_2,d_2 |& [a_1,b_1 \,]d_1, [a_1,c_1 \,],b_1c_1a^{-1}_1b^{-1}_1c^{-1}_1,[c_1,d_1 \,], \\
         &                        & [a_2,b_2 \,]d_2, [a_2,c_2 \,],b_2c_2a^{-1}_2b^{-1}_2c^{-1}_2,[c_2,d_2 \,],  \\
         &                        & d_1 d_2^{-1}> \\
\end{array}$$    

One of the many ways we can see $M$ isn't $T^4$ is to observe if $\pi_1(M) \cong \mathbb Z^4$ then it is abelian and hence $d_1$ and $d_2$ are trivial. But then we can rewrite
$$\begin{array}{r l}
\pi_1(M) = & <a_1,b_1,c_1 | [a_1,b_1 \,], [a_1,c_1 \,],b_1c_1a^{-1}_1b^{-1}_1c^{-1}_1 > \ast \\
           & <a_2,b_2,c_2 | [a_2,b_2 \,], [a_2,c_2 \,],b_2c_2a^{-1}_2b^{-1}_2c^{-1}_2 >. \\
\end{array}
$$

For this to abelian, we must have $\pi_1(M) = 0$ but this is a contradiction.

\end{subsection}

\end{section}

\begin{section}{Monodromy trivial in one direction, two disjoint Dehn twists on a genus 2 surface in the other}
Consider the fibre bundle,
$$
\begin{tikzpicture}[description/.style={fill=white,inner sep=2pt}]
\matrix (m) [matrix of math nodes, row sep=3em,
column sep=2.5em, text height=1.5ex, text depth=0.25ex]
{ \sum_2     & S^1 \times N_a \#_{T^2} S^1 \times N_a = M  \\
                     & S^1 \times S^1        \\
};
\path[->,font=\scriptsize]
(m-1-1) edge node[auto]{} (m-1-2);
\path[->,font=\scriptsize]
(m-1-2) edge node[auto]{} (m-2-2);
\end{tikzpicture}
$$
where on both the left and right, $T^2$ is the submanifold $S^1 \times [\{ pt \} \times I] \times S^1$. Here the $\{ pt \}$ is chosen so it misses the curve $a$ hence $[\{ pt \} \times I] \cong S^1$ in $N_a$. We will identify each $T^2$ with the identity diffeomorphism.

The homology and basis for $H_2(M)$ are the same as in the previous example as the only difference from the previous bundle is the diffeomorphism gluing the two copies of $T^2$. Thus our manifold will be spin. The fundamental group is given as follows.

$$\begin{array}{r l l}
\pi_1(M) &= <a_1,b_1,c_1,d_1,f_1,a_2,b_2,c_2,d_2,f_2 |& [a_1,b_1 \,]d_1, [a_1,c_1 \,],b_1c_1a^{-1}_1b^{-1}_1c^{-1}_1,[c_1,d_1 \,], \\
         &                        & [a_2,b_2 \,]d_2, [a_2,c_2 \,],b_2c_2a^{-1}_2b^{-1}_2c^{-1}_2,[c_2,d_2 \,],  \\
         &                        & f_1 f_2^{-1}, c_1 c_2^{-1}, d_1 d_2^{-1}> \\
\end{array}$$ 

At this time, it is unknown if this is isomorphic to the fundamental group of our previous manifold and also if the two manifolds are diffeomorphic to each other.

\end{section}

\nocite{*}
\bibliographystyle{alpha}
 \bibliography{bibfile}

\appendix

\end{document}